\newtheorem{dummy}{}[section]
\newtheorem{theorem}[dummy]{Theorem}
\newtheorem{proposition}[dummy]{Proposition}
\newtheorem{lemma}[dummy]{Lemma}
\newtheorem{corollary}[dummy]{Corollary}
\theoremstyle{definition}
\newtheorem{definition}[dummy]{Definition}
\newtheorem{example}[dummy]{Example}
\theoremstyle{remark}
\newtheorem{remark}[dummy]{Remark}
\newcommand{\sbigoplus}{%
  \mathop{\mathchoice{\textstyle\bigoplus}{\bigoplus}{\bigoplus}{\bigoplus}}%
}
\newcommand{\Amone}{\Lambda_{-1}}
\newcommand{\A}{\Lambda}
\newcommand{\B}{\Delta}
\newcommand{\D}{\Theta} 
\newcommand{\sR}{\mathscr{R}}
\renewcommand{\H}{\mathbf{H}}
\newcommand{\sH}{\H}
\newcommand{\bR}{\mathbf{R}}
\newcommand{\Der}{{\mathfrak{D}}}
\newcommand{\tR}{{\bR '}}
\newcommand{\soR}{\overline{\sR}}
\newcommand{\QQ}{\mathbb Q}
\newcommand{\ZZ}{\mathbb Z}
\newcommand{\CC}{\mathbb C}
\newcommand{\C}{\mathcal C}
\newcommand{\Cont}{\mathrm{Cont}}
\newcommand{\X}{\mathcal X}
\newcommand{\I}{\mathcal I}
\newcommand{\U}{\mathcal U}
\newcommand{\V}{\mathcal V}
\newcommand{\Y}{\mathcal Y}
\newcommand{\Z}{\mathcal Z}
\newcommand{\End}{\mathrm{End}}
\newcommand{\bP}{\mathbb{P}}
\newcommand{\M}{\overline{\mathcal{M}}}
\newcommand{\cO}{\mathcal{O}}
\newcommand{\DD}{\mathcal{D}}
\newcommand{\vir}{\mathrm{vir}}
\newcommand{\Aut}{\mathrm{Aut}}
\newcommand{\ev}{\mathrm{ev}}
\newcommand{\bt}{\mathbf{t}}
\renewcommand{\ev}{\mathrm{ev}}
\newcommand{\diag}{\mathrm{diag}}
\newcommand{\red}{\mathrm{red}}
\newcommand{\eff}{\mathrm{eff}}
\newcommand{\comp}{\mathrm{comp}}
\newcommand{\PP}{\mathbb P}
\DeclareMathOperator{\spann}{span}
\begin{document}

\title[Structure of GW invariants of Quintic $3$-folds]{Structure of Higher Genus Gromov--Witten Invariants of Quintic 3-folds}

\author[S.~Guo]{Shuai Guo}
\address{School of Mathematical Sciences and Beijing International Center for Mathematical Research, Peking University, Beijing, China }
\email{guoshuai@math.pku.edu.cn}

\author[F.~Janda]{Felix Janda}
\address{Department of Mathematics, University of Michigan, 2074 East Hall, 530 Church Street, Ann Arbor, MI 48109, USA}
\email{janda@umich.edu}

\author[Y.~Ruan]{Yongbin Ruan}
\address{Department of Mathematics, University of Michigan, 2074 East Hall, 530 Church Street, Ann Arbor, MI 48109, USA}
\email{ruan@umich.edu}

\begin{abstract}
  There is a set of remarkable physical predictions for the structure
  of BCOV's higher genus B-model of mirror quintic 3-folds which can
  be viewed as conjectures for the Gromov--Witten theory of quintic
  3-folds.
  They are (i) Yamaguchi--Yau's finite generation, (ii) the
  holomorphic anomaly equation, (iii) the orbifold regularity and (iv)
  the conifold gap condition.
  Moreover, these properties are expected to be universal properties
  for all the Calabi--Yau 3-folds.
  This article is devoted to proving first three conjectures.
  
  The main geometric input to our proof is a log GLSM moduli space and
  the comparison formula between its reduced virtual cycle
  (reproducing Gromov--Witten invariants of quintic 3-folds) and its
  nonreduced virtual cycle \cite{CJR18P}.
  Our starting point is a Combinatorial Structural Theorem expressing
  the Gromov--Witten cohomological field theory as an action of a
  generalized $R$-matrix in the sense of Givental.
  An $R$-matrix computation implies a graded finite generation
  property.
  Our graded finite generation implies Yamaguchi--Yau's (nongraded)
  finite generation,  as well as the orbifold regularity.
  By differentiating the Combinatorial Structural Theorem carefully, we
  derive the holomorphic anomaly equations.
  Our technique is purely A-model theoretic and does \emph{not} assume
  any knowledge of B-model.
  Finally, above structural theorems hold for a family of theories
  (the extended quintic family) including the theory of quintic as a
  special case.
\end{abstract}

\setcounter{tocdepth}{2}

\maketitle 
\tableofcontents
 
\section{Introduction}

The computation of the Gromov--Witten (GW) theory of compact
Calabi--Yau 3-folds is a central and yet difficult problem in geometry
and physics.
During last twenty years, it has attracted a lot of attention from
both physicists and mathematicians.
In the early 90s, the physicist Candelas and his collaborators
\cite{CdOGP91} surprised the mathematical community by using mirror
symmetry to derive a conjectural formula for a certain generating
function (the $J$-function) of genus zero Gromov--Witten invariants of
a quintic 3-fold in terms of the period integral (or the $I$-function)
of its B-model mirror.
The effort to prove the formula has directly lead to the birth of
mirror symmetry as a mathematical subject.
In a seminal work \cite{BCOV93} in 1993, Bershadsky, Cecotti, Ooguri
and Vafa (BCOV) introduced the higher genus B-model in physics in an
effort to push mirror symmetry to higher genus.
During the subsequent years, a series of conjectural formulae was
proposed by physicists based on the BCOV B-model.
Let $F_g(Q)$ be the generating function of genus $g$ GW-invariants of
a quintic 3-fold.
BCOV had already proposed a conjectural formula for $F_1$ and $F_2$ in
their original paper (see also \cite{YaYa04}).
A conjectural formula for $F_3$ was proposed by Katz--Klemm--Vafa in
1999 \cite{KKV99}.
Afterwards, it became clear that we need a better understanding of the
structure of $F_g$.
A fundamental physical prediction of Yamaguchi--Yau is that $F_g$ can
be written as a polynomial of five generators constructed explicitly
from the period integral of its mirror.
Among the five generators, four of them are the holomorphic limit of
certain non-holomorphic objects in the B-model.
The famous holomorphic anomaly equation of BCOV can be recasted into
equations determining the dependence on four of the generators.
Abusing terminology, we still refer to them as holomorphic anomaly
equations even though the four generators are holomorphic.
The initial condition of the holomorphic anomaly equations is expected
to be a degree $3g-3$ polynomial $f_g$ of a single variable $Z$.
Two other physical predictions regarding the structure of $f_g$ are
the \emph{conifold gap condition} and the \emph{orbifold regularity}
which determine the lower and upper parts of $f_g$.
Furthermore, the above structures of Gromov--Witten invariants are
expected to be present in some fashion for all Calabi--Yau 3-folds,
and hence can be considered as universal properties.
Based on the above B-model structural predictions and an additional
A-model conjecture called Castelnuovo bound, Klemm and his
collaborators \cite{HKQ09} derived a formula for $F_g$ for all
$g\leq 51$!

The progress for mathematicians to prove these conjectures has been
slow.
The genus zero conjecture was proved by Givental \cite{Gi98b} and
Lian--Liu--Yau \cite{LLY97}, and it was considered to be a major event
in mathematics during the 90s.
It took another ten years for Zinger to prove BCOV's conjecture in
genus one \cite{Zi08}.
It took yet another ten years for the authors' recent proof of the
genus two BCOV conjecture \cite{GJR17P}.
The geometric input to our work on genus two is a construction of
a certain \emph{reduced virtual cycle} on an appropriate log
compactification of the GLSM moduli space \cite{CJR18P} (see also
\cite{CJRS18P}).
Its localization formula expresses the Gromov--Witten invariants of
quintic 3-folds in terms of a graph sum of (rather mysterious)
\emph{effective invariants} and (rather well-understood) twisted
GW-invariants of $\PP^4$.
This formula works in arbitrary genus as long as one can compute the
effective invariants.
In particular, there is only one effective invariant for $g=2,3$, and
it can be computed from the $g=2,3$, degree zero GW-invariant.
So in principle, we can push our technique to $g=3$ to prove the
conjectural formula of Katz--Klemm--Vafa.
The main difficulty is directly related to the physical prediction of
Yamaguchi--Yau that $F_g$ should be a polynomial of five generators.
A direct generalization of our argument in \cite{GJR17P} only implies
that $F_g$ is a polynomial of nine generators.
The appearance of extra generators is similar to the simpler case of
the Gromov--Witten theory of an elliptic curve, where $F_g$ is a
quasi-modular form of $SL_2(\ZZ)$.
On the other hand, the corresponding twisted theory of $\cO(3)$ on
$\PP^2$ is a quasi-modular form for $\Gamma_0(3)$.
The ring of quasi-modular forms of $\Gamma_0(3)$ has more generators
than that of $SL_2(\ZZ)$!
In the case of the quintic, the presence of the extra four generators
increases the computational complexity significantly.
We could try to prove the genus three formula by brute force but the
proof would not be illuminating and is unlikely to generalize to
higher genus.
Our eventual goal is to reach to $g=51$ and beyond.
It is clear to us that we should first attack the set of physical
predictions for the structures of $F_g$.

To describe the main idea, recall that the general twisted theory
naturally depends on six equivariant parameters, five for the base
$\bP^4$ and one for the twist.
It is complicated to study the general twisted theory, and therefore
Zagier--Zinger \cite{ZaZi08} specialize the equivariant parameter to
$(\lambda, \zeta\lambda, \zeta^2\lambda, \zeta^3\lambda,
\zeta^4\lambda, 0)$, where $\zeta$ is a primitive fifth root of unity.
This theory is referred as \emph{formal quintic} (see \cite{LhPa18}).
They show that the twisted theory is generated by the five generators
predicted by physicists.
Unfortunately, in our work, the natural specialization of equivariant
parameters is $(0, 0,0,0,0, t)$.
The bulk of \cite{GJR17P} is to show that the corresponding twisted
theory for equivariant parameter $(0, 0,0,0,0, t)$ has four
\emph{extra generators}.
The main input of the current article is a comparison formula
expressing the reduced virtual cycle of the Log GLSM moduli space in
terms of its canonical (non-reduced) virtual cycle.
The advantage of non-reduced theory is that it admits a $(\CC^*)^6$
action with six equivariant parameters which we can specialize to
$(\lambda, \zeta\lambda, \zeta^2\lambda, \zeta^3\lambda,
\zeta^4\lambda, 0)$ as for the formal quintic.

\medskip

To state our precise results, we need to setup some notation.
Let $X_5$ be a quintic 3-fold.
Fix $(g, n)$ such that $2g - 2 + n > 0$, and fix ambient classes
$\gamma_1, \dotsc, \gamma_n \in H^*(X_5)$.\footnote{By a dimension
  consideration, insertions of primitive classes are not very
  interesting.}
Let
\begin{equation*}
  \Omega_{g, n} (\gamma_1, \dotsc, \gamma_n)
  := \sum_{\beta = 0}^\infty Q^{\beta} \rho_* \left(\prod_{i = 1}^n \ev_i^*(\gamma_i) \cap [\M_{g, n}(X_5, \beta)]^\vir\right),
\end{equation*}
where $\rho\colon \M_{g, n}(X_5, \beta) \to \M_{g, n}$ is the forgetful
map, be the generating series of Gromov--Witten classes defined by
$X_5$.
For $g \ge 2$, let
\begin{equation*}
  F_g(Q) := \int_{\M_g} \Omega_{g, 0}
\end{equation*}
be the corresponding numerical generating series.
In the cases $g = 0$, and $g = 1$ to avoid unstable terms, we need
markings (see below).

Let $\mathcal M_{g, n}(\PP^4, \cO(5), d)$ be the GLSM moduli space for
a quintic 3-fold, that is the moduli space of stable maps to $\PP^4$
with a $p$-field \cite{ChLi12}.
In \cite{CJR18P}, we construct a certain logarithmic compactification
$\M_{g, n}(\PP^4, \cO(5), d, \nu)$ (see Section~\ref{sec:loc} for more
details) where $\nu$ is a partition representing the contact order (or
relative condition).
Traditionally, we call the case $\nu=\emptyset$ the \emph{holomorphic
  theory}, and the case $\nu\neq \emptyset$ the \emph{meromorphic
  theory}.
The moduli space $\M_{g, n}(\PP^4, \cO(5), d, \nu)$ is a proper
Deligne--Mumford stack with a two-term perfect obstruction theory.
Hence, it admits a virtual fundamental cycle
$[\M_{g, n}(\PP^4, \cO(5), d, \nu)]^\vir$.
The cycle $[\M_{g, n}(\PP^4, \cO(5), d, \nu)]^\vir$ is equivariant for
both the $(\CC^*)^5$ action of $\PP^4$ and the $\CC^*$ action on the
$p$-field.
The holomorphic theory is very special in the sense, that in
\cite{CJR18P} we construct a \emph{reduced} virtual cycle
$[\M_{g, n}(\PP^4, \cO(5), d, \nu)]^\red$.
The main result of \cite{CJR18P} is that
$[\M_{g, n}(\PP^4, \cO(5), d, \nu)]^\red$ computes the GW-invariants
of quintic 3-folds.
The boundary of $\M_{g, n}(\PP^4, \cO(5), d, \nu)$ contains
\emph{rubber moduli spaces}
$\M_{g, n}^\sim(\PP^4, \cO(5), d, \mu, \nu)$ with their own virtual
fundamental cycles.
The key new higher genus information for quintic 3-folds are the
\emph{effective invariants}
\begin{equation*}
  c^\eff_{g, d}
  = \deg [\M_{g, n}^\sim(\PP^4, \cO(5), d, (2^{2g - 2 - 5d}), \emptyset)]^\red \in \QQ
\end{equation*}
for integral $g, d\geq 0$ such that $5d \le 2g - 2$.
In particular, the invariants $c^\eff_{g, d}$ are only defined when
$g \ge 1$, and when $d \le \frac{2g - 2}5$.
Furthermore, the $c^\eff_{g, d}$ are determined by the corresponding
low degree Gromov--Witten invariants.

The comparison formula of \cite{CJR18P} between the reduced and
non-reduced cycle yields a formula of the form (the precise result is
in Theorem~\ref{thm:compare})
\begin{equation*}
  \Omega_{g,n} 
  = \sum_{ \Gamma} c^\eff_{g_1,d_1} \cdots c^\eff_{g_k, d_k} \Omega_\Gamma,
\end{equation*}
where $\Gamma$ is a decorated bipartite graph, the $c^\eff_{g_i,d_i}$
are effective invariants associated to $\infty$ vertices and
$\Omega_{\Gamma}$ is the remaining contributions which can be
expressed in terms of non-reduced virtual cycles.
We can replace $c^\eff_{g, d}$ in the above formula by formal
parameters $c_{g, d}$ and denote the resulting generating series by
$\Omega^{\mathbf c}_{g,n}$.
We call $\Omega^{\mathbf c}_{g,n}$ \emph{the extended quintic family}.
We can then obtain the theory of $X_5$ by setting
$c_{g, d} = c^\eff_{g, d}$.
By setting $c_{g, d} = 0$, we obtain the theory of holomorphic GLSM.

\subsection{Graded finite generation and orbifold regularity}

Let us consider the $I$-function (or period integral of its mirror) of
the quintic 3-fold
\begin{equation*}
  I(q,z) = z\sum_{d\geq 0} q^d \frac{\prod^{5d}_{k=1}(5H+kz)}{\prod^d_{k=1}(H+kz)^5}
\end{equation*}
where $H$ is a formal variable satisfying $H^4=0$.
We separate $I(q, z)$ into components:
\begin{equation*}
  I(q,z)  = z I_0(q) \mathbf 1 + I_1(q) H  +z^{-1} I_2(q)H^2 + z^{-2} I_3(q) H^3
\end{equation*}

The genus zero mirror symmetry conjecture of quintic 3-folds can be
phrased as a relationship
\begin{equation*}
  J(Q)=\frac{I(q)}{I_0(q)}
\end{equation*}
between the $J$- and $I$-function, involving the mirror map
$Q = q e^{\tau_Q(q)}$, where
\begin{equation*}
  \tau_Q(q)  = \frac{I_1(q)}{I_0(q)}.
\end{equation*}
Now we introduce the following  degree $k$ ``basic'' generators
\begin{equation*}
  \X_k := \frac{d^k}{du^k} \left(\log \frac{I_0} L\right),\quad
  \Y_k:= \frac{d^k}{du^k} \left(\log \frac{I_0  I_{1,1}}{ L^2}\right),\quad
  \Z_k:=  \frac{d^k}{du^k} \left( \log (q^{\frac{1}{5}}L)\right),
\end{equation*}
where $I_{1,1} := 1+ q\frac{d}{dq} \tau_Q$,
$L := (1-5^5q)^{-\frac{1}{5}}$ and
$du := L \frac{dq}{q} = \frac{L}{I_{1, 1}} \frac{dQ}Q$.
We often abbreviate $\X = \X_1$, $\Y = \Y_1$ and $\Z = \Z_1$.

To simplify later formulae, we introduce following basis:
$$
 \phi_k = {I_{1,1}\cdots I_{k,k}}{  L^{-k}} H^k
$$
and the normalized Gromov--Witten classes
\begin{equation} \label{normalizedcohft}
  \bar \Omega^{\mathbf c}_{g,n}
  :=  {5^{g-1}(L/I_0)^{2g-2}}  \Omega^{\mathbf c}_{g,n}
\end{equation}
where $I_{2,2}=L^5 I_{0}^{-2} I_{1,1}^{-2}$, $I_{3,3}=I_{1,1}$,
$I_{4,4}=I_0$ \cite{ZaZi08}.

One can apply localization formula to the non-reduced virtual cycles
in the comparison formula to compute $\bar \Omega^{\mathbf c}_{g,n}$,
and specialize the equivariant parameter to the formal quintic
parameter
$(\lambda, \zeta \lambda, \zeta^2 \lambda, \zeta^3\lambda,
\zeta^4\lambda, 0)$.
The first main result of the article is a combinatorial structural
theorem that packages the localization contributions into a
Givental-style $R$-matrix action.

\begin{theorem}(Combinatorial Structural Theorem)
  \begin{equation*}
    \bar \Omega^{\mathbf c }_{g,n}
    = \lim_{\lambda \to 0} \sum_{\Gamma\in G_{g,n}^\infty} \frac{1}{|\Aut(\Gamma)|}  \Cont_\Gamma.
  \end{equation*}
  Here, $G_{g,n}^{\infty}$ is the set of genus $g$, $n$-marked stable
  graphs with the decorations:
  \begin{itemize}
  \item for each vertex $v$, we assign a label $0$ or $\infty$;
  \item for each flag $f=(v,e)$ or $f=(v,l)$ where $v$ labeled by $\infty$, we assign a degree $\delta_f \in \ZZ_{\geq 1}$.
  \end{itemize}
  For each $\Gamma \in G^\infty _{g,n}$, the contribution
  $\Cont_\Gamma$ is defined from the $R$- and $S$-matrices of the
  formal quintic theory in an explicit formula similar to that of
  Givental's $R$-matrix action (we refer to Section~\ref{NewRaction}
  for the precise formula).
\end{theorem}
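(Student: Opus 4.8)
The plan is to apply the virtual localization formula of Graber--Pandharipande to the non-reduced virtual cycles appearing on the right-hand side of the comparison formula (Theorem~\ref{thm:compare}), after specializing the $(\CC^*)^6$-equivariant parameters to the formal quintic values $(\lambda, \zeta\lambda, \zeta^2\lambda, \zeta^3\lambda, \zeta^4\lambda, 0)$, and then to reorganize the resulting sum of graph contributions into the shape of a Givental $R$-matrix action. First I would recall from \cite{CJR18P} the description of the $\CC^*$-fixed loci of $\M_{g,n}(\PP^4, \cO(5), d, \nu)$ and of the rubber spaces: the fixed loci are indexed by bipartite decorated graphs whose vertices are sorted into a ``$0$'' type (maps into $\PP^4$, contributing twisted GW classes of $\PP^4$ with the formal quintic specialization) and an ``$\infty$'' type (contributing the effective invariants $c^\eff_{g_i,d_i}$, replaced by the formal parameters $c_{g_i,d_i}$ to pass to the extended family $\Omega^{\mathbf c}_{g,n}$), with edges carrying the degree decorations $\delta_f \in \ZZ_{\ge 1}$ recording contact orders. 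The normalization \eqref{normalizedcohft} together with the basis $\phi_k$ and the identities $I_{2,2} = L^5 I_0^{-2} I_{1,1}^{-2}$, $I_{3,3} = I_{1,1}$, $I_{4,4} = I_0$ is designed precisely so that, after dividing by the Euler class of the virtual normal bundle and summing a geometric-series of edge contributions, the vertex and edge factors assemble into the entries of a power series $R(z)$ and a vector $S(z)$.

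The key steps, in order, are: (1) write out the localization contribution of a fixed locus as a product over vertices of (twisted, formal quintic) GW classes or effective invariants, times a product over edges and markings of the inverse Euler classes; (2) perform the change of variables from the equivariant/twisted classes of $\PP^4$ to the normalized classes $\bar\Omega^{\mathbf c}_{g,n}$, absorbing the factors $5^{g-1}(L/I_0)^{2g-2}$ and the $I_{k,k}$ into the vertex and edge weights — here the genus-dependent prefactor must be distributed across vertices and edges using the standard Euler-characteristic bookkeeping $2g-2 = \sum_v (2g_v - 2 + n_v) - 2\#E$; (3) sum over the degree decorations on each edge and on each half-edge at a ``$0$'' vertex adjacent to an ``$\infty$'' vertex, recognizing the resulting series as the matrix entries $R^j_i(z)$ and as the ``translation'' $S(z)$ built from the $I$-function components $I_j$ and the basic generators $\X, \Y, \Z$; (4) compare the assembled expression term-by-term with Givental's $R$-matrix action formula and identify the output with the statement, including the $\lim_{\lambda\to 0}$, which is needed because individual fixed-locus contributions have poles in $\lambda$ that cancel only in the sum over $G^\infty_{g,n}$.

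The main obstacle will be step (3): showing that the infinite sums over edge- and flag-degrees converge to closed-form power series that are exactly the $R$- and $S$-matrices of the formal quintic theory, i.e.\ matching our geometrically-produced series with the Givental-style data of \cite{ZaZi08}. This requires carefully computing the localization weights on the rubber spaces and on the nodes joining $0$- and $\infty$-vertices — in particular the tangent-line smoothing factors and the psi-class insertions — and then recognizing combinations of the $I_j(q)$, $L$, $I_{1,1}$ and their $u$-derivatives. A secondary subtlety is the $\lambda\to 0$ limit: one must argue that although the specialization to $(\lambda, \zeta\lambda, \dots, \zeta^4\lambda, 0)$ makes some denominators vanish, the full graph sum $\sum_{\Gamma\in G^\infty_{g,n}} \frac{1}{|\Aut(\Gamma)|}\Cont_\Gamma$ is regular at $\lambda = 0$, so that the limit exists and equals $\bar\Omega^{\mathbf c}_{g,n}$; this regularity is itself a consequence of the fact that the left-hand side (a GW class of the quintic) has no equivariant parameters at all.
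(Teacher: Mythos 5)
There is a genuine gap at the heart of your step (3): you expect the $R$-matrix to emerge by summing the localization weights over edge- and flag-degrees, but that is not where it can come from in this setup. The torus used here is only the $\CC^*$ scaling the log-section (the $(\CC^*)^5$ on $\PP^4$ is kept as equivariant parameters and then specialized to $(\lambda,\zeta\lambda,\dots,\zeta^4\lambda,0)$), so the fixed loci on the ``$0$'' side are entire moduli of stable maps to $\PP^4$, and the corresponding vertex factors are the \emph{full} formal-quintic twisted CohFT classes $\Omega^{\lambda}_{g(v),m}$ --- not topological-field-theory terms dressed by localization edge weights. No amount of resumming contact orders and smoothing factors at the nodes will turn these into $R$-matrix entries. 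In the paper the degree sums over unstable genus-zero vertices only produce the $S$-matrix, the $J$-function and the edge propagator $V_{\bt}$; the $R$-matrix is then introduced by a separate, essential input: the semisimplicity of the $\lambda$-twisted theory and the Givental--Teleman reconstruction theorem, applied at every label-$0$ vertex to expand $\Omega^{\lambda}_{g(v),m}$ into its own $R$-matrix graph sum, after which the two graph structures are merged into $G^{\infty}_{g,n}$. Your proposal never invokes Teleman's theorem, so the reorganization you describe cannot reach the stated formula.

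Two further ingredients of the actual argument are also absent and would be needed even after fixing the above. First, the extended quintic classes are defined with the dilaton-type shift $\bt = z(1-I_0(q)) + I_1(q)H$, and one must use the wall-crossing/dilaton identity $\Omega^{\lambda,\bt}_{g,n} = I_0^{-(2g-2+n)}\Omega^{\lambda}_{g,n}$, the identity rewriting $V_{\bt}$ in terms of $S^{-1}_{\tau}$, and the ancestor--descendent comparison to convert the descendent insertions $1/(-z-\psi)$ into ancestor ones; this is what makes the edge and leg factors match the $\sR$-matrix action exactly. Second, packaging the $\infty$-vertex contributions as $J\Omega^{\infty,\mathbf c}_{g,\mu}$ requires the genus-zero mirror theorem in the form $J_{\bt}(-5H/\mu)=-5H/\mu$ for $\mu\le 5$ (and its degree bound for general $\mu$), so that parts of size one can be absorbed and the sum over auxiliary partitions is finite; your sketch treats the $\infty$ vertices as bare effective invariants, whereas they also carry the rubber (middle-level) contributions with the factor $1/(-c_1(L_{\min}))$, whose invertibility after the specialization is itself a nontrivial input. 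Your remark on regularity at $\lambda=0$ is consistent with the paper, but it is the least of the difficulties.
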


We cannot directly apply the structural results of formal quintic
proven in \cite{LhPa18P}.
Nevertheless, after much computation, we can prove Yamaguchi--Yau's
prediction.

\begin{definition}
We introduce the ring of $6$-generators
$$
\widetilde \bR:={\QQ}[L^{-1},\X_1,\X_2, \X_3, \Y, Z]
$$
the degree defined as follows
$$
\deg L^{-1} = 1,\quad \deg \X_k =k,\quad \deg \Y =1,\quad \deg Z=0.
$$
We define Yamaguchi--Yau's finite generation ring\footnote{From the
  definitions one can check that it is a subring of $\widetilde \bR$.}
by
\begin{equation} \label{fivegenring}
  \bR := \QQ[\X_1, \X_2, \X_3, \Y] \otimes \spann_\QQ \{L^{-a} Z^b : b \le a \le 5b\}
\end{equation}
We denote by $\bR_k \subset \bR$ the linear subspace of degree $k$
elements.

This ring admits an additional structure: there exists a derivation
$\partial_u$ acting on this ring, such that the ring is closed under
$\partial_u$ and that $\partial_u$ increases the degree by $1$.
The explicit definition of $\partial_u$ is given in
Lemma~\ref{lem:du}.
\end{definition}
\begin{remark}
  By setting $Z=L^5$, we can regard
  $\bR\subset \QQ[L^{-1},\X_1,\X_2, \X_3, \Y, Z]$ as a subring of
  $\QQ[\X_1,\X_2, \X_3, \Y, L]$.
  Then we will lose the degree information.
\end{remark}

\begin{theorem}
  \label{thm:fgen}
The following ``graded finite generation properties" hold for the extended quintic family
\begin{description}
\item[(1)] $\bar \Omega^{\mathbf c}_{g,n}(\phi_{a_1}, \dotsc, \phi_{a_n}) \in  H^{2(3g-3+\sum_i a_i)}(\M_{g,n}, \QQ)\otimes \bR_{3g-3+\sum_i a_i}$\\
\item[(2)] $\bar{F}_{g,n} = \int_{\M_{g, n}} \bar \Omega_{g, n}^{\mathbf c}(\phi_1^{\otimes n}) \in \bR_{3g-3+n}$
\end{description}
\end{theorem}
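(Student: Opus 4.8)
The plan is to feed the Combinatorial Structural Theorem into an $R$-matrix computation and track which functions of $q$ appear. By that theorem, $\bar\Omega^{\mathbf c}_{g,n}(\phi_{a_1},\dots,\phi_{a_n})$ is a $\lambda\to 0$ limit of a finite graph sum, where each $\Cont_\Gamma$ is built, in Givental's style, from: (a) the formal-quintic $R$-matrix entries $R(z)$, (b) the formal-quintic $S$-matrix / genus-zero data, (c) the edge propagator obtained by gluing two legs of $R$, (d) vertex contributions which are integrals over $\M_{g_v,n_v}$ of $\psi$-classes against the topological field theory, and, at $\infty$-vertices, the formal parameters $c_{g,d}$ together with the degree decorations $\delta_f$. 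The first step is to isolate, from the explicit localization-derived formulae of Section~\ref{NewRaction}, the precise $q$-dependence of each building block after the specialization to the formal-quintic equivariant parameter and the normalization \eqref{normalizedcohft}. Using the results of Zagier--Zinger \cite{ZaZi08} on the structure of the formal-quintic $S$-matrix (the $I_{k,k}$ and the mirror map) and the known closed form of the formal-quintic $R$-matrix, I expect each entry to lie in $\widetilde\bR$ after multiplication by suitable powers of $L$ and $I_0$, with a controllable degree in the grading $\deg L^{-1}=1$, $\deg\X_k=k$, $\deg\Y=1$, $\deg Z=0$.

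Next I would set up the degree bookkeeping. Each cohomology class insertion $\phi_{a_i}$ carries the factor $I_{1,1}\cdots I_{a_i,a_i}L^{-a_i}H^{a_i}$, and the $H$-powers force the total complex-codimension to be $3g-3+\sum_i a_i$, matching the stated cohomological degree on $\M_{g,n}$. The claim to prove is that the accompanying coefficient lands in $\bR$ in the \emph{correct} graded piece $\bR_{3g-3+\sum_i a_i}$. For this I would argue inductively on the graph complexity: (i) a single-vertex $0$-labeled graph gives the ``leading'' formal-quintic contribution, whose finite generation in the five generators $\X_1,\X_2,\X_3,\Y$ together with the $L^{-a}Z^b$ span is exactly the Zagier--Zinger / formal-quintic statement and can be quoted; (ii) adding an edge multiplies by the propagator, which I must show is an element of $\bR$ of degree $2$ (it will be a specific polynomial in $\X,\Y$ plus $L^{-a}Z^b$-terms, coming from the regularized $R$-matrix gluing); (iii) $\infty$-vertices contribute the parameters $c_{g,d}$ (degree $0$, as formal constants) times explicit rational functions in $q$ arising from the rubber/degree-$\delta_f$ factors, which I must show again lie in the span $\spann_\QQ\{L^{-a}Z^b: b\le a\le 5b\}$. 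The derivation $\partial_u$ enters because differentiating $R$-matrix entries in $u$ (equivalently $q\,d/dq$ up to the factor $L/I_{1,1}$) is how one generates $\X_2,\X_3$ from $\X_1$; I would verify that each such differentiation stays inside $\bR$ and raises the grading by exactly $1$, which is the content anticipated for Lemma~\ref{lem:du}. Part~(2) then follows from Part~(1) with $a_i=1$ and $\sum a_i = n$: after integrating over $\M_{g,n}$ (a pure number), the generator-valued coefficient is unchanged and lies in $\bR_{3g-3+n}$.

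The main obstacle is controlling the $L^{-a}Z^b$-part: a priori the localization formula produces arbitrary powers of $L^{-1}$ and of the conifold factor, and one must prove the sharp constraint $b\le a\le 5b$ survives all the graph operations (multiplication, $\partial_u$, gluing, and the $\lambda\to 0$ limit). The most delicate points are (i) showing the propagator and the $\delta_f$-indexed edge/leg factors do not violate the upper bound $a\le 5b$, which I expect to reduce to an identity for the formal-quintic $R$-matrix involving $L^5$ versus $Z$ and the vanishing $H^4=0$; and (ii) showing the $\lambda\to 0$ limit is finite term-by-term after the normalization — i.e.\ that the apparent poles in $\lambda$ from the $(\CC^*)^5$-localization cancel within each combinatorial type. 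A subsidiary obstacle is that we \emph{cannot} directly invoke the formal-quintic structural results of \cite{LhPa18P} as black boxes, since our graph sum mixes $0$- and $\infty$-vertices with the extra parameters $c_{g,d}$; so the five-generator closure must be re-derived in a form robust enough to accommodate the $\infty$-vertex insertions, which is where "much computation" is unavoidable.
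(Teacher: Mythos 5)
Your overall architecture coincides with the paper's: expand via the Combinatorial Structural Theorem, prove finite generation with degree control for the $R$- and $S$-matrix entries, do vertex/edge/leg bookkeeping through the graph sum, use the compatibility of $\partial_u$ (Lemma~\ref{lem:du}), and deduce part (2) from part (1). The genuine gap is at the foundation of step (i): you propose to quote ``the known closed form of the formal-quintic $R$-matrix'' and Zagier--Zinger to conclude that each entry lies in $\widetilde\bR$ with controlled degree. No such quotable statement exists; producing it is the paper's main technical work. After deriving the quantum differential equations for $\bar R$ and $\bar S_\delta$ (Lemma~\ref{cor:QDEforR}), the induction needs as base case the regularity, the mod-$5$ vanishing, and the sharp degree bound for the first row $[z^k]\bar R_0^{\,j}(z)$; this is Corollary~\ref{lem:R0}, which rests on Theorem~\ref{thm:R0}, proved in Section~\ref{sec:orbifoldregularity} by a stationary-phase analysis of the oscillatory integrals together with a Picard--Fuchs recursion, and which \emph{implies} (rather than follows from) the Zagier--Zinger conjecture. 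Likewise the specialized $S$-matrix entries at $z=H_\delta$ require Lemma~\ref{lem:mirror} and the $\tilde I$-function identity \eqref{S4}, not just genus-zero mirror symmetry in the abstract.

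Moreover, the point you yourself flag as ``the main obstacle'' --- the constraint $b\le a\le 5b$ --- is exactly what your outline leaves unresolved, and it is where the theorem's content lies. In the paper it follows from two quantitative inputs absent from your plan: the degree bound $\deg_{L^5}\bigl(L_\alpha^k\, r_{k\alpha}\bigr)=k$ of Theorem~\ref{thm:R0}, and the bound on the $q$-degree of the $\infty$-vertex contribution $J\Omega^{\infty,\mathbf c}_{g,\mu}$ (a polynomial in $q$ of degree $\lfloor(2g-2+\ell(\mu)-|\mu|)/5\rfloor$), combined with the half-edge bookkeeping of Section~\ref{proofofFG} in which the negative powers $L^{-\delta}$ from half-edges and legs at an $\infty$-vertex are absorbed by the factor $L^{2g(v)-2+m}$ from that vertex, giving $L^{\overline{a_v}}\QQ[Z]_{\le d_v}$ and hence landing in $\overline\bR_{3g(v)-3}$ with the regularity in $L$ checked globally. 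Your appeal to ``an identity for the formal-quintic $R$-matrix involving $L^5$ versus $Z$ and $H^4=0$'' gestures at this but supplies no argument. Finally, note that the paper avoids your worry about pole cancellation in the $\lambda\to 0$ limit by proving the stronger equivariant statement (Theorem~\ref{thm:fgen2}), in which only nonnegative powers of $\lambda^5$ occur, so the limit is immediate.
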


Our graded finite generation theorem is much stronger than
Yamaguchi--Yau's original non-graded finite generation.
For example, a direct consequence is another key structural
prediction:
\begin{corollary}(Orbifold regularity)
  Suppose that $f_g=L^{3g-3} \bar F_g|_{\X=\Y_k=0}$.
  Then we can write
  \begin{equation*}
    f_g=  \sum^{3g-3}_{i = 0} a_{i,g} Z^i \quad  \text{ with } \quad Z=L^5 ,
  \end{equation*}
  where  $a_{i, g} =0$ for $i\leq \lceil \frac{3g-3}{5} \rceil$.
\end{corollary}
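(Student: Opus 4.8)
The plan is to deduce the Orbifold Regularity corollary directly from the graded finite generation statement of Theorem~\ref{thm:fgen}(2) together with the explicit description of the degree grading on $\bR$. First I would start from $\bar F_g \in \bR_{3g-3}$, which is the $n=0$ case of part (2). By the defining formula \eqref{fivegenring}, every element of $\bR_{3g-3}$ is a $\QQ$-linear combination of monomials of the form $\X_1^{i_1}\X_2^{i_2}\X_3^{i_3}\Y^{j}\cdot L^{-a}Z^{b}$ with $i_1+2i_2+3i_3+j+a = 3g-3$ and $b \le a \le 5b$. Setting $\X = \Y_k = 0$ (equivalently $\X_1=\X_2=\X_3=\Y=0$; note the $\Y_k$ for $k\ge 2$ lie in $\bR$ but are themselves expressible via $\X,\Y$ and $\partial_u$, so they also vanish) kills every monomial with any positive $i_\ell$ or $j$, leaving only the terms with $i_1=i_2=i_3=j=0$, hence $a = 3g-3$ exactly. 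Thus $\bar F_g|_{\X=\Y_k=0}$ is a $\QQ$-combination of $L^{-(3g-3)}Z^b$ with $b \le 3g-3 \le 5b$, i.e. $\lceil (3g-3)/5 \rceil \le b \le 3g-3$.

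Next I would multiply by $L^{3g-3}$ to form $f_g = L^{3g-3}\bar F_g|_{\X=\Y_k=0}$, which is then a $\QQ$-combination of $Z^b$ with $\lceil (3g-3)/5\rceil \le b \le 3g-3$. Writing $f_g = \sum_{i=0}^{3g-3} a_{i,g} Z^i$ with $Z = L^5$, this says precisely that $a_{i,g} = 0$ whenever $i < \lceil (3g-3)/5 \rceil$. The corollary as stated asks for $a_{i,g}=0$ for $i \le \lceil (3g-3)/5\rceil$, i.e.\ including the boundary index $i = \lceil (3g-3)/5 \rceil$; I would need to check whether this endpoint genuinely vanishes or whether the intended statement is the strict inequality. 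When $5 \nmid 3g-3$ the constraint $b \le 3g-3 \le 5b$ forces $b > (3g-3)/5$, so $b \ge \lceil (3g-3)/5\rceil$ and the weakest surviving power is exactly $i = \lceil (3g-3)/5\rceil$; so to get vanishing \emph{at} that index one needs the additional input that the extremal monomial $L^{-(3g-3)}Z^{\lceil(3g-3)/5\rceil}$ does not actually occur, or one interprets $\le$ as the non-strict bound coming from the $5 \mid 3g-3$ case. I would resolve this by invoking the more refined bound in \eqref{fivegenring} carefully — the span condition is $b \le a \le 5b$, and for $a = 3g-3$ fixed with $3g-3 \equiv r \pmod 5$, $r \neq 0$, the minimal $b$ is $\lceil a/5 \rceil$, which should be matched against the precise claim.

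The main obstacle, then, is not the algebra of the grading — that is essentially bookkeeping once Theorem~\ref{thm:fgen} is in hand — but rather the interplay between the $\Y_k$-specialization and the generating set. Concretely, I must confirm that setting $\X = \Y_k = 0$ for all $k \ge 1$ really does annihilate the full $\QQ[\X_1,\X_2,\X_3,\Y]$ factor of $\bR$ and not merely the degree-one generators; this is immediate from the fact that the ring is $\QQ[\X_1,\X_2,\X_3,\Y]\otimes(\text{span of }L^{-a}Z^b)$, so the quotient by the ideal generated by $\X_1,\X_2,\X_3,\Y$ is exactly $\spann_\QQ\{L^{-a}Z^b : b\le a\le 5b\}$. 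The only genuinely delicate point is the precise endpoint in the inequality $a_{i,g}=0$ for $i \le \lceil\frac{3g-3}{5}\rceil$ versus $i < \lceil\frac{3g-3}{5}\rceil$, which I expect to follow from reading the span condition in \eqref{fivegenring} at the extremal value $a = 3g-3$ and noting that $3g-3 \le 5b$ with $b$ an integer gives $b \ge \lceil (3g-3)/5 \rceil$ with equality possible only in restricted residue classes; a short case analysis on $g \bmod 5$ completes the argument.
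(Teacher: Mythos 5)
Your proposal is essentially the paper's own argument: the corollary is deduced exactly as you do it, from $\bar F_g\in\bR_{3g-3}$ (Theorem~\ref{thm:fgen} with $n=0$) together with the constraint $b\le a\le 5b$ in \eqref{fivegenring}, forcing $a=3g-3$ once the polynomial generators are killed; the only proof the paper offers (end of Section~\ref{sec:fgen}) is the one-liner that the statement ``follows from the degree bounds in the definition of $\bR$''. Your reading of the specialization as taking the degree-zero part in the factor $\QQ[\X_1,\X_2,\X_3,\Y]$ is the intended one (your parenthetical that the $\Y_k$ themselves vanish under it is not quite accurate, since e.g.\ $\Y_2$ has a pure $L,Z$ component, but this is immaterial once the specialization is interpreted as you otherwise do). The endpoint worry you raise is genuine, and no case analysis on $g\bmod 5$ will remove it: the grading argument yields $a_{i,g}=0$ only for $5i<3g-3$, i.e.\ $i<\lceil\frac{3g-3}{5}\rceil$, and the boundary coefficient need not vanish. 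Indeed, for $g=2$ the paper's explicit $\bar F_{2,0}$ gives, after setting $\X_1=\X_2=\X_3=\Y=0$ and multiplying by $L^3$, the $Z^1$-coefficient $\frac{547}{150}\neq 0$ (from the $\frac{547}{72}\Z_3$ term), while $\lceil\frac 35\rceil=1$; so the ``$\le$'' in the statement is an off-by-one for ``$<$'', which is also what the counting in the remark following the corollary requires ($\lceil\frac{3g-3}{5}\rceil$ conditions from orbifold regularity, one more from the degree-zero invariants). With the strict inequality, your argument is complete and coincides with the paper's.
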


\begin{remark}
  $f_g$ represents the initial condition of the holomorphic anomaly
  equations.
  By using the boundary behavior of the large complex structure limit
  point, the conifold singularity and the orbifold regularity, it is
  expected to be a polynomial of $Z$ of degree $3g-3$. 
  At this moment, it is beyond our ability to calculate $f_g$ directly
  even at $g=3$.
  There are two additional B-model structural predictions for
  $f_g(Z)$.
  The \emph{orbifold regularity} claims the vanishing of the lower
  part of $f_g$.
  The other is the \emph{conifold gap condition}, which determines the
  upper $2g-2$ coefficients of of $f_g$.
  As we see, for each $g$ there are $3g-2$ of initial conditions.
  By using the orbifold regularity and the degree zero Gromov-Witten
  invariants, the number of initial conditions is reduced by
  $\lceil \frac{3g-3}{5} \rceil+1$.
  By using the conifold gap condition, it is further reduced to
  $\lfloor\frac{2g-2}{5}\rfloor$ many, the same as the number of
  effective invariants.
  It is natural to speculate that our approach also implies the
  conifold gap condition.
  We leave this for future research.
\end{remark}

\begin{remark}
  There is a different approach to the higher genus theory of quintic
  3-folds by Chang--Guo--Li--Li--Liu.
  Recently, they posted a series of articles \cite{CGLL18P, CGL18Pa,
    CGL18Pb}.
  Among other things, they proved the original Yamaguchi--Yau
  (non-graded) prediction independently.
\end{remark}

\begin{remark}
  It is nontrivial to show \cite{Mo11} that the five generators are
  algebraic independent and hence the expression of $\bar{F}_g$ is
  unique.
  This is important for the statement of the holomorphic anomaly
  equation for which we need to consider derivatives with respect to
  generators.
  On the other hand, our proof of Theorem~\ref{thm:fgen} gives a
  canonical expression of $\bar \Omega^{\mathbf c}_{g,n}$ and
  $\bar{F}_g$ in terms of the generators.
  Hence, we do not need the algebraic independence of five generators.
\end{remark}
    
\subsection{Holomorphic anomaly equations}

We now consider the holomorphic anomaly equations (HAE).
It is clear that the choice \eqref{fivegenring} of the generators of
$\bR$ is not canonical.
Let us make a choice.
\begin{definition}
  Suppose $S$ be a finite set that generates $\bR$.
  We pick a subset $S'$ of $S$ such that
  \begin{equation*}
    S'\cap \QQ[L]=\emptyset.
  \end{equation*}
  Let $\tR = \QQ[S'] \subset \bR$ be the subring generated by
  $S'$.
  We call $\tR$ a choice of \emph{non-holomorphic subring}, and we call the
  elements in the ring of the derivations of $\tR$
  \begin{equation*}
    \Der_{\tR} \subset \Der_{\bR}
  \end{equation*}
  the \emph{vector fields in the non-holomorphic direction}.
\end{definition}

\begin{remark}
  A natural choice of the non-holomorphic subring is
  \begin{equation} \label{tR}
    \tR:= \QQ[\X_1,\X_2,\X_3,\Y].
  \end{equation}
\end{remark}

Motivated by \cite{YaYa04}, we can pick another set of generators
\begin{equation}\label{generatorsSp}
S':=\{ \U := \X,\quad \V := \X+\Y, \quad \V_2:= \partial_u \U +\U^2 -\U\V,\quad \V_3:=(\partial_u+\V)\, \V_2 \}
\end{equation}
of $\tR$ defined in \eqref{tR},  such that $\QQ[S'] = \tR$.  Here $\partial_u:=\frac{d}{du}$.
\begin{theorem}
  \label{HAE2}
  We introduce the derivations $\partial_1, \partial_0 \in \Der_{\tR}$
  to be
  \begin{equation} \label{operatorpartial01}
    \partial_1 = \partial_\U, \qquad
    \partial_0 = \partial_{\V_1}-\U \, \partial_{\V_2} -\U^2 \,  \partial_{\V_3} .
  \end{equation}
  Then we have the following holomorphic anomaly equations conjectured
  in \cite{YaYa04}
  \begin{align*}
    -\partial_0 \bar F_g &= \frac{1}{2} \bar F_{g-1,2}  +\frac{1}{2} \sum_{g_1+g_2=g} \bar F_{g_1,1} \bar F_{g_2,1},  \\
    -\partial_1 \bar F_g &= 0.
  \end{align*}
  Note that we could consider $\bar F_{g,n}$ as polynomials of either
  the generators $\{\X_1,\X_2,\X_3,\Y,L\}$ or the generators
  $\{\V_1,\V_2,\V_3,\U,L\}$.
\end{theorem}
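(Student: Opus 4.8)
The plan is to derive the two equations by differentiating the Combinatorial Structural Theorem with respect to the "non-holomorphic" generators and tracking how that differentiation interacts with the $R$-matrix action. First I would reformulate the problem in the most convenient coordinates: by Theorem~\ref{thm:fgen} we know $\bar F_{g,n}\in\bR_{3g-3+n}$, and via the change of generators \eqref{generatorsSp} we may regard $\bar F_g$ as a polynomial in $\{\U,\V_1,\V_2,\V_3,L\}$ (using $\Z=L^5$). Since $\partial_1=\partial_\U$ and $\partial_0$ is the combination in \eqref{operatorpartial01}, both lie in $\Der_{\tR}$, i.e. they are vector fields in the non-holomorphic direction and annihilate $L$. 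The key observation is that the generators $\X_k,\Y_k$ enter $\Cont_\Gamma$ only through the $R$- and $S$-matrices of the formal quintic, and these matrices satisfy first-order ODEs in $u$ whose coefficients are expressible in the basic generators. So I would first establish precise differentiation rules: formulas for $\partial_0$ and $\partial_1$ applied to each entry $R^{(i)}_{j}(z)$, $S^{(i)}_j(z)$ of the formal-quintic matrices, analogous to the genus-two computation in \cite{GJR17P}. Concretely, I expect $\partial_1$ to act trivially on everything that survives after taking $\lambda\to 0$ (this is the content of $-\partial_1\bar F_g=0$), while $\partial_0$ produces a controlled "insertion" term.

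The heart of the argument is the second equation. I would differentiate the expression $\bar F_g=\lim_{\lambda\to 0}\sum_{\Gamma}\frac1{|\Aut\Gamma|}\Cont_\Gamma$ (with all $n=0$, $\phi$-insertions trivial) term by term using the Leibniz rule over the building blocks of $\Cont_\Gamma$: vertex contributions (formal-quintic correlators at $0$ and $\infty$), edge contributions (built from $R$), and the propagators. The differentiation rule for $\partial_0$ acting on an edge factor should, after summing over how the derivative can "land", reorganize into: (a) terms that cut an edge, degenerating $\Gamma$ into a graph with one fewer loop or a one-edge-separating configuration, and (b) terms that smooth a vertex. Using the compatibility of the $R$-matrix action with the splitting/genus-reduction axioms of a CohFT (Givental--Teleman), this reorganization is exactly the graph-sum incarnation of the right-hand side $\tfrac12\bar F_{g-1,2}+\tfrac12\sum_{g_1+g_2=g}\bar F_{g_1,1}\bar F_{g_2,1}$. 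The coefficients $1/2$ come from the automorphism of the new edge / the symmetry of the two new vertices, as usual. I would organize this as: (i) a lemma computing $\partial_0 R$ and $\partial_0 S$ in closed form; (ii) a lemma showing the derivative of a single edge factor equals a "broken edge" term plus a total-$u$-derivative correction; (iii) assembling the graph sum and matching with the Log GLSM comparison formula's graph combinatorics; (iv) taking $\lambda\to0$ and checking the limit commutes with differentiation (finiteness from Theorem~\ref{thm:fgen}).

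The main obstacle I anticipate is step (i)--(ii): obtaining clean, usable formulas for $\partial_0$ and $\partial_1$ on the formal-quintic $R$-matrix. The formal quintic is only defined after specializing equivariant parameters to $(\lambda,\zeta\lambda,\dots,\zeta^4\lambda,0)$, and its $R$-matrix is characterized by a quantum differential equation whose $\lambda\to0$ behavior is delicate; the generators $\X_k,\Y_k,\Z_k$ were built precisely to make this structure polynomial, but expressing $\partial_0$ of each matrix entry in terms of $\{\U,\V_1,\V_2,\V_3\}$ will require carefully exploiting the relations among the $I_{k,k}$ (e.g. $I_{2,2}=L^5I_0^{-2}I_{1,1}^{-2}$, $I_{3,3}=I_{1,1}$, $I_{4,4}=I_0$) together with the definition of $\partial_u$ from Lemma~\ref{lem:du}. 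A secondary difficulty is bookkeeping: ensuring that the "broken edge" and "smoothed vertex" terms are produced with exactly the right combinatorial weights, including contributions where the derivative hits a self-loop or a separating edge, so that the graph sum collapses to precisely $\bar F_{g-1,2}$ and the product $\sum\bar F_{g_1,1}\bar F_{g_2,1}$ with no stray terms. I expect the $\partial_1$ equation to fall out more easily, essentially because $\partial_1=\partial_\U$ corresponds to a direction in which the relevant $R$-matrix data is independent after the $\lambda\to0$ limit, so that each $\Cont_\Gamma$ is individually annihilated.
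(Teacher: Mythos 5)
Your strategy for the $\partial_0$ equation is essentially the paper's: differentiate the graph sum of the Combinatorial Structural Theorem, observe that (since $n=0$) the non-holomorphic generators enter only through the edge factors, obtain a closed-form differentiation rule for the $R$- and $S$-matrices from the QDE, and reinterpret each broken edge as a non-separating degeneration (giving $\tfrac12\bar F_{g-1,2}$) or a separating one (giving $\tfrac12\sum\bar F_{g_1,1}\bar F_{g_2,1}$). Two structural points you gloss over are, however, exactly what makes this work. First, there are \emph{no} ``vertex-smoothing'' terms: the label-$0$ vertex insertions $T(z)=z(\mathbf 1-R^{-1}_\tau(z)\mathbf 1)$ involve only the first column of $\bar R$, which by Corollary~\ref{lem:R0} lies in $\QQ[L]$, and the label-$\infty$ vertex terms are polynomials in $q$; hence every derivation in $\Der_{\tR}$ kills all vertex and leg factors, and the Leibniz rule lands only on edges. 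Second, the edge derivative is \emph{exactly} a broken-edge term, with no ``total-$u$-derivative correction'': Lemma~\ref{lem:PDEsforR} produces, for each $\D\in\Der_{\tR}$, a strictly lower-triangular, skew-adjoint $\A_\D(z)$ with $\D\bar R^{-1}(z)=\bar R^{-1}(z)\A_\D(z)$ and $\D\bar S_\delta=\bar S_\delta\A_\D(-H_\delta)$, and skew-adjointness is what makes the bivector $\B_\D$ of Corollary~\ref{cor:PDEforV} polynomial in $z_1,z_2$, so that $\D\mathscr V=-(\sR^{-1}\otimes\sR^{-1})\B_\D$ cleanly inserts $5\B_\D$ at the two new half-edges (Theorem~\ref{thm:HAE}).

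The genuine gap is your treatment of $-\partial_1\bar F_g=0$. You expect $\partial_1=\partial_\U$ to annihilate each $\Cont_\Gamma$ individually because ``the relevant $R$-matrix data is independent'' of $\U$ after $\lambda\to0$; this is false. The $R$- and $S$-matrix entries do depend on $\U=\X$ (already $\bar R_1$ does), and the explicit computation gives $\B_{\partial_1}=\tfrac15\,\bar\phi_0\otimes\bar\phi_2+\tfrac15\,\bar\phi_2\otimes\bar\phi_0\neq 0$ (see \eqref{eq:YY1}), so differentiating by $\partial_1$ produces broken-edge terms of exactly the same shape as for $\partial_0$, now with insertions $\phi_0\otimes\phi_2+\phi_2\otimes\phi_0$. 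The vanishing of $-\partial_1\bar F_g$ is therefore not term-by-term at the graph level; it requires the additional geometric input that $\bar\Omega^{\mathbf c}_{g,1}(\phi_2)$ and $\bar\Omega^{\mathbf c}_{g,2}(\phi_0,\phi_2)$ vanish, which follows from pullback and dimension considerations. Without that step your argument proves the cycle-level identity of Theorem~\ref{thm:HAE} for $\partial_1$ but not the asserted equation $-\partial_1\bar F_g=0$.
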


\begin{remark}
  The above holomorphic anomaly equation should be viewed as a higher
  genus generalization of the Picard--Fuchs equation.
  For example, for $g \ge 2$, it determines (see
  Remark~\ref{rmk:YYHAE}) $\bar F_g$ inductively up to a holomorphic
  function in $L$, which is referred to as the \emph{holomorphic
    anomaly} in the physics literature.
  Our Graded Finite Generation Theorem implies that the holomorphic
  anomaly is a polynomial of degree $3g-3$, whose lower part vanishes
  (orbifold regularity).
\end{remark}

\subsection{Plan of the paper}

The paper is organized as follows.
In Section~\ref{sec:loc}, we introduce the logarithmic
compactification of the GLSM moduli space and the comparison formula
between its reduced and nonreduced virtual cycle.
In Section~\ref{sec:rewrite}, we prove a geometric comparison between
the formal quintic and the true quintic theory.
The combinatorial structural theorem is proved in
Section~\ref{sec:Rmatrixaction}.
Graded finite generation and orbifold regularity are proven in
Section~\ref{sec:fgen}.
The holomorphic anomaly equation is proven in Section~\ref{sec:HAE}.
In the final section~\ref{sec:orbifoldregularity}, we prove a
technical result of the formal quintic theory which we used in the
proof of graded finite generation.
It implies a conjecture of Zagier--Zinger \cite{ZaZi08}.

\subsection{Acknowledgments}

Our program (including the results of the current paper) to the higher
genus theory was first announced in a workshop in Zurich in January
2018.
A special thanks to R.~Pandharipande for the opportunity to present
our program and his constant support.
We thank D.~Maulik, A.~Pixton, G.~Oberdieck and X.~Wang for
interesting discussions.
The third author would like to thank S.~Donaldson, H.~Ogouri and
C.~Vafa for valuable discussions regarding the program.

The first author was partially supported by supported by NSFC grants
11431001 and 11501013.
The second author was partially supported by an AMS/Simons Travel
Grant.
The third author was partially supported by NSF grant DMS 1405245 and
NSF FRG grant DMS 1159265.
Part of the work was done during last two authors' visit to the MSRI,
which was supported by NSF grant DMS-1440140.

\section{Logarithmic GLSM Moduli Space and its Virtual Cycles}
\label{sec:loc}

In this Section, we collect some results that will appear in
\cite{CJR18P, CJR19P}.

\subsection{Moduli space}

Let $X$ be a smooth projective variety with a line bundle $L$
admitting a section cutting out a smooth hypersurface $Y$.
We are mainly interested in the case that $X = \PP^4$ and
$L = \cO(5)$.

Given a map $f\colon C \to X$ from a log-smooth curve $C$, let $\PP$
be the projective bundle
$\PP(\omega_{\log} \otimes f^* L^\vee \oplus \cO)$ equipped with the
logarithmic structure pulled back from $C$ and the divisorial log
structure from the infinity section.
Let $\M_{g, n}(X, L, \beta, \nu)$ be the moduli space of maps
$f\colon C \to X$ of degree $\beta$ together with an
aligned\footnote{This imposes that the partial order on the
  degeneracies of the irreducible components is actually a total
  order. This is a variant of the logarithmic moduli space that
  behaves well for the localization.} log-section
$\eta\colon C \to \PP$ with contact order $\nu$ along the infinity
section, and with all markings mapping to the zero section, and such
that $\omega_C^{\log} \otimes f^* \cO(1) \otimes \eta^* \cO(0_\PP)$,
where $0_\PP$ is the zero section of $\PP$, is ample for all
sufficiently large $k$.
For the computation of the Gromov--Witten theory of $Y$ it suffices to
consider the case when $\nu$ is the empty partition but in the proof
of the Combinatorial Structural Theorem, we will also need to consider
more general $\nu$.

\begin{theorem}[\cite{CJR18P}]
  The space $\M_{g,n}(X, L, \beta, \nu)$ is a proper Deligne--Mumford
  stack.
\end{theorem}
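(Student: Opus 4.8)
The plan is to establish properness via the valuative criterion while deducing the other Deligne--Mumford properties (finite automorphisms, boundedness, separatedness) from the corresponding facts about the ordinary moduli of stable maps, of which $\M_{g,n}(X,L,\beta,\nu)$ is (after forgetting the log-section) a relative compactification. First I would observe that the forgetful morphism $\M_{g,n}(X,L,\beta,\nu) \to \M_{g,n}(X,\beta)$ sending $(f,\eta)$ to $f$ is of finite type, since the fiber over a fixed $f$ parametrizes aligned log-sections $\eta\colon C \to \PP(\omega_{\log}\otimes f^*L^\vee \oplus \cO)$ of bounded contact order $\nu$, and such sections — being log-maps of a fixed curve to a fixed $\PP^1$-bundle with prescribed tangency along the infinity divisor — form a bounded family by the boundedness results for stable log maps (Abramovich--Chen, Gross--Siebert). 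Boundedness of $\M_{g,n}(X,\beta)$ itself is classical, so the total space is of finite type, in particular quasi-compact.

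Next I would check that the diagonal is representable, quasi-compact, and separated, which gives the algebraic-stack structure and separatedness of the moduli space. An isomorphism of objects $(C,f,\eta) \cong (C',f',\eta')$ is an isomorphism of the underlying log curves compatible with $f$ and with $\eta$; since $\PP$ and its log structure are canonically built from $C$ and $f$, the automorphism group of $(C,f,\eta)$ is a closed subgroup of the automorphism group of $(C,f)$ as a stable map — in particular finite and reduced — so the objects have no infinitesimal automorphisms and the stack is Deligne--Mumford. Separatedness then follows from separatedness of $\M_{g,n}(X,\beta)$ together with the fact that the log-section, once it extends over the generic point of a DVR, extends uniquely: this is the uniqueness half of the valuative criterion for the relative log moduli problem.

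The heart of the argument is the existence half of the valuative criterion for properness: given a DVR $R$ with fraction field $K$ and an $R$-point of $\M_{g,n}(X,\beta)$ together with an aligned log-section over $\operatorname{Spec} K$ with contact order $\nu$, one must fill in a log-section over $\operatorname{Spec} R$ (after possibly a finite base change) lying in the moduli problem. The strategy here is the standard one for logarithmic/relative stable maps: after a base change, stably reduce the family of curves and use the properness of the space of stable log maps to $\PP$ (relative to the infinity section) to extend $\eta$; the alignment condition is then achieved by a further controlled modification of the log structure (subdivision of the tropicalization / base of the log curve) which does not change the $K$-fiber, exactly as in \cite{CJR18P, CJR19P}. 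Finally one checks that after this extension the ampleness/stability condition involving $\omega_C^{\log}\otimes f^*\cO(1)\otimes \eta^*\cO(0_\PP)$ can be arranged by contracting unstable components, so the limit lies in $\M_{g,n}(X,L,\beta,\nu)$, and uniqueness of the limit follows as above.

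The main obstacle I expect is the interplay between the alignment condition and properness: alignment forces a total order on the degeneracies of the components, and when a family degenerates this total order may have to be refined, which corresponds to a non-trivial log modification of the base. One must show that among all such refinements there is a canonical (or at least a unique stable) one producing a point of the aligned moduli space, so that the limit is well-defined and unique — this is precisely the technical content isolated in \cite{CJR18P} and the place where the naive valuative criterion for log maps must be upgraded. Everything else (boundedness, finiteness of automorphisms, the easy direction of separatedness) reduces cleanly to known statements about stable maps and stable log maps.
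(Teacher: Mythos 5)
The paper itself gives no proof of this statement: it is imported verbatim from \cite{CJR18P}, so there is nothing in the present text to compare your argument against, and the real content lives in that reference. Judged on its own terms, your outline has the right general shape (boundedness, finite automorphisms, valuative criterion, with the alignment handled by a log modification), but two of its reductions to known facts about stable maps do not go through as stated. First, stability in this moduli problem is imposed on the pair $(f,\eta)$ through ampleness of $\omega_C^{\log}\otimes f^*\cO(1)\otimes\eta^*\cO(0_\PP)$, not on $f$ alone; the underlying map may have components on which it is constant with too few special points, so $(C,f)$ is in general not a stable map. Consequently $\Aut(C,f)$ can be positive-dimensional, and your claim that $\Aut(C,f,\eta)$ is finite because it sits inside the automorphism group of a stable map fails exactly on those components --- there one must use the section (the degree of $\eta^*\cO(0_\PP)$) to kill the infinitesimal automorphisms. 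For the same reason the forgetful morphism only maps to $\M_{g,n+|\nu|}(X,\beta)$ after stabilization, and its fibers are not simply spaces of log sections of a fixed $\PP^1$-bundle over a fixed curve, so boundedness does not reduce cleanly to Abramovich--Chen/Gross--Siebert.

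Second, in the valuative criterion you start from an $R$-point of $\M_{g,n}(X,\beta)$ and try to extend only the section; but the limit of the pair typically forces additional degenerations of the curve, so one cannot fix the underlying family first, and the target $\PP=\PP(\omega_{\log}\otimes f^*L^\vee\oplus\cO)$ is relative --- it is rebuilt from whatever limit curve and map one takes --- so properness of stable log maps to a fixed log target cannot be invoked directly. Moreover, because of the $\CC^*$-scaling of the $p$-field direction, the genuinely delicate point in \cite{CJR18P} is ruling out that the section escapes to the infinity divisor or that its degree along $\cO(0_\PP)$ is unbounded in a degenerating family; this is where the actual work lies, whereas the alignment refinement you single out is comparatively routine (the aligned space is a proper log modification of the unaligned one). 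So the proposal identifies a real issue but misses the harder one, and the automorphism/boundedness steps need the stability condition for the pair rather than for the map.
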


There are evaluation maps
$\ev_i\colon \M_{g,n}(X, L, \beta, \nu) \to X$, and there is a
forgetful map
$p\colon \M_{g,n}(X, L, \beta, \nu) \to \M_{g,n + |\nu|}(X, \beta)$.

\subsection{Virtual cycles}

The moduli space $\M_{g,n}(X, L, \beta, \nu)$ has a canonical perfect
obstruction theory defining a virtual cycle
$[\M_{g,n}(X, L, \beta, \nu)]^\vir$.

The maximal degeneracy defines a line bundle $L_{\max}$ on
$\M_{g,n}(X, L, \beta, \nu)$.
In the case that $\nu = \emptyset$, in \cite{CJR18P}, a surjective
homomorphism of sheaves
\begin{equation*}
  \sigma\colon \mathrm{Ob}_{\M_{g,n}(X, L, \beta, \emptyset)} \to L_{\max}^\vee
\end{equation*}
is constructed using the section cutting out the hypersurface $Y$.
Using this cosection-like homomorphism, an alternative \emph{reduced}
virtual cycle $[\M_{g,n}(X, L, \beta, \emptyset)]^\red$ is
constructed, which agrees with
$[\M_{g,n}(X, L, \beta, \emptyset)]^\vir$ on the locus where the
log-section $\eta$ does not map any components of the source to the
infinity section.
In particular, the reduced virtual dimension is the same as the
ordinary virtual dimension.
The crucial property of the reduced virtual cycle is the following:
\begin{theorem}[\cite{CJR18P}]
  \label{thm:period}
  We have
  \begin{equation*}
    p_* [\M_{g, n}(Y, \beta)]^\vir = (-1)^{1 - g + \int_\beta c_1(L)} p_* [\M_{g,n}(X, L, \beta, \emptyset)]^\red,
  \end{equation*}
  where on both sides $p$ denotes the corresponding forgetful map to
  $\M_{g, n}(X, \beta)$.
\end{theorem}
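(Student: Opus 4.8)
The plan is to realise the reduced virtual cycle on the proper logarithmic GLSM space as the cosection--localised cycle of the (non-proper) moduli of stable maps with a $p$-field, and then to conclude via the $p$-field correspondence of Chang--Li and Kim--Oh. Write $\M = \M_{g,n}(X, L, \beta, \emptyset)$, and let $\M^\circ \subset \M$ be the open substack on which the log-section $\eta$ carries no component of the source curve to the infinity section of $\PP$. Over $\M^\circ$ the logarithmic structure is trivial and $\eta$ is literally a section of $\omega_C^{\log} \otimes f^*L^\vee$, so $\M^\circ$ is precisely the moduli space of stable maps to $X$ equipped with a $p$-field; moreover $[\M]^\red$ and $[\M]^\vir$ agree over $\M^\circ$, and under this identification $\sigma$ restricts to Chang--Li's cosection of the $p$-field obstruction theory. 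By the $p$-field correspondence of Chang--Li and Kim--Oh, the associated cosection--localised cycle $[\M^\circ]^{\vir}_{\mathrm{loc}}$ is carried by the closed substack $\M_{g,n}(Y,\beta) \subset \M$ of maps factoring through $Y$, and equals $(-1)^{1-g+\int_\beta c_1(L)}[\M_{g,n}(Y,\beta)]^\vir$.

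The first and main step I would carry out is a support statement: $[\M]^\red$ is the pushforward to $\M$ of $[\M^\circ]^{\vir}_{\mathrm{loc}}$; equivalently, every stratum of $\M$ along which some component of the source maps to the infinity section contributes nothing to $[\M]^\red$. This is the logarithmic counterpart of Kiem--Li's theorem that a cosection--localised class lives on the degeneracy locus of the cosection, but the mechanism differs here because $\sigma\colon \mathrm{Ob}\to L_{\max}^\vee$ is globally surjective rather than degenerate. I would argue locally along the infinity strata, which are assembled from the rubber moduli spaces $\M^\sim_{g,n}(X, L, \beta, \mu, \emptyset)$: identify $L_{\max}$ with the line bundle recording the maximal degeneracy, analyse the reduced two-term obstruction theory along these strata, and use the global surjectivity of $\sigma$ to force the reduced intrinsic normal cone into a proper subcone there, so that the associated contribution to $[\M]^\red$ vanishes on dimensional grounds. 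I expect this to be the main obstacle: it is exactly where the logarithmic compactification and the alignment condition do their work, and it demands careful bookkeeping of the interaction of $\sigma$, $L_{\max}$ and $\mathrm{Ob}$ along the boundary, together with the rubber calculus.

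Granting the support statement, and combining it with the agreement $[\M]^\red = [\M]^\vir$ over $\M^\circ$ and the $p$-field correspondence recalled above, we obtain $[\M]^\red = (-1)^{1-g+\int_\beta c_1(L)}\, \iota_*[\M_{g,n}(Y,\beta)]^\vir$ in $A_*(\M)$, where $\iota\colon \M_{g,n}(Y,\beta)\hookrightarrow \M$. Pushing forward along $p$ to $\M_{g,n}(X,\beta)$ — the support of $[\M]^\red$ maps through $p$ exactly as the natural map $\M_{g,n}(Y,\beta)\to\M_{g,n}(X,\beta)$ — then yields the asserted identity. For completeness, the sign is the parity of $\chi(C, f^*L) = \int_\beta c_1(L) + 1 - g$: restricting the $p$-field obstruction theory to $\M_{g,n}(Y,\beta)$, the conormal sequence $0 \to T_Y \to T_X|_Y \to L|_Y \to 0$ together with Serre duality on the universal curve exhibit it as the intrinsic Gromov--Witten obstruction theory of $Y$ modified by $R^\bullet\pi_* f^*L$ in deformations and by $R^\bullet\pi_*(f^*L^\vee\otimes\omega_C^{\log}) \cong -(R^\bullet\pi_* f^*L)^\vee$ in obstructions, so that cosection localisation contributes the sign $(-1)^{\chi(C, f^*L)}$.
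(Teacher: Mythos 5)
You should first note that the paper you are being compared against does not prove this statement at all: Theorem~\ref{thm:period} is quoted from \cite{CJR18P} as external input, so the only possible comparison is with the strategy of that reference. Your overall route is indeed the expected one there: identify the open locus $\M^\circ\subset\M_{g,n}(X,L,\beta,\emptyset)$ where no component maps to the infinity section with the Chang--Li space of stable maps with a $p$-field, show that the reduced class is entirely accounted for by that locus, and then invoke the Chang--Li/Kim--Oh correspondence, whose sign $(-1)^{1-g+\int_\beta c_1(L)}=(-1)^{\chi(f^*L)}$ you identify correctly.

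The gap is that the step you label the ``support statement'' is not an auxiliary lemma to be granted -- it is essentially the entire content of the theorem -- and the mechanism you offer for it is not an argument. ``Contributions to $[\M]^\red$ from the infinity strata'' are not a priori defined for a virtual class, and ``forcing the reduced intrinsic normal cone into a proper subcone so the contribution vanishes on dimensional grounds'' cannot be run as stated: what is actually needed is a localization statement for the \emph{reduced} obstruction theory, e.g.\ that the Chang--Li-type cosection built from the section cutting out $Y$ lifts to the reduced theory over the whole logarithmic compactification with \emph{proper} degeneracy locus contained in $\M^\circ$, so that Kiem--Li applies and the resulting localized class can be compared with Chang--Li's over $\M^\circ$. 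Constructing and controlling that lift along the aligned/rubber boundary is exactly where $L_{\max}$, the homomorphism $\sigma\colon\mathrm{Ob}\to L_{\max}^\vee$ and the alignment condition do their work, and none of it is supplied by the sketch. There is also an internal tension in your setup: $\sigma$ is globally surjective, whereas the Chang--Li cosection degenerates precisely along maps into $Y$ with vanishing $p$-field, so the assertion that $\sigma$ ``restricts to Chang--Li's cosection'' on $\M^\circ$ cannot be taken at face value; the relation between $\sigma$, the reduced class, and the Chang--Li cosection is more delicate than you assert. Without the support statement you only know $[\M]^\red|_{\M^\circ}=[\M^\circ]^\vir$, which yields no identity after $p_*$ because $\M^\circ$ is not proper; so the route is right, but the crucial step remains unproved.
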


\subsection{Localization formula}
\label{sec:loc:formula}

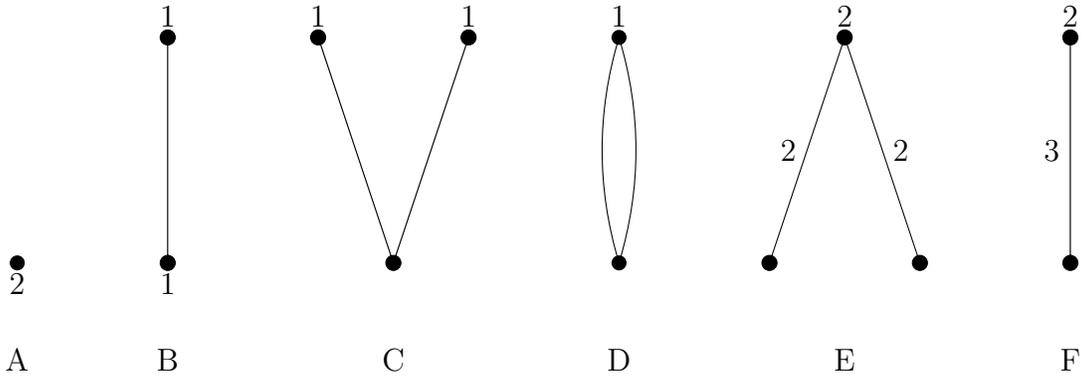
\begin{figure}
  \centering
  \begin{tikzpicture}
    \fill (0, 0) circle(1mm) node[below] {$2$};
    \draw (0, -1) node[below] {A};
    \draw[fill] (2, 0) circle(1mm) node[below] {$1$} -- (2, 3) circle(1mm) node[above] {$1$};
    \draw (2, -1) node[below] {B};
    \draw[fill] (4, 3) node[above] {$1$} circle(1mm) -- (5, 0) circle(1mm) -- (6, 3) circle(1mm) node[above] {$1$};
    \draw (5, -1) node[below] {C};
    \draw (8, 0) .. controls (7.7, 1) and (7.7, 2) .. (8, 3);
    \draw (8, 0) .. controls (8.3, 1) and (8.3, 2) .. (8, 3);
    \fill (8, 0) circle(1mm); \fill (8, 3) circle(1mm) node[above] {$1$};
    \draw (8, -1) node[below] {D};
    \draw[fill] (10, 0) circle(1mm) -- (10.5,1.5) node[left] {$2$} -- (11, 3) circle(1mm) node[above] {$2$};
    \draw[fill] (12, 0) circle(1mm) -- (11.5,1.5) node[right] {$2$} -- (11, 3);
    \draw (11, -1) node[below] {E};
    \draw[fill] (14, 0) circle(1mm) -- (14, 1.5) node[left] {$3$} -- (14, 3) circle(1mm) node[above] {$2$};
    \draw (14, -1) node[below] {F};
  \end{tikzpicture}
  \caption{The bipartite genus two graphs (for $\nu = \emptyset$). Each vertex is decorated by its genus when it is different from zero. Each edge is decorated by its degree when it is different from one. The curve class $\beta$ is distributed among the stable vertices $v \in V_0(\Gamma)$ (of which there is at most one).}
  \label{fig:g2graph}
\end{figure}
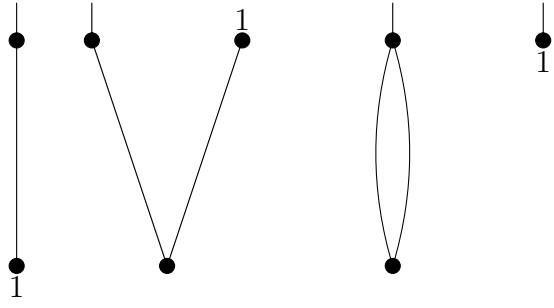
\begin{figure}
  \centering
  \begin{tikzpicture}
    \draw[fill] (0, 0) circle(1mm) node[below] {$1$} -- (0, 3) circle(1mm) -- (0, 3.5);
    \draw[fill] (2, 0) circle(1mm) -- (3, 3) circle(1mm) node[above] {$1$};
    \draw[fill] (1, 3.5) -- (1, 3) circle(1mm) -- (2, 0);
    \draw (5, 0) .. controls (4.7, 1) and (4.7, 2) .. (5, 3);
    \draw (5, 0) .. controls (5.3, 1) and (5.3, 2) .. (5, 3);
    \draw[fill] (5, 0) circle(1mm) (5, 3) circle(1mm) -- (5, 3.5);
    \draw[fill] (7, 3) circle(1mm) node[below] {$1$} -- (7, 3.5);
  \end{tikzpicture}
  \caption{The bipartite genus one graphs for the partition $\nu = (1)$}
  \label{fig:g1graph}
\end{figure}

The moduli space $\M_{g,n}(X, L, \beta, \nu)$ admits a $\CC^*$-action
defined via scaling the log-section $\eta$.
Both the ordinary and reduced perfect obstruction theories are
equivariant with respect to this torus action.
Hence, we can apply the virtual localization theorem \cite{GrPa99} to
compute their virtual classes.

We first consider the fixed loci of $\M_{g,n}(X, L, \beta, \nu)$.
They are classified by a set of decorated bipartite graphs $\Gamma$,
which we describe now.
Let $V(\Gamma)$ and $E(\Gamma)$ be the sets of vertices and edges of
$\Gamma$, respectively.
There are various decorations:
\begin{itemize}
\item the bipartite structure $V(\Gamma) = V_0(\Gamma) \sqcup V_\infty(\Gamma)$
\item a genus mapping $g\colon V(\Gamma) \to \ZZ_{\ge 0}$
\item a curve class mapping $\beta\colon V(\Gamma) \to \mathrm{Pic}(X)^\vee$
\item a fiber class mapping $\delta\colon E(\Gamma) \to \ZZ_{\ge 1}$
\item a distribution of markings
  $m\colon \{1, \dotsc, n\} \sqcup \nu \to V(\Gamma)$
\end{itemize}
The valence $n(v)$ of a vertex $v \in V(\Gamma)$ is the sum of the
number of edges at $v$ and the number of preimages under $m$.
We set for all $v \in V_\infty(\Gamma)$ that $\nu(v) = |m^{-1}(\nu)|$
and that $\mu(v)$ is the partition formed by $\delta(e)$ for all edges
$e$ at $v$.

We require the decorated graphs to statisfy the following conditions:
\begin{itemize}
\item $\sum_v g(v) + h^1(\Gamma) = g$
\item $\sum_v \beta(v) = \beta$
\item $\forall v \in V_\infty(\Gamma): |\mu(v)| - |\nu(v)| = 2g(v) - 2 + n(v) - \int_{\beta(v)} c_1(X)$
\item $m(\{1, \dotsc, n\}) \subseteq V_0(\Gamma)$, $m(\nu) \subseteq V_\infty(\Gamma)$
\item If for $v \in V(\Gamma)$, we have
  $(g(v), n(v), \beta(v)) = (0, 1, 0)$, then the unique incident edge
  $e$ has $\delta(e) > 1$.
\end{itemize}

In the case that $X = \PP^4$, $L = \cO(5)$, Figure~\ref{fig:g2graph}
and Figure~\ref{fig:g1graph} show all localization graphs for
$\M_2(X, L, \beta, \emptyset)$ and $\M_1(X, L, \beta, (1))$ which do
not have any vertices $v \in V_0(\Gamma)$ with
$(g(v), n(v)) = (0, 1)$.
These are all localization graphs for the analogous moduli space
defined using stable quotients.
The full set of stable maps localization graphs is obtained by
attaching additional genus zero vertices via degree-one edges to any
$v \in V_\infty(\Gamma)$ and distributing $\beta$ among them and the
original stable vertices in $V_0(\Gamma)$.

Given a decorated graph $\Gamma$, consider the fiber product
\begin{equation*}
  \xymatrix{
    \widetilde M_\Gamma \ar[r] \ar[d] & M_\Gamma = \prod_{v \in V(\Gamma)} \M_{g(v), n(v)}(X, \beta(v)) \ar[d] \\
    X^{|E(\Gamma)|} \ar[r]^\Delta & X^{|E(\Gamma)|} \times X^{|E(\Gamma)|}
    }
\end{equation*}
where for unstable $v \in V_0(\Gamma)$ in the sense that
$2g(v) - 2 + n(v) + 3 \int_{\beta(v)} c_1(X) \le 0$, we define the
corresponding factor in $M_\Gamma$ to be $X$.
There is a gluing map
\begin{equation*}
  \iota_\Gamma\colon \widetilde M_\Gamma \to \M_{g,n + |\nu|}(X, \beta).
\end{equation*}

We then have
\begin{equation}
  \label{eq:locvir}
  p_*([\M_{g,n}(X, L, \beta, \nu)]^\vir)
  = \sum_\Gamma \frac 1{|\Aut(\Gamma)|} \iota_{\Gamma*} \Delta^! (C_\Gamma^\vir)
\end{equation}
where
\begin{multline*}
  C_\Gamma^\vir = \prod_{v \in V_0(\Gamma)} \frac{e(-R\pi_{v*}(\omega_{\pi_v} \otimes f^* L^\vee) \otimes [1])}{\prod_{e\text{ at }v} (\frac{t - \ev_e^*(c_1(L))}{\delta(e)} - \psi_e)} \cap [\M_{g(v), n(v)}(X, \beta(v))]^\vir \\
  \prod_{v \in V_\infty(\Gamma)} p_*\left(\frac 1{-t - c_1(L_{\min})}
    [\M_{g(v)}^\sim(X, L, \beta(v), \mu(v), \nu(v))]^\vir\right) \prod_{e \in E(\Gamma)} \frac
  1{\delta(e) \prod_{i = 1}^{\delta(e)} \frac{t - \ev_e^*(c_1(L))}{\delta(e)}}
\end{multline*}
and
\begin{equation}
  \label{eq:locred}
  p_*([\M_{g,n}(X, L, \beta, \emptyset)]^\red)
  = \sum_\Gamma \frac 1{|\Aut(\Gamma)|} \iota_{\Gamma*} \Delta^! (C_\Gamma^\red)
\end{equation}
where
\begin{multline*}
  C_\Gamma^\red = \prod_{v \in V_0(\Gamma)} \frac{e(-R\pi_{v*}(\omega_{\pi_v} \otimes f^* L^\vee) \otimes [1])}{\prod_{e\text{ at }v} (\frac{t - \ev_e^*(c_1(L))}{\delta(e)} - \psi_e)} \cap [\M_{g(v), n(v)}(X, \beta(v))]^\vir \\
  \prod_{v \in V_\infty(\Gamma)} p_*\left(\frac t{-t - c_1(L_{\min})}
    [\M_{g(v)}^\sim(X, L, \beta(v), \mu(v), \emptyset)]^\red\right)
  \prod_{e \in E(\Gamma)} \frac 1{\delta(e) \prod_{i = 1}^{\delta(e)}
    \frac{i(t - \ev_e^*(c_1(L)))}{\delta(e)}}.
\end{multline*}
We have used the notations:
\begin{itemize}
\item $\M_{g(v)}^\sim(X, L, \beta(v), \mu(v), \nu(v))$ is a rubber
  moduli space defined in \cite{CJR18P}.
\item $L_{\min}$ is the line bundle on
$\M_{g(v)}^\sim(X, L, \beta(v), \mu(v), \nu(v))$ corresponding to the
minimal degeneracy.
\item $[\M_{g(v)}^\sim(X, L, \beta(v), \mu(v), \nu(v))]^\vir$ is a
  virtual class on $\M_{g(v)}^\sim(X, L, \beta(v), \mu(v), \nu(v))$.
\item $[\M_{g(v)}^\sim(X, L, \beta(v), \mu(v), \emptyset)]^\red$ is a
  virtual class on $\M_{g(v)}^\sim(X, L, \beta(v), \mu(v), \emptyset)$
  satisfying
  \begin{equation*}
    c_1(L_{\max}) [\M_{g(v)}^\sim(X, L, \beta(v), \mu(v), \emptyset)]^\red
    = [\M_{g(v)}^\sim(X, L, \beta(v), \mu(v), \emptyset)]^\vir.
  \end{equation*}
\end{itemize}

We also need to define
\begin{equation*}
  \frac{e(-R\pi_{v*}(\omega_{\pi_v} \otimes f^* L^\vee) \otimes [1])}{\prod_{e\text{ at }v} (\frac{t - \ev_e^*(c_1(L))}{\delta(e)} - \psi_e)} \cap [\M_{g(v), n(v)}(X, \beta(v))]^\vir
\end{equation*}
for unstable vertices $v$ at $0$: In the case that
$(g(v), n(v), \beta(v)) = (0, 2, 0)$, and that $v$ is connected to two
edges $e_1$ and $e_2$, we use
\begin{equation*}
  \frac{t - c_1(L)}{\frac{t - c_1(L)}{\delta(e_1)} + \frac{t - c_1(L)}{\delta(e_2)}},
\end{equation*}
in the case that $(g(v), n(v), \beta(v)) = (0, 2, 0)$, and that $v$ is
connected to one edge $e$ and has the $i$th marking, we use
$t - c_1(L)$, and finally, when $(g(v), n(v), \beta(v)) = (0, 1, 0)$
and $e$ is the unique edge at $v$, we use
\begin{equation*}
  \frac{(t - c_1(L))^2}{\delta(e)}.
\end{equation*}
The only case of unstable vertices $v$ at $\infty$ happens when
$(g(v), n(v), \beta(v)) = (0, 2, 0)$ and $\nu(v) \neq \emptyset$.
Then we define
\begin{equation*}
  p_*\left(\frac 1{-t - c_1(L_{\min})} [\M_{g(v)}^\sim(X, L, \beta(v), \mu(v), \nu(v))]^\vir\right)
  = 1.
\end{equation*}

\subsection{Comparison of virtual cycles}

The ordinary and reduced virtual cycle of the moduli space
$\M_{g,n}(X, L, \beta, \emptyset)$ only differ over the boundary
$\partial \M_{g,n}(X, L, \beta, \emptyset)$.
As in a similar situation in \cite{PaTh16}, the boundary contribution
can be made explicit.
\begin{theorem}[\cite{CJR18P}]
  There is a reduced virtual cycle
  $[\partial \M_{g,n}(X, L, \beta, \emptyset)]^\red$ such that
  \begin{equation*}
    [\M_{g,n}(X, L, \beta, \emptyset)]^\vir
    = [\M_{g,n}(X, L, \beta, \emptyset)]^\red + [\partial \M_{g,n}(X, L, \beta, \emptyset)]^\red.
  \end{equation*}
\end{theorem}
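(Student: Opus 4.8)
Write $\M := \M_{g,n}(X, L, \beta, \emptyset)$. By \cite{CJR18P} the reduced cycle $[\M]^\red$ already agrees with the ordinary cycle $[\M]^\vir$ over the open complement $\M^\circ = \M \setminus \partial\M$ of the boundary (the locus where the log-section $\eta$ sends some component of the source curve to the infinity section), so the difference $[\M]^\vir - [\M]^\red$ is a cycle of the common virtual dimension supported along $\partial\M$, and the content of the theorem is to identify it as the reduced virtual cycle of a natural obstruction-theoretic structure on $\partial\M$. The plan is to follow the method of the analogous stable-pairs comparison of \cite{PaTh16}.

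First I would assemble the three relevant objects: the canonical two-term perfect obstruction theory $E^\bullet \to \mathbb{L}_\M$, with obstruction sheaf $\mathrm{Ob}$; the surjective cosection-like homomorphism $\sigma\colon \mathrm{Ob} \to L_{\max}^\vee$ of \cite{CJR18P}, out of which $[\M]^\red$ is built; and the geometry of $\partial\M$, whose strata are governed by the combinatorial type of the maximally degenerate part of the log-section and which carries its own two-term perfect obstruction theory, inherited from that of $\M$, together with a cosection induced from $\sigma$. The key structural input to establish is that $\partial\M$ is cut out in $\M$ as the zero locus of a canonical section of a line bundle closely related to $L_{\max}$, coming from the maximal-degeneracy logarithmic structure; this section is the engine driving the comparison.

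The heart of the argument is then to relate the intrinsic normal cone $\mathfrak{C}_\M$ entering $[\M]^\vir$ to the (cosection-reduced) cone entering $[\M]^\red$. Because the two cycles already coincide over $\M^\circ$, this is a statement localized near $\partial\M$, and I would carry it out by degenerating $\mathfrak{C}_\M$ to the normal cone of $\partial\M$ --- equivalently, by using the canonical section of $L_{\max}$ to deform $\sigma$ --- and reading off the two pieces of the specialized cone: one piece accounts for the generic obstructions and contributes exactly $[\M]^\red$ after applying the refined Gysin map, while the other is a cone supported over $\partial\M$, governed by the boundary obstruction theory and its induced cosection, and contributes a class one identifies as a reduced virtual cycle $[\partial\M]^\red$ of $\partial\M$. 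Additivity of this cycle-level decomposition yields the asserted identity.

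The step I expect to be the main obstacle is exactly the one of ``making the boundary contribution explicit'': pinning down the cone supported over $\partial\M$ and verifying that it is genuinely the cone of the boundary moduli problem equipped with the correct induced cosection. This is where the fine structure of the logarithmic GLSM moduli space is needed --- the description of $\partial\M$ and of its obstruction theory in terms of the rubber moduli spaces $\M^\sim$, the behaviour of $L_{\max}$ and $L_{\min}$ along the boundary, and the compatibility of $\sigma$ with this combinatorial structure. Once this identification is in place, the bookkeeping of the factors transverse to $\partial\M$ reproduces precisely the line-bundle twists of the form $t/(-t - c_1(L_{\min}))$ appearing in the localization formula \eqref{eq:locred}, and the cycle-level identity $[\M]^\vir = [\M]^\red + [\partial\M]^\red$ follows.
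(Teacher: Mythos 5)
First, note that the paper does not prove this statement at all: it is quoted from \cite{CJR18P}, so there is no internal proof to measure your argument against. Judged on its own terms, your proposal is a reasonable strategy outline in the spirit of \cite{PaTh16} (deformation of the cosection/cone, supported along the boundary, with the excess piece identified as a boundary reduced cycle), and the ingredients you list --- the cosection $\sigma\colon \mathrm{Ob} \to L_{\max}^\vee$, the role of the maximal degeneracy, the agreement of the two cycles away from $\partial\M$ --- are the right ones. But as a proof it has a genuine gap, and you name it yourself: the identification of the cone supported over $\partial\M$ with ``the reduced virtual cycle of the boundary moduli problem'' is not an auxiliary verification, it \emph{is} the theorem. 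The class $[\partial \M_{g,n}(X, L, \beta, \emptyset)]^\red$ is not an a priori defined object; its construction (a perfect obstruction theory with a compatible cosection on each boundary stratum, and ultimately the graph-by-graph expression in terms of reduced rubber cycles that feeds into Theorem~\ref{thm:compare}) is precisely the content being asserted. Deferring that step means the central claim is assumed rather than proved.

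Two further points where the sketch is too optimistic. The boundary $\partial\M$ is not a single Cartier divisor cut out by one section of a line bundle: it is stratified by tropical/bipartite types, and the aligned log structure (total ordering of degeneracies) is exactly what makes a stratum-by-stratum cone analysis possible; a global ``canonical section of a bundle related to $L_{\max}$'' does not come for free, and the compatibility of $\sigma$ with $L_{\max}$ and $L_{\min}$ along each stratum is the hard log-geometric input of \cite{CJR18P}, not bookkeeping. Also, your final appeal to recovering the factors $t/(-t - c_1(L_{\min}))$ of \eqref{eq:locred} conflates the (non-equivariant) cycle comparison with the $\CC^*$-localization formula; the comparison theorem must be established independently of localization. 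So the outline is plausible, but the decisive identification of the boundary contribution remains unproven in your write-up.
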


We make the comparison more explicit by decomposing the boundary
$\partial \M_{g,n}(X, L, \beta, \emptyset)$ according to ``tropical
data'', which is in correspondence to exactly the same decorated
graphs as those discussed in Section~\ref{sec:loc:formula}.

The graphs symbolize a decomposition of the source curve of the
log-section into a part mapping directly into the infinity section,
and one part not mapping directly into the infinity section.
The vertices $v \in V_\infty(\Gamma)$ (respectively,
$v \in V_0(\Gamma)$) make up the first (respectively, second) part.
There is exactly one graph, the graph consisting of a single vertex
$v \in V_0(\Gamma)$, that does not stand for any component of
$\partial \M_{g,n}(X, L, \beta, \emptyset)$.
It instead corresponds to $\M_{g,n}(X, L, \beta, \emptyset)$ itself.

Applying an additional comparison of connected and disconnected rubber
virtual cycles, \cite{CJR19P} arrives at:
\begin{theorem}
  \label{thm:compare}
  \begin{equation*}
    p_*([\M_{g, n}(X, L, \beta, \emptyset)]^\red)
    = \sum_\Gamma \frac 1{|\Aut(\Gamma)|} \iota_{\Gamma*} \Delta^!(C_\Gamma^\comp),
  \end{equation*}
  where
  \begin{multline*}
    C_\Gamma^\comp = \prod_{v \in V_0(\Gamma)} p_*([\M_{g(v), n(v)}(X, L, \beta(v), \nu(v))]^\vir) \\
    \prod_{v \in V_\infty(\Gamma)} p_*(-[\M^\sim_{g(v), n(v)}(X, L, \beta(v), \mu(v), \emptyset)]^\red) \prod_{e \in E(\Gamma)} \delta(e).
  \end{multline*}
\end{theorem}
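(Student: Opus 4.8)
The plan is to start from the decomposition
\begin{equation*}
  [\M_{g,n}(X, L, \beta, \emptyset)]^\vir = [\M_{g,n}(X, L, \beta, \emptyset)]^\red + [\partial \M_{g,n}(X, L, \beta, \emptyset)]^\red
\end{equation*}
of the preceding theorem, to make the reduced boundary cycle completely explicit following the template of \cite{PaTh16}, and then to convert the disconnected rubber cycles that appear into connected ones. Rewriting the decomposition as $[\M]^\red = [\M]^\vir - [\partial\M]^\red$ and observing that the single-vertex graph (its only vertex lying in $V_0$) accounts exactly for the term $p_* [\M_{g,n}(X, L, \beta, \emptyset)]^\vir$ on the right-hand side of Theorem~\ref{thm:compare}, the assertion is equivalent to an explicit formula for $p_*[\partial\M]^\red$ as a sum over the decorated graphs $\Gamma$ with $V_\infty(\Gamma) \neq \emptyset$.

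First I would use the tropical stratification of $\partial\M$ indicated before the statement: at a point of $\partial\M$ the log-section $\eta$ contracts a nonempty subcurve $C_\infty$ into the infinity section, and stabilizing its complement $C_0$ produces a well-defined decorated bipartite graph $\Gamma$, of exactly the kind classifying the localization fixed loci in Section~\ref{sec:loc:formula}, with $V_\infty(\Gamma) \neq \emptyset$. I would then identify the normalization of the corresponding stratum with a fiber product over $X^{|E(\Gamma)|}$ of the log GLSM moduli spaces $\M_{g(v), n(v)}(X, L, \beta(v), \nu(v))$ at the vertices $v \in V_0(\Gamma)$ (carrying the part of the log-section that sends no component to the infinity section, with contact conditions $\nu(v)$ at the nodes separating it from $C_\infty$) with a \emph{possibly disconnected} rubber moduli space built from the vertices $v \in V_\infty(\Gamma)$, glued along the node evaluation maps recorded by the edges. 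Since no component of the $V_0$-part maps to the infinity section, the defining property of the reduced cycle (agreement with $[\ ]^\vir$ off the infinity locus) forces the restriction of $[\partial\M]^\red$ to this stratum to be $\Delta^!$ of the exterior product of the $p_*[\M_{g(v), n(v)}(X, L, \beta(v), \nu(v))]^\vir$, the disconnected reduced rubber cycle, and a node-normalization factor $\prod_{e} \delta(e)$ coming, as in the expanded-degeneration formula, from the length-$\delta(e)$ nodes of the log curve; in particular no $\psi$-classes appear, in contrast with the localization formula \eqref{eq:locred}. Summing these contributions requires an inclusion--exclusion over the poset of decorated graphs, since a stratum whose $C_\infty$ has several connected components lies in the closures of the strata obtained by merging them; the resulting alternating signs are precisely what package, after passing back from $[\partial\M]^\red$ to $[\M]^\red$, into the single factor $(-1)$ attached to each $\infty$-vertex in $C_\Gamma^\comp$.

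It then remains to replace the disconnected reduced rubber cycle over $V_\infty(\Gamma)$ by the product of the connected ones $[\M^\sim_{g(v), n(v)}(X, L, \beta(v), \mu(v), \emptyset)]^\red$. The subtlety is that the rubber torus acts diagonally on the whole expansion, so a disconnected rubber moduli space is not the product of its connected components; the comparison of connected and disconnected rubber (reduced) virtual cycles proved in \cite{CJR19P}, the rubber analogue of the usual relation between connected and disconnected relative invariants, expresses it as exactly such a product up to combinatorial coefficients, and these coefficients are absorbed into $1/|\Aut(\Gamma)|$ once one sums over isomorphism classes of decorated graphs. After this substitution, matching the markings and contact data on the $V_0$-side with the insertions $\nu(v)$ and matching the edge multiplicities are routine, and one arrives at the formula with $C_\Gamma^\comp$ as stated.

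The main obstacle, I expect, is the explicit identification of $[\partial\M]^\red$ with the stratum-wise product: one has to prove that the reduced perfect obstruction theory restricts on each boundary stratum to the product of the obstruction theories of the $V_0$ log GLSM pieces and of the rubber pieces, with no excess contribution beyond the $\prod_e \delta(e)$ node factors, which means comparing the cosection $\sigma$ with the rubber cosection along the boundary, together with arranging the inclusion--exclusion and the connected/disconnected rubber comparison to cooperate so that every $\Gamma$ ends up with coefficient exactly $1/|\Aut(\Gamma)|$ and exactly one sign per $\infty$-vertex. This is carried out in \cite{CJR18P, CJR19P}, which here one simply cites.
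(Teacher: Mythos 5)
Your proposal follows exactly the route the paper itself indicates: the decomposition $[\M_{g,n}(X,L,\beta,\emptyset)]^\vir = [\M_{g,n}(X,L,\beta,\emptyset)]^\red + [\partial\M_{g,n}(X,L,\beta,\emptyset)]^\red$, the tropical decomposition of the boundary by the same decorated bipartite graphs as in the localization section, and the connected/disconnected rubber cycle comparison. In fact the paper gives no independent proof of Theorem~\ref{thm:compare} at all—it is quoted from \cite{CJR18P,CJR19P}—so your sketch, which likewise defers the technical steps (restriction of the reduced obstruction theory to boundary strata, the $\delta(e)$ node factors, and the sign bookkeeping) to those companion papers, is essentially the same approach.
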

\begin{remark}
  Note that
  $C_\Gamma^\comp = p_* [\M_{g,n}(X, L, \beta,
  \emptyset)]^\red$ in the case that $\Gamma$ is the graph consisting
  of a single vertex $v \in V_0(\Gamma)$.
  The sum of the contributions of the other graphs is
  $p_* [\partial \M_{g,n}(X, L, \beta, \emptyset)]^\red$.
\end{remark}
\begin{remark}
  \label{rmk:quintic}
  In the quintic case ($X = \PP^4$, $L = \cO(5)$), the classes
  $[\M^\sim_{g, n}(X, L, \beta, \mu, \emptyset)]^\red$ are determined
  from only $1 + \lfloor\frac{2g - 2}5\rfloor$ numbers in any genus:
  To see this, note first that $\mu$ is a partition of
  $2g - 2 + n - 5\beta$, and that the dimension of
  $[\M^\sim_{g, n}(\PP^4, \cO(5), \beta, \mu, \emptyset)]^\red$ is
  \begin{equation*}
    n - (2g - 2 - 5\beta).
  \end{equation*}  
  Therefore, in the case that all parts of $\mu$ are equal to $2$, the
  virtual dimension is zero, and we may define a constant
  $c_{g, \beta}^\eff$ for $5\beta \le 2g - 2$ by
  \begin{equation*}
    -[\M^\sim_{g, 2g - 2 - 5\beta}(\PP^4, \cO(5), \beta, (2^{2g - 2 - 5\beta}), \emptyset)]^\red
    = c_{g, \beta}^\eff [\mathrm{pt}].
  \end{equation*}

  Furthermore, in the case that some parts of $\mu$ are equal to $1$,
  the reduced virtual class is pulled back under the map forgetting
  those markings.
  Therefore, the virtual class vanishes unless all parts of $\mu$ are
  equal to $1$ or $2$, and when $\mu = (2^{2g - 2 - 5\beta}, 1^k)$, we
  have
  \begin{equation*}
    -[\M^\sim_{g, 2g - 2 - 5\beta}(\PP^4, \cO(5), \beta, (2^{2g - 2 - 5\beta}, 1^k), \emptyset)]^\red
    = c_{g, \beta}^\eff \pi_k^* [\mathrm{pt}],
  \end{equation*}
  where $\pi_k$ is the map forgetting the last $k$ markings.

  Therefore, $[\M^\sim_{g, n}(X, L, \beta, \mu, \emptyset)]^\red$ are
  determined from the numbers $c_{g, \beta}^\eff$ for
  $5\beta \le 2g - 2$.
  We call $c_{g, \beta}^\eff$ the \emph{effective constants}.
\end{remark}

\subsection{The extended quintic family}
\label{sec:loc:extended}

By Remark~\ref{rmk:quintic}, the comparison formula in
Theorem~\ref{thm:compare} involves effective constants
$c_{g, \beta}^\eff$.
Viewing the effective constants as parameters, we define the extended
quintic family of theories.
Given a choice $\mathbf c$ of constants $c_{g, \beta}$ for
$5\beta \le 2g - 2$, we formally define the extended quintic virtual
class
\begin{multline*}
  [Q_{g, n, \beta}^{\mathbf c}]^\vir
  := \sum_\Gamma \frac 1{|\Aut(\Gamma)|} \prod_{e \in E(\Gamma)} \delta(e) \\
  \iota_{\Gamma*} \Delta^!\left(\prod_{v \in V_0(\Gamma)} p_*([\M_{g(v), n(v)}(X, L, \beta(v), \nu(v))]^\vir) \prod_{v \in V_\infty(\Gamma)} c_{g(v), \beta(v), \mu(v)}\right),
\end{multline*}
where we set
\begin{equation*}
  c_{g, \beta, \mu} :=
  \begin{cases}
    0 & \text{if $\mu$ has a part of size $\ge 3$}, \\
    c_{g, \beta} \pi_{\ell(\mu) - (2g - 2 - 5\beta)}^* [\mathrm{pt}].
  \end{cases}
\end{equation*}
This is a cycle on $\M_{g, n}(\PP^4, \beta)$.

\section{From formal quintic to quintic}
\label{sec:rewrite}

The goal of this section is to prove a graph sum formula expressing
the extended quintic family in terms of the formal quintic.
We will prove a cycle-valued formula that will be the an essential
ingredient in the Combinatorial Structure Theorem for the quintic
family that we will prove in the following
Section~\ref{sec:Rmatrixaction}.

\subsection{Twisted virtual cycles}
\label{sec:twisted-vc}

Compared to the previous section, we now specialize to the case that
$X = \PP^4$ and $L = \cO(5)$.\footnote{The analysis of this and the
  following section can be performed analogously for the case where
  $X$ is any projective space and $L = \cO(k)$ for some $k > 0$.} In
this case, the localization formula of Section~\ref{sec:loc:formula}
involves the \emph{twisted virtual cycle}
\begin{equation*}
  e(-R\pi_*(\omega_\pi \otimes f^* \cO(-5)) \otimes [1])\cap [\M_{g, n}(\PP^4, \beta)]^\vir,
\end{equation*}
which we can rewrite via Serre duality as
\begin{multline}
  \label{eq:serre}
  e(-R\pi_*(\omega_\pi \otimes f^* \cO(-5)) \otimes [1])\cap [\M_{g, n}(\PP^4, \beta)]^\vir\\
  = (-1)^{1 - g + 5\beta} e(R\pi_*(f^* \cO(5)) \otimes [1])\cap [\M_{g, n}(\PP^4, \beta)]^\vir.
\end{multline}
In the following, we call the cohomological field theory defined using
the virtual cycle
\begin{equation*}
  e(R\pi_*(f^* \cO(5)) \otimes [1])\cap [\M_{g, n}(\PP^4, \beta)]^\vir
\end{equation*}
the \emph{$t$-twisted theory}.

This theory is closely related to the \emph{formal quintic} theory (we
also refer to it as the \emph{$\lambda$-twisted theory}) of
\cite{GuRo16P, GuRo17P, LhPa18, LhPa18P, ZaZi08} in genus zero.
To explain how they differ, it is useful to consider the \emph{fully
  equivariant twisted theory}, which is defined in the same way as the
$t$-twisted theory except that we use the $(\CC^*)^5$-equivariant
virtual cycle $[\M_{g, n}(\PP^4, \beta)]^\vir_{(\CC^*)^5}$ of
$\M_{g, n}(\PP^4, \beta)$ with respect to the diagonal
$(\CC^*)^5$-action on $\PP^4$.
Let $\lambda_i$ denote the corresponding additional equivariant
parameters.

Then the $t$-twisted theory can be obtained by specializing
$\lambda_i = 0$.
On the other hand, the $\lambda$-twisted theory is obtained by setting
$t = 0$, which is possible since $c_1(\cO(5))$ is invertible in
equivariant cohomology, as well as specializing
$\lambda_i = \zeta^i \lambda$ where $\lambda$ is one parameter and
$\zeta$ is a fifth root of unity.
We accordingly define the formal quintic virtual cycle:
\begin{equation*}
  [\M_{g, n}(\PP^4, \beta)]^{\vir, \lambda}
  = \left(e(R\pi_* f^* \cO(5) \otimes [1]) \cap [\M_{g, n}(\PP^4, \beta)]^\vir_{(\CC^*)^5}\right)\Big|_{t = 0, \lambda_i = \zeta^i \lambda}.
\end{equation*}

\begin{remark}
  In genus zero, the two twisted theories admit non-equivariant limits
  $t = 0$, $\lambda = 0$, which agree.
  However, in higher genus, it is not possible to take these limits,
  and taking the $t^0$-coefficient in the Laurent expansion of the
  $t$-twisted theory will give a different result to the
  $\lambda^0$-coefficient in the $\lambda$-twisted theory.
\end{remark}

\subsection{Full torus localization}
\label{sec:loc:full}

In addition to the $\CC^*$-action on
$\M_{g,n}(\PP^4, \cO(5), \beta, \nu)$ discussed in
Section~\ref{sec:loc:formula}, we may also use the diagonal
$(\CC^*)^5$-action on $\PP^4$ to define a $(\CC^*)^6$-action on
$\M_{g,n}(\PP^4, \cO(5), \beta, \nu)$.
The ordinary perfect obstruction theory of
$\M_{g,n}(\PP^4, \cO(5), \beta, \nu)$ is equivariant for the full
$(\CC^*)^6$-action, while (when defined), the reduced perfect
obstruction theory is only $\CC^*$-equivariant.

The localization formula of Section~\ref{sec:loc:formula} can clearly
be lifted to $(\CC^*)^6$-equivariant cohomology.
We note that this is not the same as the localization formula for the
$(\CC^*)^6$-action on $\M_{g,n}(\PP^4, \cO(5), \beta, \nu)$ because we
still use the fixed loci for the $\CC^*$-action and not the (smaller)
fixed loci of the $(\CC^*)^6$-action.

We now consider the specialization of equivariant parameters
\begin{equation}
  \label{eq:special}
  (\lambda_0, \lambda_1, \lambda_2, \lambda_3, \lambda_4, t) = (\lambda, \zeta\lambda, \zeta^2\lambda, \zeta^3\lambda, \zeta^4\lambda, 0).
\end{equation}
Under this specialization, the localization formula from
Section~\ref{sec:loc:formula} can be written as
\begin{equation*}
  p_*([\M_{g,n}(\PP^4, \cO(5), \beta, \nu)]^\vir)
  = \lim_{\lambda \to 0} \sum_\Gamma \frac 1{|\Aut(\Gamma)|} \iota_{\Gamma*} \Delta^! (C_\Gamma^\vir)
\end{equation*}
where
\begin{multline*}
  C_\Gamma^\vir = \prod_{v \in V_0(\Gamma)} \frac{(-1)^{1 - g(v) + 5\beta(v)}}{\prod_{e\text{ at }v} (\frac{-\ev_e^*(5H)}{\delta(e)} - \psi_e)} \cap [\M_{g(v), n(v)}(\PP^4, \beta(v))]^{\vir, \lambda} \\
  \prod_{v \in V_\infty(\Gamma)} p_*\left(\frac 1{-c_1(L_{\min})}
    [\M_{g(v)}^\sim(\PP^4, \cO(5), \beta(v), \mu(v), \nu(v))]^\vir_{(\CC^*)^5}\right)
  \prod_{e \in E(\Gamma)} \frac 1{\delta(e) \prod_{i = 1}^{\delta(e)}
    \frac{-i\ev_e^*(5H)}{\delta(e)}},
\end{multline*}
where and all other notation and exceptional cases are similar as in
Section~\ref{sec:loc:formula}.
\begin{remark}
  \label{rmk:invertible}
  It is not obvious that we could perform the formal quintic
  specialization.
  The next lemma addresses invertibility of $c_1(L_{\min})$.
  To see that $\frac{-\ev_e^*(5H)}{\delta(e)} - \psi_e$ is invertible
  note that on each fixed locus for localization on
  $\M_{g, n}(\PP^4, \beta)$, $H$ specializes to an element of the form
  $\zeta^i \lambda$, whereas $\psi_e$ is either nilpotent or an
  element of the form $(\zeta^j - \zeta^k)\lambda$ for $j \neq k$.
\end{remark}
\begin{lemma}[\cite{CJR19P}]
  $c_1(L_{\min})$ is invertible in the $(\CC^*)^5$-equivariant
  cohomology of
  $$\M_{g(v)}^\sim(\PP^4, \cO(5), \beta(v), \mu(v), \nu(v)).$$
\end{lemma}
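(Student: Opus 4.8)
The plan is to analyze the line bundle $L_{\min}$ on the rubber moduli space $\M_{g(v)}^\sim(\PP^4, \cO(5), \beta(v), \mu(v), \nu(v))$ by examining its fixed loci under the diagonal $(\CC^*)^5$-action and showing that $c_1(L_{\min})$ restricts to an invertible class on each of them. Since a class in $(\CC^*)^5$-equivariant cohomology of a Deligne--Mumford stack is invertible if and only if its restriction to every component of the fixed locus is invertible (the nilpotent part of the restriction being immaterial once the degree-zero equivariant part is a nonzero element of $\QQ[\lambda_0, \dotsc, \lambda_4]$), it suffices to compute, for each fixed locus $F$, the equivariant first Chern class $c_1(L_{\min})|_F$ and verify its $H^0(F)$-component is a nonzero polynomial in the $\lambda_i$.

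The key steps, in order, are as follows. First I would recall the description of the rubber target: it is an expanded/broken degeneration of the total space of $\PP(\omega_{\log} \otimes f^*\cO(-5) \oplus \cO) \to C$ rubber-rescaled, so that $L_{\min}$ records the degeneracy index of the minimal (closest to the zero section) bubble component; concretely $L_{\min}$ is a tautological line bundle built from the line bundle $\omega_{\log} \otimes f^* \cO(-5)$ restricted to the relevant component, or a tensor power thereof dictated by the partition data $\mu(v)$. Second, on a $(\CC^*)^5$-fixed locus, the underlying stable map to $\PP^4$ is supported on the torus-fixed points and torus-fixed lines of $\PP^4$, so $f^*\cO(5)$ and hence $f^* c_1(\cO(5)) = 5H$ specializes on each component to $5\zeta^i\lambda$-type weights — precisely as in Remark~\ref{rmk:invertible}. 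Third, I would trace through how these weights propagate into $c_1(L_{\min})$: the minimal-degeneracy bubble carries a first Chern class whose equivariant part is a positive combination (with coefficients coming from the contact orders in $\mu(v)$ and the fiber degrees) of such weights, and because all the $\lambda_i = \zeta^i\lambda$ are on the same ray away from the nonequivariant locus, no cancellation can occur — the $\lambda$-coefficient of $c_1(L_{\min})|_F$ is a strictly positive rational number, hence nonzero. Finally, I would note that this argument is uniform across fixed loci and across the exceptional unstable vertices, and conclude invertibility.

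The main obstacle I expect is the second and third steps taken together: pinning down exactly which tautological line bundle $L_{\min}$ is in terms of the rubber geometry of \cite{CJR18P}, and bookkeeping the combinatorial contribution of $\mu(v)$ (and of the rubber's own torus-scaling weight, which here is set to $t = 0$, so one must check nothing degenerates) to its equivariant first Chern class. In particular one must be careful that on the \emph{minimal} bubble the relevant weight is genuinely nonzero after the specialization $t = 0$ — this is where the choice of $L_{\min}$ rather than $L_{\max}$ matters, and where the positivity of the contact orders $\delta(e) \ge 1$ and $\mu(v)$ being a partition of a nonnegative number enter decisively. Once the explicit formula for $c_1(L_{\min})|_F$ is in hand, the nonvanishing of its $\lambda$-coefficient is immediate from the fact that all summands have the same sign.
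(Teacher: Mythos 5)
The paper itself contains no proof of this lemma: it is imported wholesale from the companion work \cite{CJR19P}, so there is no in-paper argument to compare yours against. Judged on its own terms, your outline has a genuine gap at exactly the step you flag as the ``main obstacle.'' You never actually identify the $(\CC^*)^5$-linearization of $L_{\min}$ on a fixed locus, and the nonvanishing argument you substitute for it is incorrect: after the specialization $\lambda_i = \zeta^i\lambda$ the weights of the form $\zeta^i\lambda$ are \emph{not} ``on the same ray,'' and a combination of them with positive rational coefficients can perfectly well vanish (e.g.\ $\lambda(1+\zeta+\zeta^2+\zeta^3+\zeta^4)=0$); indeed the ``$\lambda$-coefficient'' of such a weight is in general a complex number, not a positive rational, so ``all summands have the same sign'' proves nothing. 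This is precisely the danger that must be ruled out: a priori the minimal level of the rubber expansion can involve components lying over several different torus-fixed points of $\PP^4$, or the relevant smoothing datum could carry the trivial character, in which case the pure-weight part of $c_1(L_{\min})|_F$ would be zero and the class would not be invertible. The situation is genuinely different from Remark~\ref{rmk:invertible}, where the class in question restricts on each fixed component to a \emph{single} weight $\zeta^i\lambda$ plus a nilpotent term; for $L_{\min}$ the analogous statement is exactly the content of the cited lemma and requires the rubber geometry of \cite{CJR18P, CJR19P}, which your sketch defers rather than supplies.

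Two smaller points. First, ``invertible in equivariant cohomology if and only if invertible on every fixed component'' holds only after localizing the coefficient ring $\QQ[\lambda_0,\dotsc,\lambda_4]$; what one really needs (and what the paper uses, since the inverse only ever appears inside localization formulas before taking $\lim_{\lambda\to 0}$) is that the $H^0$-component of $c_1(L_{\min})|_F$ is a nonzero weight on every component, so the class becomes invertible after inverting $\lambda$ --- you should state the claim in that form. Second, the rubber space $\M^\sim$ has already been quotiented by the fiberwise $\CC^*$, so there is no ``$t=0$ specialization'' happening on it; the only issue is the $(\CC^*)^5$-weight, and that is the computation your proposal leaves undone.
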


\subsection{Tripartite graphs}

{\footnotesize
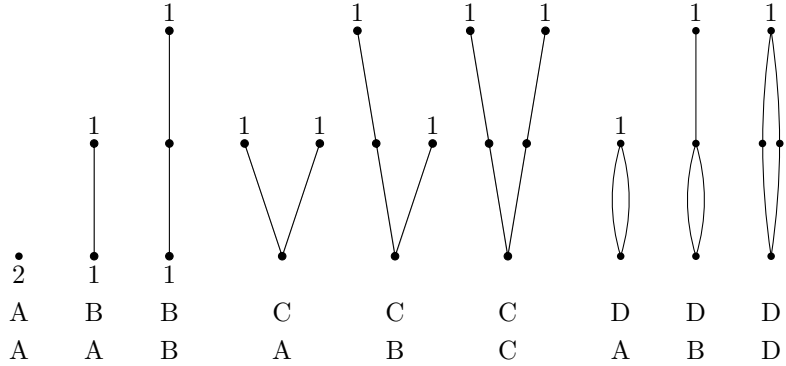
\begin{figure}
  \centering
  \begin{tikzpicture}[scale=0.5]
    \fill (0, 0) circle(1mm) node[below] {$2$};
    \draw (0, -1) node[below] {A} (0, -2) node[below] {A};
    \draw[fill] (2, 0) circle(1mm) node[below] {$1$} -- (2, 3) circle(1mm) node[above] {$1$};
    \draw (2, -1) node[below] {B} (2, -2) node[below] {A};
    \draw[fill] (4, 0) circle(1mm) node[below] {$1$} -- (4, 3) circle(1mm) -- (4, 6) circle(1mm) node[above] {$1$};
    \draw (4, -1) node[below] {B} (4, -2) node[below] {B};
    \draw[fill] (6, 3) node[above] {$1$} circle(1mm) -- (7, 0) circle(1mm) -- (8, 3) circle(1mm) node[above] {$1$};
    \draw (7, -1) node[below] {C} (7, -2) node[below] {A};
    \draw[fill] (9, 6) node[above] {$1$} circle(1mm) -- (9.5, 3) circle(1mm) -- (10, 0) circle(1mm) -- (11, 3) circle(1mm) node[above] {$1$};
    \draw (10, -1) node[below] {C} (10, -2) node[below] {B};
    \draw[fill] (12, 6) node[above] {$1$} circle(1mm) -- (12.5, 3) circle(1mm) -- (13, 0) circle(1mm) -- (13.5, 3) circle(1mm) -- (14, 6) circle(1mm) node[above] {$1$};
    \draw (13, -1) node[below] {C} (13, -2) node[below] {C};
    \draw (16, 0) .. controls (15.7, 1) and (15.7, 2) .. (16, 3);
    \draw (16, 0) .. controls (16.3, 1) and (16.3, 2) .. (16, 3);
    \fill (16, 0) circle(1mm); \fill (16, 3) circle(1mm) node[above] {$1$};
    \draw (16, -1) node[below] {D} (16, -2) node[below] {A};
    \draw (18, 0) .. controls (17.7, 1) and (17.7, 2) .. (18, 3);
    \draw (18, 0) .. controls (18.3, 1) and (18.3, 2) .. (18, 3) -- (18, 6);
    \fill (18, 0) circle(1mm); \fill (18, 3) circle(1mm); \fill (18, 6) circle(1mm) node[above] {$1$};
    \draw (18, -1) node[below] {D} (18, -2) node[below] {B};
    \draw (20, 0) .. controls (19.7, 2) and (19.7, 4) .. (20, 6);
    \draw (20, 0) .. controls (20.3, 2) and (20.3, 4) .. (20, 6);
    \fill (20, 0) circle(1mm); \fill (20, 6) circle(1mm) node[above] {$1$};
    \fill (19.77, 3) circle(1mm); \fill (20.23, 3) circle (1mm);
    \draw (20, -1) node[below] {D} (20, -2) node[below] {D};
  \end{tikzpicture}
  \caption{The tripartite graphs in genus two, part 1. The letters under each graph indicate which genus two bipartite graph is obtained when merging the upper two or the lower two level, respectively.}
  \label{fig:g2graph1}
\end{figure}
\begin{figure}
  \centering
  \begin{tikzpicture}[scale=0.5]
    \draw[fill] (22, 0) circle(1mm) -- (22.5,1.5) node[left] {$2$} -- (23, 3) circle(1mm) node[above] {$2$};
    \draw[fill] (24, 0) circle(1mm) -- (23.5,1.5) node[right] {$2$} -- (23, 3);
    \draw (23, -1) node[below] {E} (23, -2) node[below] {A};
    \draw[fill] (25, 0) circle(1mm) -- (25.5,1.5) node[left] {$2$} -- (26, 3) circle(1mm) node[left] {$1$} -- (26, 6) circle(1mm) node[above] {$1$};
    \draw[fill] (27, 0) circle(1mm) -- (26.5,1.5) node[right] {$2$} -- (26, 3);
    \draw (26, -1) node[below] {E} (26, -2) node[below] {B};
    \draw[fill] (28, 0) circle(1mm) -- (28.5,1.5) node[left] {$2$} -- (29, 3) circle(1mm) -- (30, 6) circle(1mm) node[above] {$1$};
    \draw[fill] (30, 0) circle(1mm) -- (29.5,1.5) node[right] {$2$} -- (29, 3) -- (28, 6) circle(1mm) node[above] {$1$};
    \draw (29, -1) node[below] {E} (29, -2) node[below] {C};
    \draw[fill] (31, 0) circle(1mm) -- (31.5,1.5) node[left] {$2$} -- (32, 3) circle(1mm) (32, 6) circle(1mm) node[above] {$1$};
    \draw[fill] (33, 0) circle(1mm) -- (32.5,1.5) node[right] {$2$} -- (32, 3);
    \draw (32, 3) .. controls (32.3, 4) and (32.3, 5) .. (32, 6);
    \draw (32, 3) .. controls (31.7, 4) and (31.7, 5) .. (32, 6);
    \draw (32, -1) node[below] {E} (32, -2) node[below] {D};
    \draw[fill] (34, 0) circle(1mm) -- (34.25,1.5) node[left] {$2$} -- (34.5, 3) circle(1mm) -- (34.75, 4.5) node[left] {$2$} -- (35, 6) circle(1mm) node[above] {$2$};
    \draw[fill] (36, 0) circle(1mm) -- (35.75, 1.5) node[right] {$2$} -- (35.5,3) circle(1mm) -- (35.25, 4.5) node[right] {$2$} -- (35, 6);
    \draw (35, -1) node[below] {E} (35, -2) node[below] {E};
    \draw[fill] (37, 0) circle(1mm) -- (37.5,1.5) node[left] {$2$} -- (38, 3) circle(1mm) -- (38, 4.5) node[left] {$3$} -- (38, 6) circle(1mm) node[above] {$2$};
    \draw[fill] (39, 0) circle(1mm) -- (38.5,1.5) node[right] {$2$} -- (38, 3);
    \draw (38, -1) node[below] {E} (38, -2) node[below] {F};
    \draw[fill] (40, 0) circle(1mm) -- (40, 1.5) node[left] {$3$} -- (40, 3) circle(1mm) node[above] {$2$};
    \draw (40, -1) node[below] {F} (40, -2) node[below] {A};
    \draw[fill] (42, 0) circle(1mm) -- (42, 1.5) node[left] {$3$} -- (42, 3) circle(1mm) node[left] {$1$} -- (42, 6) circle(1mm) node[above] {$1$};
    \draw (42, -1) node[below] {F} (42, -2) node[below] {B};
    \draw[fill] (45, 0) circle(1mm) -- (45, 1.5) node[left] {$3$} -- (45, 3) circle(1mm) -- (44, 6) circle(1mm) node[above] {$1$};
    \draw[fill] (45, 3) -- (46, 6) circle(1mm) node[above] {$1$};
    \draw (45, -1) node[below] {F} (45, -2) node[below] {C};
    \draw[fill] (48, 0) circle(1mm) -- (48, 1.5) node[left] {$3$} -- (48, 3) circle(1mm) (48, 6) circle(1mm) node[above] {$1$};
    \draw (48, 3) .. controls (48.3, 4) and (48.3, 5) .. (48, 6);
    \draw (48, 3) .. controls (47.7, 4) and (47.7, 5) .. (48, 6);
    \draw (48, -1) node[below] {F} (48, -2) node[below] {D};
    \draw[fill] (50, 0) circle(1mm) -- (50, 1.5) node[left] {$3$} -- (50, 3) circle(1mm) -- (50, 4.5) node[left] {$3$} -- (50, 6) circle(1mm) node[above] {$2$};
    \draw (50, -1) node[below] {F} (50, -2) node[below] {F};
  \end{tikzpicture}
  \caption{The tripartite graphs in genus two, part 2.}
  \label{fig:g2graph2}
\end{figure}
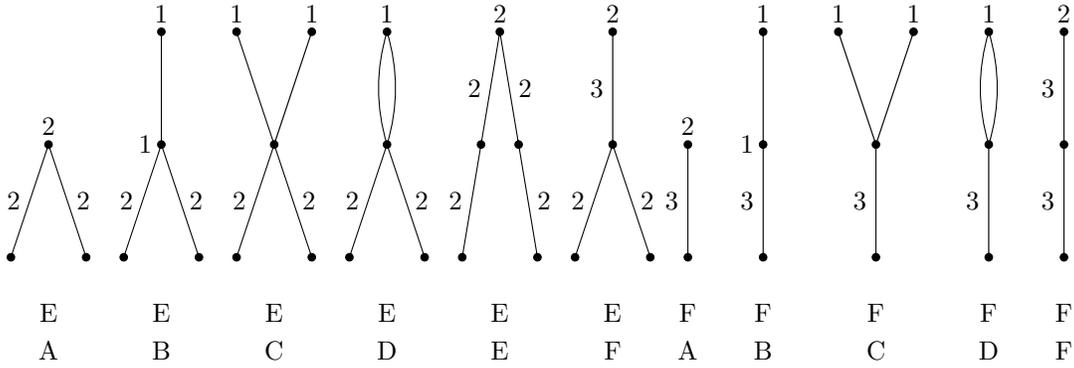
}

We now combine the comparison of Theorem~\ref{thm:compare}, and more
generally, the definition of the extended quintic family in
Section~\ref{sec:loc:extended}, with the localization formula of
Section~\ref{sec:loc:full} to the computation of
$[Q_{g, n, \beta}^{\mathbf c}]^\vir$.

The comparison formula is already a sum over bipartite graphs.
Applying localization to
$p_*([\M_{g(v), n(v)}(\PP^4, \cO(5), \beta(v), \nu(v))]^\vir)$ for
each $v \in V_0(\Gamma)$, we arrive at a sum over graphs with three
different types of vertices.

The decorations of such a graph $\Gamma$ consist of
\begin{itemize}
\item the tripartite structure $V(\Gamma) = V_l(\Gamma) \sqcup V_m(\Gamma) \sqcup V_u(\Gamma)$
\item a genus mapping $g\colon V(\Gamma) \to \ZZ_{\ge 0}$
\item a curve class mapping $\beta\colon V(\Gamma) \to \mathrm{Pic}(X)^\vee$
\item a fiber class mapping $\delta\colon E(\Gamma) \to \ZZ_{\ge 1}$
\item a distribution of markings
  $m\colon \{1, \dotsc, n\} \to V(\Gamma)$
\end{itemize}
When $v \in V_l(\Gamma)$ (respectively, $v \in V_m(\Gamma)$,
$v \in V_u(\Gamma)$), we say that $v$ is in the lower (respectively,
middle, upper) level.
Edges must always connect vertices of different levels.
We can therefore decompose
$E(\Gamma) = E_{lm}(\Gamma) \sqcup E_{lu}(\Gamma) \sqcup
E_{mu}(\Gamma)$ according to which levels are connected by an edge.
For $v \in V_m(\Gamma) \sqcup V_u(\Gamma)$, we set $\mu(v)$
(respectively, $\nu(v)$) to be the partitions formed by the
$\delta(e)$ for all $e$ at $v$ connecting to a lower (respectively,
upper) level.
Note that if $v \in V_u(\Gamma)$, then $\nu(v) = \emptyset$.
They need to satisfy the constraints:
\begin{itemize}
\item $E_{lu}(\Gamma) = \emptyset$
\item $\sum_v g(v) + h^1(\Gamma) = g$
\item $\sum_v \beta(v) = \beta$
\item
  $\forall v \in V_m(\Gamma) \sqcup V_u(\Gamma): |\mu(v)| - |\nu(v)| =
  2g(v) - 2 + n(v) - \int_{\beta(v)} c_1(X)$
\item If for $v \in V(\Gamma)$, we have
  $(g(v), n(v), \beta(v)) = (0, 1, 0)$, then the unique incident edge
  $e$ has $\delta(e) > 1$.
\end{itemize}
We have listed all tripartite graphs in the case that
$(g, n) = (2, 0)$ (and we use a stable quotient theory) in
Figure~\ref{fig:g2graph1} and \ref{fig:g2graph2}.

We define $M_\Gamma$, $\widetilde M_\Gamma$ and
\begin{equation*}
  \iota_\Gamma\colon \widetilde M_\Gamma \to \M_{g, n}(\PP^4, \beta)
\end{equation*}
analogously as in Section~\ref{sec:loc:formula}.
There is an additional case of an unstable vertex $v$, that is, when
$v \in V_m(\Gamma)$ and $(g(v), n(v), \beta(v)) = (0, 2, 0)$.

We can now write $p_*([\M_{g,n}(\PP^4, \cO(5), \beta, \emptyset)]^\red)$ as
a sum over decorated tripartite graphs:
\begin{equation*}
  [Q_{g, n, \beta}^{\mathbf c}]^\vir
  = \lim_{\lambda \to 0} \sum_\Gamma \frac 1{|\Aut(\Gamma)|} \iota_{\Gamma*} \Delta^! (C_\Gamma^3),
\end{equation*}
where
\begin{multline*}
  C_\Gamma^3 = \prod_{v \in V_l(\Gamma)} \frac{(-1)^{1 - g(v) + 5\beta(v)}}{\prod_{e\text{ at }v} (\frac{-\ev_e^*(5H)}{\delta(e)} - \psi_e)} \cap [\M_{g(v), n(v)}(\PP^4, \beta(v))]^{\vir, \lambda} \\
  \prod_{v \in V_m(\Gamma)} p_*\left(\frac 1{-c_1(L_{\min})} [\M_{g(v)}^\sim(\PP^4, \cO(5), \beta(v), \mu(v), \nu(v))]^\vir_{(\CC^*)^5}\right) \\
  \prod_{v \in V_u(\Gamma)} c_{g(v), \beta(v), \mu(v)}
  \prod_{e \in E_{lm}(\Gamma)} \frac 1{\delta(e) \prod_{i = 1}^{\delta(e)}
    \frac{-i\ev_e^*(5H)}{\delta(e)}}
  \prod_{e \in E_{mu}(\Gamma)} \delta(e).
\end{multline*}
Here the notations are similar to those in
Section~\ref{sec:loc:formula}, as are the contributions of unstable
vertices $v \in V_l(\Gamma)$ and $v \in V_m(\Gamma)$.

{\footnotesize
\begin{figure}
  \centering
  \begin{tikzpicture}[scale=0.5]
    \draw[fill] (0, -0.5) -- (0, 0) circle(1mm) node[above] {$1$};
    \draw[fill] (2, 2.5) --  (2, 3) circle(1mm) node[above] {$1$};

    \draw[fill] (5, -0.5) -- (5, 0) circle(1mm) node[above] {$2$};
    \draw[fill] (7, -0.5) -- (7, 0) circle(1mm) node[left] {$1$} -- (7, 3) circle(1mm) node[above] {$1$};
    \draw[fill] (10, -0.5) -- (10, 0) circle(1mm) -- (11, 3) circle(1mm) node[above] {$1$};
    \draw[fill] (10, 0) -- (9, 3) circle(1mm) node[above] {$1$};
    \draw (13, 0) .. controls (12.7, 1) and (12.7, 2) .. (13, 3);
    \draw (13, 0) .. controls (13.3, 1) and (13.3, 2) .. (13, 3);
    \draw[fill] (13, -0.5) -- (13, 0) circle(1mm) (13, 3) circle(1mm) node[above] {$1$};
    \draw[fill] (15, 2.5) --  (15, 3) circle(1mm) node[above] {$2$};
  \end{tikzpicture}
  \caption{Bipartite graphs for $(g, \mu) = (1, (1))$, $(2, (3))$}
  \label{fig:comp1}
\end{figure}
\begin{figure}
  \centering
  \begin{tikzpicture}[scale=0.5]
    \draw[fill] (4.8, -0.5) -- (5, 0) circle(1mm) node[above] {$1$};
    \draw (5.2, -0.5) -- (5, 0);
    \draw[fill] (6.8, -0.5) -- (7, 0) circle(1mm) -- (7, 3) circle(1mm) node[above] {$1$};
    \draw (7.2, -0.5) -- (7, 0);
    \draw[fill] (8.8, 2.5) -- (9, 3) circle(1mm) node[above] {$1$};
    \draw (9.2, 2.5) -- (9, 3);

    \draw[fill] (11.8, -0.5) -- (12, 0) circle(1mm) node[above] {$2$};
    \draw (12.2, -0.5) -- (12, 0);
    \draw[fill] (13.8, -0.5) -- (14, 0) circle(1mm) node[left] {$1$} -- (14, 3) circle(1mm) node[above] {$1$};
    \draw (14.2, -0.5) -- (14, 0);
    \draw[fill] (16.8, -0.5) -- (17, 0) circle(1mm) -- (18, 3) circle(1mm) node[above] {$1$};
    \draw[fill] (17.2, -0.5) -- (17, 0) -- (16, 3) circle(1mm) node[above] {$1$};
    \draw (20, 0) .. controls (19.7, 1) and (19.7, 2) .. (20, 3);
    \draw (20, 0) .. controls (20.3, 1) and (20.3, 2) .. (20, 3);
    \fill (20, 0) circle(1mm) (20, 3) circle(1mm) node[above] {$1$};
    \draw (19.8, -0.5) -- (20, 0) -- (20.2, -0.5);
    \draw[fill] (21.8, -0.5) -- (22, 0) circle(1mm) -- (22, 1.5) node[left] {$2$} -- (22, 3) circle(1mm) node[above] {$2$};
    \draw (22.2, -0.5) -- (22, 0);
    \draw[fill] (23.8, 2.5) -- (24, 3) circle(1mm) node[above] {$2$};
    \draw (24.2, 2.5) -- (24, 3);
  \end{tikzpicture}
  \caption{Bipartite graphs for $(g, \mu) = (1, (1, 1))$, $(2, (2, 2))$}
  \label{fig:comp2}
\end{figure}
}

We now rewrite the formula as a sum over bipartite graphs by merging
the middle and upper levels.
The result is a sum over decorated graphs as in
Section~\ref{sec:loc:formula}:
\begin{equation}
  \label{eq:quinticlambda}
  [Q_{g, n, \beta}^{\mathbf c}]^\vir
  = \lim_{\lambda \to 0}\sum_\Gamma \frac 1{|\Aut(\Gamma)|} \iota_{\Gamma*} \Delta^! (C_\Gamma^2),
\end{equation}
where
\begin{multline*}
  C_\Gamma^2 = \prod_{v \in V_0(\Gamma)} \frac{(-1)^{1 - g(v) + 5\beta(v)}}{\prod_{e\text{ at }v} (\frac{-\ev_e^*(5H)}{\delta(e)} - \psi_e)} \cap [\M_{g(v), n(v)}(\PP^4, \beta(v))]^{\vir, \lambda} \\
  \prod_{v \in V_\infty(\Gamma)} C_{g(v), \beta(v), \mu(v)}
  \prod_{e \in E(\Gamma)} \frac 1{\delta(e) \prod_{i = 1}^{\delta(e)}
    \frac{-i\ev_e^*(5H)}{\delta(e)}}
\end{multline*}
for classes $C_{g, \beta, \mu} \in H_*(\M_{g, \mu}(\PP^4, \beta))$
that themselves can be written as a sum over a class of bipartite
decorated graphs $\Gamma$ that we will describe now.

The decorations are given by:
\begin{itemize}
\item the bipartite structure $V(\Gamma) = V_0(\Gamma) \sqcup V_\infty(\Gamma)$
\item a genus mapping $g\colon V(\Gamma) \to \ZZ_{\ge 0}$
\item a curve class mapping $\beta\colon V(\Gamma) \to \mathrm{Pic}(X)^\vee$
\item a fiber class mapping $\delta\colon E(\Gamma) \to \ZZ_{\ge 1}$
\item a distribution of markings $m\colon \mu \to V_0(\Gamma)$
\end{itemize}
We set for all $v \in V_0(\Gamma)$ that $\mu(v) = |m^{-1}(\mu)|$ and
that $\nu(v)$ is the partition formed by $\delta(e)$ for all edges $e$
at $v$.
For all $v \in V_\infty(\Gamma)$, the partition formed by $\delta(e)$
for all edges $e$ at $v$ is denoted by $\mu(v)$, while we set that
$\nu(v) = \emptyset$.

We require the decorated graphs to satisfy the following conditions:
\begin{itemize}
\item $\sum_v g(v) + h^1(\Gamma) = g$
\item $\sum_v \beta(v) = \beta$
\item
  $\forall v \in V_0(\Gamma): |\mu(v)| - |\nu(v)| = 2g(v) - 2 + n(v) -
  5 \beta(v)$
\item There are no vertices $v \in V(\Gamma)$ such that
  $(g(v), n(v), \beta(v)) = (0, 1, 0)$.
\end{itemize}
We note that the conditions imply that $\mu$ is a partition of
$2g - 2 - 5\beta$.

Then, we can define $C_{g, \beta, \mu}$ via
\begin{equation}
  \label{eq:lambdaloc}
  C_{g, \beta, \mu}
  = \sum_\Gamma \frac 1{|\Aut(\Gamma)|} \iota_{\Gamma*} \Delta^! (C_\Gamma^{mu}),
\end{equation}
where
\begin{multline*}
  C_\Gamma^{mu} =
  \prod_{v \in V_0(\Gamma)} p_*\left(\frac 1{-c_1(L_{\min})} [\M_{g(v)}^\sim(\PP^4, \cO(5), \beta(v), \mu(v), \nu(v))]^\vir_{(\CC^*)^5}\right) \\
  \prod_{v \in V_\infty(\Gamma)} c_{g(v), \beta(v), \mu(v)}
  \prod_{e \in E(\Gamma)} \delta(e).
\end{multline*}

All in all, we have found a formula \eqref{eq:quinticlambda}
expressing the extended quintic virtual cycle in terms of the formal
quintic and cycles \eqref{eq:lambdaloc} that depend on the chosen
element of the extended quintic family.
\begin{remark}
  \label{rmk:fullquintic}
  We could further extend the quintic family by allowing the
  $C_{g, \beta, \mu}$ to be arbitrary cycles subject to the constraint
  that $C_{g, \beta, \mu}$ is only nonzero if $\mu$ is a partition of
  $2g - 2 - 5\beta$.
\end{remark}

\section{The combinatorial structure theorem}
\label{sec:Rmatrixaction}

In Section~\ref{sec:loc:extended}, we introduced the extended quintic
family.
It contains the quintic and holomorphic GLSM theories.
In this section, we prove a structural theorem for the extended
quintic family, which sets the stage for the proof of the main
theorems.

\subsection{Formal quintic CohFT}
\label{sec:twisted-CohFT}

In Section~\ref{sec:twisted-vc} we introduced the $\lambda$-twisted
virtual cycle $[\M_{g, n}(\PP^4, \beta)]^{\vir, \lambda}$.
We now define the corresponding cohomological field theory (CohFT).
The state space is $\H = H^*(\PP^4)$, the pairing is given by
\begin{equation*}
  (\gamma_1, \gamma_2)^\lambda
  = \Big(\int_{\PP^4, (\CC^*)^5} e(\cO(5)) \gamma_1\gamma_2\Big) \Big|_{\lambda_i = \zeta^i \lambda},
\end{equation*}
and the unit is $\mathbf 1$, the unit in cohomology.
We now define the $\lambda$-twisted cohomological field theory
$\Omega^\lambda$ by
\begin{equation*}
  \Omega_{g, n}^\lambda(\gamma_1, \dotsc, \gamma_n)
  = \sum_{\beta = 0}^\infty q^\beta \rho_*\left(\prod_{i = 1}^n \ev_i^*(\gamma_i) \cap [\M_{g, n}(\PP^4, \beta)]^{\vir, \lambda}\right),
\end{equation*}
where $\rho\colon \M_{g, n}(\PP^4, \beta) \to \M_{g, n}$ is the
forgetful map.
More generally, for any (formal) $\tau \in \H$, we may define the
$\tau$-shifted $\lambda$-twisted cohomological field theory
$\Omega^\lambda$ by
\begin{equation*}
  \Omega_{g, n}^{\lambda, \tau}(\gamma_1, \dotsc, \gamma_n)
  = \sum_{\beta, k = 0}^\infty \frac{q^\beta}{k!} \rho_*\pi_*^k\left(\prod_{i = 1}^n \ev_i^*(\gamma_i) \prod_{i = n + 1}^{n + k} \ev_i^*(\tau) \cap [\M_{g, n + k}(\PP^4, \beta)]^{\vir, \lambda}\right),
\end{equation*}
where
$\pi^k\colon \M_{g, n + k}(\PP^4, \beta) \to \M_{g, n}(\PP^4, \beta)$
is the forgetful map.

The ($\tau$-shifted) $\lambda$-twisted cohomological field theory is
semi-simple.
Hence there exists a basis $\{e_\alpha\}$ of idempotents, and the
Givental--Teleman reconstruction theorem \cite{Te12, PPZ15} applies.
This theorem determines $\Omega^\lambda$ from its degree-zero part
(topological field theory) $\omega$, and an $R$-matrix
\begin{equation*}
  R_\tau(z) \in \End(\H) \otimes \QQ(\lambda)[[q, z]]
\end{equation*}
satisfying the symplectic condition
\begin{equation*}
  R_\tau(z) R^*_\tau(-z) = \mathrm{Id}.
\end{equation*}

To motivate the extended $R$-matrix action, we recall the statement of
the Givental--Teleman theorem.
Let $G_{g,n}$ be the set of stable graphs of genus $g$ with vertices
and $n$ legs.
For any $\Gamma$ in $G_{g,n}^{\infty}$ or $G_{g, n}$, there is a
corresponding gluing map
\begin{equation*}
  \iota_\Gamma\colon \M_\Gamma := \prod_v \M_{g(v), n(v)} \to \M_{g, n}.
\end{equation*}

We then, for each $\Gamma \in G_{g, n}$, define the contribution
\begin{equation*}
  \Cont_\Gamma\colon \H^{\otimes n} \to \M_\Gamma
\end{equation*}
by the following contraction of tensors
\begin{enumerate}
\item at each flag $f=(v, l)$ with leg insertion $\gamma$, we place
  $R^{-1}_\tau(z_f)\gamma$;
\item at each edge $e$ connecting two vertices $v_1,v_2$, we place
  \begin{equation*}
    \frac{\sum_\alpha e_\alpha \otimes e^\alpha - R^{-1}_\tau(z_{f_1}) e_\alpha \otimes R^{-1}_\tau(z_{f_2}) e^\alpha}{z_{f_1} + z_{f_2}}
  \end{equation*}
  where $f_1:=(v_1,e), f_2:=(v_2,e)$ and $\{e^\alpha\}$ is the dual
  basis to $\{e_\alpha\}$, and
\item at each vertex $v$ with $m$ flags $f_1, \dotsc, f_m$, we place the map
  \begin{multline*}
    \gamma_1(z_{f_1}) \otimes \dotsb \otimes \gamma_m(z_{f_m}) \mapsto
    T\omega_{g(v), m} (\gamma_1(\bar\psi_1)\otimes \cdots \otimes \gamma_m(\bar\psi_m) ) \\
    := \sum_{k = 0}^\infty \frac 1{k!} \pi_{k*}\omega_{g(v), m + k} \left(\gamma_1(\bar\psi_1)\otimes \cdots \otimes \gamma_m(\bar\psi_m) \otimes \bigotimes_{i = 1}^k T(\psi_{m + i})\right),
  \end{multline*}
  where
  \begin{equation*}
    T(z) = z(\mathbf 1 - R^{-1}_\tau(z) \mathbf 1),
  \end{equation*}
  and the $\bar\psi$ are the (ancestor) psi classes.
\end{enumerate}
\begin{theorem}[Givental--Teleman]
  \begin{equation*}
    \Omega_{g, n}^\lambda
    = \sum_{\Gamma\in G_{g,n}} \frac{1}{|\Aut(\Gamma)|}  \iota_{\Gamma*} \Cont_\Gamma
  \end{equation*}
\end{theorem}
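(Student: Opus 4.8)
\emph{Approach.} This statement is Teleman's classification of semisimple cohomological field theories, and the plan is to prove it by promoting Givental's genus-zero reconstruction to all genera via an induction on $2g-2+n$. Write $R_\tau\cdot\omega$ for the graph sum $\sum_{\Gamma\in G_{g,n}}\frac{1}{|\Aut(\Gamma)|}\iota_{\Gamma*}\Cont_\Gamma$ attached to the topological field theory $\omega$ and to a symplectic matrix $R_\tau(z)$ as in the statement (with the translation $T(z)=z(\mathbf 1-R_\tau^{-1}(z)\mathbf 1)$ absorbed at the vertices). The first step is purely formal: using only $R_\tau(z)R_\tau^*(-z)=\mathrm{Id}$, the shape of the edge bivector, and the standard behavior of $\psi$- and $\kappa$-classes under the forgetful maps $\M_{g,n+1}\to\M_{g,n}$, one checks that $R_\tau\cdot\omega$ is again a CohFT, that $R\mapsto R\cdot(-)$ defines a group action, and --- most importantly --- that restricting $R_\tau\cdot\omega$ along any boundary gluing map $\M_{g-1,n+2}\to\M_{g,n}$ or $\M_{g_1,n_1+1}\times\M_{g_2,n_2+1}\to\M_{g,n}$ is computed by the same graph-sum recipe applied to the lower-complexity pieces and contracted with the edge bivector. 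Thus the right-hand side satisfies the CohFT axioms and restricts along the boundary exactly as any CohFT must.

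The second step pins down $R_\tau$ from genus zero. Since the ($\tau$-shifted) $\lambda$-twisted theory is semisimple, its genus-zero part is a germ of a semisimple Frobenius manifold; let $\{e_\alpha\}$ be the idempotents and $u^\alpha$ the canonical coordinates. Givental's genus-zero theory produces a unique $R_\tau(z)\in\End(\H)\otimes\QQ(\lambda)[[q,z]]$ with $R_\tau(0)=\mathrm{Id}$ and $R_\tau(z)R_\tau^*(-z)=\mathrm{Id}$ --- obtained from the stationary-phase asymptotics at the semisimple point of the fundamental solution $S_\tau(z)$ of the quantum differential equation, equivalently from the $S$-matrix of the theory --- with the property that $R_\tau\cdot\omega$ reproduces $\Omega^\lambda_{0,n}$ for all $n$. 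This fixes the only free input in the right-hand side.

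It now remains to prove $\Omega^\lambda_{g,n}=(R_\tau\cdot\omega)_{g,n}$ for all $(g,n)$, by induction on $2g-2+n$, the genus-zero case and the small base cases being settled. Both classes are tautological: for $R_\tau\cdot\omega$ this is evident from the graph sum, and for $\Omega^\lambda$ it follows from $\CC^*$-localization on $\M_{g,n}(\PP^4,\beta)$, whose fixed loci contribute tautological classes since $\PP^4$ is toric. By the inductive hypothesis the two classes coincide for all smaller $2g'-2+n'$, so --- using the gluing axioms, which the right-hand side satisfies by the first step --- their pullbacks along every boundary divisor of $\M_{g,n}$ agree, and by the unit axiom they agree after setting the last insertion to $\mathbf 1$. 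Hence $D:=\Omega^\lambda_{g,n}-(R_\tau\cdot\omega)_{g,n}$ is a tautological class that restricts to zero on every boundary divisor of $\M_{g,n}$, so modulo the boundary it is a polynomial in $\psi$- and $\kappa$-classes.

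The hard part is to show this polynomial vanishes, and this is the technical heart of Teleman's argument. One invokes the Getzler--Ionel (and Graber--Vakil) vanishing: on $\M_{g,n}$ with $n\ge 1$, every monomial in $\psi$- and $\kappa$-classes of cohomological degree $\ge g$ lies in the boundary ideal, so the interior part of $D$ lives in degrees $<g$; combined with the unit axiom, which forces this part to pull back compatibly under the forgetful maps, and with the genus-zero and lower-genus data already matched, the interior part of $D$ is pinned down to zero, hence $D=0$. The remaining cases with $n=0$ reduce to $n\ge 1$ via forgetful/unit compatibility (or the $n=0$ form of the vanishing). This closes the induction. I expect essentially all the real difficulty to sit in this last step --- controlling the interior $\psi$ and $\kappa$ contributions via the tautological-ring vanishing --- whereas the first three steps are either formal or a citation of the genus-zero theory.
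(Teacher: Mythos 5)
The paper does not actually prove this statement: it is recorded as the Givental--Teleman classification and used as a black box with a citation to \cite{Te12, PPZ15}, so your proposal has to be judged as an attempted reproof of Teleman's theorem rather than against an argument in the text. Your first two steps (the graph sum defines a CohFT compatible with the gluing maps; the $R$-matrix is determined from the genus-zero/semisimple Frobenius structure via the $S$-matrix asymptotics) are consistent with how the theorem is set up, modulo the minor point that uniqueness of $R_\tau$ from genus zero needs either the Euler grading or a normalization choice.

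The genuine gap is in your final step. Setting $D:=\Omega^\lambda_{g,n}-(R_\tau\cdot\omega)_{g,n}$, induction gives $\iota^*D=0$ for all boundary gluing maps, but the Getzler--Ionel/Graber--Vakil vanishing only says that interior $\psi$--$\kappa$ monomials of degree $\ge g$ are supported on the boundary; it gives no control whatsoever on the interior part of $D$ in degrees $<g$, and the unit/forgetful compatibility merely relates different numbers of markings -- it does not force a boundary-vanishing class to be zero (nor does $\iota^*D=0$ even exclude nonzero boundary-pushforward summands of $D$, since restriction is not injective on such classes). A telling symptom is that your vanishing step never invokes semisimplicity, yet the classification is false for non-semisimple CohFTs: genus-zero data plus the TQFT do not determine the higher-genus classes in general, so no argument of this shape can be complete. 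Teleman's actual proof of precisely this step rests on Harer stability and the Madsen--Weiss theorem (Mumford's conjecture), which identify the stable-range cohomology of $\M_{g,n}$ as freely generated by $\kappa$- and $\psi$-classes, combined with a stabilization trick that uses semisimplicity (invertibility of the handle-gluing element) to transport the identity from large genus, where the stable range applies, back to arbitrary $(g,n)$. Your sketch replaces this technical heart with an appeal that does not suffice, so the proposed proof does not go through.
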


For later reference, we recall the definition of the twisted
$J$-function and the (inverse) $S$-matrix.
We allow for a more general shift $\bt(z) \in \H[[t]]$:
\begin{equation*}
  J_\bt(z)
  := z\mathbf 1 + \bt(-z) + \sum_{\beta, k = 0}^\infty \frac{q^\beta}{k!} \ev_{1*} \left(\frac 1{z - \psi_1} \prod_{i = 2}^{k + 1} \ev_i^*(\bt(\psi_i)) \cap [\M_{0, k + 1}(\PP^4, \beta)]^{\vir, \lambda}\right)
\end{equation*}
\begin{equation*}
  S^{-1}_\bt(z) \gamma
  := \gamma + \sum_{\beta, k = 0}^\infty \frac{q^\beta}{k!} \ev_{1*} \left(\frac 1{-z - \psi_1} \ev_2^*(\gamma) \prod_{i = 3}^{k + 2} \ev_i^*(\bt(\psi_i)) \cap [\M_{0, k + 2}(\PP^4, \beta)]^{\vir, \lambda}\right)
\end{equation*}

\subsection{The extended quintic family as a generalized $R$-matrix action}
\label{NewRaction}

We use the extended quintic virtual cycle
$[Q_{g, n, \beta}^{\mathbf c}]^\vir$ to define ($\bt$-shifted)
extended quintic classes for any $\gamma_1, \dotsc, \gamma_n \in \H$.
We use a slightly artificial definition in order to avoid making
assumptions on the effective constants.
We define
\begin{multline*}
  \Omega_{g, n}^{\mathbf c, \bt} (\gamma_1, \dotsc, \gamma_n) \\
  := (I_0)^{2g - 2 + n} \sum_{\beta, k = 0}^\infty \frac{q^\beta}{k!} (-1)^{1 - g + 5\beta} \rho_* \pi_*^k\left(\prod_{i = 1}^n \ev_i^*(\gamma_i) \prod_{i = n + 1}^{n + k} \ev_i^*(\bt(\psi_i)) \cap [Q_{g, n + k, \beta}^{\mathbf c}]^\vir\right),
\end{multline*}
where we use the shift
\begin{equation*}
  \bt = z (1 - I_0(q)) + I_1(q) H,
\end{equation*}
and where $I_0(q)$ and $I_1(q)$ are as in the introduction.
The choice is motivated by the Wall-Crossing Theorem \cite{CiKi16P}
(see also \cite{CJR17P1}).
The extended quintic classes are related to the quintic CohFT via
\begin{equation*}
  \Omega_{g, n}(\gamma_1, \dotsc, \gamma_n)
  = \Omega_{g, n}^{\mathbf c^\eff, \bt} (\gamma_1, \dotsc, \gamma_n)
\end{equation*}
under the mirror transformation between $q$ and $Q$.

Although the extended quintic classes do not form a CohFT, in this
section, we will express them in terms of a generalized $R$-matrix
action.
In the following sections, we will mostly work with the related shift
\begin{equation*}
  \tau = \frac{I_1(q)}{I_0(q)} H.
\end{equation*}

Before stating the generalized $R$-matrix action, we need to setup
some notation.
First, let $G_{g,n}^{\infty}$ be the set of stable graphs in
$G_{g, n}$ with the decorations:
\begin{itemize}
\item for each vertex $v$, we assign a label $0$ or $\infty$;
\item for each flag $f=(v,e)$ or $f=(v,l)$ where $v$ labeled by $\infty$, we assign a degree $\delta_f \in \ZZ_{\geq 1}$.
\end{itemize}
Then, if $v$ is a vertex of a graph $\Gamma \in G^\infty_{g, n}$, and
$\gamma \in \H$, we write
\begin{equation*}
  \sR_v^{-1}(z_f)\gamma :=
  \begin{cases}
    R^{-1}_\tau(z_f) \gamma & \text{if $v$ is labeled by $0$ }, \\
    S^{-1}_\tau(z_f) \, \gamma  & \text{if $v$ is labeled by $\infty$ }.
  \end{cases}
\end{equation*}

To define the generalized $R$-matrix action, define for each
$\Gamma \in G^\infty _{g,n}$ a contribution
$$
\Cont_\Gamma\colon  \H^{\otimes n} \rightarrow \M_\Gamma
$$
by the following construction:
\begin{enumerate}
\item at each flag $f=(v,l)$ with leg insertion $\gamma$, we place
  $\sR_v^{-1}(z_f)\gamma$;
\item at each edge $e$ connecting two vertices $v_1,v_2$, we place
  \begin{equation*}
    \mathscr V_{f_1,f_2}(z_{f_1} ,z_{f_2}):=\frac{\sum_\alpha  \delta_{v_1v_2} e_\alpha \otimes e^\alpha- \sR_{v_1}^{-1}(z_{f_1} )e_\alpha \otimes   \sR_{v_2}^{-1}(z_{f_2} )e^\alpha}{z_{f_1} +z_{f_2} }
  \end{equation*}
  where $f_1:=(v_1,e), f_2:=(v_2,e)$ and
  \begin{equation*}
    \delta_{v_1,v_2} =
    \begin{cases}
      1 & \text{ $v_1, v_2$ have both label $0$}, \\
      0  & \text{ otherwise };
    \end{cases}
  \end{equation*}
\item at each vertex $v$ with $m$ flags $f_1, \dotsc, f_m$, we place
  the map
  \begin{multline*}
    \gamma_1(z_{f_1})\otimes \cdots \otimes\gamma_m(z_{f_m}) \mapsto \\
    \begin{cases}
      T\omega^{\lambda}_{g(v), m} ( \gamma_1(\bar\psi_1)\otimes \cdots
      \otimes\gamma_m(\bar\psi_m) ) & \text{ $v$ is labeled by $0$ },\\
      (I_0)^{2g(v) - 2 + \ell(\mu)} J \Omega^{\infty, \mathbf c}_{g(v), \mu} ( \gamma_1(5H/\mu_1)\otimes \cdots
      \otimes\gamma_m(5H/\mu_m)) & \text{ $v$ is labeled by $\infty$ } .
    \end{cases}
  \end{multline*}
  where $\mu$ is the partition of length $m$ formed by $\delta_f$ for
  all flags $f$, where
  \begin{equation*}
    \Omega^{\infty, \mathbf c}_{g, \mu}\colon \H^{\otimes \ell(\mu)} \to H^*(\M_{g, \ell(\mu)})
  \end{equation*}
  is a multilinear form determined from the effective invariants such
  that
  \begin{equation*}
    \Omega^{\infty, \mathbf c}_{g, \mu} = 0 \quad \text{for} \quad {2g-2} + \ell(\mu) - 5\,|\mu| < 0,
  \end{equation*}
  and where
  \begin{equation*}
    J \Omega^{\infty, \mathbf c}_{g, \mu}(\gamma_1\otimes \cdots
    \otimes\gamma_m)
    = \sideset{}{'}\sum_{\mu'} \frac 1{|\Aut(\mu')|} \Omega^{\infty, \mathbf
      c}_{g,\mu + \mu'} \left(\gamma_1 \otimes \cdots
    \otimes\gamma_m \otimes \bigotimes_{i=1}^{\ell(\mu')}
    J_\bt(-5H/\mu'_i)\right),
  \end{equation*}
  in which $\sum'$ indicates that the sum is only over all partitions
  $\mu'$ with no part equal to one.
\end{enumerate}
\begin{remark}
  By Remark~\ref{rmk:invertible}, the substitution
  $z_i = \frac{5H}{\mu_i}$ is well-defined.
  Also, note that the sum in the definition of
  $J \Omega^{\infty, \mathbf c}$ has only finitely many nonzero terms.
\end{remark}
\begin{remark}
  The formal definition of $\Omega^{\infty, \mathbf c}$ is in
  Section~\ref{sec:rmatrix1}.
\end{remark}
\begin{theorem}
  \label{thm:Raction}
  \begin{equation*}
    \Omega_{g, n}^{\mathbf c, \bt}
    = \lim_{\lambda \to 0} \sum_{\Gamma\in G_{g,n}^\infty} \frac{1}{|\Aut(\Gamma)|}  \iota_{\Gamma*}(\Cont_\Gamma)
  \end{equation*}
\end{theorem}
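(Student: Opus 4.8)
The plan is to substitute the Givental--Teleman reconstruction of the formal quintic cohomological field theory into the localization formula \eqref{eq:quinticlambda} and reorganize the resulting iterated graph sum into a single sum over $G_{g,n}^\infty$. Formula \eqref{eq:quinticlambda} already presents $[Q_{g,n,\beta}^{\mathbf c}]^\vir$ as a sum over bipartite graphs: every $v\in V_0(\Gamma)$ carries the $\lambda$-twisted virtual cycle of $\M_{g(v),n(v)}(\PP^4,\beta(v))$, every $v\in V_\infty(\Gamma)$ carries the class $C_{g(v),\beta(v),\mu(v)}$ which is itself the inner graph sum \eqref{eq:lambdaloc} over rubber cycles and effective constants, and every edge carries the localization factor $\frac{1}{\delta(e)\prod_{i=1}^{\delta(e)}\frac{-i\,\ev_e^*(5H)}{\delta(e)}}$. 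First I would feed in the shift $\bt = z(1-I_0)+I_1H$, push forward along $\rho$ and the forgetful maps $\pi^k$, and distribute the $k$ auxiliary markings over the vertices; the prefactors $(I_0)^{2g-2+n}$ and $(-1)^{1-g+5\beta}$ in the definition of $\Omega_{g,n}^{\mathbf c,\bt}$ combine with the signs already present in $C_\Gamma^2$. A string/dilaton-type manipulation then absorbs the dilaton-direction component $z(1-I_0)\mathbf 1$ of $\bt$ into scalar prefactors $(I_0)^{2g(v)-2+n(v)}$ at each $V_0$-vertex and $(I_0)^{2g(v)-2+\ell(\mu(v))}$ at each $V_\infty$-vertex, and replaces $\bt$ throughout by $\tau=(I_1/I_0)H$.

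Next, at each $V_0$-vertex I would apply the Givental--Teleman theorem to the $\tau$-shifted $\lambda$-twisted CohFT $\Omega^{\lambda,\tau}$, which is semisimple: this produces a sum over stable graphs with $R_\tau^{-1}(z)$ at legs, the usual Givental propagator at internal edges, and the topological vertex contribution $T\omega^\lambda$. The edge factors $\frac{1}{-5H/\delta(e)-\psi_e}$ appearing at flags that join a $V_0$-vertex to a $V_\infty$-vertex are descendant, not ancestor, insertions; applying the ancestor--descendant correspondence, i.e. resumming the $\bt$-strings on the $V_0$-side against $\frac{1}{-5H/\delta-\psi}$, converts them into the inverse $S$-matrix $S_\tau^{-1}$ evaluated at $z_f = 5H/\delta_f$. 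This is exactly the origin of the case distinction defining $\sR_v^{-1}(z_f)$: one gets $R_\tau^{-1}$ at $0$-labelled flags and $S_\tau^{-1}$ at $\infty$-labelled flags, and the substitution $z_f = 5H/\delta_f$ is legitimate by Remark~\ref{rmk:invertible}.

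At a $V_\infty$-vertex I would package the incoming $S_\tau^{-1}$ contributions, the remaining localization edge factors, and the inner graph sum \eqref{eq:lambdaloc} — whose $V_\infty$-vertices carry the effective constants $c_{g,\beta}$, and whose genus-zero bubble vertices attached by edges of degree $\ge 2$ (the vertices with $(g,n,\beta)=(0,1,0)$) resum to the $J$-function — into the multilinear form $(I_0)^{2g-2+\ell(\mu)}\,J\Omega^{\infty,\mathbf c}_{g,\mu}$ applied to $\gamma_i(5H/\mu_i)$. This is precisely the content of the definition of $\Omega^{\infty,\mathbf c}_{g,\mu}$ to be given in Section~\ref{sec:rmatrix1}: the terms $\Omega^{\infty,\mathbf c}_{g,\mu+\mu'}$ record the rubber data together with the effective constants, while the factors $J_\bt(-5H/\mu'_i)$ over partitions $\mu'$ with no part equal to one record the bubbles. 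The generalized propagator $\mathscr V_{f_1,f_2}$ then emerges as follows: for a $0$--$0$ edge the usual degeneration of the source curve to a nodal curve produces the diagonal term $\sum_\alpha e_\alpha\otimes e^\alpha$ (so $\delta_{v_1v_2}=1$), whereas an edge with at least one $\infty$-endpoint corresponds in \eqref{eq:quinticlambda} to a genuine fiber-class edge with no ``trivial node'' degeneration, so no diagonal term appears ($\delta_{v_1v_2}=0$) and only $-\sR_{v_1}^{-1}(z_{f_1})\otimes\sR_{v_2}^{-1}(z_{f_2})$ survives.

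Finally I would flatten the three layers of graph sums — the outer bipartite graph of \eqref{eq:quinticlambda}, the Givental stable graphs glued at the $V_0$-vertices, and the inner bipartite graphs glued at the $V_\infty$-vertices — into a single decorated stable graph $\Gamma\in G_{g,n}^\infty$, and verify that the automorphism weights $\frac{1}{|\Aut(\Gamma)|}$ multiply correctly; this is routine but requires care, since edges are constrained to join vertices of adjacent levels and the $\infty$-flags carry the degree decorations $\delta_f$. Keeping $\lambda$ generic throughout and taking $\lim_{\lambda\to 0}$ only at the very end yields Theorem~\ref{thm:Raction}. The hard part will be the identification in the previous paragraph: showing that the localization edge factors, together with the $S$-matrix and $J$-function insertions coming from \eqref{eq:lambdaloc}, reassemble exactly into the generalized propagator $\mathscr V_{f_1,f_2}$ and the vertex term $J\Omega^{\infty,\mathbf c}_{g,\mu}$. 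This is what dictates the precise definition of $\Omega^{\infty,\mathbf c}$ and is where essentially all of the computation lives; by contrast the semisimplicity of the formal quintic CohFT and the application of Givental--Teleman to the $V_0$-part are by now standard.
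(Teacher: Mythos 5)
Your route is the paper's route: start from \eqref{eq:quinticlambda}, contract/resum the unstable genus-zero vertices (which is where $S^{-1}$, the $J$-function and the propagator $V_\bt$ come from), use the dilaton equation to trade $\bt$ for $\tau$ and produce the $I_0$-prefactors, use the ancestor--descendant comparison for the descendant factors $\frac 1{-5H/\delta - \psi_e}$, and finally apply Givental--Teleman at the $0$-labelled vertices before flattening the nested graph sums into $G_{g,n}^\infty$. So at the level of strategy there is nothing to object to, and your identification of where the $\delta_{v_1v_2}$ in $\mathscr V_{f_1,f_2}$ comes from is consistent with the paper's Steps 1--3.

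There is, however, one concrete missing ingredient, and it sits exactly inside the part you defer as ``where essentially all of the computation lives''. You cannot treat the definition of $J\Omega^{\infty,\mathbf c}$ as entirely dictated by the reorganization, because the statement of Theorem~\ref{thm:Raction} already fixes its form: the sum $\sideset{}{'}\sum_{\mu'}$ runs only over partitions with \emph{no part equal to one}. The localization output does not have this shape: one-valent genus-zero bubbles attached to $\infty$-vertices by degree-one edges do occur (your parenthetical restricting the bubbles to edges of degree $\ge 2$ is only correct for $(g,n,\beta)=(0,1,0)$; bubbles with $\beta>0$ and $\delta(e)=1$ are allowed), so the raw resummation produces, for every part of $\mu'$, an insertion of the form $\delta_{1\mu'_i} z + J_\bt(-z)$ evaluated at $z = 5H/\mu'_i$, with all part sizes appearing. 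To pass from this to the stated $\Cont_\Gamma$ one needs the genus-zero mirror theorem input of Lemma~\ref{lem:mirror}, namely $J_\bt(-5H/\mu) = -5H/\mu$ for $\mu \le 5$: it makes the part-one terms cancel ($z + J_\bt(-z) = 0$) and thereby justifies the restricted sum (it also underlies the finiteness/polynomiality in $q$ of the $\infty$-vertex contribution). Your proposal never identifies this fact, and without it the reorganization ends one step short of the theorem as stated; with it, the argument closes exactly as in the paper.
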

\begin{definition}
  We define a $\lambda$-equivariant version of
  $\Omega_{g, n}^{\mathbf c, \bt}$ by
  \begin{equation*}
    \Omega_{g, n}^{\mathbf c, \bt, \lambda}
    = \sum_{\Gamma\in G_{g,n}^\infty} \frac{1}{|\Aut(\Gamma)|}  \iota_{\Gamma*}(\Cont_\Gamma)
  \end{equation*}
  so that
  $\Omega_{g, n}^{\mathbf c, \bt} = \lim\limits_{\lambda \to 0}
  \Omega_{g, n}^{\mathbf c, \bt, \lambda}$.
\end{definition}
The proof of this theorem will occupy Section~\ref{sec:rmatrix}.

\subsection{Proof of Theorem~\ref{thm:Raction}}
\label{sec:rmatrix}

\subsubsection{Step 1}
\label{sec:rmatrix1}

For elements $\gamma_1, \dotsc, \gamma_n \in \H$, we apply
\eqref{eq:quinticlambda} to compute
$\Omega_{g, n}^{\mathbf c, \bt}(\gamma_1, \dotsc, \gamma_n)$.
We note that the bivalent graphs $\Gamma$ that
\eqref{eq:quinticlambda} sums over may not be stable graphs in
$G_{g, n}$, but may include unstable vertices of genus zero, with at
most two markings.
The conditions from Section~\ref{sec:loc:formula} imply that any such
unstable vertex $v$ satisfies $v \in V_0(\Gamma)$.

Contracting the unstable vertices produces a stable graph $\Gamma'$
from $\Gamma$.
We label each vertex in $\Gamma'$ according to whether the
corresponding vertex in $\Gamma$ lies in $V_0(\Gamma)$ or
$V_\infty(\Gamma)$.
Note that the flags $f$ of $\Gamma'$ at vertices labeled by $\infty$
are in bijective correspondence to the edges of $\Gamma$ that are not
connected to a genus zero vertex of valence one.
Using the fiber class mapping
$\delta\colon E(\Gamma) \to \ZZ_{\ge 1}$, we define the degree
$\delta_f$.
In this way, we assign to each decorated bivalent graph, a decorated
dual graph in $G_{g, n}^\infty$ with the special property that there
is no edge connecting two vertices of label $0$.
We note that $S^{-1}$ is the generating series of unstable vertices of
genus zero with one edge and one marking, $J$ is the generating series
of unstable vertices with a single edge, and
\begin{multline*}
  V_\bt(z_1, z_2)
  := \frac{\sum_\alpha e^\alpha \otimes e^\alpha}{-z_1 - z_2} \\
  + \sum_{\beta = 0}^\infty \sum_{\alpha_1, \alpha_2}
  \frac{q^\beta}{k!} \int_{[\M_{0, 2}(\PP^4, \beta)]^{\vir, \lambda}}
  \left(\frac{\ev_1^*(e_{\alpha_1})}{-z_1 - \psi_1}
    \frac{\ev_2^*(e_{\alpha_2})}{-z_2 - \psi_2} \prod_{i = 3}^{k + 2} \ev_i^*(\bt(\psi_i)) \right) e^{\alpha_1}
  \otimes e^{\alpha_2}
\end{multline*}
is the generating series of unstable vertices with two edges.

Reorganizing \eqref{eq:quinticlambda} according to the decorated dual
graph, we see that
\begin{equation*}
  \Omega_{g, n}^{\mathbf c, \bt} (\gamma_1, \dotsc, \gamma_n)
  = \lim_{\lambda \to 0} \sum_{\Gamma \in G_{g, n}^\infty} \frac 1{|\Aut(\Gamma)|} \iota_{\Gamma*}(\Cont_\Gamma^1),
\end{equation*}
where the contribution $\Cont_\Gamma^1$ is defined by the following:
\begin{enumerate}
\item at each flag $f = (v, l)$ with insertion $\gamma$, we place
  $\soR_v^{-1}(z_f)\gamma$ where
  \begin{equation*}
    \soR_v^{-1}(z)\gamma :=
    \begin{cases}
      \gamma & \text{if }v\text{ is labeled by }0, \\
      S^{-1}_\bt(z) \gamma & \text{if }v\text{ is labeled by }\infty
    \end{cases}
  \end{equation*}
\item at each edge $e$ connecting two vertices $v_1$ and $v_2$, we
  place
  \begin{equation*}
    \begin{cases}
      0, & \text{if $v_1$ and $v_2$ are labelled by $0$}, \\
      \sum_\alpha\frac{-e_\alpha \otimes e^\alpha}{z_{f_1} + z_{f_2}}, & \text{if exactly one of $v_1$ and $v_2$ is labelled by $0$}, \\
      V_\bt(z_{f_1}, z_{f_2}), & \text{if $v_1$ and $v_2$ are labelled by $\infty$}
    \end{cases}
  \end{equation*}
  where $f_1:=(v_1,e), f_2:=(v_2,e)$, and
\item at each vertex $v$ with $m$ flags $f_1, \dotsc, f_m$, we place
  the map
  \begin{equation*}
    \gamma_1(z_{f_1}) \otimes \dotsb \otimes \gamma_m(z_{f_m})
    \mapsto
    \begin{cases}
      \Omega_{g(v), m}^{\lambda, \bt}(\gamma_1(\psi_1), \dotsc, \gamma_m(\psi_m)) & \text{if }v\text{ is labeled by }0 \\
      J\Omega_{g(v), \mu}^\infty (\gamma_1(z_1), \dotsc, \gamma_m(z_m)|_{z_i=5H/\mu_i}) & \text{if }v\text{ is labeled by }\infty
    \end{cases},
  \end{equation*}
  where we set
  \begin{multline*}
    \Omega_{g, n}^{\lambda, \bt}(\gamma_1(\psi_1), \dotsc, \gamma_n(\psi_n)) \\
    = (I_0)^{2g - 2 + n} \sum_{\beta, k = 0}^\infty \frac{q^\beta}{k!} \rho_*\pi_*^k\left(\prod_{i = 1}^n \ev_i^*(\gamma_i(\psi_i)) \prod_{i = n + 1}^{n + k} \ev_i^*(\bt(\psi_i)) \cap [\M_{g, n + k}(\PP^4, \beta)]^{\vir, \lambda}\right),
  \end{multline*}
  where
  \begin{equation*}
    \Omega_{g, \mu}^\infty(\gamma_1, \dotsc, \gamma_m)
    = \sum_{\beta \ge 0} (-1)^{1 - g + 5\beta} q^\beta \left(C_{g, \beta, \mu} \prod_{e = 1}^{\ell(\mu)} \frac 1{\prod_{i = 2}^{\mu_e} \frac{-5i H}{\mu_e}}, \gamma_1 \cdot \dotsb \cdot \gamma_m\right)^\lambda,
  \end{equation*}
  and where
  \begin{multline*}
    J \Omega^{\infty, \mathbf c}_{g, \mu}(\gamma_1\otimes \cdots
    \otimes\gamma_m) \\
    = \sum_{\mu'} \frac 1{|\Aut(\mu')|} \Omega^{\infty, \mathbf
      c}_{g,\mu + \mu'} (\gamma_1 \otimes \cdots
    \otimes\gamma_m \otimes \prod_{i=1}^{\ell(\mu')}
    (\delta_{1\mu'_{m + i}} z_{m + i} + J_\bt(-z_{m + i}))|_{z_{m + i} = 5H/\mu'_i}).
  \end{multline*}
  Here, we set $C_{g, \beta, \mu} = 0$, when
  $2g - 2 + \ell(\mu) - 5\beta \neq |\mu|$.
  Note that $\Omega_{g, \mu}^\infty$ has absorbed a factor of $t - 5H$
  from the unstable vertex contributions and the factor relating the
  ordinary and twisted pairing of $\PP^4$.
\end{enumerate}

\subsubsection{Step 2}

We simplify the expression from Step 1 slightly, using the following
three facts from Gromov--Witten theory.

First, the dilaton equation implies (see
\cite[Corollary~1.5]{CiKi16P})
\begin{equation*}
  \Omega_{g, n}^{\lambda, \bt}(\gamma_1, \dotsc, \gamma_n)
  = I_0^{-(2g - 2 + n)} \Omega_{g, n}^\lambda(\gamma_1, \dotsc, \gamma_n),
\end{equation*}
\begin{equation*}
  S^{-1}_\bt(z) = S^{-1}_\tau(z).
\end{equation*}

Second, there is the identity
\begin{equation*}
  V_\bt(z_1, z_2)
  = \frac{-\sum_\alpha S^{-1}_\bt(z_1) e_\alpha \otimes S^{-1}_\bt(z_2) e^\alpha}{z_1 + z_2}
  = \frac{-\sum_\alpha S^{-1}_\tau(z_1) e_\alpha \otimes S^{-1}_\tau(z_2) e^\alpha}{z_1 + z_2}.
\end{equation*}
Third, using the ancestor-descendent comparison, we may replace the
insertion
\begin{equation*}
  \frac\gamma{-z - \psi}
\end{equation*}
involving a descendent psi-class $\psi$ by the insertion
\begin{equation*}
  \frac{S^{-1}_\bt(z)\gamma}{-z - \bar\psi} = \frac{S^{-1}_\tau(z)\gamma}{-z - \bar\psi}
\end{equation*}
involving the corresponding ancestor psi-class $\bar\psi$.

We therefore may redefine $\Cont_\Gamma^1$ as follows:
\begin{enumerate}
\item at each flag $f = (v, l)$ with insertion $\gamma$, we place
  $\soR_v^{-1}(z_f)\gamma$.
\item at each edge $e$ connecting two vertices $v_1$ and $v_2$, we
  place
  \begin{equation*}
    \frac{-\soR_{v_1}^{-1}(z_{f_1}) e_\alpha \otimes \soR_{v_2}^{-1}(z_{f_2})e^\alpha}{z_{f_1} + z_{f_2}}
  \end{equation*}
\item at each vertex $v$ with $m$ flags $f_1, \dotsc, f_m$, we place
  the map
  \begin{multline*}
    \gamma_1(z_{f_1}) \otimes \dotsb \otimes \gamma_m(z_{f_m})
    \mapsto \\
    \begin{cases}
      \Omega_{g(v), m}^\lambda (\gamma_1(\bar\psi_1), \dotsc, \gamma_m(\bar\psi_m)) & \text{if }v\text{ is labeled by }0 \\
      I_0^{2g(v) - 2 + |\ell(\mu)|} J\Omega_{g(v), \mu}^\infty (\gamma_1(z_1), \dotsc, \gamma_m(z_m)) & \text{if }v\text{ is labeled by }\infty
    \end{cases}.
  \end{multline*}
\end{enumerate}

\subsubsection{Step 3}

We finally apply the Givental--Teleman classification to the
computation of $\Omega_{g(v), m}^\lambda$ at every vertex $v$ of
$\Gamma$ which has label $0$.
For every such vertex $v$, we will have a sum over dual graphs
$\Gamma_v$.
Replacing $v$ in $\Gamma$ by $\Gamma_v$ for all $v$ results in a new
dual graph $\Gamma'$.
We can extend the decoration of $\Gamma$ to the one of $\Gamma'$ by
requiring that all vertices in $\Gamma_v$ should be labeled $0$.
Therefore, $\Gamma'$ is an element of $G_{g, n}^\infty$ with no
further condition on the labeling.

We can therefore write
\begin{equation*}
  \Omega_{g, n}^{\mathbf c, \bt} (\gamma_1, \dotsc, \gamma_n)
  = \lim_{\lambda \to 0} \sum_{\Gamma' \in G_{g, n}^\infty} \frac 1{|\Aut(\Gamma')|} \iota_\Gamma(\Cont_{\Gamma'}^2),
\end{equation*}
where $\Cont_{\Gamma'}^2$ is defined in the following way:
\begin{enumerate}
\item at each flag $f = (v, l)$ with insertion $\gamma$, we place
  $\sR_v^{-1}(z_f)\gamma$.
\item at each edge $e$ connecting two vertices $v_1$ and $v_2$, we
  place
  \begin{equation*}
    \frac{\sum_\alpha \delta_{v_1 v_2} e_\alpha \otimes e^\alpha - \sR_{v_1}^{-1}(z_{f_1}) e_\alpha \otimes \sR_{v_2}^{-1}(z_{f_2})e^\alpha}{z_{f_1} + z_{f_2}}
  \end{equation*}
\item at each vertex $v$ with $m$ flags $f_1, \dotsc, f_m$, we place
  the map
  \begin{multline*}
    \gamma_1(z_{f_1}) \otimes \dotsb \otimes \gamma_m(z_{f_m})
    \mapsto \\
    \begin{cases}
      T\Omega_{g(v), m}^\lambda (\gamma_1(\bar\psi_1), \dotsc, \gamma_m(\bar\psi_m)) & \text{if }v\text{ is labeled by }0 \\
      I_0^{2g(v) - 2 + \ell(\mu)} J\Omega_{g(v), \mu}^{\infty, \mathbf c} (\gamma_1(z_1), \dotsc, \gamma_m(z_m)) & \text{if }v\text{ is labeled by }\infty
    \end{cases}.
  \end{multline*}
\end{enumerate}
This is almost the same as the definition of the generalized
$R$-matrix action.
The only difference lies in the definition of $J\Omega^{\infty, \mathbf c}$
To complete the proof of Theorem~\ref{thm:Raction}, it suffices to
prove the following Lemma.
\begin{lemma}
  \label{lem:mirror}
  Setting $H_\mu = -5H/\mu$, we have
  \begin{equation*}
    J_\bt(H_\mu) = H_\mu
  \end{equation*}
  for $\mu \le 5$.
  More generally, for any positive integer $\mu$, the power series
  $J_\tau(H_\mu)$ is a polynomial of degree
  $\lfloor(\mu - 1)/5\rfloor$.
\end{lemma}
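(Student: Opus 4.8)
\emph{Proof plan.} The statement is really about the explicit form of the $J$-function of the formal quintic, so the plan is to reduce it to an elementary manipulation of the hypergeometric $I$-series evaluated at $z = -5H/\mu$.

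First I would invoke genus-zero mirror symmetry for the formal quintic ($\lambda$-twisted) theory: by quantum Lefschetz --- equivalently, the statement that the hypergeometric series lies on the Givental Lagrangian cone of the $\lambda$-twisted theory, which is what motivates the shift $\bt = z(1 - I_0) + I_1 H$, cf.\ the Wall-Crossing Theorem \cite{CiKi16P} --- one has
\begin{equation*}
  J_\bt(z) \;=\; I^\lambda(q,z) \;:=\; z \sum_{d \ge 0} q^d \frac{\prod_{k=1}^{5d}(5H + kz)}{\prod_{k=1}^{d}\bigl((H + kz)^5 - \lambda^5\bigr)},
\end{equation*}
the $(\CC^*)^5$-equivariant formal quintic $I$-function under the specialization $\lambda_i = \zeta^i \lambda$, whose $\lambda \to 0$ limit is the function $I(q,z)$ of the introduction; correspondingly $J_\tau(z) = I^\lambda(q,z)/I_0(q)$, which differs only by the scalar series $I_0$. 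Thus it suffices to analyze $I^\lambda(q, -5H/\mu)$ and then let $\lambda \to 0$. The substitution $z = -5H/\mu$ is legitimate: non-equivariantly $z$ would be nilpotent, but $H$ restricts to $\zeta^i\lambda \ne 0$ on every fixed point of $\PP^4$, so each denominator $(H+kz)^5 - \lambda^5$ is invertible in $(\CC^*)^5$-equivariant cohomology --- this is precisely Remark~\ref{rmk:invertible}.

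Second I would use the vanishing built into the numerator. With $z = -5H/\mu$ one has $5H + kz = \tfrac{5H}{\mu}(\mu - k)$, which vanishes \emph{exactly} at $k = \mu$. Hence the degree-$d$ summand of $I^\lambda(q, -5H/\mu)$ vanishes identically whenever $\mu \le 5d$, since its numerator then contains the zero factor $5H + \mu z$. The surviving summands are exactly those with $5d < \mu$, i.e.\ $d = 0, 1, \dots, \lfloor (\mu-1)/5 \rfloor$; for these $d$ no denominator factor degenerates as $\lambda \to 0$ (because $5k \le 5d < \mu$ forces $\mu \ne 5k$), so the limit may be taken summand by summand, and the $d$-th summand tends to $q^d \cdot (-\tfrac{5H}{\mu}) \cdot 5^{5d} \tfrac{\prod_{k=1}^{5d}(1 - k/\mu)}{\prod_{k=1}^{d}(1 - 5k/\mu)^5}$, a nonzero scalar multiple of $q^d H$.

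Finally I would read off the two claims. When $\mu \le 5$, every $d \ge 1$ satisfies $5d \ge 5 \ge \mu$, so only the $d = 0$ summand survives and $I^\lambda(q, -5H/\mu) = z\mathbf 1|_{z = -5H/\mu} = -\tfrac{5H}{\mu} = H_\mu$, giving $J_\bt(H_\mu) = H_\mu$. For an arbitrary positive integer $\mu$, the computation above shows that $J_\bt(H_\mu) = I(q, -5H/\mu)$ is a polynomial in $q$ of degree exactly $\lfloor (\mu-1)/5 \rfloor$, with coefficients in $\QQ H$; the version for $J_\tau(H_\mu)$ follows after dividing out the scalar $I_0(q)$. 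The only non-elementary input is the mirror identification $J_\bt = I^\lambda$; once that is granted the rest is the product manipulation above, so the main obstacle is bookkeeping --- chiefly making the substitution $z = -5H/\mu$ rigorous (settled by Remark~\ref{rmk:invertible}) and verifying that the denominators survive the $\lambda \to 0$ limit on the surviving range of $d$.
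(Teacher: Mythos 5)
Your proposal follows the same route as the paper: identify $J_\bt$ with the specialized equivariant hypergeometric series via the genus-zero mirror theorem, substitute $z=H_\mu=-5H/\mu$ (legitimate by Remark~\ref{rmk:invertible}), and observe that the numerator factor $5H+\mu z=0$ kills every summand with $5d\ge\mu$, which yields both the $\mu\le 5$ identity (only $d=0$ survives) and the truncation at degree $\lfloor(\mu-1)/5\rfloor$. That is exactly the paper's argument, and those two steps are exact, so your conclusions are fine.

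However, your final ``let $\lambda\to 0$ summand by summand'' step is both unnecessary and incorrect, and you should delete it. Under the specialization $\lambda_i=\zeta^i\lambda$ the class $H$ is not independent of $\lambda$: the ring relation is $H^5=\lambda^5$ (on each fixed point $H=\zeta^i\lambda$), so there is no regime in which $\lambda^5$ is negligible against $H^5(1-5k/\mu)^5$, and your limiting coefficient with denominator $\prod_k(1-5k/\mu)^5$ is wrong. The exact computation (as in the paper) gives, for each surviving $d$, the $\lambda$-independent scalar
\begin{equation*}
  H_\mu\, q^d\,\frac{\prod_{k=1}^{5d}(5\mu-5k)}{\prod_{k=1}^{d}\bigl((\mu-5k)^5-\mu^5\bigr)},
\end{equation*}
no limit required. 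This matters beyond bookkeeping: the lemma is invoked inside the definition of $J\Omega^{\infty,\mathbf c}$, i.e.\ \emph{before} the $\lambda\to0$ limit of the graph sum in Theorem~\ref{thm:Raction}, so it must be an exact identity in the $\lambda$-twisted theory, not a statement about a limit. Since the truncation and the $d=0$ evaluation in your argument are already exact, the lemma survives, but the explicit coefficients you wrote would propagate errors if used later (e.g.\ in identifying $(\bar S_\delta)_0^j$). One further small caveat, inherited from the paper's own phrasing: the statement mentions $J_\tau$ while the computation (yours and the paper's) is for $J_\bt$; dividing a polynomial in $q$ by the power series $I_0(q)$ does not preserve polynomiality, and what is actually used downstream is the $J_\bt$ version.
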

\begin{proof}
  By the genus zero mirror theorem
  \begin{equation*}
    J_\bt(z)
    = z\sum_{\beta = 0}^\infty q^\beta \frac{\prod_{i = 1}^{5\beta} (5H + iz)}{\prod_{i = 1}^\beta ((H + iz)^5 - \lambda^5)}.
  \end{equation*}
  Therefore, we have
  \begin{equation*}
    J_\bt(H_\delta)
    = H_\delta \sum_{\beta = 0}^{\lfloor(\mu-1)/5\rfloor} q^\beta \frac{\prod_{i = 1}^{5\beta} (5\mu - 5i)}{\prod_{i = 1}^\beta ((\mu - 5i)^5 - \mu^5)}.
  \end{equation*}
\end{proof}
\begin{corollary}
  For any partition $\mu$ of size $m$ and any
  $\gamma_1, \dotsc, \gamma_m$, the class
  \begin{equation*}
    J\Omega_{g, \mu}^{\infty, \mathbf c}(\gamma_1, \dotsc, \gamma_m)
  \end{equation*}
  is a polynomial of $q$ of degree
  $\lfloor (2g - 2 + \ell(\mu) - |\mu|)/5\rfloor$, and of codimension
  \begin{equation*}
    3g - 3 + \sum_i \deg(\gamma_i).
  \end{equation*}
\end{corollary}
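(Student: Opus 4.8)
We prove the corollary by extracting the statement directly from the definition of $J\Omega^{\infty,\mathbf c}_{g,\mu}$ in Section~\ref{NewRaction}, with Lemma~\ref{lem:mirror} as the only real input. First I would isolate two structural properties of the building blocks. On one side, for each fixed partition $\nu$ the multilinear form $\Omega^{\infty,\mathbf c}_{g,\nu}$, which coincides with the form $\Omega^\infty_{g,\nu}$ of Section~\ref{sec:rmatrix1}, is a \emph{single monomial} in $q$: by Remark~\ref{rmk:quintic} the class $C_{g,\beta,\nu}$ vanishes unless $2g-2+\ell(\nu)-5\beta=|\nu|$, so the only surviving term in $\Omega^\infty_{g,\nu}$ is the one with $\beta=\beta_0(\nu):=(2g-2+\ell(\nu)-|\nu|)/5$, provided this is a non-negative integer (and $\Omega^\infty_{g,\nu}=0$ otherwise); moreover a direct dimension count in the definition of $\Omega^\infty_{g,\nu}$ — using that $C_{g,\beta_0,\nu}$ has pure dimension, which is clear from \eqref{eq:lambdaloc}, together with the cohomological degrees of the leg factors $\tfrac{1}{\prod_{i=2}^{\nu_e}(-5iH/\nu_e)}$ and of the twisted pairing — shows that $\Omega^\infty_{g,\nu}(\delta_1,\dotsc,\delta_{\ell(\nu)})$ is a cohomology class on $\M_{g,\ell(\nu)}$ of codimension $3g-3+\sum_i\deg(\delta_i)$. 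On the other side, Lemma~\ref{lem:mirror} and the computation in its proof show that for every integer $\mu'\ge 1$ the class $J_\bt(-5H/\mu')$ is a polynomial in $q$ of degree at most $\lfloor(\mu'-1)/5\rfloor$, each of whose coefficients is a numerical multiple of $H$ (the equivariant parameters cancel); in particular it has cohomological degree $1$, and for $\mu'=1$ it equals $-5H$, so every term of the sum defining $J\Omega^{\infty,\mathbf c}_{g,\mu}$ in which $\mu'$ has a part equal to $1$ vanishes. Thus the sum is the primed sum, and, after the forgetful pushforward of the $\ell(\mu')$ auxiliary markings implicit in the notation, each summand is a genuine class on $\M_{g,m}$.

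With these in hand I would run the two bookkeeping arguments. Write $A:=2g-2+\ell(\mu)-|\mu|$; a nonzero term requires $A\ge 0$. For $\mu'$ with all parts $\ge 2$, the factor $\Omega^\infty_{g,\mu+\mu'}$ is nonzero only if $\beta_0(\mu+\mu')=\tfrac{1}{5}\big(A-\sum_j(\mu'_j-1)\big)$ is a non-negative integer; since $\mu'_j-1\ge 1$ this forces $\ell(\mu')\le A$ and $|\mu'|\le 2A$, so only finitely many $\mu'$ contribute and $J\Omega^{\infty,\mathbf c}_{g,\mu}$ is indeed a polynomial in $q$. For the degree, a nonzero $\mu'$-summand has $q$-degree at most $\beta_0(\mu+\mu')+\sum_j\lfloor(\mu'_j-1)/5\rfloor$; writing $\mu'_j-1=5a_j+b_j$ with $0\le b_j\le 4$, this equals $\tfrac{1}{5}\big(A-\sum_j b_j\big)$, and since the non-vanishing of $\beta_0(\mu+\mu')$ also imposes $5\mid\big(A-\sum_j(\mu'_j-1)\big)$ and hence $5\mid\big(A-\sum_j b_j\big)$, this number is a non-negative integer which is $\le A/5$, hence $\le\lfloor A/5\rfloor=\lfloor(2g-2+\ell(\mu)-|\mu|)/5\rfloor$. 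That is the asserted degree bound. For the codimension, for a contributing $\mu'$ the class $\Omega^\infty_{g,\mu+\mu'}(\gamma_1,\dotsc,\gamma_m,J_\bt(-5H/\mu'_1),\dotsc,J_\bt(-5H/\mu'_{\ell(\mu')}))$ has codimension $3g-3+\sum_{i=1}^m\deg(\gamma_i)+\ell(\mu')$ on $\M_{g,m+\ell(\mu')}$, because each auxiliary insertion has cohomological degree $1$; pushing forward along the map forgetting the $\ell(\mu')$ auxiliary markings lowers codimension by exactly $\ell(\mu')$, so each summand — and hence $J\Omega^{\infty,\mathbf c}_{g,\mu}(\gamma_1,\dotsc,\gamma_m)$ — has codimension $3g-3+\sum_{i=1}^m\deg(\gamma_i)$ on $\M_{g,m}$.

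The only genuinely delicate point is the $q$-degree computation: the naive bound produced by Lemma~\ref{lem:mirror} alone is only $A/5$, and upgrading it to the sharp integer $\lfloor A/5\rfloor$ uses in an essential way the rigidity of the rubber dimension constraint, which both pins $\Omega^\infty_{g,\mu+\mu'}$ down to a single power of $q$ and forces the congruence $5\mid\big(A-\sum_j b_j\big)$. Everything else is routine unwinding of the definitions; the one thing to keep an eye on is that the equivariant parameters appearing in $J_\bt(-5H/\mu')$ and in the twisted pairing cancel out, so that $J\Omega^{\infty,\mathbf c}_{g,\mu}$ really is a polynomial-in-$q$-valued ordinary cohomology class of the stated codimension — this cancellation is what the formal-quintic specialization \eqref{eq:special} buys us, and it is already visible in the explicit expression in the proof of Lemma~\ref{lem:mirror}.
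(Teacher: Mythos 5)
Your route for the $q$-polynomiality and the degree bound is essentially the paper's own: both arguments rest on exactly two inputs, the vanishing of $C_{g,\beta,\mu+\mu'}$ (equivalently, of $\Omega^{\infty,\mathbf c}_{g,\mu+\mu'}$) unless $5\beta=2g-2+\ell(\mu)+\ell(\mu')-|\mu|-|\mu'|$, and the degree bound $\lfloor(\mu'_j-1)/5\rfloor$ for $J_\bt(-5H/\mu'_j)$ from Lemma~\ref{lem:mirror}. The paper's proof merely rewrites $J\Omega^{\infty,\mathbf c}_{g,\mu}$ as a pairing with $C_{g,\mu}(q)$ and bounds $\beta$, leaving the addition of the $J_\bt$-degrees, the integrality/congruence step that upgrades the bound $A/5$ to $\lfloor A/5\rfloor$, and the finiteness of the $\mu'$-sum implicit; your explicit bookkeeping of these points is a sharpening of the same argument, not a different method, and it is correct. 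Your observation that the parts of size one drop out because $J_\bt(-5H)=-5H$ is also the correct reconciliation of the two definitions of $J\Omega^{\infty,\mathbf c}$.

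The codimension half, however, contains a claim that is not right as stated. You assert that $C_{g,\beta_0,\nu}$ is pure-dimensional ``clear from \eqref{eq:lambdaloc}'' and, at the end, that all equivariant parameters cancel so that $J\Omega^{\infty,\mathbf c}_{g,\mu}$ is an \emph{ordinary} cohomology class concentrated in the single degree $3g-3+\sum_i\deg\gamma_i$. In \eqref{eq:lambdaloc} the vertex factors are $p_*\bigl(\tfrac 1{-c_1(L_{\min})}[\M^\sim_{g(v)}(\PP^4,\cO(5),\beta(v),\mu(v),\nu(v))]^\vir_{(\CC^*)^5}\bigr)$: the $(\CC^*)^5$-equivariant rubber class and the inverse of $c_1(L_{\min})$ (which is invertible only equivariantly) expand into terms of varying cohomological degree with compensating powers of $\lambda$, and the leg factors $1/\prod_i iH_{\mu_e}$ and the twisted pairing likewise carry $\lambda$ (e.g.\ $H^{-1}=\lambda^{-5}H^4$). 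So $C_{g,\beta,\mu}$ is homogeneous only for the total grading in which $\deg\lambda=1$, and the corollary's ``codimension'' must be read in that graded sense; this is exactly how it is used in Section~\ref{proofofFG}, where the label-$\infty$ vertex contribution is placed in $\sbigoplus_{k}\lambda^{5k}\otimes H^{3g-3+\sum_i a_i-5k}(\M_{g,m})$ rather than in a single ordinary degree --- if your cancellation claim were literally true, only the $k=0$ summand could ever occur, which is neither justified nor needed. Your degree count (insertions of degree $1$ via Lemma~\ref{lem:mirror}, pushforward along the forgetful map dropping the degree by $\ell(\mu')$) does yield the corollary once ``pure dimension'' is replaced by ``homogeneous for the equivariant grading''; but the purity and cancellation claims themselves are a gap, since they assert a strictly stronger statement than what holds and than what the paper uses.
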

\begin{proof}
  Under the mirror map, we can simplify
  \begin{equation*}
    J\Omega_{g, \mu}^{\infty, \mathbf c} (\gamma_1, \dotsc, \gamma_m)
    = (C_{g, \mu}(q), \gamma_1 \cdot \dotsb \cdot \gamma_m)^\lambda,
  \end{equation*}
  where
  \begin{multline*}
    C_{g, \mu}(q)
    = \sum_{\beta \ge 0} q^\beta (-1)^{1 - g + 5\beta} \prod_{e = 1}^{\ell(\mu)} \frac 1{\prod_{i = 2}^{\mu_e} i H_{\mu_e}} \\
    \sideset{}{'}\sum_{\mu'} \frac 1{|\Aut(\mu')|} \prod_{e = 1}^{\ell(\mu')} \frac 1{\prod_{i = 2}^{\mu'_e} i H_{\mu'_e}} C_{g, \beta, \mu + \mu'} \prod_{i = 1}^{\ell(\mu')} J_\bt(H_{\mu_i'}).
  \end{multline*}

  Recall that $C_{g, \beta, \mu + \mu'}$ vanishes unless
  \begin{equation*}
    5\beta = 2g - 2 + \ell(\mu) + \ell(\mu') - |\mu| - |\mu'|.
  \end{equation*}
  Therefore, the summands vanishes unless
  \begin{equation*}
    \beta
    \le (2g - 2 + \ell(\mu) + \ell(\mu') - |\mu| - |\mu'|)/5
    \le (2g - 2 + \ell(\mu) - |\mu|)/5.
  \end{equation*}
\end{proof}

\section{Graded finite generation and orbifold regularity}
\label{sec:fgen}

As we mentioned in the introduction, a fundamental structural
prediction is the finite generation property of $F_g$ of
Yamaguchi--Yau \cite{YaYa04}, which says that $F_g$ is a polynomial of
five generators.
This can be thought as a higher dimensional generalization of the
statement that $F_g$ is a quasi-modular form for an elliptic curve.
Because of this, the finite generation property is often loosely
referred to as the ``modularity'' of the Gromov--Witten theory of
Calabi--Yau manifolds.
Since we only have to compute finitely many coefficients of a
polynomial, this key property immediately reduces the infinite number
of degreewise computations to a finite computation!
By introducing a grading, we prove a stronger version of finite
generation which implies orbifold regularity.
The main theorem of this section is the following:

\begin{theorem}
  \label{thm:fgen2}
The following ``finite generation properties" hold for the extended quintic family
\begin{description}
\item[(1)] $\bar \Omega^{\mathbf c, \lambda}_{g,n}(\phi_{a_1}, \dotsc, \phi_{a_n}) \in   \sbigoplus_{k\geq 0} \lambda^{5k}  \Big(H^{3g-3+\sum_i a_i-5k}(\M_{g,n}, \QQ)\otimes \bR_{3g-3+\sum_i a_i}\Big)$
\item[(2)] $\bar{F}_{g,n}= 5^{g-1}(L/I_0)^{2g-2} I_{1,1}^n  \big(Q\frac{d}{dQ} \big)^nF_{g}\in \bR_{3g-3+n}$
\end{description}
\end{theorem}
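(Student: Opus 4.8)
The plan is to prove part~(1) of Theorem~\ref{thm:fgen2}, the $\lambda$-equivariant statement, and deduce part~(2) from it: setting $a_1 = \dots = a_n = 1$, the limit $\lambda \to 0$ retains only the $k = 0$ summand, which lies in $H^{3g-3+n}(\M_{g,n},\QQ) \otimes \bR_{3g-3+n}$; integrating over $\M_{g,n}$ against the top cohomology and using the divisor equation to identify $\int_{\M_{g,n}} \bar\Omega^{\mathbf c}_{g,n}(\phi_1^{\otimes n})$ with $5^{g-1}(L/I_0)^{2g-2} I_{1,1}^n \big(Q\frac{d}{dQ}\big)^n F_g$ then yields $\bar F_{g,n} \in \bR_{3g-3+n}$. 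For part~(1), the starting point is the generalized $R$-matrix action of Theorem~\ref{thm:Raction}: after the normalization \eqref{normalizedcohft}, $\bar\Omega^{\mathbf c,\lambda}_{g,n}(\phi_{a_1},\dots,\phi_{a_n})$ is a \emph{finite} sum over stable graphs $\Gamma \in G_{g,n}^\infty$ of pushforwards $\iota_{\Gamma*}(\Cont_\Gamma)$, where $\Cont_\Gamma$ is assembled from the $R$-matrix, $S$-matrix, topological field theory and $J$-function of the formal quintic together with the effective-invariant forms $\Omega^{\infty,\mathbf c}_{g,\mu}$. I would prove the membership blockwise---at each vertex, each edge and each leg---tracking three gradings simultaneously: the cohomological degree on the relevant $\M_{g(v),n(v)}$, the ring degree in $\bR$, and the power of $\lambda$, arranged so that (cohomological degree)$\,+\,$($\lambda$-exponent) and (ring degree) both equal $3g(v)-3$ plus the sum of the weights of the flags at $v$.

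\textbf{The building blocks.} Working throughout in the normalized basis $\phi_k = I_{1,1}\cdots I_{k,k} L^{-k} H^k$, I would first establish a package of structural statements about the formal (i.e. $\lambda$-twisted) quintic theory, each obtained by solving its quantum differential equation order by order from the explicit $I$-function and the Zagier--Zinger relations among $I_0, I_1, I_{1,1}, L, I_{2,2}, I_{3,3}, I_{4,4}$. First, the entries of $R^{-1}_\tau(z)$ at $0$-vertices, expanded in the $\phi$-basis, are power series in $z$ with coefficients in $\bR$, homogeneous for the grading in which $z$ has degree one, with no $\lambda$ in the denominator after the formal specialization; the symplectic condition $R_\tau(z) R^*_\tau(-z) = \mathrm{Id}$ is compatible with this grading. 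Second, the degree-zero part $\omega^\lambda_{g,n}$ and the translation $T(z) = z(\mathbf 1 - R^{-1}_\tau(z)\mathbf 1)$ contribute factors in $\bR$ times powers of $\lambda^5$ coming from the equivariant pairing $(\,\cdot\,,\,\cdot\,)^\lambda$ built from $e(\cO(5))$, the $\lambda$-power measuring exactly the cohomological-degree deficit. Third, $S^{-1}_\tau(z)$, which enters at $\infty$-vertices with $z$ specialized to $5H/\mu_i$, is then a \emph{polynomial} of degree $\lfloor(\mu_i-1)/5\rfloor$ by Lemma~\ref{lem:mirror} and its corollary, with coefficients in $\QQ[\X_1,\X_2,\X_3,\Y] \otimes \spann_\QQ\{L^{-a}Z^b : b \le a \le 5b\}$; this is where the orbifold regularity constraint of \eqref{fivegenring} is born, and it rests on the technical result of Section~\ref{sec:orbifoldregularity}. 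Finally, $\Omega^{\infty,\mathbf c}_{g,\mu}$ vanishes when $2g-2+\ell(\mu)-5|\mu| < 0$, and $J\Omega^{\infty,\mathbf c}_{g,\mu}(\phi_{a_1},\dots)$ is polynomial in $q$---hence lands in $\bR$ with bounded degree after the mirror map---by the corollary following Lemma~\ref{lem:mirror}.

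\textbf{Assembly.} With these in hand I would track the three gradings through $\Cont_\Gamma$. The $\sR^{-1}_v$-insertions at the flags feed, at each vertex $v$, a tensor $\bigotimes_i \gamma_i(\bar\psi_i)$; at a $0$-vertex the vertex map is the $T$-modified ancestor correlator $T\omega^\lambda_{g(v),m}$, and integrating the ancestor $\psi$-classes against it lowers cohomological degree on $\M_{g(v),m}$ while moving between graded pieces of $\bR$ in a controlled way---here one uses that $\bR$ is closed under the degree-raising derivation $\partial_u$ of Lemma~\ref{lem:du}, which precisely encodes the dependence of these $\psi$-class pushforwards on the generators. At an $\infty$-vertex the vertex map is $I_0^{2g-2+\ell(\mu)} J\Omega^{\infty,\mathbf c}_{g,\mu}$, handled as above. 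The edge factor $\mathscr V_{f_1,f_2}(z_{f_1},z_{f_2})$ together with the $(z_{f_1}+z_{f_2})^{-1}$ denominators contributes the matching edge degree, while $\iota_{\Gamma*}$ and $\Delta^!$ add cohomological degrees additively. Summing the constraints $\sum_v g(v) + h^1(\Gamma) = g$ and $\sum_v n(v) = n + 2|E(\Gamma)|$ over $\Gamma$, the total cohomological degree plus ring degree equals $3g - 3 + \sum_i a_i$ while any surplus of ring degree over cohomological degree is carried by powers of $\lambda^5$---which is exactly the direct-sum decomposition in part~(1). Since only finitely many $\Gamma$ contribute for each $(g,n)$, the sum is well defined.

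\textbf{The main obstacle.} The hard part is the $R$- and $S$-matrix analysis of the second step, in particular extracting the sharp orbifold regularity constraint $b \le a \le 5b$: one must control the entries of $R^{-1}_\tau(z)$ and $S^{-1}_\tau(z)$ finely enough to land inside the small ring $\bR$ of \eqref{fivegenring} rather than only inside the six-generator ring $\widetilde\bR$, and this requires understanding the fine behavior of the formal quintic quantum connection near its orbifold point---the technical input of Section~\ref{sec:orbifoldregularity}, which simultaneously settles the Zagier--Zinger conjecture. A secondary difficulty is keeping the $\lambda^5$-bookkeeping exact: the equivariant pairing and the Euler class $e(\cO(5))$ introduce powers of $\lambda$ whose exponents must be matched against the drop in cohomological degree on $\M_{g,n}$, which is why the argument must be run $\lambda$-equivariantly throughout and specialized to $\lambda \to 0$ only at the very end.
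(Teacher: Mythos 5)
Your proposal follows essentially the same route as the paper: starting from the Combinatorial Structural Theorem, it establishes graded finite generation for the $R$-matrix and the specialized $S$-matrices via the quantum differential equation together with the regularity and degree bounds of Section~\ref{sec:orbifoldregularity}, and then tracks cohomological degree, $\bR$-degree and $\lambda$-powers blockwise through the vertex, edge and leg contributions of the generalized $\sR$-matrix action, absorbing the negative powers of $L$ at the $\infty$-vertices, with part~(2) following from the $\lambda\to 0$ limit and the divisor equation. The only (cosmetic) misattributions are that Lemma~\ref{lem:du} enters through the inductive QDE argument for the $R$-matrix rather than through the vertex $\psi$-class pushforwards, and that Lemma~\ref{lem:mirror} only controls the first column of the specialized $S$-matrix, the remaining entries requiring the QDE together with the oscillatory-integral analysis.
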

In the non-equivariant limit $\lambda \to 0$, we recover
Theorem~\ref{thm:fgen}.

\subsection{CohFT of the $\lambda$-twisted invariants}

Recall the $\lambda$-twisted invariants defined in
Section~\ref{sec:twisted-CohFT} for the mirror shift
\begin{equation*}
  \tau = \frac{I_1(q)}{I_0(q)} H.
\end{equation*}
Note that the $\lambda$-twisted theory is semi-simple.
We will work with five bases of the state space $H^*(\PP^4)$:
\begin{enumerate}
\item The flat basis $\{H^i\}_{i = 0, \dotsc, 4}$
\item The normalized flat basis $\{\phi_i\}_{i = 0, \dotsc, 4}$ (see the introduction)
\item An alternative normalized flat basis $\{\bar\phi_i\}_{i = 0, \dotsc, 4}$, where
  \begin{equation*}
    \bar \phi_k =  \frac{ I_{0} }{ L} \phi_k = \frac{I_0 I_{1, 1} \dotsb I_{k, k}}{L^{k+1}} H^k
  \end{equation*}
\item The canonical basis $\{ e_\alpha \}$ (of idempotents)
\item The normalized canonical basis $\{ \bar e_\alpha \}$, where
  \begin{equation*}
    \bar e_\alpha:= \Delta_\alpha^{1/2}  e_\alpha,
  \end{equation*}
  and
  \begin{equation*}
    \Delta_\alpha:= (e_\alpha,  e_\alpha)^{-1} = (\zeta^\alpha \lambda)^3\frac{I_0^2}{L^2}.
  \end{equation*}
\end{enumerate}

We now collect basic formulae for the $\lambda$-twisted invariants
(see \cite[Section~6]{GuRo17P}, and also \cite[Section~7]{GuRo16P}):
\begin{itemize}
\item Canonical basis and normalized canonical basis
\begin{align*}
  e_\alpha  =\frac{1}{5 }\sum_i (\zeta^{\alpha} \lambda)^{-i}  \phi_i, \qquad
  \bar e_\alpha  =  \frac{1}{ {5}}\sum_i (\zeta^\alpha  \lambda)^{-i+\frac{3}{2}}  \bar \phi_i .
\end{align*}
\item Canonical coordinates
  \begin{equation}
    \label{eq:cancoords}
    u_\alpha =\zeta^\alpha \lambda \int_0^q (L(q)-1) \frac{dq}{q}
  \end{equation}
\item The pairing $(,)^\lambda$ in the bases $\{\phi_i\}$ and
  $\{\bar\phi_i\}$ is given by the matrices
  \begin{equation*}
 \frac{5 L^2}{I_0^2}    \begin{pmatrix}
      & & & 1& \\
      & & 1 & & \\
      & 1& & & \\
   1 & & & & \\
      & & & &  \lambda^5  
    \end{pmatrix}, \qquad
    \begin{pmatrix}
      & & & 5 & \\
      & & 5 & & \\
      & 5 & & & \\
      5 & & & & \\
      & & & & 5 \lambda^5
    \end{pmatrix},
  \end{equation*}
  respectively.
\item The matrix of quantum multiplication by
  $\dot\tau =H+q\frac{d}{dq} (\frac{I_1}{I_0})$ in the flat basis
  $\{H^i\}_{i=0}^4$ is
\begin{equation} \label{quantumproduct}
   \dot\tau * = A: =\begin{pmatrix}
  & I_{1,1} \\
 &    & I_{2,2} \\
    &  &  & I_{3,3} \\
      &  &   &  & I_{4,4} \\
    I_{5,5}\lambda^5  &   &   &   &
   \end{pmatrix}^T.
\end{equation}
\item Topological field theory {\small
\begin{align*}
 \qquad \quad
\omega_{g,n}(e_{\alpha_1},\cdots,e_{\alpha_n}) = &\ \sum_\alpha \Delta_\alpha^{g-1} \prod_i \delta_{\alpha, \alpha_i}  ,\\
\omega_{g,n}(\phi_{a_1},\cdots,    \phi_{a_n}) = &\ \sum_\alpha \zeta^{\alpha \sum_i a_i}\, \lambda^{3g-3+\sum_i a_i} (I_0/L)^{2g-2} .
\end{align*}}
\end{itemize}

We will also consider the following dual bases:
\begin{enumerate}
\item The dual flat basis $\{\frac 15 H^{3 - i}\}_{i = 0, \dotsc, 4}$,
  where we note that
  \begin{equation*}
    H^{-1} = H^4 H^{-5} = H^4 \lambda^{-5}
  \end{equation*}
\item The alternative dual normalized flat basis
  $\{\phi^i\}_{i = 0, \dotsc, 4}$, where
  \begin{equation*}
    \bar \phi^i = \frac 15 \bar \phi_{3 - i},
  \end{equation*}
  and $\bar \phi_{-1} = \lambda^{-5} \bar \phi_4$.
\end{enumerate}

Note that if $k = a + 5b \in \ZZ$ for $a \in \{0, 1, 2, 3, 4\}$, we
have $H^k = \lambda^b H^a$.
We analogously define $\phi_k = \lambda^b \phi_a$ and
$\bar \phi_k = \lambda^b \bar\phi_a$.
This is consistent with the above definition of $\bar\phi_{-1}$.

\subsection{Differential equations for $S$-matrix and $R$-matrix}
\label{sec:QDERS}

Recall the $S$-matrix is the solution of the quantum differential
equation (QDE) \cite{Du94, LePa04P}
\begin{equation*}
  z \,d S_\bt(z)  = d\bt *_{\bt}  S_\bt(z).
\end{equation*}
By using the divisor equation, the QDE at $\bt=\tau:=I_1/I_0$ is
equivalent to the following
\begin{equation}\label{QDE}
  zD  S_\tau(z) +    S_\tau(z) \cdot H= A  \cdot S_\tau(z),
\end{equation}
where $A = \dot \tau *$ is the quantum product matrix
\eqref{quantumproduct}.
We will omit the subscript $\tau$ in $S_\tau$ in the rest of the
paper.

\begin{lemma}
We consider the entries of the $S$-matrix and $R$-matrix 
$$
 {S}_{i\bar \alpha}(z):= (\bar e_\alpha,  S^*(z) H^i)^\lambda,\quad
 {R}_{i \bar \alpha}(z):= (\bar e_\alpha,  R^*(z) H^i)^\lambda .
$$
The entries satisfy the following quantum differential equation
\begin{align}
\big(z\,q\frac{d}{dq}+ h_\alpha \big)   {S}_{i\bar \alpha}(z) = &\, \sum_j {A_i}^j \cdot  {S_j}_{\bar \alpha} (z)   \label{QDEforS}
\\
\big(z\,q\frac{d}{dq}+ L_\alpha \big)\,  {R}_{i\bar \alpha}(z) = &\,  \sum_j {A_i}^j \cdot {R_j}_{\bar \alpha}(z)   \label{QDEforR}
\end{align}
where $h_\alpha = \zeta^\alpha \lambda$,
$L_\alpha = \zeta^\alpha \lambda L$ and
${A_i}^{j} = \frac{1}{5} (A\, H^i, H^{3-j})^\lambda$ are the entries
of the matrix $A$ \eqref{quantumproduct}.
\end{lemma}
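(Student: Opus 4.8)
The plan is to derive both differential equations as component forms of the quantum differential equation \eqref{QDE}. I would first put \eqref{QDE} in adjoint form: since classical multiplication by $H$ and quantum multiplication by $\dot\tau$ are both self-adjoint for $(\,,)^\lambda$, the $(\,,)^\lambda$-adjoint of $z\,q\frac{d}{dq}S(z)+S(z)\cdot H=A\cdot S(z)$ reads $z\,q\frac{d}{dq}S^*(z)+H\cdot S^*(z)=S^*(z)\cdot A$; applying this to $H^i$ and using $AH^i=\sum_j{A_i}^j H^j$ gives the vector identity $z\,q\frac{d}{dq}(S^*(z)H^i)+H\cdot(S^*(z)H^i)=\sum_j{A_i}^j\,S^*(z)H^j$ in $\H$.

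Pairing this against $\bar e_\alpha$ yields \eqref{QDEforS}. Since $\{\bar e_\alpha\}$ is $(\,,)^\lambda$-orthonormal, the right side contributes $\sum_j{A_i}^j S_{j\bar\alpha}$ directly, and $(\bar e_\alpha,\,z\,q\frac{d}{dq}(S^*H^i))=z\,q\frac{d}{dq}S_{i\bar\alpha}-z(q\frac{d}{dq}\bar e_\alpha,\,S^*H^i)$. The remaining task is to show that this frame correction $-z(q\frac{d}{dq}\bar e_\alpha,\,S^*H^i)$ together with the classical term $(H\bar e_\alpha,\,S^*H^i)$ collapses to $h_\alpha S_{i\bar\alpha}$. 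For this I would invoke the explicit expansions $e_\alpha=\tfrac15\sum_i(\zeta^\alpha\lambda)^{-i}\phi_i$, $\bar e_\alpha=\Delta_\alpha^{1/2}e_\alpha$, the recursions among the $I_{k,k}$ recalled above, and the canonical-coordinate formula \eqref{eq:cancoords}; the two algebraic facts that make it work are $A\bar e_\alpha=L_\alpha\bar e_\alpha$ (so that the matrix $A$ of \eqref{quantumproduct} is diagonalized by the canonical frame with eigenvalue $L_\alpha=\zeta^\alpha\lambda L$) and $q\frac{d}{dq}u_\alpha=\zeta^\alpha\lambda(L-1)=L_\alpha-h_\alpha$, the $q$-derivative of \eqref{eq:cancoords}.

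For \eqref{QDEforR} I would use that, by the Givental--Teleman reconstruction, $R_\tau(z)$ satisfies a differential equation of the same shape as $S$, obtained from \eqref{QDE} by passing to the normalized canonical frame: $S(z)$ factors asymptotically (in Givental's normalization) as the transition matrix to $\{\bar e_\alpha\}$ times $R_\tau(z)\,e^{U/z}$, with $U=\mathrm{diag}(u_\alpha)$. Substituting this into \eqref{QDE} and taking $\{H^i\}\otimes\{\bar e_\alpha\}$-components, the only difference from the $S$-computation is the diagonal contribution $q\frac{d}{dq}U=\mathrm{diag}(L_\alpha-h_\alpha)$ produced by differentiating $e^{U/z}$, which shifts the scalar $h_\alpha$ of \eqref{QDEforS} to $h_\alpha+(L_\alpha-h_\alpha)=L_\alpha$; the symplectic condition $R_\tau(z)R_\tau^*(-z)=\mathrm{Id}$ and $R_\tau(0)=\mathrm{Id}$ pin the solution down, so \eqref{QDEforR} has exactly the stated form.

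The step I expect to be the main obstacle is the bookkeeping of the $q$-dependence of the normalized canonical frame. A priori $q\frac{d}{dq}\bar e_\beta=\sum_\gamma M_{\beta\gamma}\bar e_\gamma$ with $M$ antisymmetric and generally nonzero, so a naive pairing of \eqref{QDE} with $\bar e_\alpha$ produces off-diagonal terms in $\alpha,\beta$ as well as a spurious term linear in $z$; indeed the bare identity $(H\bar e_\alpha,\,S^*H^i)+z(q\frac{d}{dq}\bar e_\alpha,\,S^*H^i)=h_\alpha S_{i\bar\alpha}$ fails. What rescues the computation is precisely the factorization through $e^{U/z}$: after reorganizing along those lines the off-diagonal $M$-terms are absorbed into $R_\tau(z)$, leaving only the diagonal eigenvalue $h_\alpha$ (resp.\ $L_\alpha$) and the flat-index action by $({A_i}^j)$. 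Getting this reorganization and its signs (including the choice of square roots in $\bar e_\alpha=\Delta_\alpha^{1/2}e_\alpha$) right is the delicate part; the rest is the routine idempotent algebra encoded in $A\bar e_\alpha=L_\alpha\bar e_\alpha$ and $q\frac{d}{dq}u_\alpha=L_\alpha-h_\alpha$.
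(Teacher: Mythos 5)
Your derivation of \eqref{QDEforR} from the $S$-side is essentially the paper's route: differentiate the asymptotic (Birkhoff) factorization of $S$ through $e^{U/z}$ and use $q\frac{d}{dq}u_\alpha=\zeta^\alpha\lambda(L-1)=L_\alpha-h_\alpha$ from \eqref{eq:cancoords} to shift $h_\alpha$ to $L_\alpha$. The one ingredient you leave implicit there is the point the paper makes by citing Givental and Coates--Givental: the connecting factor $C_\alpha(z)$ in $S_{i\bar\alpha}(z)C_\alpha(z)^{-1}=e^{u_\alpha/z}R_{i\bar\alpha}(z)$ is the explicit Bernoulli-series constant \eqref{constantR}, in particular \emph{independent of $q$}, which is exactly why applying $z\,q\frac{d}{dq}$ produces only the diagonal term $q\frac{d}{dq}u_\alpha$; appealing to the symplectic condition and $R(0)=\mathrm{Id}$ settles uniqueness of $R$, not this $q$-independence.

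The genuine gap is in your treatment of \eqref{QDEforS}. You pair the adjoint QDE against the $q$-varying frame $\bar e_\alpha$, correctly observe the resulting frame correction $-z(q\frac{d}{dq}\bar e_\alpha,S^*H^i)$ and classical term $(H\bar e_\alpha,S^*H^i)$, concede that their collapse to $h_\alpha S_{i\bar\alpha}$ ``fails'' as a bare identity, and then propose that the offending terms are ``absorbed into $R_\tau(z)$'' through the $e^{U/z}$ factorization. That cannot work: \eqref{QDEforS} is a statement about $S$ alone, and in your architecture the factorization is precisely what you later use to deduce \eqref{QDEforR} from \eqref{QDEforS}, so invoking it to repair \eqref{QDEforS} is circular, and no cancellation is actually exhibited. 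Indeed with the $q$-varying frame the identity is unrescuable: since $S^*(z)=\mathrm{Id}+O(z^{-1})$, the coefficient of $z^{+1}$ in \eqref{QDEforS} would force $q\frac{d}{dq}(\bar e_\alpha,H^i)^\lambda=0$, which is false (e.g.\ $(\bar e_\alpha,H^3)^\lambda=(\zeta^\alpha\lambda)^{3/2}I_0/L$). The paper's one-line proof works because the pairing covector is effectively constant: pairing $z\,q\frac{d}{dq}S^*+H\cdot S^*=S^*\cdot A$ against the $q$-independent $\tfrac15H^{3-j}$, equivalently against $\bar e_\alpha|_{q=0}$ (an eigenvector of classical multiplication by $H$ with eigenvalue $h_\alpha$, annihilated by $q\frac{d}{dq}$), gives \eqref{QDEforS} with no frame corrections, the flat index being acted on through self-adjointness of $A$ and $AH^i=\sum_j{A_i}^jH^j$. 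In particular your ``key fact'' $A\bar e_\alpha=L_\alpha\bar e_\alpha$ plays no role in \eqref{QDEforS} (the scalar there is the classical $h_\alpha$, not the quantum eigenvalue); $L_\alpha$ enters only through $q\frac{d}{dq}u_\alpha=L_\alpha-h_\alpha$ when passing from the $S$-equation to the $R$-equation. Redo the $S$-step in the flat (or frozen) frame and your argument closes along the paper's lines.
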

\begin{proof}
The equation for the $S_{i\bar\alpha}$ is just the component form of the equation \eqref{QDE}.

  By the following Birkhoff factorization
  $$
  {S}_{i\bar \alpha}(z) C_\alpha(z)^{-1} = e^{u_\alpha /z} {R}_{i \bar \alpha}(z)
  $$
  and \eqref{eq:cancoords}, the equation for the $R$-matrix
  follows.
  Here by the results of \cite{Gi01a, CoGi07}
  \begin{equation} \label{constantR} 
    C_\alpha(z) = \,  e^{ \sum_{k>0,\beta\neq \alpha} \frac{B_{2k}}{2k({2k-1})}\frac{z^{2k-1}}{(\zeta^\alpha \lambda - \zeta^\beta\lambda)^{2k-1}} } \cdot e^{ \sum_{k>0} \frac{B_{2k}}{2k({2k-1})}\frac{z^{2k-1}}{(-5 h_\alpha)^{2k-1}} .}
  \end{equation}
\end{proof}

We denote by $\bar R^*$ the matrix representation of $R^*$ under the
normalized flat basis $\{\bar \phi_i\}$, in other words
$$
\bar {R}_{  i} ^{ j}(z):= (\bar\phi^j,  R^*(z) \bar\phi_i)^{\lambda},
$$
and we also define a $\bar S^*(z)$ matrix by
$$
\bar {S}_{  i} ^{ j}(z):={\textstyle \frac{1}{5} }(H^{3-j},  S^*(z) \bar\phi_i)^{\lambda}  ,\qquad
(\bar {S}_\delta)_{ i} ^{ j}:=L^\delta \cdot {\textstyle \frac{1}{5} }   (H^{3-j} ,  S^*(H_\delta) \bar\phi_i) ^{\lambda},
$$
where $H_\delta = - 5H/\delta$.

\begin{lemma}
  \label{cor:QDEforR}
  We have the following properties
\begin{align}
(a)&\qquad    \big(z D_{\C} +\Amone \big) \, \bar R^*(z)   =  \bar R^*(z) \Amone   \label{QDEnormalize}\\
 &\qquad \big(H_\delta D_{\C} + 5H \Z \big) \,  \bar {S}_\delta   =  \bar S_\delta \Amone   \label{QDESnormalize}
\\
(b)&\qquad [z^k] {\bar R_{ i }}^{j}(z) = 0 \quad \text{ if  } \quad  k-i+j \notin 5 \ZZ\\
&\qquad \qquad \!\! (\bar S_\delta)_{i }^{j} = 0 \quad \text{ if  } \quad  i\neq j  \label{vanishingcondS}
\end{align}
where $\Amone$ is defined by $\Amone \cdot \bar\phi_i:=\bar\phi_{i+1}$,
 $[z^k]f(z)$ is the coefficient of $z^k$ in a formal series $f(z)$ of $z$, and for any $M$
$$
D_{\C} M :=\partial_u M -M \cdot \C,\qquad \C=
\diag( \X,\Y,-\Y,-\X,0)
$$

\end{lemma}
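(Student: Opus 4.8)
The plan is to derive the whole statement from the preceding lemma by rewriting its quantum differential equations \eqref{QDEforS} and \eqref{QDEforR} in the fully normalized flat basis $\{\bar\phi_i\}$, using $q\tfrac{d}{dq}=L\,\partial_u$ (from $du=L\,\tfrac{dq}{q}$). For the $R$-matrix, write $N_i:=I_0 I_{1,1}\cdots I_{i,i}/L^{i+1}$, so $\bar\phi_i=N_iH^i$. Passing from $\{H^i\}$ to $\{\bar\phi_i\}$ conjugates the matrix of $R^*(z)$ by $\diag(N_i)$, and commuting this past $\partial_u$ creates the connection term $\diag(\partial_u\log N_i)$. Using the definitions of $\X,\Y$ and the identities $I_{2,2}=L^5 I_0^{-2}I_{1,1}^{-2}$, $I_{3,3}=I_{1,1}$, $I_{4,4}=I_0$ — which give $N_0=I_0/L$, $N_1=I_0I_{1,1}/L^2$, $N_2=L^2/(I_0I_{1,1})$, $N_3=L/I_0$, $N_4=1$ — one checks $\diag(\partial_u\log N_i)_{i=0}^4=\diag(\X,\Y,-\Y,-\X,0)=\C$. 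Moreover, in the basis $\{\bar\phi_i\}$ both the quantum product matrix $A$ of \eqref{quantumproduct} and the diagonal matrix $\diag(L_\alpha)=\diag(\zeta^\alpha\lambda L)$ of its eigenvalues become $L\,\Amone$: the off-diagonal entries $I_{k,k}$ of $A$ are precisely the factors absorbed by $\diag(N_i)$ (with $I_{1,1}\cdots I_{5,5}=L^5$ giving the $\lambda$ in the wrap-around of $\Amone$), while conjugating $\diag(L_\alpha)$ by the eigenvector matrix $\bar e_\alpha=\tfrac15\sum_i(\zeta^\alpha\lambda)^{-i+3/2}\bar\phi_i$ again recovers the quantum product matrix $L\,\Amone$. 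Substituting into \eqref{QDEforR} and cancelling the common factor $L$ (one from $q\tfrac{d}{dq}=L\partial_u$, one from each $L\,\Amone$) produces \eqref{QDEnormalize}.

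For \eqref{QDESnormalize} the argument is parallel, starting from \eqref{QDE} rather than \eqref{QDEforR}, rewriting it in the half-normalized basis used to define $\bar S$, and then specializing $z=H_\delta=-5H/\delta$ (legitimate by Remark~\ref{rmk:invertible}). The right-hand side once again collapses to $\Amone$, while the extra prefactor $L^\delta$ in $\bar S_\delta$ contributes its logarithmic $u$-derivative $\delta\,\partial_u\log L$; combined with the $q^{1/5}$ entering through the canonical-coordinate normalization \eqref{eq:cancoords} and the definition $\Z=\tfrac{d}{du}\log(q^{1/5}L)$, this assembles into $5H\,\Z$, the coefficient $5H$ arising from $H_\delta\cdot\delta=-5H$.

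For part (b), I would use the $\ZZ/5$-grading that makes \eqref{QDEnormalize} homogeneous: give $\bar\phi_i$ weight $i$, compatibly with $\bar\phi_{k+5}=\lambda\bar\phi_k$ (so $\lambda$ has weight $0$ mod $5$), under which $\Amone$ raises the weight by $1$ and the diagonal operator $D_\C$ preserves it. Since $\bar R^*(0)=\mathrm{Id}$ has weight $0$, matching orders in $z$ in \eqref{QDEnormalize} forces the weight of $[z^k]\bar R^*$ to decrease by $1$ per power of $z$, i.e.\ $[z^k]\bar R_i^j=0$ unless $k-i+j\in 5\ZZ$. For \eqref{vanishingcondS}, the analogous grading (using the genus-zero mirror formula of Lemma~\ref{lem:mirror} for the coefficients) gives that $(S_m^*)_i^j\neq 0$ only when $j\equiv i+m\pmod 5$, where $S^*(z)=\mathrm{Id}+z^{-1}S_1^*+\cdots$; after substituting $z=H_\delta$, where $z^{-m}$ is a $\lambda$-power times $H^{a}$ with $a\equiv -m\pmod 5$, every monomial of $S^*(H_\delta)\bar\phi_i$ carries an $H$-power $\equiv i\pmod 5$, and pairing against $H^{3-j}$ — which detects the $H^j$-component modulo $H^5=\lambda^5$ — then forces $(\bar S_\delta)_i^j=0$ unless $i=j$.

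The genuine work is the bookkeeping rather than any new idea: one must check that the powers of $L$ cancel exactly, so that \eqref{QDEnormalize} and \eqref{QDESnormalize} carry the bare shift $\Amone$ and not $L\,\Amone$, and one must correctly isolate the $\Z$-term of \eqref{QDESnormalize} as the sum of the $L^\delta$-contribution and the $q^{1/5}$-contribution. Keeping the two differing ``dual'' conventions straight — $\bar\phi^j$ in the definition of $\bar R_i^j$ versus $\tfrac15 H^{3-j}$ in that of $\bar S_i^j$ — is what accounts for the slightly different forms of the two equations in part (a).
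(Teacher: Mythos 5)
Your part (a) follows the paper's own route: take the component quantum differential equations \eqref{QDEforS}--\eqref{QDEforR}, pass to the normalized basis so that the connection term $\diag(\partial_u\log N_i)$ becomes exactly $\C$ and the quantum product becomes the shift $\Amone$, and for $\bar S_\delta$ substitute $z=H_\delta$ and keep track of the prefactor $L^\delta$. Two bookkeeping points: the wrap-around convention is $\bar\phi_{k+5}=\lambda^5\bar\phi_k$ (cf.\ $H^{-1}=H^4\lambda^{-5}$, $\bar\phi_{-1}=\lambda^{-5}\bar\phi_4$), not $\bar\phi_{k+5}=\lambda\bar\phi_k$, otherwise the corner entry $I_{5,5}\lambda^5$ of $A$ does not match $L\Amone$; and the $\tfrac1{5L}$ half of the $\Z$-term in \eqref{QDESnormalize} does not come from the canonical coordinates \eqref{eq:cancoords} but from the divisor term (the constant $h_\alpha$ in \eqref{QDEforS}, equivalently the fact that $\bar S$ is paired against the constant dual basis $\tfrac15 H^{3-j}$), which produces the term $H L^{-1}\bar S^*$ that combines with $-H_\delta\,\delta\,\partial_u\log L=5H\,\partial_u\log L$ to give $5H\Z$.

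The genuine gap is in part (b), for the $R$-matrix. At order $z^k$, \eqref{QDEnormalize} reads $\Amone\bar R_k-\bar R_k\Amone=-D_{\C}\bar R_{k-1}$, so the unknown $\bar R_k$ enters only through $\mathrm{ad}_{\Amone}$, whose kernel is the five-dimensional commutant $\spann\{\mathrm{Id},\Amone,\dotsc,\Amone^4\}$; these kernel elements realize every residue class of your $\ZZ/5$-weight. Hence the homogeneity of the equation together with $\bar R^*(0)=\mathrm{Id}$ does \emph{not} force homogeneity of the solution: the recursion leaves five undetermined functions of $q$ at each order, and there are solutions with identity leading term that violate the mod-$5$ pattern, so ``matching orders in $z$'' proves nothing about the actual $R$-matrix. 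What pins the pattern down, and what the paper uses, is independent knowledge of the $i=0$ column, namely $[z^k]\bar R_0^{\,j}=0$ unless $k+j\in5\ZZ$ (Corollary~\ref{lem:R0}(2)), which rests on the oscillatory-integral and Picard--Fuchs analysis of Section~\ref{sec:orbifoldregularity}; part (b) is then deduced by induction on the \emph{input index} $i$ (not on $k$), since \eqref{QDEnormalize} expresses the $(i+1)$st column explicitly in terms of the $i$th. The same structural point applies to \eqref{vanishingcondS}: Lemma~\ref{lem:mirror} does not give a mod-$5$ pattern for all entries of $S^*$, only that $S^*(H_\delta)\bar\phi_0$ is proportional to $\bar\phi_0$ via \eqref{eq:JS}, and that is precisely the $i=0$ base case from which one inducts using \eqref{QDESnormalize}. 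Your $S$-argument could be repaired (the $S$-entries are explicit genus-zero data, symmetric in the $\lambda_i$), but for $R$ the missing base-case input is essential and cannot be recovered formally from the QDE.
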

\begin{proof}
  Note that we have
  \begin{equation*}
    \frac{1}{5} H^{3-j} = \frac 15 \sum_\alpha (\zeta^\alpha \lambda)^{\frac 32 - j}   \bar e_\alpha|_{q = 0},\qquad
    \bar \phi^j =  \frac{1}{5} \sum_\alpha (\zeta^\alpha \lambda)^{\frac 32 - j} \bar e_\alpha,\qquad
  \end{equation*}

  Letting $ {S }_i^j(z):=\frac{1}{5}(H^{3-j}, S^*(z) H^i)^\lambda$ and
  $ {R }_{ i}^{ \bar j}(z):=(\bar\phi^j, R^*(z) H^i)^\lambda$, we have
  \begin{align*}
    z\, \frac{1}{L} q\frac{d}{dq}  {S }_{ i}^{ j}(z)+  \frac{1}{L}  {S}_{i}^{ {j-1}}(z)   =& \    \frac{ I_{i+1\,,i+1} }{L}   {S }_{i+1}^{   j}(z), \\
    z\, \frac{1}{L} q\frac{d}{dq}  {R }_{ i}^{ \bar j}(z)+    {R}_{i}^{ \overline {j-1}}(z)   =& \    \frac{ I_{i+1\,,i+1} }{L}   {R }_{i+1}^{ \overline j}(z).
  \end{align*}
  By changing the basis on the right hand side to the alternative normalized flat
  basis as well, we arrive at
  \begin{align*}
    & z\big(\partial_u-  \partial_u \log {\textstyle \frac{I_{0}\cdots I_{i,i}}{   L^{i+1 }} } \big) \,{\bar S_{{i} }}^{  j}(z)+  L^{-1} {\bar S_{{i}}}^{  {j-1}}(z)   =   {\bar S_{ {i+1}}}^{ j}(z)
    \\
    & z\big(\partial_u- \partial_u \log {\textstyle \frac{I_{0}\cdots I_{i,i}}{  L^{i+1 }} } \big) \,{\bar R_{  {i} }}^{ j}(z)+   {\bar R_{  {i}}}^{  {j-1}}(z)   =   {\bar R_{ {i+1}}}^{  j}(z)
  \end{align*}
  The second equality gives \eqref{QDEnormalize}. 
  The first equality gives
  $$
  z D_{\C} \, \bar S^*(z)+   H \cdot L^{-1} \bar S^*(z)   =  \bar S^*(z) \Amone  .
  $$
  By setting $z = H_\delta = -5H/\delta$ and by using
  $\partial_u \log L +({5 L})^{-1}= \Z$ we obtain
  \eqref{QDESnormalize}.

  For (b), the following $i=0$ case will be proved in
  Theorem~\ref{lem:R0}
  $$
  [z^k] {\bar R_0}^j(z) = 0 \quad \text{ if  } \quad  k+j \notin 5 \ZZ .
  $$
  Then by using Part (a) we can deduce (b) for any $i>0$ inductively.

  We apply Lemma~\ref{lem:mirror} to compute the entries
  $(S_\delta)_0^j$.
  We note that by the genus zero mirror theorem
  \begin{equation}
    \label{eq:JS}
    J_\bt(H_\delta)
    = H_\delta \frac{S^*(H_\delta) \mathbf 1}{I_0}
    = H_\delta \frac{S^*(H_\delta) \bar\phi_0}L.
  \end{equation}
  So by Lemma~\ref{lem:mirror}, $(\bar S_\delta)_0^j = 0$, unless
  $j = 0$.
  As for $\bar R$, we can now deduce Part (b) from Part (a)
  inductively.
\end{proof}

\subsection{Finite generation for $S$-matrix and $R$-matrix}

As we will see soon, Lemma~\ref{cor:QDEforR} gives us a canonical way
to write each entry of the $R$-matrix as an element in the ring of six
generators.
We first note the following:
\begin{lemma}
  \label{lem:du}
  There is a derivation $\partial_u$ on the ring $\bR$ of six
  generators compatible with the derivation $\frac qL \frac d{dq}$.
  Explicitly:
  \begin{align*}
    \partial_u \X_1 = & \ \X_2, \qquad
    \partial_u \X_2 = \X_3, \qquad
    \partial_u L^{-a} Z^b = \frac 15 L^{-a} Z^b (5b - a) L^{-1} (Z - 1), \\
    \partial_u \Y_1=&\, -3 \X_2-\Y^2-\X^2-\frac{3}{5}  L^{-2} Z(Z-1)\\
    \partial_u \X_3 =&\,  -4  \X  \X_3-3  \X_2^2-6  \X^2  \X_2- \X^4- \frac{3}{5}L^{-2} Z(Z-1)\big( \X_1^2+\X_2\big) \\&\quad- \frac{3}{25}L^{-3} Z(Z-1)(8Z-3) \X_1+
 \frac{1}{625} L^{-4} \cdot  Z \big( -396 Z^3+714 Z^2 -341 Z+23 \big)
  \end{align*}
\end{lemma}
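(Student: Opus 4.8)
The six symbols $L^{-1},\X_1,\X_2,\X_3,\Y,Z$ generating $\widetilde\bR$ are taken to be algebraically independent (which is why $Z$ is kept as a formal variable rather than set equal to $L^5$), so there is no consistency to check: the formulas of the Lemma — reading the formula for $\partial_u(L^{-a}Z^b)$ as the Leibniz consequence of $\partial_u L^{-1}=-\tfrac{1}{5}L^{-2}(Z-1)$ and $\partial_u Z=L^{-1}Z(Z-1)$ — define a unique derivation of $\widetilde\bR$. The two things that do require proof are: (i) that $\partial_u$ maps $\bR$ into $\bR$ and raises the grading by $1$; and (ii) that, once $L^{-1},\X_1,\X_2,\X_3,\Y,Z$ are replaced by their actual values as $q$-series, this $\partial_u$ computes $\tfrac{q}{L}\tfrac{d}{dq}$, i.e.\ that the five displayed identities hold for the genuine functions.

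Point (i) is a finite check. Writing $\partial_u(L^{-a}Z^b)=\tfrac{1}{5}(5b-a)\big(L^{-(a+1)}Z^{b+1}-L^{-(a+1)}Z^{b}\big)$, the coefficient $5b-a$ vanishes precisely when $a=5b$, so the monomial $L^{-(a+1)}Z^{b}$ — the only one that could fail the constraint $b'\le a'\le 5b'$ — never actually occurs, while $L^{-(a+1)}Z^{b+1}$ always satisfies $b+1\le a+1\le 5(b+1)$; hence $\partial_u$ preserves $\spann_\QQ\{L^{-a}Z^{b}:b\le a\le 5b\}$. Inspecting the two Picard--Fuchs formulas term by term shows that every monomial $L^{-c}Z^{d}$ occurring there obeys $d\le c\le 5d$, so all of $\bR$ is preserved. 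Finally, with $\deg\X_k=k$, $\deg\Y=1$, $\deg L^{-1}=1$, $\deg Z=0$, each term on the right-hand side of each of the five formulas has degree one more than the left-hand side.

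For point (ii): the identities $\partial_u\X_1=\X_2$ and $\partial_u\X_2=\X_3$ are immediate from $\partial_u=\tfrac{d}{du}$ and $\X_k=\tfrac{d^k}{du^k}\log(I_0/L)$. From $L=(1-5^5q)^{-1/5}$ one gets $\tfrac{d}{dq}L=5^4L^6$ and $5^5q=1-L^{-5}=1-Z^{-1}$, whence $\partial_u L=\tfrac{q}{L}\tfrac{dL}{dq}=5^4qL^5=\tfrac{1}{5}(Z-1)$; differentiating $L^{-1}$ and $Z=L^5$ gives the two primitive monomial formulas, and Leibniz gives the general one. The two remaining identities are where the Picard--Fuchs equation of the quintic is used. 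Set $D=q\tfrac{d}{dq}=L\,\partial_u$, so $q=\tfrac{Z-1}{5^5Z}$ and $c:=\partial_u\log L=\tfrac{Z-1}{5L}=\Z-\tfrac{1}{5}L^{-1}$ is explicit. Since $I_0$ solves
\begin{equation*}
  \Big[D^4-5^5q\prod_{j=1}^4\big(D+\tfrac{j}{5}\big)\Big]I_0=0,
\end{equation*}
setting $p=\log I_0$ and expanding (using $D(I_0f)=I_0(D+Dp)f$) turns this into a polynomial identity in $Dp,\dotsc,D^4p$ and $q$; converting $D$-derivatives to $\partial_u$-derivatives via $D^kp=L^k\partial_u^kp+(\text{lower }\partial_u\text{-order})$ and then using $\partial_u^kp=\X_k+\partial_u^{k-1}c$ for $k\ge1$, one solves for the top derivative $\partial_u^4p$, i.e.\ for $\X_4=\partial_u\X_3$. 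This produces the stated formula, which involves only $\X_1,\X_2,\X_3,L,Z$ because only $I_0$ entered. For $\partial_u\Y_1=\Y_2$ one combines the same equation with the structural identities of Zagier--Zinger \cite{ZaZi08} — the self-duality relations $I_{k,k}=I_{4-k,4-k}$ together with $I_{2,2}=L^5I_0^{-2}I_{1,1}^{-2}$, which encode the ``special geometry'' of the quintic — to obtain the extra relation needed to express $\Y_2$ purely in terms of $\X_1,\X_2,\Y,L,Z$; concretely, $I_0I_{1,1}L^{-2}=(L/I_{2,2})^{1/2}$ gives $\Y_1=\tfrac{1}{2}\partial_u\log(L/I_{2,2})$, and the second $u$-derivative of $\log I_{2,2}$ is then controlled by the Picard--Fuchs input. (Alternatively, both formulas may be quoted from \cite{YaYa04,ZaZi08} and merely rewritten in the normalization $\X_k,\Y,\Z,L$ used here.)

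The main obstacle is the algebra in this last step, above all for $\partial_u\X_3$: the fourth-order Picard--Fuchs relation expands into a large number of monomials in $\X_1,\X_2,\X_3,L,Z$, and one must track carefully the powers of $L$ introduced by $D=L\,\partial_u$ and the factor $5q=\tfrac{Z-1}{625\,Z}$, and then bring the answer into the displayed normal form in which every monomial $L^{-c}Z^d$ satisfies $d\le c\le 5d$. Once the five identities are established, the closure of $\bR$ under $\partial_u$ and the degree shift $+1$ are exactly point (i), and the proof is complete.
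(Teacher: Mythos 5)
Your proposal is correct, and it is in fact more detailed than the paper, which states Lemma~\ref{lem:du} with no proof at all, treating it as the standard Yamaguchi--Yau/Zagier--Zinger computation (the same formulas reappear, rewritten in terms of $\Z_2,\Z_3,\Z_4$, as the vector field $D_u$ in the proof of Lemma~\ref{lem:PDEsforR}, again without derivation). Your framing is the right one: define $\partial_u$ formally on the six generators, then check (i) closure of $\bR$ and the degree shift, and (ii) that the five formulas hold as identities of $q$-series. Everything you verify explicitly checks out: $\partial_u L=\tfrac15(Z-1)$, hence $\partial_u L^{-1}=-\tfrac15L^{-2}(Z-1)$, $\partial_u Z=L^{-1}Z(Z-1)$, and the monomial formula by Leibniz; the observation that the only monomial that could violate $b\le a\le 5b$ carries the vanishing coefficient $5b-a$; the tautological $\partial_u\X_1=\X_2$, $\partial_u\X_2=\X_3$; and the degree count. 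Your route to the two nontrivial identities --- expanding the Picard--Fuchs operator $D^4-5^5q\prod_{j=1}^4\bigl(D+\tfrac j5\bigr)$ logarithmically in $I_0$ and converting $D=L\partial_u$ to solve for $\partial_u\X_3$, and invoking the Zagier--Zinger relations $I_{3,3}=I_{1,1}$, $I_{4,4}=I_0$, $I_{2,2}=L^5I_0^{-2}I_{1,1}^{-2}$ to reduce $\partial_u\Y_1$ to the same input --- is exactly the standard derivation, and correctly explains why no $\Y$ appears in the $\partial_u\X_3$ formula. The only thing left undone is the brute-force expansion producing the explicit coefficients (e.g.\ $-396Z^3+714Z^2-341Z+23$), which you flag honestly and which can be checked mechanically or matched against \cite{YaYa04, ZaZi08} after the change of normalization; since the paper supplies nothing here, this is not a gap relative to it, though a complete write-up would either carry out that expansion or make the translation to the cited generators explicit.
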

\begin{lemma}
  \label{lem:fgforR}
  Let $\bar R_k:=[z^k] \bar R(z)$. We have $R_0=\mathrm{Id}$.
  For each $k>0$, all entries of the $\bar R_k$-matrix
  can be canonically expressed
  as an element of the degree $k$ subspace $\bR_k$ of the ring $\bR$:
  $$
  [z^k] {\bar R_{ i }}^{ j}(z)  \in  \lambda^{i - j - k} \bR_k
  $$
\end{lemma}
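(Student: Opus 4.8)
The plan is to prove Lemma~\ref{lem:fgforR} by induction on $k$, using the differential equation \eqref{QDEnormalize} from Lemma~\ref{cor:QDEforR}(a) as a recursion that expresses $\bar R_k$ in terms of $\bar R_{k-1}$, combined with the symplectic condition $R(z)R^*(-z) = \mathrm{Id}$ to pin down the ``integration constant'' at each step. The base case $\bar R_0 = \mathrm{Id}$ is part of the statement (it follows from $R(z) = \mathrm{Id} + O(z)$). The vanishing pattern $[z^k]\bar R_i^j(z) = 0$ unless $k - i + j \in 5\ZZ$ is already established in Lemma~\ref{cor:QDEforR}(b), so the weight bookkeeping $\lambda^{i-j-k}$ is consistent throughout; one should track the $\lambda$-powers carefully but they are forced by that congruence together with $H^5 = \lambda^5$.

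First I would extract the component recursion from \eqref{QDEnormalize}. Writing $\Amone\cdot\bar\phi_i = \bar\phi_{i+1}$ and $\C = \diag(\X,\Y,-\Y,-\X,0)$, the matrix equation $(zD_\C + \Amone)\bar R^*(z) = \bar R^*(z)\Amone$ reads in components, after taking the $[z^{k+1}]$-coefficient,
\begin{equation*}
  \partial_u [z^{k}]\bar R_i^{\,j} - c_j\, [z^{k}]\bar R_i^{\,j} + [z^{k+1}]\bar R_{i-1}^{\,j} = [z^{k+1}]\bar R_i^{\,j+1},
\end{equation*}
where $c_j$ is the $j$-th diagonal entry of $\C$. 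This lets one solve for $[z^{k+1}]\bar R_i^{\,j+1}$ (raising the upper index) in terms of $\partial_u$ and $c_j$ applied to the degree-$k$ entry $[z^{k}]\bar R_i^{\,j}$ — which by the inductive hypothesis lies in $\bR_k$ — plus the entry $[z^{k+1}]\bar R_{i-1}^{\,j}$ with lower index decreased by one. Since $\partial_u$ raises degree by one and preserves the ring $\bR$ (Lemma~\ref{lem:du}), and multiplication by $c_j \in \{\X,\Y,0\} \subset \bR_1$ also raises degree by one, each application of the recursion stays inside $\bR_{k+1}$. Iterating on the index $i$, all entries $[z^{k+1}]\bar R_i^{\,j}$ with $i$ in the relevant range are determined from the single ``seed'' entry in the bottom row (or top row, depending on how one organizes the induction), i.e.\ from $[z^{k+1}]\bar R_0^{\,j}(z)$ together with the already-known degree-$k$ data.

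The remaining task, and the \textbf{main obstacle}, is to show that the seed — the entries $[z^{k+1}]\bar R_0^{\,j}$ that are not determined by the recursion on $i$ alone — lies in $\bR_{k+1}$. This is exactly the content of the ``$i = 0$ case'' referred to in the proof of Lemma~\ref{cor:QDEforR}(b), which the paper defers to Theorem~\ref{lem:R0}; so I would invoke that theorem for the $i=0$ row. Concretely, $\bar R_0^{\,j}(z)$ is governed by the constant-in-$q$ Birkhoff factor $C_\alpha(z)$ of \eqref{constantR} together with the canonical coordinates $u_\alpha$ from \eqref{eq:cancoords}, and one computes $\bar R_0^{\,j}$ via the change of basis $\bar R_{0}^{\,\bar j}(z) = e^{-u_\alpha/z}\,S_{0\bar\alpha}(z)\,C_\alpha(z)$ expanded in $z$; the entries of $S_{0\bar\alpha}$ are controlled by the hypergeometric $I$-function and the genus-zero mirror theorem (as in \eqref{eq:JS}), from which one reads off that the $z$-coefficients are polynomials in $L^{-1}$, $\X_k$, $\Y$, $Z$ of the correct degree. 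Once the $i=0$ row is in hand, the ambiguity in solving \eqref{QDEnormalize} up to a $\partial_u$-closed term is exactly killed: the differential equation plus the prescribed $i=0$ boundary data determines $\bar R_{k+1}$ uniquely, and the symplectic condition $\bar R(z)\bar R^*(-z) = \mathrm{Id}$ provides the consistency check that no inconsistency arises (and can alternatively be used in place of the $i=0$ input to fix the constant, as $R_k^* = (-1)^{k}$ times the ``lower-triangular transpose'' of $R_k$ modulo lower-order data). Assembling: by induction every $[z^k]\bar R_i^{\,j}(z)$ is canonically an element of $\lambda^{i-j-k}\bR_k$, which is the assertion.
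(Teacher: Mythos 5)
Your argument is essentially the paper's own proof: the seed row $i=0$ is supplied by Theorem~\ref{thm:R0}/Corollary~\ref{lem:R0} together with the degree bound \eqref{degreeboundrk}, giving $[z^k]\bar R_0^{\,j}\in\lambda^{-j-k}L^{-k}\,\QQ[Z]_{k/5\le d\le k}\subset\bR_k$, and the remaining rows follow inductively from \eqref{QDEnormalize}, with $\partial_u$ (Lemma~\ref{lem:du}) and the entries of $\C$ raising the degree by one. Two small corrections: the component form of \eqref{QDEnormalize} is $z(\partial_u-c_i)\bar R_i^{\,j}+\bar R_i^{\,j-1}=\bar R_{i+1}^{\,j}$ (the diagonal entry carries the lower index and the recursion raises the lower index, not the upper one), and since this determines the rows $i\ge 1$ from the row $i=0$ purely algebraically, there is no integration constant to fix --- so the symplectic condition is not needed here and could not substitute for the $i=0$ input, which is precisely the technical content of Section~\ref{sec:orbifoldregularity}.
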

\begin{proof}
  The statement for $[z^k] {\bar R_0}^{ j}(z)$ follows from
  Lemma~\ref{lem:R0} (1) and Lemma \ref{degreeboundrk}, where we
  regard $Z=L^5$ such that
  $$
  [z^k] {\bar R_0}^{ j}(z) \in  \lambda^{-j -k} L^{-k}  \mathbb Q[Z]_{k/5 \leq d\leq k}  \subset \bR_k
  $$
  The general case of the lemma follows then from
  Lemma~\ref{cor:QDEforR} inductively, where we use Lemma~\ref{lem:du}
  to canonically define $\partial_u$.
\end{proof}

We move on to proving finite generation results for the specialized
$S$-matrices.
\begin{lemma}
  Following \cite{ZaZi08}, let
  \begin{equation} \label{tIfunction}
    \tilde I(q,z)
    = z\sum_{d\geq 0} q^d \frac{\prod^{5d-1}_{k=0}(5H+kz)}{\prod^d_{k=1}\big( (H+kz)^5 -\lambda^5 \big)}.
  \end{equation}
  Then, we have
  \begin{equation} \label{S4}
    H^4 \cdot \tilde I(z) =  I_0 \cdot S^*(z) \bar \phi_4,\qquad  I(z) = \big(H + zq\frac{d}{dq}\big) \tilde I(z).
  \end{equation}
\end{lemma}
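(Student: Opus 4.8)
The plan is to recognize $\tilde I$ as the hypergeometric presentation of a column of the $\lambda$--twisted fundamental solution $S^*(z)$, in the spirit of the genus zero mirror theorem, and to obtain the second identity of \eqref{S4} as an elementary manipulation of hypergeometric factors. As a preliminary I would record the normalization $\bar\phi_4 = H^4$: by definition $\bar\phi_4 = I_0 I_{1,1}I_{2,2}I_{3,3}I_{4,4}L^{-5}H^4$, and substituting $I_{2,2}=L^5I_0^{-2}I_{1,1}^{-2}$, $I_{3,3}=I_{1,1}$, $I_{4,4}=I_0$ collapses the scalar coefficient to $1$. Thus the first assertion becomes $H^4\tilde I(z)=I_0\,S^*(z)H^4$, and since $H^5=\lambda^5$ in the $(\CC^*)^5$--equivariant cohomology of $\PP^4$ specialized at $\lambda_i=\zeta^i\lambda$, the class $H^4 = \lambda^5 H^{-1}$ is the ``cyclic bottom'' generator of the $\lambda$--twisted quantum cohomology.

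\textbf{Second identity.} I would expand both sides of $I(z) = (H+zq\tfrac{d}{dq})\tilde I(z)$ as power series in $q$ and compare the coefficient of $q^d$, working in $\QQ(\lambda)[H]/(H^5-\lambda^5)$. Writing $\tilde I = z\sum_{d\ge 0}q^d\tilde I_d$ with $\tilde I_d = \prod_{k=0}^{5d-1}(5H+kz)\big/\prod_{k=1}^d((H+kz)^5-\lambda^5)$, one has $(H+zq\tfrac{d}{dq})\tilde I = z\sum_d q^d(H+dz)\tilde I_d$, so the claim reduces to a telescoping identity among the hypergeometric numerators: since $5H\cdot\prod_{k=1}^{5d-1}(5H+kz)=\prod_{k=0}^{5d-1}(5H+kz)$ and $5(H+dz)=5H+5dz$, we obtain $(H+dz)\prod_{k=0}^{5d-1}(5H+kz)=H\prod_{k=1}^{5d}(5H+kz)$, which matches the $q^d$--coefficient of $I(z)$ after accounting for the overall normalization (the denominators being identical). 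This is the routine part.

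\textbf{First identity.} For $H^4\tilde I(z)=I_0S^*(z)H^4$ I would argue via the quantum differential equation and uniqueness. The columns of the $\lambda$--twisted fundamental solution, read in the flat basis, solve the QDE \eqref{QDE}, and in the normalized flat basis $\{\bar\phi_i\}$ multiplication by $\dot\tau$ is the cyclic shift $\bar\phi_i\mapsto\bar\phi_{i+1}$ (the content of \eqref{QDEnormalize} and its $S$--matrix analogue $zD_{\C}\bar S^*(z)+H L^{-1}\bar S^*(z)=\bar S^*(z)\Amone$). Hence $S^*(z)\bar\phi_0,\dots,S^*(z)\bar\phi_4$ are obtained from one another by this shift, and the $\bar\phi_0$--column is, by the genus zero mirror theorem and the $J$--$S$ relation \eqref{eq:JS}, the hypergeometric series $I(z)$ up to the elementary factor $L/z$; iterating the shift four times and closing the cycle via $H^5=\lambda^5$ produces exactly $H^4\tilde I(z)/I_0$. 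Equivalently and more directly, one verifies that $v(z):=I_0^{-1}H^4\tilde I(z)$ satisfies \eqref{QDE} componentwise in the $\bar\phi_4$--slot -- which amounts to the Picard--Fuchs recursion for $\tilde I$, i.e.\ that $\tilde I$ is annihilated by the operator built from $q\tfrac{d}{dq}$ and $H$ with structure constants $I_{k,k}$ -- together with the asymptotics $v(z)\to H^4=\bar\phi_4$ as $z\to\infty$ and the correct value at $q=0$; then $v(z)=S^*(z)\bar\phi_4$ by uniqueness of the solution of the QDE with prescribed leading term.

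\textbf{Main obstacle.} There is essentially no conceptual difficulty beyond recognizing $\tilde I$ as the fundamental solution attached to the cyclic generator $H^4\simeq H^{-1}$; the genuine work is bookkeeping -- matching the many normalization factors ($I_0$, $L$, the $I_{k,k}$, powers of $z$ and of $5$, and the twisted versus ordinary Poincar\'e pairings) across $\tilde I$, the $I$--function, and the paper's normalized bases $\{\phi_i\}$, $\{\bar\phi_i\}$ -- together with re-running the Zagier--Zinger computation in the $(\CC^*)^5$--equivariant (formal quintic) setting rather than the non-equivariant quintic.
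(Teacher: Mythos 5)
The paper does not actually prove this lemma; it simply cites \cite[Section~2]{ZaZi08}, so an independent argument like yours would be a genuinely different (and more self-contained) route. However, as written your proposal has concrete gaps precisely where the content lies. First, your telescoping computation for the second identity gives, on the coefficient of $q^d$, $(H+dz)\prod_{k=0}^{5d-1}(5H+kz)=H\prod_{k=1}^{5d}(5H+kz)$, i.e.\ $\big(H+zq\tfrac{d}{dq}\big)\tilde I(z)=H\cdot I(z)$ with $I$ the equivariant hypergeometric series; the leftover factor of $H$ is a cohomology class, not an ``overall normalization,'' so the step as written does not establish the displayed identity \eqref{S4} — you must either pin down the convention for $I(z)$ (or for $\tilde I$, cf.\ ZaZi08's $\mathcal F_{-1}$) that absorbs this factor, or correct the matching; waving it off hides exactly the bookkeeping this lemma is about. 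Second, for the first identity the decisive step — that $I_0^{-1}H^4\tilde I(z)$ satisfies the quantum differential equation in the $\bar\phi_4$-slot, i.e.\ the equivariant Picard--Fuchs/recursion computation with structure constants $I_{k,k}$ — is asserted rather than carried out; but this is precisely the content of the Zagier--Zinger computation that the paper's citation covers, so your argument at that point re-cites the result rather than proving it. Moreover your uniqueness setup has a normalization mismatch: since $\tilde I(q,z)=z(1+O(q))$, the class $v(z):=I_0^{-1}H^4\tilde I(z)$ equals $zH^4+\dotsb$ at $q=0$, whereas $S^*(z)\bar\phi_4=\bar\phi_4+O(z^{-1})$, so the claimed asymptotics $v(z)\to\bar\phi_4$ as $z\to\infty$ fails as stated and the overall power of $z$ must be tracked before invoking uniqueness of QDE solutions with prescribed leading term. (Your preliminary observation $\bar\phi_4=H^4$ is correct, and the overall strategy — cyclic structure from $H^5=\lambda^5$ plus QDE and uniqueness — is sound; what is missing is the actual verification and the factor bookkeeping, which for this lemma is the proof.)
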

\begin{proof}
  This follows from \cite[Section~2]{ZaZi08}.
\end{proof}

\begin{definition}
For any $A\subset   {\QQ}[L^{-1},L,\X_1,\X_2, \X_3, \Y]$, 
we introduce
\begin{align*}  
A^{\text{reg}} :=&\  A \cap  {\QQ}[L,\X_1,\X_2, \X_3, \Y] .
\end{align*}
We also introduce
$$
\overline\bR:=\bR[L^{-1}],\qquad  \overline\bR_k:=\{ f\in \bR[L^{-1}] \subset \widetilde \bR \ : \  \deg f = k \}
$$
\end{definition}
\begin{remark}
  Clearly,
  $$
  \bR = \overline\bR \cap  \widetilde \bR^{\text{reg}}.
  $$
\end{remark}
\begin{lemma}
  Recall $ H_\delta := -\frac{5 H }{\delta}$ and let (see \eqref{vanishingcondS})
$$
S_{\delta;j}:=  (S_{\delta})_j^j= L^{\delta} \cdot  \frac{I_0\cdots I_{j,j}}{5 L^{j + 1}} (H^{3-j}, S^*(H_\delta)  H^j)^\lambda,
$$
then by regarding $Z=L^5$ we have
for $i=0,1,2,3,4$,  
 \begin{equation*}
   S_{\delta;i}  \  \in \  \big( L^{\overline{\delta-1}}   \, \overline\bR_i \big)^{\text{reg}} \otimes \mathbb Q[Z]_{\leq \lfloor \frac{\delta-1}{5}  \rfloor}
 \end{equation*}
 where $\overline i = i - 5\lfloor i/5\rfloor$ denotes the remainder of
 $i$ under division by $5$, and $\QQ[Z]_{\leq d}$ is the set of degree
 $\leq d$ polynomials in $Z$.  
\end{lemma}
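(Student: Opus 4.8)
The plan is to argue as in the proof of Lemma~\ref{lem:fgforR}: pin down $S_{\delta;i}$ for one value of $i$ directly from the genus zero mirror theorem, and then propagate the statement to the remaining $i$ by the $S$-matrix quantum differential equation \eqref{QDESnormalize}, using that $\bR$ is closed under the derivation $\partial_u$, which raises the degree by one (Lemma~\ref{lem:du}).

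For the base case I take $i=0$. By the genus zero mirror theorem in the form \eqref{eq:JS}, the class $S^*(H_\delta)\mathbf 1$ equals, up to the explicit scalar $H_\delta$ and an explicit power of $I_0$, the specialization $J_\bt(H_\delta)$ of the twisted $J$-function. Once $z=H_\delta=-5H/\delta$, every factor in the hypergeometric series for $J_\bt$ becomes a scalar (the $H$'s collapse via $H^5=\lambda^5$ and the $\lambda$'s cancel between numerator and denominator), and by Lemma~\ref{lem:mirror} with $\mu=\delta$ it is an honest polynomial in $q$ of degree $\lfloor(\delta-1)/5\rfloor$. Unwinding the normalizing factor $L^\delta\,I_0\cdots I_{j,j}/(5L^{j+1})$ and the pairing $(\,,)^\lambda$ in the definition of $S_{\delta;0}$ then produces an explicit closed formula; substituting $5^5q=1-L^{-5}$ rewrites it as an explicit power of $L$ times a polynomial in $Z^{-1}$ of bounded degree, and a short check shows the remaining powers of $L$ are non-negative and fit the asserted form $(L^{\overline{\delta-1}}\overline\bR_0)^{\text{reg}}\otimes\QQ[Z]_{\le\lfloor(\delta-1)/5\rfloor}$. (One may instead run the base case at $i=4$, using $\bar\phi_4=H^4$ and \eqref{S4} in terms of $\tilde I(H_\delta)$; either way the input is Lemma~\ref{lem:mirror}.)

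For the inductive step I use \eqref{QDESnormalize}, $H_\delta D_{\C}\bar S_\delta+5H\Z\,\bar S_\delta=\bar S_\delta\Amone$, together with \eqref{vanishingcondS}, which says $\bar S_\delta$ is diagonal with diagonal entries $S_{\delta;0},\dots,S_{\delta;4}$. Multiplication by $H$ acts on the normalized flat basis as a weighted shift $\bar\phi_i\mapsto (L/I_{i+1,i+1})\,\bar\phi_{i+1}$, so comparing the $\bar\phi_i\to\bar\phi_{i+1}$ components of the two sides of \eqref{QDESnormalize} yields a recursion expressing $S_{\delta;i+1}$ linearly in $S_{\delta;i}$ and $\partial_u S_{\delta;i}$, with coefficients built out of $L/I_{i+1,i+1}$, $\Z$ and the diagonal entries $\X,\Y,-\Y,-\X,0$ of $\C$. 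The factor $L/I_{i+1,i+1}$ is controlled because the normalizing factor $I_0\cdots I_{i,i}/L^{i+1}$ carried by $S_{\delta;i}$ combines with it — via the Zagier--Zinger identities $I_{2,2}=L^5I_0^{-2}I_{1,1}^{-2}$, $I_{3,3}=I_{1,1}$, $I_{4,4}=I_0$ recalled after \eqref{normalizedcohft} — into the normalizing factor of $S_{\delta;i+1}$ up to quantities already expressible through $\X_1,\X_2,\X_3,\Y,Z,L^{-1}$. Then Lemma~\ref{lem:du} lets one compute $\partial_u S_{\delta;i}$ inside $\bR$, raising the degree from $i$ to $i+1$ (matching $\overline\bR_i\to\overline\bR_{i+1}$), and multiplication by $\Z,\X,\Y$ (each of degree one) does the same; one checks along the recursion that the $q$-degree, hence the $\QQ[Z]$-degree bound $\lfloor(\delta-1)/5\rfloor$, is not increased.

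The step I expect to require the most care is the regularity assertion $(\,\cdot\,)^{\text{reg}}$: a priori $\bR$ involves $L^{-1}$, so one must verify that, after regarding $Z=L^5$, no negative power of $L$ survives. This is transparent in the base case, and in the inductive step it follows by tracking — alongside the homogeneous degree and the $q$-degree — the lowest power of $L$ occurring, using that $\partial_u L^{-a}Z^b$, multiplication by $\Z=L^{-1}Z/5$, and multiplication by $L/I_{i+1,i+1}$ shift it in a controlled way; the key constraint throughout is the system $b\le a\le 5b$ cutting out the span in \eqref{fivegenring}. A further, purely computational, point is to make the base-case identification of the $I$-function data with the generators fully explicit, including checking that all powers of $\lambda$ cancel (for which one uses $\int_{\PP^4}H^k=0$ unless $k\equiv 4\pmod 5$).
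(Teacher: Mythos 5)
Your proposal is correct in outline and, for the entries $S_{\delta;0},S_{\delta;1},S_{\delta;2},S_{\delta;3}$, it is essentially the paper's argument: the base case $S_{\delta;0}$ via \eqref{eq:JS} and Lemma~\ref{lem:mirror} (which also supplies the diagonality \eqref{vanishingcondS} you rely on), followed by propagation through the specialized QDE \eqref{QDESnormalize}, with Lemma~\ref{lem:du} controlling $\partial_u$ and the $b\le a\le 5b$ constraint absorbing the stray $L^{-1}Z$ factors so that regularity and the $\QQ[Z]_{\le\lfloor(\delta-1)/5\rfloor}$ bound survive each step. The genuine divergence is at $i=4$: the paper does \emph{not} obtain $S_{\delta;4}$ from the recursion but treats it as a second anchor, via the relation \eqref{S4} to the $\tilde I$-function together with the analysis of Section~\ref{sec:orbifoldregularity}, and thereby proves the sharper statement $S_{\delta;4}\in L^{\bar\delta}\cdot\QQ[Z]_{\le\lfloor\delta/5\rfloor}$ (no dependence on $\X,\Y$ at all), of which the lemma's membership for $i=4$ is a weakening; your route of simply taking one more turn of the QDE induction (there is no $H^5=\lambda^5$ wrap-around in the step $3\to 4$, so nothing breaks) suffices for the lemma as stated, but it cannot recover that stronger form, which is what the parallel treatment of the first column of $R$ in Section~\ref{sec:orbifoldregularity} is designed to give. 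Two small cautions: the truncation is a $q$-degree bound only at the base case, and becomes a $Z$-degree bound via $L^{5d}\,\QQ[q]_{\le d}\subset\QQ[Z]_{\le d}$, so along the recursion you should phrase the bookkeeping purely in terms of $Z$ and $L^{-1}$ (as you do in your last paragraph) rather than in terms of $q$-degree; and the regularity must be checked for a \emph{decomposition} into products (regular element of $L^{\overline{\delta-1}}\overline\bR_i$) times (element of $\QQ[Z]_{\le\lfloor(\delta-1)/5\rfloor}$), not merely for the total expression after setting $Z=L^5$ --- this is exactly the ``direct computation'' the paper leaves implicit, and your plan of tracking the lowest power of $L$ termwise is the right way to carry it out.
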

\begin{proof}
  Indeed we have the following stronger result:
  \begin{align*}
    S_{\delta;4} \ \in &\  L^{\bar\delta}\cdot \QQ[Z]_{\leq \lfloor\frac{\delta}{5} \rfloor}  ,\qquad \qquad 
                         S_{\delta;0} \ \in \ L^{\overline {\delta-1}}\cdot  \QQ[Z]_{\leq \lfloor\frac{\delta-1}{5} \rfloor}  , \\
    S_{\delta;1} \  \in &\     \big( L^{\overline{\delta-1}} \QQ [L^{-1},\X,\Z_1 ]_1\big)^{\text{reg}}  \otimes \QQ[Z]_{\leq \lfloor\frac{\delta-1}{5} \rfloor}   ,\\
    S_{\delta;2}\ \in &\   \big( L^{\overline{\delta-1}}  \QQ [L^{-1},\X,\X_2,\Y,\Z_1,\Z_2]_2\big)^{\text{reg}}  \otimes \QQ[Z]_{\leq \lfloor\frac{\delta-1}{5} \rfloor}    , \\
    S_{\delta;3} \  \in  &\   \big( L^{\overline{\delta-1}} \QQ [L^{-1},\X,\X_2,\X_3,\Z_1,\Z_2,\Z_3]_3\big)^{\text{reg}}  \otimes \QQ[Z]_{\leq \lfloor\frac{\delta-1}{5} \rfloor}   
  \end{align*}
  The statement for $S_{\delta;4} $ and $S_{\delta;0} $ follows from
  the oscillatory integral analysis in
  Section~\ref{sec:orbifoldregularity}, and from \eqref{S4} and
  \eqref{eq:JS}, respectively.
  The other properties follow from a direct computation via
  \eqref{QDESnormalize}.
  
  The fact that the function $S_{\delta; 3}$ does not involve $\Y$ is
  not important here.
  It will follow from the first equation in Example~\ref{ex:PDEforR}.
  (Note although stated for $R$ there, the PDE holds also for
  $S_\delta$.)
\end{proof}

\subsection{Proof of Theorem \ref{thm:fgen2}}  \label{proofofFG}

We carefully look at the generalized $\sR$-matrix action from
Section~\ref{NewRaction}.
We first consider the contribution $ {\mathscr{V}}_{f_1, f_2}$
of an edge $e$
connecting $v_1$ and $v_2$ with corresponding flags $f_1$, $f_2$.
We distinguish three cases:
\begin{itemize}
\item If both $v_1$ and $v_2$ are labeled by $0$, the contribution is
  of the form
  \begin{align*}
& \frac 15 \sum_{k,l\geq 0} \sum_{a+b=k+l+1, \atop b>l,\ i \in \ZZ_5} (-1)^{a+l} \bar R_{a;3-i}  \bar  R_{b;i}  \   \bar\phi_{3-i-a} \bar\psi_{f_1} ^k  \otimes   \bar\phi_{i-b}  \bar\psi_{f_2} ^l 
\\& \in \frac{I_0^2}{L^2} \sbigoplus_{k \geq 0} \Big( \bR_{k+1} \otimes (\sH^{\otimes 2}[\lambda^{-5}])_{3 - k - 1} \otimes H^{k}(\M_{g_{v_1},n_{v_1}} \times \M_{g_{v_2},n_{v_2}})\Big),
  \end{align*}
  where $\bar R_{a;i} = [z^a] \bar R_{i}^{i - a}$, and where
  $(\sH^{\otimes 2}[\lambda^{-5}])_k$ denotes the subspace of elements
  of cohomological degree $k$.
  The fact that only powers of $\lambda$ divisible by 5 appear is due
  to the fact that the classes under consideration are symmetric in
  the equivariant parameters.
\item If $v_1$ is labeled by $0$ and $v_2$ is labeled by $\infty$,
  the contribution is of the form
 \begin{align*}
\qquad
 &\frac 15 L^{- \delta_{f_2} } \sum_{k\geq 0} \sum_{a\leq k,i \in \ZZ_5} (-1)^{a} \bar R_{a;3-i}   \bar  S_{\delta_{f_2};i} \  \bar\phi_{3-i-a} \bar\psi_{f_1}^{k} \otimes   H^{i} \! \cdot \!H_{\delta_{f_2}}^{a-k-1}
 \\  &\in \frac{I_0}{L} L^{- \delta_{f_2} } \sbigoplus_{j} \Big(  \big( L^{\overline{\delta_{f_2}-1}}\overline\bR_{j   + 1}  \big)^{\text{reg}}\otimes \mathbb Q[Z]_{\leq \lfloor(\delta_{f_2}-1)/5\rfloor}\otimes \phi_{3 - j - 1} \otimes H^j_{\CC^*}(\PP^4 \times \M_{g_{v_1},n_{v_1}})\Big)
 \end{align*}
\item If both $v_1$ and $v_2$ are labeled by $\infty$, the contribution is
  of the form
  \begin{align*}
    &\frac 15 L^{- \delta_{f_1} - \delta_{f_2} } \sum_{i \in \ZZ_5} (H_{\delta_{f_1}} \otimes 1+1\otimes H_{\delta_{f_2}})^{-1}(\bar  S_{\delta_{f_1};3-i}  \bar  S_{\delta_{f_2};i} )H^{3-i} \otimes H^{i} 
    \\ \in&   L^{- \delta_{f_1} - \delta_{f_2} } \big(L^{\overline{\delta_{f_1}-1}+\overline{\delta_{f_2}-1}} \overline\bR_{3} \big)^{\text{reg} }\otimes\mathbb Q[Z]_{\leq \lfloor(\delta_{f_1}-1)/5\rfloor+\lfloor(\delta_{f_2}-1)/5\rfloor} \otimes H^{3 - 1}_{\CC^*}(\PP^4 \times \PP^4)
  \end{align*}
  Note that for $i=4$, we rewrite $L \cdot L = L^{-3} Z \in \bR_3$.
\end{itemize}
Note that it suffices to only consider edge contributions not
involving $\bar\psi$-classes because each power of $\bar\psi$
increases both cohomological degree and degree in $\bR$ by one, which
is compatible with Theorem~\ref{thm:fgen2}.
We may therefore split the contribution of each edge $e = (v_1, v_2)$
into two half-edge contributions according to a choice
\begin{equation*}
  \deg \gamma_{(e,v_1)} + \deg \gamma_{(e,v_2)} = 2,\qquad
  k_{(e,v_1)} + k_{(e,v_2)}=3.
\end{equation*}
The contribution of a half-edge $h = (e, v)$ with $v$ of label 0 lies in
\begin{equation*}
  \frac{I_0}L \bR_{k_h - \deg \gamma_h} \otimes (\sH[\lambda^{-5}])_{\deg \gamma_h},
\end{equation*}
while the contribution of a half-edge $h = (e, v)$ with $v$ of label
$\infty$ lies in
\begin{equation*}
  L^{-\delta_h} H^{\deg \gamma_h} (L^{\overline{\delta_h - 1}} \overline\bR_{k_h})^{\text{reg}} \otimes \QQ[Z]_{\leq \lfloor(\delta_h-1)/5\rfloor}.
\end{equation*}

We next consider the contribution of a leg $l$ with insertion $\phi_i$ at
a vertex $v$:
\begin{itemize}
\item If $v$ has label $0$, the contribution is
  \begin{equation*}
    \sum_{k \ge 0} \bar R_{k;i} \phi_{i - k} \bar\psi_l^k
    \in \sum_{k \ge 0} \bR_k \phi_{i - k} \bar\psi_l^k.
  \end{equation*}
\item If $v$ has label $\infty$ and $l$ has degree $\delta$, the
  contribution is
  \begin{equation*}
    \frac{L}{I_0} L^{-\delta} (\bar S_\delta)_i^i H^i
    \in \frac{L}{I_0} L^{-\delta} H^i \big(L^{\overline{\delta-1}} \overline\bR_i \big)^{\text{reg} }\otimes \QQ[Z]_{\leq \lfloor(\delta-1)/5\rfloor}.
  \end{equation*}
\end{itemize}
We now consider the vertex contributions:
\begin{itemize}
\item 
  The contribution at each vertex of label $0$ with $m$ half-edges is
  \begin{align*}
    &T \omega_{g(v), m}^\lambda\! \Big(\!\!\otimes_{i=1}^m   \phi_{a_i} \! \Big) \\
    &\in \left(\frac{I_0}L\right)^{2g(v) - 2} \sbigoplus_{k \in \ZZ} \lambda^{5k}  \bR_{3g(v)-3+\sum_i a_i-5k} \otimes H^{3g(v)-3+\sum_i a_i-5k}(\M_{g(v),m}).
  \end{align*}
 This is because the left hand side is a sum of terms of the
 form
 \begin{align*}
   \qquad\qquad ( \pi_k)_*\omega_{g(v),m+k}^\lambda &\ \Big(\otimes_{i=1}^{m}  \phi_{a_i}   , \  \otimes_{j=1}^{k}  (\bar R_{{l_j}})_{0}^{b_j}  \phi_{b_j} \bar\psi_{m+j}^{l_j+1}\Big) \\
   &\qquad \in  \left(\frac{I_0}L\right)^{2g(v) - 2} \lambda^{3g(v)-3+\sum_i a_i -\sum_j l_j} \bR_{\sum_j l_{j}} \otimes H^{\sum_j  l_j}(\M_{g(v),m})
 \end{align*}
 where $3g(v)-3+\sum_i a_i-\sum_j l_j$ is divisible by $5$ because
 both $b_j + l_j$ and $\sum_i a_i + \sum_j b_j$ must be divisible by
 $5$ in order for the term to be nonzero.
 After multiplying by $1 = L^{-5k} Z^k \in \bR_{5k}$, we may regard
 the degree in $\bR$ to be $3g - 3 + \sum_i a_i$.
\item The contribution of a vertex $v$ of label $\infty$ with $m$
  half-edges is
  \begin{multline*}
    J \Omega_{g(v), \mu}^{\infty, \mathbf c}(H^{a_1}, \dotsc, H^{a_m}) \\
    \in \left(\frac{I_0}L\right)^{2g(v) - 2 + m} L^{2g(v) - 2 + m} \sbigoplus_{k \in \ZZ} \lambda^{5k} \QQ[q]_{\le d_v} \otimes H^{3g(v) - 3 + \sum_i a_i - 5k}(\M_{g(v), m})
  \end{multline*}
  where $\nu_i = \mu_i - 1$ and $d_v := \lfloor \frac{a_v}{5} \rfloor$
  with $a_v:=2g(v) - 2 - |\nu|$.
\end{itemize}

We now consider the total contribution of a graph $\Gamma$ to the
computation of
\begin{equation*}
  \bar \Omega^{\mathbf c, \lambda}_{g,n}(\phi_{a_1}, \dotsc, \phi_{a_n}).
\end{equation*}
We first consider the power of $I_0/L$.
We have a factor $(I_0/L)^{2g(v) - 2 + |E_v|}$ where $E_v$ is the set
of edges at $v$ for every vertex $v$.
If $v$ has label $0$, this factor comes from both the vertex and the
edges.
If $v$ has label $\infty$, this factor comes from both the vertex and
the legs.
Combining these factors for all vertices $v$ gives the desired global
factor $(I_0/L)^{2g(v) - 2}$.

The overall cohomological degree is
\begin{equation*}
  |E(\Gamma)| + \sum_v (3g(v) - 3) + \sum_i a_i + 2|E(\Gamma)|
  = 3 g - 3 + \sum_i a_i,
\end{equation*}
as expected.

We next observe that the total contribution is regular in $L$.
This is because negative powers of $L$ only appear in the half-edge
factors, and the $q$-polynomial in the contribution of a vertex $v$
with label $\infty$.
These negative powers are absorbed by the power of $L$ in the vertex
term:
\begin{equation*}
  L^{-|\mu|} L^{2g(v) - 2 + m} \QQ[q]_{\le d_v}
  = L^{a_v} \QQ[q]_{\le d_v}
  = L^{\overline{a_v}} \QQ[Z]_{\le d_v}
\end{equation*}

Combining at every vertex $v$ with label $\infty$, the factors
involving $L$, $q$ and $Z$ gives an element of
\begin{multline*}
  L^{-|\mu| + \sum_i \overline{\nu_i} + 2g(v) - 2 + \ell(\mu)} \otimes \QQ[Z]_{\le \lfloor\nu_i/5\rfloor} \otimes \QQ[q]_{d_v} \\
  = L^{-(3g(v) - 3)} Z^{-\sum_i \lfloor\nu_i/5\rfloor} Z^{g(v) - 1} \otimes \QQ[Z]_{\le \lfloor\nu_i/5\rfloor} \otimes \QQ[q]_{d_v} \\
  \subset L^{-(3g(v) - 3)} \QQ[Z, Z^{-1}]_{\le g(v) - 1}
  \subset \overline\bR_{3g(v) - 3}
\end{multline*}

We therefore obtain an element of $\bR$ of degree
\begin{equation*}
  \sum_v (3g(v) - 3) + \sum_i a_i + 3|E(\Gamma)|
  = 3 g - 3 + \sum_i a_i.
\end{equation*}
This completes the proof of Theorem~\ref{thm:fgen2}.

\subsection{Yamaguchi--Yau's prediction}

In this subsection, we prove the second (numerical) part of
Theorem~\ref{thm:fgen}, which implies Yamaguchi--Yau's prediction.

For Calabi--Yau $3$-folds, all nonzero-degree Gromov--Witten
invariants vanish unless all insertions are divisor classes.
Let
$$
\bar F_{g,n}^{\mathbf c}
:= \int_{\M_{g,n}}\bar \Omega_{g,n}^{\mathbf c}(\phi_1^{\otimes n})
= 5^{g-1}\frac{L^{2g-2}}{I_0^{2g-2}} F_{g,n}^{\mathbf c}(\phi_1^{\otimes n}).
$$
Notice that by the divisor equation, we have
\begin{equation} \label{divisoreq}
\bar F_{g,n+1} =  (\partial_u - n(\Y-\X) +(2g-2)\X )\bar F_{g,n}.
\end{equation}
\begin{corollary}
We have
\begin{equation}
\bar F_{g,n}      \in \bR_{3g-3+n}
\end{equation}
\end{corollary}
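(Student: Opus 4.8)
The plan is to deduce this from the cycle-level Graded Finite Generation Theorem by integration over $\M_{g,n}$. First I would specialize part (1) of Theorem~\ref{thm:fgen} to the case where all insertions equal $\phi_1$, obtaining
\[
  \bar\Omega^{\mathbf c}_{g,n}(\phi_1^{\otimes n}) \in H^{2(3g-3+n)}(\M_{g,n},\QQ)\otimes \bR_{3g-3+n}.
\]
Since $\dim_{\CC}\M_{g,n}=3g-3+n$, the cohomological degree $2(3g-3+n)$ is the top degree, so $\int_{\M_{g,n}}(-)$ is a well-defined $\QQ$-linear functional on $H^{2(3g-3+n)}(\M_{g,n},\QQ)$; extending it $\bR$-linearly and applying it to the displayed element yields $\bar F^{\mathbf c}_{g,n}=\int_{\M_{g,n}}\bar\Omega^{\mathbf c}_{g,n}(\phi_1^{\otimes n})\in\bR_{3g-3+n}$, which is the corollary. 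If one prefers to work at the $\lambda$-equivariant level via Theorem~\ref{thm:fgen2}(1), the same integration kills every summand with $k>0$, since those live in strictly lower cohomological degree, and keeps only the $k=0$ piece, recovering the same conclusion after $\lambda\to0$.

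An alternative route, and a useful consistency check, is an induction on $n$ via the divisor equation \eqref{divisoreq}: one has $\bar F_{g,n+1}=(\partial_u-n(\Y-\X)+(2g-2)\X)\,\bar F_{g,n}$, and the operator on the right preserves $\bR$ and raises its degree by exactly one --- $\partial_u$ raises degree by $1$ by Lemma~\ref{lem:du}, and multiplication by $\Y\in\bR_1$ and by $\X=\X_1\in\bR_1$ each also raise degree by $1$. Hence $\bar F_{g,n}\in\bR_{3g-3+n}$ for all $n$ once it is known for the smallest stable value of $n$ (e.g. $n=0$ when $g\ge 2$), which is supplied by Theorem~\ref{thm:fgen2} or by the integration argument above.

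The only point requiring any care is bookkeeping rather than substance: one should confirm that the two descriptions of $\bar F_{g,n}$ in play --- the integral $\int_{\M_{g,n}}\bar\Omega^{\mathbf c}_{g,n}(\phi_1^{\otimes n})$ used in this subsection, and the closed form $5^{g-1}(L/I_0)^{2g-2}I_{1,1}^{\,n}(Q\tfrac{d}{dQ})^{n}F_g$ appearing in Theorem~\ref{thm:fgen2}(2) --- agree. This follows by repeatedly applying the Gromov--Witten divisor equation to turn each insertion $\phi_1=I_{1,1}L^{-1}H$ into a factor $Q\tfrac{d}{dQ}$, together with the scalar normalizations in \eqref{normalizedcohft}; equivalently it is recorded in \eqref{divisoreq}. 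No new geometric input beyond Theorem~\ref{thm:fgen2} is required, so there is no real obstacle here beyond this bookkeeping.
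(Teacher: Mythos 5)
Your proposal is correct and matches the paper's (largely implicit) argument: the corollary is obtained by integrating the cycle-level statement of Theorem~\ref{thm:fgen2}/Theorem~\ref{thm:fgen} against the fundamental class of $\M_{g,n}$, with the divisor equation \eqref{divisoreq} recorded to reconcile the integral definition of $\bar F_{g,n}$ with the closed form $5^{g-1}(L/I_0)^{2g-2}I_{1,1}^{\,n}(Q\tfrac{d}{dQ})^{n}F_g$. Your observation that the operator $\partial_u - n(\Y-\X)+(2g-2)\X$ preserves $\bR$ and raises degree by one is exactly the role the divisor equation plays in the paper, so no further comment is needed.
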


\begin{example} We have the following initial data  and low genus formulae
\begin{align*}
\bar F_{0,3} = &\,\, 1 \qquad
\bar F_{0,4} = \,\, \X-3\Y  \qquad
\bar F_{1,1} = \,\, -\frac{59}{6}\X -\frac{1}{2}\Y -\frac{125}{12} \Z \\
\bar F_{1,2} = &\,\, -\frac{25}{3} \X_2+\frac{28}{3} \X(\Y-\X)+\Y^2 +\frac{125}{2}(\Y-\X) \Z-\frac{205}{24} \Z_2\\
\bar F_{2,0} = &\,\,  5\cdot \Big(
{\frac {70\,\X_{{3}}}{9}}+{\frac {575\,\X\X_{{2}}}{18}}+\frac{5 \Y\X_{{2}}}{6}+{\frac {557\,{\X}^{3}}{72}}-{\frac {629\,\Y{\X
}^{2}}{72}}-{\frac {23\,{\Y}^{2}\X}{24}}-\frac{{\Y}^{3}}{24}\\&+{
\frac {625\,\Z\X_{{2}}}{36}}-{
\frac {175\,\Z\Y\X}{9}}+{\frac {1441\,\Z_{{2
}}\X}{48}}-{\frac {25\,\Z({\X}^{2}+{\Y}^{2})}{24}}\\
& -{\frac {3125\,{\Z}^{2}(\X+\Y)}{288}} +{\frac {41\,\Z_{{2}}\Y}{48}}-{\frac {625\,{\Z}^{3}}{144}}+{
\frac {2233\,\Z\Z_{{2}}}{128}}+{\frac {547\,\Z_{{3}}}{72}}\Big)
 \end{align*}
where we have used the genus one \cite{Zi08} and genus two mirror theorems \cite{GJR17P}.
\end{example}

\subsection{Grading and orbifold regularity}
 
In this section, we will explain how the ``orbifold regularity"  follows from the ``graded finite generation property". 

The original ``orbifold regularity" is a global property for the
Gromov--Witten potential.
In the paper \cite{BCOV93}, the authors constructed a non-holomorphic
completion $\mathcal F_g(q,\bar q)$ of the Gromov--Witten potential
$F_g(q)$ for each $g$, such that
$$
\lim_{\bar q \rightarrow 0}  \mathcal F_g(q,\bar q) =   F_g(Q(q)) .
$$
Further, they conjecture that there exist an  ``LG" theory with the potential function $F_g^{\text{orb}}(\tau^{\text{orb}})$ such that
$$
\lim_{\bar q \rightarrow \infty}  \mathcal F_g(q,\bar q) =   F_g^{\text{orb}}(\tau^{\text{orb}}) |_{\tau^{\text{orb}}=\tau^{\text{orb}}(q^{-1/5}) }.
$$
Nowadays it is known that such an LG theory is the FJRW theory.
Then the ``orbifold regularity" in  physics can be translated as follows:
\begin{theorem}
  Let
  $$
  \frac{L^{2g-2}}{I_0^{2g-2}}  F_g = P_g(\X_1,\X_2,\X_3,\Y,L^{-1},Z)
  $$
  be the polynomial of the six generators.
  Then the limit
  $$
  \lim_{L\rightarrow 0} P_g(\X_1,\X_2,\X_3,\Y,L^{-1},L^5)
  $$
  exists.
\end{theorem}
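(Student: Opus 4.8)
The plan is to deduce the theorem immediately from the graded finite generation property. By the normalization \eqref{normalizedcohft} we have $\bar F_g = 5^{g-1}(L/I_0)^{2g-2}F_g$, hence
\[
  \frac{L^{2g-2}}{I_0^{2g-2}} F_g = \frac{1}{5^{g-1}}\, \bar F_g .
\]
By Theorem~\ref{thm:fgen}(2) (applied with $\mathbf c = \mathbf c^\eff$, so that $\bar F_g = \bar F_{g,0}$; equivalently, the non-equivariant limit of Theorem~\ref{thm:fgen2}(2)), the element $\bar F_g$ lies in $\bR_{3g-3}$. Thus $P_g$ is, up to the harmless constant $5^{1-g}$, exactly the canonical expression of $\bar F_g$ as an element of $\bR_{3g-3}$ furnished by the proof of Theorem~\ref{thm:fgen2}.

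Next I would unwind the definition \eqref{fivegenring} of $\bR$: every element of $\bR$, and in particular $\bar F_g$, is a finite $\QQ[\X_1,\X_2,\X_3,\Y]$-linear combination of the monomials $L^{-a}Z^b$ subject to $b \le a \le 5b$. Carrying out the substitution $Z = L^5$ from the Remark following the definition of $\bR$, the monomial $L^{-a}Z^b$ becomes $L^{5b-a}$, and the constraint $a \le 5b$ forces the exponent $5b-a$ to be $\ge 0$. Therefore $P_g(\X_1,\X_2,\X_3,\Y,L^{-1},L^5)$, regarded as a polynomial in $L$ with coefficients in $\QQ[\X_1,\X_2,\X_3,\Y]$, involves only nonnegative powers of $L$; its limit as $L \to 0$ consequently exists and equals the sum of the terms with $a = 5b$. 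This may be viewed as a global companion of the Orbifold Regularity Corollary, whose sharper vanishing statement for $f_g$ is read off from the same inequality $a \le 5b$ after setting $\X = \Y_k = 0$ and multiplying by $L^{3g-3}$.

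Since the argument is a direct corollary of the already-established Theorem~\ref{thm:fgen}, no genuine obstacle remains. The single point that deserves a moment's attention is that the normalization factor relating $\bar F_g$ and $F_g$ introduces no new negative powers of $L$: but the statement divides by exactly $I_0^{2g-2}$ and multiplies by $L^{2g-2}$, matching \eqref{normalizedcohft} so that $P_g = 5^{1-g}\bar F_g$ on the nose, and we have already seen that $\bar F_g$, viewed through $Z \mapsto L^5$, lies in $\QQ[\X_1,\X_2,\X_3,\Y,L]$ with no negative $L$-powers.
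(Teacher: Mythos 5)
Your proposal is correct and is essentially the paper's own argument: the paper proves this theorem in one line by invoking the degree bounds $b \le a \le 5b$ in the definition \eqref{fivegenring} of $\bR$, which (after noting $P_g = 5^{1-g}\bar F_g \in \bR_{3g-3}$ by Theorem~\ref{thm:fgen}) is exactly the substitution $Z = L^5$, $L^{-a}Z^b \mapsto L^{5b-a}$ with $5b-a \ge 0$ that you carry out. Your write-up simply makes the identification with $\bar F_g$ and the nonnegativity of the resulting $L$-exponents explicit.
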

\begin{proof}
  The theorem follows from the degree bounds in the definition of
  $\bR$ (see Equation~\eqref{fivegenring}).
\end{proof}

\section{Holomorphic anomaly equations (HAE)}
\label{sec:HAE}

\subsection{Derivations acting on the $\sR$-matrix} \label{sec:PDEforR}

With the finite generation property for the $R$-matrix (see
Lemma~\ref{lem:fgforR}), we can consider vector fields in $\Der_\bR$
acting on the $R$-matrix.
The key to the proof of the holomorphic anomaly equations of
Theorem~\ref{HAE2}, are the following differential equations satisfied
by the generalized $\sR$-matrix:
\begin{lemma}
  \label{lem:PDEsforR}
  Let $\bR' \subset \bR$ be a non-holomorphic subring.
  Then, there exists a unique strictly lower triangular (see below)
  linear map\footnote{We sometimes abuse notation and will identify
    $\End\sH$ with
    $\End \spann_\QQ\{\bar \phi_0,\cdots,\bar \phi_4\}$.}
  $\A\colon \Der_{\tR} \rightarrow \End \sH\otimes \bR[z]$, such that
  for any $\D\in \Der_{\tR}$, we have
  \begin{equation} \label{PDEforR}
     \D  \bar R^{-1}(z) = \bar R^{-1}(z) \cdot \A_\D(z)  \quad \text{  and  }\quad     \D  {\bar S}_\delta = {\bar S}_\delta \cdot \A_\D(-H_\delta),
  \end{equation}
  where $\bar R^{-1}(z) = \bar R^*(-z)$  (see Section~\ref{sec:QDERS}), and
  where $H_\delta = -5H/\delta$.
  Furthermore, for any $\D$, $\D_1$ and $\D_2$, we have the following
  properties:
  \begin{enumerate}
  \item $\A_\D$ is strictly lower triangular:
    $\A_\D \bar \phi_j \in \spann_\QQ\{\bar \phi_0,\cdots,\bar
    \phi_{j-1}\}[z]$;
  \item $\A_\D$ is skew adjoint: $\A_\D  + \A_\D^*|_{z\mapsto -z} =0$;
  \item $\A_{[\D_1,\D_2]} =  [\A_{\D_1}, \A_{\D_2}]+\D_1 \A_{\D_2}- \D_2 \A_{\D_1}$
  \end{enumerate}
\end{lemma}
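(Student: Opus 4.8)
The plan is to \emph{define} $\A_\D(z) := \bar R(z)\bigl(\D\bar R^{-1}(z)\bigr)$, where $\bar R(z)$ denotes the matrix inverse of $\bar R^{-1}(z)$, and then to verify the stated properties. Uniqueness is immediate since $\bar R(z)$ is invertible. Properties (2) and (3) are formal: differentiating $\bar R(z)\bar R^{-1}(z)=\mathrm{Id}$ gives $\D\bar R=-\A_\D\bar R$, so, using that $\D$ commutes with adjoints and with $z\mapsto -z$, the symplectic relation $\bar R^{-1}(z)=\bar R^*(-z)$ yields $\bar R^{-1}(z)\A_\D(z)=(\D\bar R)^*(-z)=-\bar R^{-1}(z)\A_\D^*(-z)$, i.e.\ $\A_\D+\A_\D^*|_{z\mapsto -z}=0$; and applying $\D_1$ to $\A_{\D_2}=\bar R(\D_2\bar R^{-1})$, then antisymmetrizing, gives $\D_1\A_{\D_2}-\D_2\A_{\D_1}=-[\A_{\D_1},\A_{\D_2}]+\bar R([\D_1,\D_2]\bar R^{-1})=-[\A_{\D_1},\A_{\D_2}]+\A_{[\D_1,\D_2]}$.

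The substantial content is (1). I would start from the normalized QDE \eqref{QDEnormalize}, which after $z\mapsto -z$ reads $z\,D_\C\bar R^{-1}(z)=[\Amone,\bar R^{-1}(z)]$, i.e.\ $z\,\partial_u\bar R^{-1}=[\Amone,\bar R^{-1}]+z\,\bar R^{-1}\C$. Applying $\D$, inserting $\D\bar R^{-1}=\bar R^{-1}\A_\D$ and $\D\bar R=-\A_\D\bar R$, and left-multiplying by $\bar R$, one obtains the linear ODE
\begin{equation*}
  z\,\partial_u\A_\D(z)=[\Amone,\A_\D(z)]-z\,[\C,\A_\D(z)]+z\,(\D\C)+z\,\bar R(z)\bigl([\partial_u,\D]\,\bar R^{-1}(z)\bigr).
\end{equation*}
Strict lower-triangularity: $\D$ annihilates the holomorphic generators $L^{-1}$ and $Z$, and by Lemma~\ref{cor:QDEforR} together with Lemma~\ref{lem:R0} a full row of $\bar R^{\pm1}(z)$ involves only $L^{-1}$ and $Z$, so $\D\bar R^{-1}$ has that row zero; feeding this into the recursion from \eqref{QDEnormalize}, and using that off-diagonal blocks enter only through the strictly lower-triangular $\Amone$, gives inductively that $\A_\D$ is strictly lower-triangular, consistently with the vanishing pattern of Lemma~\ref{cor:QDEforR}(b). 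Polynomiality: if the $z$-degree $N$ of $\A_\D$ exceeds that of the source term $\bar R([\partial_u,\D]\bar R^{-1})$, then the $z^{N+1}$-coefficient of the ODE reads $\partial_u\A_{\D,N}=-[\C,\A_{\D,N}]$, a homogeneous first-order linear ODE; combined with the degree grading of Lemma~\ref{lem:fgforR}, which pins the $\bR$-degree and $\lambda$-weight of $\A_{\D,N}$ in terms of $N$, and with skew-adjointness and strict lower-triangularity, this has no nonzero solution once $N$ exceeds a bound depending only on $\dim\sH=5$ and the degree of $\D$. The $z$-degree of the source term is bounded by an induction resolving $[\partial_u,\D]$ inside the Lie subalgebra of $\Der_\bR$ generated by $\partial_u$ and $\Der_\tR$, using that $\bR$ is closed under $\partial_u$ (Lemma~\ref{lem:du}); the same closure, with Lemma~\ref{lem:fgforR}, gives $\bR$-integrality of the coefficients.

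For the $\bar S_\delta$-equation, note that \eqref{QDESnormalize}, $(H_\delta D_\C+5H\Z)\bar S_\delta=\bar S_\delta\Amone$, is the specialization at $z=-H_\delta$ of the flatness equation for $\bar R^{-1}$, the term $5H\Z$ accounting for $L_\alpha=\zeta^\alpha\lambda L$ versus $h_\alpha=\zeta^\alpha\lambda$ (compare the Birkhoff factorization in the proof of Lemma~\ref{cor:QDEforR} and the mirror identity \eqref{eq:JS}). Differentiating \eqref{QDESnormalize} by $\D$ and using the ODE for $\A_\D$, one finds that $\D\bar S_\delta-\bar S_\delta\A_\D(-H_\delta)$ is annihilated by the same flatness operator and vanishes at $q=0$, hence is zero; since the diagonal entries $S_{\delta;i}$ of $\bar S_\delta$ are units (nonzero powers of $L$ times polynomials in $Z$), $\bar S_\delta$ is invertible and $\D\bar S_\delta=\bar S_\delta\A_\D(-H_\delta)$ follows.

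The main obstacle is the analysis in the second paragraph: establishing \emph{simultaneously} the $z$-polynomiality, the strict lower-triangularity, and the $\bR$-integrality of $\A_\D$. The delicate points are that the source term $\bar R([\partial_u,\D]\bar R^{-1})$ is not manifestly of bounded $z$-degree, so one must genuinely exploit the vanishing patterns and degree grading of Lemmas~\ref{lem:fgforR} and \ref{cor:QDEforR} together with skew-adjointness; and that $\D\C$ is diagonal, so keeping $\A_\D$ strictly lower-triangular requires careful bookkeeping of how the diagonal contributions cancel against the $\Amone$- and $\C$-terms of the ODE.
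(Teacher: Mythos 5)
Your skeleton (define $\A_\D(z):=\bar R(z)\,\D\bar R^{-1}(z)$, get uniqueness and properties (2)--(3) formally, and exploit the QDE \eqref{QDEnormalize} together with the holomorphicity of the first column of $\bar R$ from Lemma~\ref{lem:R0}) is in the right spirit, but the heart of the lemma is not established: namely that $\A_\D$ lands in $\End\sH\otimes\bR[z]$ (polynomial in $z$, coefficients in $\bR$, strictly lower triangular) and that the \emph{same} $\A_\D$ also governs $\bar S_\delta$. Your degree-bound argument is circular: the source term $\bar R(z)\bigl([\partial_u,\D]\bar R^{-1}(z)\bigr)$ is nothing but $\A_{[\partial_u,\D]}(z)$, an unknown of exactly the same kind, and since $[\partial_u,\D]$ lies in $\Der_{\tR}$ again, the proposed ``induction resolving $[\partial_u,\D]$ inside the Lie subalgebra generated by $\partial_u$ and $\Der_{\tR}$'' has no decreasing parameter --- iterated commutators with $\partial_u$ just produce more elements of $\Der_{\tR}$. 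Moreover, a priori $\A_\D$ is only a formal power series in $z$, so one cannot speak of its top $z$-degree $N$; and the claim that the homogeneous equation $\partial_u\A_{\D,N}=-[\C,\A_{\D,N}]$ admits no nonzero solution past some bound is asserted, not proved (such first-order linear equations do have nonzero solutions in general, and skew-adjointness plus triangularity do not obviously exclude them). Finally, your uniqueness-of-flat-sections sketch for $\D\bar S_\delta=\bar S_\delta\A_\D(-H_\delta)$ hides the same difficulty: $\D$ does not commute with $\partial_u$, so differentiating \eqref{QDESnormalize} by $\D$ reintroduces the commutator term rather than producing the ``same flatness operator''.

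The paper avoids all of this by turning your ODE into a finite recursion over the basis index rather than a degree estimate: it inducts on $i$, simultaneously for all $\D\in\Der_{\tR}$ and simultaneously for $\bar R^{-1}$ and $\bar S_\delta$. The base case is $\A_\D\bar\phi_0:=0$, since by Lemma~\ref{lem:R0} the column $\bar R^{-1}(z)\bar\phi_0$ (and likewise $\bar S_\delta\bar\phi_0$) depends only on holomorphic generators and is thus killed by $\D$. The inductive step uses \eqref{QDEnormalize} and \eqref{QDESnormalize} to \emph{define} $\A_\D(z)\bar\phi_{i+1}:=\bigl[z\bigl(-\A_{[\D,\partial_u]}(z)+\D\C+[\A_\D(z),\C]\bigr)+(-z\partial_u+\Amone)\A_\D(z)\bigr]\bar\phi_i$, which is licit because $[\D,\partial_u]\in\Der_{\tR}$ and $\A_{[\D,\partial_u]}\bar\phi_i$ is already known at stage $i$; the parallel computation for $\bar S_\delta$ gives the second equation of \eqref{PDEforR} with the same $\A_\D$ evaluated at $z=-H_\delta$. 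Since $\sH$ is five-dimensional and each step raises the $z$-degree by a bounded amount and keeps coefficients in $\bR$ (using the closure of $\bR$ under $\partial_u$, Lemma~\ref{lem:du}), polynomiality, $\bR$-integrality, and strict lower-triangularity are automatic consequences of this five-step construction --- no top-degree vanishing argument is needed. To repair your proposal you would essentially have to reorganize your second and third paragraphs into this column-by-column induction.
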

\begin{remark}
  The proof of the above Lemma will give an algorithm for computing
  the map $\A$ via the QDE.
  Furthermore, it also works for the fully equivariant (without
  specialization) twisted theory.
  In a sequel \cite{GJR19P}, we will use it in the proof of the LG/CY
  correspondence.
\end{remark}
\begin{remark}
  The map can be extended to the ring $\Der_{\bR}$, if we enlarge the
  codomain of the map and if we only consider the $R$-matrix.
  To be precise, there exist a map
  $${\A}\colon \Der_{\bR} \rightarrow  \End \sH \otimes \bR[[z]]$$
  such that the first equality in \eqref{PDEforR} holds.   
\end{remark}
\begin{corollary} \label{cor:PDEforV}
  There exists a map
  $
  \B\colon \Der_{\tR} \rightarrow   \sH[z_1]\otimes  \sH[z_2]
  $
  such that for any $\D\in \Der_{\tR}$
  \begin{equation}
    \begin{aligned}
      \label{PDEforV}
          \D\Big[\frac{\sum_\alpha e_\alpha \otimes e^\alpha - \bar R^{-1}(z) e_\alpha \otimes \bar R^{-1}(z) e^\alpha}{z_1 + z_2}\Big]
          &=\ -  (\bar R^{-1}(z_1) \otimes \bar R^{-1}(z_2)) \B_\D(z_1, z_2), \\
          \D\Big[\frac{\sum_\alpha -\bar R^{-1}(z_1) e_\alpha \otimes \bar S_{\delta_2} e^\alpha}{z_1 - H_{\delta_2}}\Big]
          &=\ -  (\bar R^{-1}(z_1) \otimes \bar S _{\delta_2}) \B_\D(z_1, -H_{\delta_2}), \\
          \D\Big[\frac{\sum_\alpha -\bar S_{\delta_1}  e_\alpha \otimes \bar S_{\delta_2} e^\alpha}{-H_{\delta_1} - H_{\delta_2}}\Big]
          &=\ -   (\bar S_{\delta_1} \otimes \bar S_{\delta_2}) \B_\D(-H_{\delta_1}, -H_{\delta_2}).
    \end{aligned}
  \end{equation}
\end{corollary}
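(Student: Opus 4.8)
The plan is to read off $\B$ directly from the linear map $\A$ of Lemma~\ref{lem:PDEsforR} together with the Casimir element of the pairing. Write $\mathcal{C} = \sum_\alpha e_\alpha \otimes e^\alpha \in \sH \otimes \sH$ for the bivector dual to $(\cdot,\cdot)^\lambda$. Since the matrix of $(\cdot,\cdot)^\lambda$ in the normalized flat basis $\{\bar\phi_i\}$ has constant (i.e.\ $q$-independent) entries, $\mathcal{C}$ has constant coefficients in that basis, and hence $\D\mathcal{C}=0$ for every $\D\in\Der_\tR$. I would then set
\begin{equation*}
  \B_\D(z_1, z_2) := \frac{1}{z_1 + z_2}\bigl( \A_\D(z_1) \otimes \mathrm{Id} + \mathrm{Id} \otimes \A_\D(z_2) \bigr)\mathcal{C}.
\end{equation*}

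First I would check that $\B_\D$ really lies in $\sH[z_1] \otimes \sH[z_2]$ (with coefficients in $\bR$), i.e.\ that the apparent denominator cancels. Since $\A_\D(z) \in \End\sH \otimes \bR[z]$ is polynomial in $z$, the numerator $N_\D(z_1,z_2) := (\A_\D(z_1)\otimes\mathrm{Id} + \mathrm{Id}\otimes\A_\D(z_2))\mathcal{C}$ is polynomial in $z_1,z_2$, so it suffices to see $N_\D(z,-z)=0$. Contracting $N_\D(z,-z)$ against $(\phi,\cdot)^\lambda\otimes(\psi,\cdot)^\lambda$ and using the reconstruction identity $\sum_i(\psi,\bar\phi^i)^\lambda\bar\phi_i=\psi$, one finds that it pairs to $(\phi,(\A_\D(z)+\A_\D^*|_{z\mapsto -z})\psi)^\lambda$, which vanishes by the skew-adjointness in Lemma~\ref{lem:PDEsforR}(2). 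Hence $z_1+z_2$ divides $N_\D$, so $\B_\D$ is a genuine polynomial, and in particular the substitutions $z_i\mapsto -H_{\delta_i}$ (with $H_\delta=-5H/\delta$ nilpotent) that appear in the second and third identities are unambiguous.

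With $\B_\D$ so defined, the three identities reduce to one-line Leibniz computations using $\D\mathcal{C}=0$ together with the two formulas $\D\bar R^{-1}(z)=\bar R^{-1}(z)\A_\D(z)$ and $\D\bar S_\delta=\bar S_\delta\A_\D(-H_\delta)$ of Lemma~\ref{lem:PDEsforR}. For the first identity, applying $\D$ to $(\bar R^{-1}(z_1)\otimes\bar R^{-1}(z_2))\mathcal{C}$ produces $(\bar R^{-1}(z_1)\otimes\bar R^{-1}(z_2))N_\D(z_1,z_2)$, and dividing by $z_1+z_2$ gives the claimed formula (the $\mathcal{C}$-term in the edge contribution is $\D$-closed and drops out). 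The second identity follows identically with one factor $\bar R^{-1}(z_2)$ replaced by $\bar S_{\delta_2}$ — there is no $\mathcal{C}$-term here since $\delta_{v_1v_2}=0$ as soon as an endpoint carries the label $\infty$ — and the factor $\A_\D(-H_{\delta_2})$ on the $\bar S$-side is precisely the evaluation $z_2=-H_{\delta_2}$ of $\B_\D$; the third identity is the same with both factors of $\bar S$. The only genuine obstacle I anticipate is the polynomiality of $\B_\D$, which as above is forced exactly by the skew-adjointness of $\A_\D$ from Lemma~\ref{lem:PDEsforR}; everything else is formal bookkeeping.
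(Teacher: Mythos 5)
Your proposal is correct and follows essentially the same route as the paper: the paper defines $\B_\D(z_1,z_2)$ by exactly your formula, $\sum_\alpha \frac{\A_\D(z_1) e^\alpha\otimes e_\alpha + e^\alpha\otimes \A_\D(z_2) e_\alpha}{z_1+z_2}$, and likewise invokes the skew-adjointness of $\A_\D$ from Lemma~\ref{lem:PDEsforR}(2) to see that this is a genuine polynomial in $z_1,z_2$. Your additional remarks (that the Casimir is $\D$-closed since the pairing is constant in the $\bar\phi$-basis, and the Leibniz computation using $\D\bar R^{-1}=\bar R^{-1}\A_\D$ and $\D\bar S_\delta=\bar S_\delta\A_\D(-H_\delta)$) simply spell out details the paper leaves implicit.
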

\begin{proof}
  We can define the map as follows:
  \begin{equation} \label{BinA}
    \B_\D(z_1,z_2):= \sum_\alpha \frac{ \A_\D(z_1) e^\alpha\otimes  e_\alpha+e^\alpha\otimes \A_\D(z_2) e_\alpha }{z_1+z_2}
  \end{equation}
  Note that the skew-adjoint property makes \eqref{BinA} a polynomial
  of $z_1$ and $z_2$.
\end{proof}
\begin{proof}[Proof of Lemma~\ref{lem:PDEsforR}]
The differential operator $\partial_u= \frac{d}{d u}$ can be considered as the following vector field
$$
D_ u =  \sum_{G\in\{\X_1,\X_2,\X_3,\Y,L\}}  (\partial_uG)\cdot \partial_G = \Y_2 \partial_{\Y}+\sum_{i=1}^3 \X_{i+1} \partial_{\X_i}+ \frac 15 (L^5 - 1) \partial_L \in \Der_{\bR},
$$
where $\Y_2, \X_4 \in \bR$ are given by
\begin{align*}
\Y_2=&\, -3 \X_2-\Y^2-\X^2-\frac{15}{4}\Z_2, \\
\X_4 =&\,  \textstyle -4  \X  \X_3-3  \X_2^2-6  \X^2  \X_2- \X^4-\frac{15}{4} (\Z_2  \X^2+\Z_2  \X_2+\Z_3  \X)\\
&\textstyle \quad -\frac{23}{24} \Z_4+\frac{29}{9} \Z_1^2 \Z_2-\frac{65}{72} \Z_1 \Z_3-\frac{3}{4} \Z_2^2.
\end{align*}
We obtain for $\D \in \Der_{\tR} $
$$
[\D, \partial_u] = [\D, \Y_2 \partial_{\Y}+\X_2\partial_{\X_1}+\X_3\partial_{\X_2}+\X_4\partial_{\X_3}] \in \Der_{\tR}.
$$

Now we inductively construct the map $\A$ and prove that it satisfies
Property~(1).
First, by the first statement in Lemma~\ref{lem:R0}, we have
\begin{equation*}
  \D \bar R^{-1}(z)\bar \phi_0 = 0,
\end{equation*}
so that we can define $\A_\D  \bar \phi_0 = 0$.

Assume that for any $k\leq i$, we have defined $\A_\D$ such that
$\D R^{-1}(z) \bar \phi_k = R^{-1}(z) \A_\D \bar \phi_k$ and that
Property~(1) holds.
By using the QDE \eqref{QDEnormalize} for $R^*(z) = R^{-1}(-z)$, we
first get for $k< i$:
\begin{align*}
  \big(-z D_{\C} +\Amone \big) \, \big[\bar R^{-1}(z) \bar\phi_k  \big] &=   \bar R^{-1}(z) \Amone  \bar\phi_k  \\
  \big(  H_\delta D_{\C} + 5H \Z \big) \, \big[\bar S_\delta \bar\phi_k  \big] &=  \bar S_\delta \Amone   \bar\phi_k 
\end{align*}
The next equation gives
\begin{align*}
 \D {\bar R}^{-1}(z) \bar \phi_{i+1}
=  & \
-z \D   \big[D_{\C} {\bar R}^{-1}(z) \bar \phi_{i}  \big] + \Amone  \D \big[  {\bar R}^{-1}(z) \bar \phi_{i} \big]  \\
=  & \ -z \big( [\D, \partial_u]  \big) {\bar R}^{-1}(z) \bar \phi_{i}+z  {\bar R}^{-1}(z)  \big(\D \C +  [\A_\D ,\C]  \big) \bar \phi_{i} \\
&\qquad + \big(  -z D_{\C}   +  \Amone   \big)  \bar R^{-1}(z)  \A_\D   \bar \phi_i \\
=  & \ 
 {\bar R}^{-1}(z)  \big[ z\big( - \A_{  [\D, \partial_u] }+{ \D \C }+ [\A_\D ,\C] \big)+ (-z\partial_u+\Amone)\A_\D \big] \bar \phi_i  \\
  \D {\bar S}_\delta\bar \phi_{i+1}
=  & \
H_\delta \D   \big[D_{\C} {\bar S}_\delta\bar \phi_{i}  \big] +  5\Z \cdot  H  \D \big[ {\bar S}_\delta\bar \phi_{i} \big]  \\
=  & \ H_\delta \big( [\D, \partial_u]  \big) {\bar S}_\delta\bar \phi_{i}-H_\delta  {\bar S}_\delta \big(\D \C +  [\A_\D ,\C]  \big) \bar \phi_{i} \\
&\qquad + \big(H_\delta D_{\C}   +   5\Z \cdot H   \big)  {\bar S}_\delta  \A_\D   \bar \phi_i \\
=  & \ 
 {\bar S}_\delta \big[   z  \big( - \A_{  [\D, \partial_u] }+{ \D \C }+ [\A_\D ,\C] \big)+ (-z \partial_u+ \Amone )\A_\D \big]_{z=-H_\delta} \bar \phi_i  
\end{align*}
where in the last equality we have again used the QDE
\eqref{QDEnormalize} and the induction assumption.
We define
  \begin{equation*}
    \A_\D(z) \bar \phi_{i+1}:= \big[ z (-\A_{  [\D, \partial_u]}(z) + \D \C + [\A_\D(z) ,\C])+  (-z\partial_u+\Amone)\A_\D(z) \big] \bar \phi_i 
  \end{equation*}
and hence finish the inductive definition.

Property (2) follows from the symplectic property of $R(z)$.
Property (3) follows from a direct computation via \eqref{PDEforR}. 
\end{proof}

\subsection{Explicit formulae of PDEs for the $\sR$-matrix} \label{sec:exPDEforR}

In this section, we set $\tR=\QQ[\X_1,\X_2,\X_3,\Y]$.
  We introduce the following derivations, which generate $\Der_{\tR}$:\footnote{One can directly check that for $\partial_0$ and $\partial_1$, the definition matches with the one given in \eqref{operatorpartial01}.}
\begin{align*}
  \partial_1 :=-\partial_\Y  + \partial_{\X} - ( \X-\Y)\,\partial_{\X_2}-\big(2\X\Y +4\X_2+\frac{15}{4} \Z_2\big)  \,\partial_{\X_3}\\
  \partial_2 :=  \partial_{\X_2} -2\X\,\partial_{\X_3},\qquad
  \partial_3 := \partial_{\X_3},\qquad
  \partial_0 := \partial_\Y
\end{align*}
By a direct calculation via the algorithm in the proof of Lemma \ref{lem:PDEsforR}, we have
\begin{lemma}
For $\D={g_0 \partial_0+g_1 \partial_1+g_2 \partial_2+g_3 \partial_3} \in \Der_{\tR}$,\footnote{In terms of Yamaguchi-Yau's generators, one has
\begin{align*}
\D = &\, g_1 \,\partial_\U+g_0\,\partial_{\V_1}+  (g_2-  \U\cdot g_0 ) \,\partial_{\V_2}
+(   g_3+2\, \U \cdot g_2- \U^2 \cdot g_0 )\, \partial_{\V_3}
\end{align*}
}
the maps $\A$ and $\B$ are given by
\begin{align*}
\A _\D = &\,\,  (g_0-g_1) \, \Lambda_0 \psi+  g_1 \,\Lambda_1 \psi  - g_2\, \Lambda_2\psi^2 + g_3 \, \Lambda_3\psi^3 \in  \End \sH[[ \psi]]   ,
\\
  \B_\D =&\,(g_0-g_1) \,  {\textstyle \frac{1}{5}}  \bar\phi_1\otimes  \bar\phi_1 + g_1\, \B_2 + g_2 \,\B_1 +g_3 \,\B_0 \in \sH[[ \psi_1]]\otimes \sH[[\psi_2]]  ,
\end{align*}
where we define the following lower triangular maps (where we, by
abuse of notation, set $\bar \phi_{k}:=0$ for $k<0$)
$$
\forall j,\quad \Lambda_0(\bar \phi_j) :=\delta_{j,2}\bar \phi_1,\quad \Lambda_1(\bar \phi_j):=\bar \phi_{j-1} , \quad \Lambda_2(\bar \phi_j):=(-1)^j \bar \phi_{j-2} ,\quad \Lambda_3(\bar \phi_j):=\bar \phi_{j-3}  ;
$$
and we define certain ``diagonal" classes in $ \sH[z_1]\otimes \sH[z_2]$:
\begin{align*}
\B_{0} :=&\,  \  \   {\textstyle  \frac{1}{5}  }\big( z_1^2  \bar\phi_0 \otimes   \bar\phi_0 - z_1 \bar\phi_0 \otimes  z_2 \bar\phi_0+  \bar\phi_0 \otimes z_2^2  \bar\phi_0 \big) \\
\B_1 :=&\,  - {\textstyle  \frac{1}{5}  }(z_1-z_2) (1\otimes  \bar\phi_1- \bar\phi_1\otimes 1) ,\\
\B_{2} :=&\,  \  \    {\textstyle  \frac{1}{5}  } \big(  \bar\phi_0\otimes  \bar\phi_2+ \bar\phi_1 \otimes  \bar\phi_1+ \bar\phi_2\otimes   \bar\phi_0 \big)
\end{align*}
\end{lemma}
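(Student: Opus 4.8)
The plan is to compute the four operators $\A_{\partial_0},\A_{\partial_1},\A_{\partial_2},\A_{\partial_3}$ directly and then to obtain the general $\D = g_0\partial_0+g_1\partial_1+g_2\partial_2+g_3\partial_3$ case by $\tR$-linearity of $\D\mapsto\A_\D$. This linearity is immediate from the uniqueness in Lemma~\ref{lem:PDEsforR}: if $f\in\tR$ and $\D\in\Der_{\tR}$, then $f\D\,\bar R^{-1}(z) = f\,\bar R^{-1}(z)\A_\D(z) = \bar R^{-1}(z)\bigl(f\A_\D(z)\bigr)$, so $\A_{f\D} = f\A_\D$, and additivity is the same computation; the identical argument applies to $\bar S_\delta$. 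Since $\partial_0,\dots,\partial_3$ generate $\Der_{\tR}$, it therefore suffices to verify $\A_{\partial_0}=\Lambda_0\psi$, $\A_{\partial_1}=-\Lambda_0\psi+\Lambda_1\psi$, $\A_{\partial_2}=-\Lambda_2\psi^2$, $\A_{\partial_3}=\Lambda_3\psi^3$, after which the stated form of $\A_\D$ is the corresponding $\QQ$-linear combination with coefficients $g_i$. Properties (1)--(3) of $\A_\D$ are already contained in Lemma~\ref{lem:PDEsforR}, so only the explicit formulae are new.

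\textbf{Running the recursion.} I will evaluate each $\A_{\partial_k}$ via the recursion from the proof of Lemma~\ref{lem:PDEsforR},
\[
  \A_\D(z)\bar\phi_{i+1} = \bigl[z\bigl(-\A_{[\D,\partial_u]}(z)+\D\C+[\A_\D(z),\C]\bigr)+(-z\partial_u+\Amone)\A_\D(z)\bigr]\bar\phi_i ,
\]
with base case $\A_\D\bar\phi_0=0$. Two inputs are needed first. (i) The commutators $[\partial_k,\partial_u]$ in the basis $\{\partial_0,\partial_1,\partial_2,\partial_3\}$ over $\tR$: writing $\partial_u = \Y_2\partial_\Y+\X_2\partial_{\X_1}+\X_3\partial_{\X_2}+\X_4\partial_{\X_3}+\tfrac15(L^5-1)\partial_L$, the $\partial_L$-part commutes with each $\partial_k$ (the $\partial_k$ have $\tR$-coefficients, which are $L$-free), so $[\partial_k,\partial_u]\in\Der_{\tR}$ is an explicit $\tR$-combination of $\partial_0,\dots,\partial_3$; here the formulae for $\Y_2$ and $\X_4$ enter. (ii) The action of each $\partial_k$ and of $\partial_u$ on $\C=\diag(\X,\Y,-\Y,-\X,0)$, i.e.\ on $\X$ and $\Y$, read off from the definitions of $\partial_0,\dots,\partial_3$ and from Lemma~\ref{lem:du}. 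With these in hand one inducts on the column index $i=0,\dots,4$, computing $\A_{\partial_k}\bar\phi_i$ for all four $k$ simultaneously: the recursion at column $i+1$ uses only column-$i$ data, and $\A_{[\partial_k,\partial_u]}\bar\phi_i$ is known at that stage by $\tR$-linearity. At each step one checks that the result stays in $\spann_\QQ\{\bar\phi_0,\dots,\bar\phi_i\}$ (Property (1)) and that the $\psi$-degree is bounded by $i$; collecting the columns produces the asserted closed forms, and the footnote comparison with \eqref{operatorpartial01} is then a one-line check using $\partial_\X=\partial_\U$, $\partial_\Y=\partial_{\V_1}-\partial_\U$ and the definitions of $\V_2,\V_3$.

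\textbf{The map $\B$.} Given the $\A_{\partial_k}$, I will obtain $\B_\D$ by substituting into formula~\eqref{BinA} of Corollary~\ref{cor:PDEforV},
\[
  \B_\D(z_1,z_2) = \sum_\alpha \frac{\A_\D(z_1)e^\alpha\otimes e_\alpha + e^\alpha\otimes\A_\D(z_2)e_\alpha}{z_1+z_2},
\]
rewriting the Casimir element as $\sum_\alpha e_\alpha\otimes e^\alpha = \tfrac15\bigl(\bar\phi_0\otimes\bar\phi_3+\bar\phi_1\otimes\bar\phi_2+\bar\phi_2\otimes\bar\phi_1+\bar\phi_3\otimes\bar\phi_0+\lambda^{-5}\bar\phi_4\otimes\bar\phi_4\bigr)$ (from the pairing matrix for $\{\bar\phi_i\}$) and applying the lower-triangular operators $\Lambda_j$ slotwise. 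Skew-adjointness of $\A_\D$ (Property (2)) forces the apparent pole at $z_1+z_2=0$ to cancel, and a short simplification yields the polynomials $\B_0,\B_1,\B_2$ in the statement; the $i=4$ term $\lambda^{-5}\bar\phi_4\otimes\bar\phi_4$ never survives because the $\Lambda_j$ kill $\bar\phi_4$ down past index $0$, consistently with the claimed $\lambda$-independence.

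\textbf{Main obstacle.} The real content is step (i) together with tracking the $\Z_k$- and $L^{\pm1}$-dependence through the recursion: a priori $\A_{\partial_k}\in\End\sH\otimes\bR[z]$, whereas the asserted answer has coefficients in $\QQ$ only, so one must check that all terms built from $\Y_2$, $\X_4$, from $\D\C$, and from the $-z\partial_u$ in the recursion cancel identically at every column. Verifying this cancellation is where the bulk of the work lies; once it is seen, the remaining bookkeeping is routine linear algebra.
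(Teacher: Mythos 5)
Your plan coincides with the paper's own proof, which is exactly this: a direct calculation of $\A_{\partial_0},\dotsc,\A_{\partial_3}$ via the recursion (with base case $\A_\D\bar\phi_0=0$) constructed in the proof of Lemma~\ref{lem:PDEsforR}, extended by linearity, with $\B_\D$ then read off from \eqref{BinA} using the Casimir in the $\bar\phi$-basis. One small caution: your stated reason for discarding the $\frac15(L^5-1)\partial_L$-part of $\partial_u$ when forming $[\partial_k,\partial_u]$ is not literally valid for $\partial_1$, whose $\partial_{\X_3}$-coefficient contains $\Z_2$ and hence depends on $L$ (the paper's proof of Lemma~\ref{lem:PDEsforR} makes the same elision), so in carrying out the recursion the commutators should be computed with that term retained.
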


\begin{example} \label{ex:PDEforR} We have the following explicit forms of \eqref{PDEforR}
\begin{align*}
\partial_\Y R^{-1}(z) \bar \phi_j =\,& \delta_{j,2}\cdot zR ^{-1}(z) \bar \phi_1\\
\partial_{\X} R^{-1}(z) \bar \phi_j =\,&  z R ^{-1}(z)  \bar \phi_{j-1}  +\delta_{j,2}\cdot(\Y-\X) z^2R ^{-1}(z)  \bar \phi_0 \qquad\qquad \text{  ($\bar \phi_{-1}:=0$) }
\\
&\quad +\delta_{j,3}\cdot \Big(- (\Y-\X) z^2R ^{-1}(z) \bar \phi_1 +  \big(  2\X^2+4\X_2 +\frac{15}{4}\Z_2\big) z^3 R^{-1}(z)\bar \phi_0\Big)
\\
\partial_{\X_2} R^{-1}(z) \bar \phi_j =\,& -\delta_{j,2}\cdot z^2R ^{-1}(z) \bar \phi_0 +\delta_{j,3}\cdot ( z^2R ^{-1}(z) \bar \phi_1+2 \X\, z^3R ^{-1}(z) \bar \phi_0)\\
\partial_{\X_3} R^{-1}(z) \bar \phi_j =\,&  \delta_{j,3}\cdot z^3R ^{-1}(z) \bar \phi_0
\end{align*}
and we have the following explicit forms of \eqref{PDEforV}
\begin{align*}
-\partial_{\Y} V(z_1,z_2)  =\,& R ^{-1}(z_1) \otimes R ^{-1}(z_2) \Big({\textstyle  \frac{1}{5}  } \phi_1\otimes \phi_1 \Big)\\
-\partial_{\X} V(z_1,z_2)  =\,&   R ^{-1}(z_1) \otimes R ^{-1}(z_2) \Big(  \B_{2}  + (\Y-\X)\B_{1} + \big(  2\X^2+4\X_2 +\frac{15}{4}\Z_2\big)   \B_{0}\Big)
\\
-\partial_{\X_2} V(z_1,z_2)  =\,& R ^{-1}(z_1) \otimes R ^{-1}(z_2) \Big( \B_{1}+2\X \B_{0}  \Big)
\\
-\partial_{\X_3} V(z_1,z_2)  =\,& R ^{-1}(z_1) \otimes R ^{-1}(z_2) \Big(  \B_{0}  \Big).
\end{align*}
where $V(z_1,z_2):=\frac{\sum_\alpha e_\alpha \otimes e^\alpha - \bar R^{-1}(z) e_\alpha \otimes \bar R^{-1}(z) e^\alpha}{z_1 + z_2}$.
If we replace $R$ by $S$ and set $z=5H/\delta$, the same equalities
hold for the specialized $S$-matrix.
\end{example}

\begin{corollary}
  For the operators $\partial_0, \partial_1 \in \Der_{\tR}$ defined in
  Equation~\eqref{operatorpartial01}, under Yamaguchi-Yau's generators, we simply
  have
  \begin{align}
    \B_{\partial_0}  = & \  {\textstyle  \frac{1}{5}  }   \bar\phi_1 \otimes   \bar\phi_1 , \label{eq:YY0} \\
    \B_{\partial_1}  = & \  {\textstyle  \frac{1}{5}  }  \bar\phi_0\otimes  \bar\phi_2+ {\textstyle  \frac{1}{5}  }  \bar\phi_2\otimes   \bar\phi_0. \label{eq:YY1}
  \end{align}
  In particular, $\B_{\partial_0}$ and $\B_{\partial_1}$ do not depend
  on $z_1$ and $z_2$.
\end{corollary}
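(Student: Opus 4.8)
The plan is to reduce the statement to the explicit formula for $\B_\D$ established in the preceding Lemma, by writing the two derivations $\partial_0, \partial_1$ of \eqref{operatorpartial01} in terms of the basis $\{\partial_0, \partial_1, \partial_2, \partial_3\}$ of $\Der_{\tR}$ introduced at the start of Section~\ref{sec:exPDEforR}. The one input needed is the coordinate-change identity recorded in the Lemma's footnote, namely
\begin{equation*}
  g_0\partial_0+g_1\partial_1+g_2\partial_2+g_3\partial_3
  = g_1\,\partial_\U + g_0\,\partial_{\V_1} + (g_2 - \U g_0)\,\partial_{\V_2} + (g_3 + 2\,\U g_2 - \U^2 g_0)\,\partial_{\V_3}.
\end{equation*}

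First I would match the derivations. For $\partial_1 = \partial_\U$ of \eqref{operatorpartial01}, comparing coefficients in the identity above forces $g_1 = 1$ and $g_0 = g_2 = g_3 = 0$, so this $\partial_1$ is exactly the derivation $\partial_1$ of Section~\ref{sec:exPDEforR}. For $\partial_0 = \partial_{\V_1} - \U\,\partial_{\V_2} - \U^2\,\partial_{\V_3}$ of \eqref{operatorpartial01}, the choice $g_0 = 1$, $g_1 = g_2 = g_3 = 0$ makes the right side equal to $\partial_{\V_1} - \U\,\partial_{\V_2} - \U^2\,\partial_{\V_3}$, since then the $-\U g_0$ and $-\U^2 g_0$ contributions are precisely the required ones; hence this $\partial_0$ is the derivation $\partial_0$ of Section~\ref{sec:exPDEforR}. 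This is the ``direct check'' promised in the footnote to that section.

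Next I would substitute into the formula $\B_\D = (g_0 - g_1)\,{\textstyle\frac15}\,\bar\phi_1\otimes\bar\phi_1 + g_1\,\B_2 + g_2\,\B_1 + g_3\,\B_0$ from the Lemma. For $\D = \partial_0$ (i.e. $g_0 = 1$, others $0$) this gives at once $\B_{\partial_0} = {\textstyle\frac15}\,\bar\phi_1\otimes\bar\phi_1$, which is \eqref{eq:YY0}. For $\D = \partial_1$ (i.e. $g_1 = 1$, others $0$) it gives $\B_{\partial_1} = -{\textstyle\frac15}\,\bar\phi_1\otimes\bar\phi_1 + \B_2$; inserting $\B_2 = {\textstyle\frac15}(\bar\phi_0\otimes\bar\phi_2 + \bar\phi_1\otimes\bar\phi_1 + \bar\phi_2\otimes\bar\phi_0)$ and cancelling the $\bar\phi_1\otimes\bar\phi_1$ terms yields $\B_{\partial_1} = {\textstyle\frac15}\,\bar\phi_0\otimes\bar\phi_2 + {\textstyle\frac15}\,\bar\phi_2\otimes\bar\phi_0$, which is \eqref{eq:YY1}. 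The $z_1,z_2$-independence is then immediate: among the building blocks $\bar\phi_1\otimes\bar\phi_1,\ \B_0,\ \B_1,\ \B_2$, only $\B_0$ and $\B_1$ carry $z_i$-dependence, and their coefficients $g_3$ and $g_2$ vanish in both cases.

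There is no substantive difficulty here; the computation is a one-line specialization of the Lemma once the two ``$\partial_0$'s'' and two ``$\partial_1$'s'' are seen to agree. The only point requiring care is the bookkeeping of the coordinate change between the $\partial_i$ and the Yamaguchi--Yau derivatives $\partial_{\V_j}$ — i.e. confirming in the first step that the derivations named $\partial_0,\partial_1$ in \eqref{operatorpartial01} really do coincide with those in Section~\ref{sec:exPDEforR} — which is exactly what makes the clean formulae \eqref{eq:YY0}--\eqref{eq:YY1} come out with no $z$-dependent remainder.
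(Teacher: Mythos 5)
Your proposal is correct and is essentially the argument the paper intends: the corollary is a direct specialization of the preceding Lemma's formula for $\B_\D$, using the footnoted coordinate change to identify the $\partial_0,\partial_1$ of \eqref{operatorpartial01} with the derivations $g_0=1$ resp.\ $g_1=1$ of Section~\ref{sec:exPDEforR}, after which the cancellation of the $\bar\phi_1\otimes\bar\phi_1$ terms and the vanishing of the $z$-dependent pieces $\B_0,\B_1$ give \eqref{eq:YY0}--\eqref{eq:YY1} exactly as you write.
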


\begin{remark}[Yamaguchi--Yau's generators] Indeed,  
  by using the generators defined in \eqref{generatorsSp} the QDE for $\sR$ can
  be written in a much simpler form:
  \begin{align*}
    \bar R _{k+1;1}= \,& (\partial_u - \U) \bar R_{k;0} +\bar R _{k+1;0} \\
    \bar R _{k+1;2} = \,&  (\partial_u +\U- \V) \bar R _{k;1} + (\partial_u - \U) \bar R_{k;0} +\bar R _{k+1;0}\\
    = \,&  (\partial_u^2 -\V \cdot \partial_u - \V_2) \bar R_{k-1;0}   + (2 \partial_u - \V) \bar R_{k;0} +\bar R _{k+1;0}\\
    \bar R _{k+1;3}= \,& \bar R _{k+1;2} + (\partial_u+\U)  \bar R _{k;2}  \\
    \bar R _{k+1;4} = \,&\bar R _{k+1;0} - \partial_u  \bar R _{k;4}
  \end{align*}
  Then by using the differential relations
  \begin{equation}
    \label{eq:DV}
    \begin{aligned}
      &  \partial_u \V = -2 \V_2-\V^2-\frac{15}{4} \Z_2, \qquad  \partial_u \V_2 = \V_3 - \V \V_2 ,\\
      &  \partial_u \V_3 = \V_2^2-\frac{23}{24}\Z_4+\frac{29}{9}\Z^2\Z_2-\frac{65}{72}\Z\Z_3-\frac{3}{4}\Z_2^2 .
    \end{aligned}
  \end{equation}
  the PDE for $R$ with respect to the differential operator
  $\partial_1=\partial_\U$ can be written in a much simpler way.
\end{remark}

\subsection{Proof of the holomorphic anomaly equations}

In the following section, we prove the holomorphic anomaly equations
of Theorem~\ref{HAE2}.
They will be a direct consequence of:
\begin{theorem}
  \label{thm:HAE}
  For any $\D \in \{\partial_0, \partial_1\}$, we have
  \begin{equation*}
    -\D \bar \Omega^{\mathbf c}_g  =
    \frac{1}{2} r_* \bar \Omega^{\mathbf c}_{g-1,2}\big(5\B_\D \big) + \frac{1}{2} \sum_{g_1+g_2=g} s_* (\bar  \Omega^{\mathbf c}_{g_1,1}\otimes \bar  \Omega_{g_2, 1}^{\mathbf c}) \big(5\B_\D\big),
  \end{equation*}
  where we recall that
  $\bar \Omega^{\mathbf c}_{g,n} := {5^{g-1}(L/I_0)^{2g-2}}
  \Omega^{\mathbf c}_{g,n}$, and where
  $r\colon \M_{g - 1, 2} \to \M_g$ and
  $s\colon \M_{g_1, 1} \times \M_{g_2, 1} \to \M_g$ are the gluing
  maps.
\end{theorem}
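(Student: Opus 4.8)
The plan is to differentiate the Combinatorial Structural Theorem (Theorem~\ref{thm:Raction}), in its $\lambda$-equivariant form $\bar\Omega^{\mathbf c,\lambda}_{g,0}=\sum_{\Gamma\in G^\infty_{g,0}}\frac1{|\Aut\Gamma|}\iota_{\Gamma*}(\Cont_\Gamma)$, term by term along the vector field $\D\in\{\partial_0,\partial_1\}\subset\Der_{\tR}$, and to show that the surviving terms reorganize exactly into the two graph sums on the right-hand side. One would work $\lambda$-equivariantly throughout and let $\lambda\to0$ at the end; both sides admit this limit because $\B_{\partial_0}$ and $\B_{\partial_1}$ are $\lambda$-independent (they involve only $H$ and functions of $q$).

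The first point is to locate the $\D$-dependence of $\Cont_\Gamma$. As $\bar\Omega^{\mathbf c}_g=\bar\Omega^{\mathbf c}_{g,0}$ carries no legs, $\Cont_\Gamma$ is built purely from vertex and edge factors, and I claim every vertex factor is $\D$-closed. For a label-$0$ vertex this is because the normalized topological field theory $\bar\omega^\lambda$ has $q$-independent structure constants (the powers of $I_0/L$ cancelling exactly as in the proof of Theorem~\ref{thm:fgen2}), and the dilaton-shift insertion $T(z)=z(\mathbf1-\bar R^{-1}_\tau(z)\mathbf1)$ satisfies $\D T=0$, since $\A_\D\mathbf1=\A_\D\bar\phi_0=0$ by the strict lower-triangularity in property~(1) of Lemma~\ref{lem:PDEsforR}. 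For a label-$\infty$ vertex it is because $J\bar\Omega^{\infty,\mathbf c}_{g,\mu}$ is assembled from the effective invariants, which are constants, and from values of the specialized $J$-function: these are $\D$-closed because $\D$ kills $q$ (equivalently $L$) by the very definition of $\Der_{\tR}$, and because $\D$ annihilates $\bar S_\delta\bar\phi_0$ by the $\bar S_\delta$-form of \eqref{PDEforR} combined with $\A_\D\bar\phi_0=0$. Hence $\D$ acts only on the edge factors, and by Corollary~\ref{cor:PDEforV}, for every type of edge $e=(v_1,v_2)$ (namely $0$--$0$, $0$--$\infty$, $\infty$--$\infty$) the derivative of the edge factor equals $-(\sR^{-1}_{v_1}(z_{f_1})\otimes\sR^{-1}_{v_2}(z_{f_2}))\,\B_\D(z_{f_1},z_{f_2})$.

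The decisive feature — and the reason the theorem is stated only for $\D\in\{\partial_0,\partial_1\}$ — is that, by the corollary following Example~\ref{ex:PDEforR}, $\B_{\partial_0}=\tfrac15\bar\phi_1\otimes\bar\phi_1$ and $\B_{\partial_1}=\tfrac15(\bar\phi_0\otimes\bar\phi_2+\bar\phi_2\otimes\bar\phi_0)$ carry no dependence on $z_1,z_2$. Thus $\D$ applied to an edge factor is a finite sum $-\sum_{a,b}c_{ab}\,(\sR^{-1}_{v_1}(z_{f_1})\bar\phi_a)\otimes(\sR^{-1}_{v_2}(z_{f_2})\bar\phi_b)$, which is precisely the contribution of cutting $e$ into two legs — one at $v_1$ decorated by $\bar\phi_a$, one at $v_2$ by $\bar\phi_b$ — and propagating these insertions by the same $\sR$-matrices. (If $\B_\D$ depended on $z$, cutting would instead produce ancestor $\bar\psi$-insertions at the node and the right-hand side would lose its clean CohFT form; this is exactly why one passes to the Yamaguchi--Yau generators and the operators $\partial_0,\partial_1$.)

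It then remains to reorganize $\sum_\Gamma\frac1{|\Aut\Gamma|}\sum_{e\in E(\Gamma)}\iota_{\Gamma*}\bigl(\D(e\text{-factor})\times(\text{rest of }\Cont_\Gamma)\bigr)$ by classifying cuts. If $e$ is non-separating, cutting produces a connected decorated graph $\Gamma'$ of genus $g-1$ with two legs and $r\circ\iota_{\Gamma'}=\iota_\Gamma$; summing over all such $(\Gamma,e)$ and invoking the structural theorem in genus $g-1$ with two markings identifies the total with $-\tfrac12\,r_*\,\bar\Omega^{\mathbf c}_{g-1,2}(5\B_\D)$. If $e$ is separating, cutting gives $\Gamma'=\Gamma_1\sqcup\Gamma_2$ of genera $g_1+g_2=g$ with one leg each and $s\circ\iota_{\Gamma'}=\iota_\Gamma$, yielding $-\tfrac12\sum_{g_1+g_2=g}s_*(\bar\Omega^{\mathbf c}_{g_1,1}\otimes\bar\Omega^{\mathbf c}_{g_2,1})(5\B_\D)$. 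The factor $\tfrac12$ is the usual symmetry factor for an unordered cut (compatible with $|\Aut\Gamma|$ versus $|\Aut\Gamma'|$ and with the symmetry of $\B_\D$), and the factor $5$ comes from the pairing normalization $\bar\phi^i=\tfrac15\bar\phi_{3-i}$, which converts the bivector inserted along the cut edge into the argument of $\bar\Omega^{\mathbf c}_{g-1,2}$, resp.\ of $\bar\Omega^{\mathbf c}_{g_1,1}\otimes\bar\Omega^{\mathbf c}_{g_2,1}$; the powers of $I_0/L$ match since $5^{g-1}=5\cdot5^{g-2}=5\cdot5^{g_1-1}5^{g_2-1}$ and the genus drops by one along each gluing. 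Combining the two cases and letting $\lambda\to0$ gives the theorem; Theorem~\ref{HAE2} then follows by integrating over $\M_g$ and substituting the explicit values of $\B_{\partial_0},\B_{\partial_1}$. The main obstacle is the verification in the second paragraph that all vertex contributions — especially the $\infty$-vertices, whose effective-invariant and $J$-function factors are genuinely $q$-dependent — are annihilated by $\D$ and contribute nothing beyond the edge analysis, which rests on a coordinated use of the $R$- and $\bar S_\delta$-versions of Lemma~\ref{lem:PDEsforR} and of Lemma~\ref{lem:mirror}, together with the defining property $\D|_{\QQ[L,Z]}=0$ of $\Der_{\tR}$.
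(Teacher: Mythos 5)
Your proposal is correct and follows essentially the same route as the paper's own proof: differentiate the generalized $\sR$-matrix action, observe (as in the finite-generation analysis of Section~\ref{sec:fgen}) that only the edge factors $\mathscr V$ involve the non-holomorphic generators, apply Corollary~\ref{cor:PDEforV} to each edge, and reorganize the resulting sum over half-edges into the non-separating and separating cases, which give the two terms of the equation. Your write-up merely makes explicit what the paper leaves implicit — the $\D$-closedness of the vertex factors (via $\A_\D\bar\phi_0=0$ and the $q$-polynomiality of the $\infty$-vertex terms), the role of the $z$-independence of $\B_{\partial_0},\B_{\partial_1}$, and the bookkeeping of the factors $5$ and $I_0/L$ — so it is, if anything, a more detailed version of the same argument.
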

\begin{proof}
  This is similar to \cite{LhPa18} and \cite{LhPa18P}.

  We consider the action of $\D$ on the contribution of a decorated
  graph $\Gamma$ in the generalized $\sR$-matrix action for
  $\bar \Omega^{\mathbf c}_g$.
  Since we have no markings, only the edge contribution $\mathscr V$
  depends on non-holomorphic generators (see Section \ref{proofofFG}).
  Therefore the contribution of $\Gamma$ to
  $\D \bar \Omega^{\mathbf c}_g$ naturally splits into a sum over
  edges $e$ of $\Gamma$.
  We can consider the contribution to $2\D \bar \Omega^{\mathbf c}_g$
  instead as split into a sum over half-edges $h$ of $\Gamma$.
  When the edge $e$ is non-disconnecting, the half-edge $h$ determines
  a decorated genus $g - 1$ graph with two (ordered) legs.
  This corresponds to the first term in the HAE.
  When $e$ is disconnecting, the half-edge $h$ determines a decorated
  genus $g_1$ graph with a leg corresponding to $h$ and a decorated
  genus $g_2$ graph with one leg, where $g_1 + g_2 = g$.
  This corresponds to the second term in the HAE.

  The holomorphic anomaly equation therefore follows from
  Corollary~\ref{cor:PDEforV}.
\end{proof}

\subsection{Examples of HAEs}

\begin{theorem}
  Recall
  $$
  \bar F^{\bf c}_{g,n}  := \int_{\M_{g,n}} \bar \Omega^{\mathbf c}_{g,n}(\phi_{1}, \dotsc, \phi_{n}) .
  $$
  The following HAEs for the extended quintic family hold {\footnotesize
    \begin{align*}
      -\partial_0 \bar F^{\bf c}_{g}  =\, \,\,\frac{1}{2} \bar F^{\bf c}_{g-1,2} +\frac{1}{2} \sum_{g_1+g_2=g}  \bar F^{\bf c}_{g_1,1} \cdot \bar F^{\bf c}_{g_2,1}, \quad \qquad
      -\partial_{1 } \bar F^{\bf c}_{g}   =\,   0 .
    \end{align*}}
\end{theorem}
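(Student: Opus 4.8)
The plan is to obtain the statement from the cycle-valued holomorphic anomaly equation of Theorem~\ref{thm:HAE} simply by integrating over $\M_g$. Because each $\D\in\{\partial_0,\partial_1\}$ lies in $\Der_{\tR}$ and hence acts only on the $\bR$-valued coefficients, it commutes with the linear functional $\int_{\M_g}\colon H^{2(3g-3)}(\M_g)\to\QQ$; so applying $\int_{\M_g}$ to both sides of Theorem~\ref{thm:HAE} converts the left-hand side into $-\D\bar F^{\mathbf c}_g$. On the right-hand side I would apply the projection formula to the two gluing maps $r\colon\M_{g-1,2}\to\M_g$ and $s\colon\M_{g_1,1}\times\M_{g_2,1}\to\M_g$: this gives $\int_{\M_g}r_*(-)=\int_{\M_{g-1,2}}(-)$ and, by K\"unneth, $\int_{\M_g}s_*(-)=\int_{\M_{g_1,1}}(-)\cdot\int_{\M_{g_2,1}}(-)$. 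One is thus reduced to computing $\int_{\M_{g-1,2}}\bar\Omega^{\mathbf c}_{g-1,2}(5\B_\D)$ and the corresponding products of one-pointed integrals $\int_{\M_{g_i,1}}\bar\Omega^{\mathbf c}_{g_i,1}$ against the two legs of $5\B_\D$.

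I would then substitute the explicit two-tensors $5\B_{\partial_0}=\bar\phi_1\otimes\bar\phi_1$ by \eqref{eq:YY0} and $5\B_{\partial_1}=\bar\phi_0\otimes\bar\phi_2+\bar\phi_2\otimes\bar\phi_0$ by \eqref{eq:YY1}. For $\partial_0$ the remaining task is to rewrite $\bar\phi_1=\tfrac{I_0}{L}\phi_1$ and to verify that the $g$-dependent prefactor $5^{g-1}(L/I_0)^{2g-2}$ in the definition of $\bar\Omega^{\mathbf c}$ is multiplicative along the gluing maps up to exactly the single factor of $5$ and the two powers of $I_0/L$ furnished by $5\B_{\partial_0}$ and by the two normalized insertions, so that the connected term becomes $\tfrac12\bar F^{\mathbf c}_{g-1,2}$ and the disconnected term becomes $\tfrac12\sum_{g_1+g_2=g}\bar F^{\mathbf c}_{g_1,1}\bar F^{\mathbf c}_{g_2,1}$. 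This yields the first equation.

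For $\partial_1$ I expect a short vanishing argument. Since $\bar\phi_0$ is a scalar multiple of the unit $\mathbf 1$, the unit (fundamental-class) axiom shows that $\bar\Omega^{\mathbf c}_{g-1,2}(\bar\phi_0,\bar\phi_2)$ is the pullback of $\bar\Omega^{\mathbf c}_{g-1,1}(\bar\phi_2)$ (times a function of $q$) along the forgetful map $\M_{g-1,2}\to\M_{g-1,1}$, hence has vanishing integral over $\M_{g-1,2}$; and for every splitting $g=g_1+g_2$ with $g_1,g_2\ge1$ the factor $\int_{\M_{g_i,1}}\bar\Omega^{\mathbf c}_{g_i,1}(\bar\phi_0)$ vanishes as well, being a pullback class when $g_i\ge2$ and a class of cohomological degree $0<2\dim\M_{1,1}$ when $g_i=1$. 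Therefore the whole right-hand side is zero, i.e.\ $-\partial_1\bar F^{\mathbf c}_g=0$. I would close by noting that under the dictionary of Theorem~\ref{HAE2} between $\partial_0,\partial_1$ and the Yamaguchi--Yau generators this is precisely the holomorphic anomaly equation stated there.

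The step I expect to be the main obstacle is the normalization bookkeeping in the $\partial_0$ case: one has to track simultaneously the factors of $5$ and of $I_0/L$ coming from (i) the $5^{g-1}(L/I_0)^{2g-2}$ normalization of $\bar\Omega^{\mathbf c}$, (ii) the relation $\bar\phi_k=\tfrac{I_0}{L}\phi_k$ between the two normalized flat bases, and (iii) the factor of $5$ in front of $\B_\D$ in Theorem~\ref{thm:HAE}, and make sure they cancel on the nose -- including at unstable vertices and at the genus-one end of the summation. Everything else is routine given Theorem~\ref{thm:HAE} and \eqref{eq:YY0}--\eqref{eq:YY1}.
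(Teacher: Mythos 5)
Your proposal follows essentially the same route as the paper: integrate the cycle-valued equation of Theorem~\ref{thm:HAE} over $\M_g$, substitute \eqref{eq:YY0}--\eqref{eq:YY1}, and dispose of the $\partial_1$ terms by pullback/dimension considerations (the paper kills the classes $\Omega^{\mathbf c}_{g,1}(\phi_2)$ and $\Omega^{\mathbf c}_{g,2}(\phi_0,\phi_2)$ by dimension and pullback, while you equivalently kill the $\bar\phi_0$-leg integrals, which also follows directly from the degree statement of Theorem~\ref{thm:fgen}). The normalization bookkeeping you flag is likewise left implicit in the paper's proof, so your argument is at least as complete as the paper's own.
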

\begin{proof}
  The first HAE directly follows from Theorem~\ref{thm:HAE} and
  \eqref{eq:YY0}.
  For the second HAE, in addition to Theorem~\ref{thm:HAE} and
  \eqref{eq:YY1}, we need to use that by pullback and dimension
  considerations both $\Omega^{\mathbf c}_{g,1}(\phi_2)$ and
  $\Omega^{\mathbf c}_{g,2}(\phi_0, \phi_2)$ vanish.
\end{proof}
\begin{remark}
  \label{rmk:YYHAE}
  Under the Yamaguchi--Yau generators $\{\U,\V_1.\V_2,\V_3,L\}$ (see
  \eqref{generatorsSp}), the divisor equation \eqref{divisoreq}
  becomes
  \begin{equation}
    \bar F_{g,n+1} =  (\partial_u - n(\V-2\,\U) +(2g-2)\,\U )\bar F_{g,n}.
  \end{equation}
Then we have
\begin{align*}
 & - \big( \partial_{\V_1}-\U \, \partial_{\V_2} -\U^2 \,  \partial_{\V_3}  \big) \bar F_{g}  \\
  &\qquad = \, \,\,\frac{1}{2}  \Big(\partial_u^2   +\big( 2(2g-1)\,\U-  \V\big) \partial_u +(2g-2) \big(\V_2 + (2g-1)\U^2\big) \Big) \bar F_{g-1} \\
  &\qquad \qquad +\frac{1}{2} \sum_{g_1+g_2=g}   (\partial_u   +(2g_1-2)\,\U ) \bar F_{g_1} \cdot   (\partial_u  +(2g_2-2)\,\U )\bar F _{g_2}
\end{align*}
Since $\partial_\U \bar F_{g} =0$ and $\partial_u \V_i \in \mathbb Q[\V_1,\V_2,\V_3,L]$, we have $\partial_\U (\partial_u\bar F_{g})=0$. Hence, we obtain
\begin{align*}
   \partial_{\V_1} \bar F_{g}  =& \, - \frac{1}{2}  \Big(\partial_u^2    -  \V  \partial_u +(2g-2) \V_2  \Big) \bar F_{g-1} 
- \frac{1}{2}\sum_{g_1+g_2=g}   \partial_u   \bar F_{g_1} \cdot  \partial_u  \bar F _{g_2}\\
\partial_{\V_2} \bar F_{g}  =& \, \,\, (2g-1)\,  \partial_u   \bar F_{g-1} 
   +\sum_{g_1+g_2=g}   \partial_u   \bar F_{g_1} \cdot   (2g_2-2)\, \bar F _{g_2}\\
\partial_{\V_3} \bar F_{g}  =& \, \,\, (g-1)   (2g-1)  \bar F_{g-1} 
   +\sum_{g_1+g_2=g}  2(g_1-1)(g_2-1)\, \bar F_{g_1} \cdot    \bar F _{g_2}
\end{align*}
We can see that up to a polynomial of $L$ (which we consider as a global meromorphic function in the moduli space),  $\bar F_{g}$ is determined from the initial data by the holomorphic anomaly equations.
\end{remark}

\begin{example}
We have the following HAEs for the genus two cases
 {\footnotesize
 }
\begin{align*}
 \quad  
  \partial_\Y \bar F_{2,0} =& \,-\frac{1}{2} \bar F_{1,2}  - \frac{1}{2}\bar F_{1,1} ^2\\
  \qquad  \quad =&  \, -\frac{1}{2} \Big( {\small \text{$
{\frac {5^6\Z^{2}}{144}}\!+ {\frac {1750\X\Z}{9}}\!+{
\frac {125 \Y\Z}{6}}\! +{\frac {3145\,{\X}^{2}}{36}}\!+{\frac {115\X\Y}{6}}+\frac{5}{4}{\Y}^{2}\!-{\frac {205\,{\Z_2}}{24}}\!-{\frac {25\,{\X_2}
}{3}}$}}\Big)
 \end{align*}
 and
 $$
   \Big( -\partial_\Y  + \partial_{\X} - ( \X-\Y)\,\partial_{\X_2}-\big(2\X\Y +4\X_2+\frac{15}{4} \Z_2\big)  \,\partial_{\X_3} \Big) \bar F_{2,0} =  \  0
 $$
This matches with the  following mirror formula proved in \cite{GJR17P} {\small
\begin{align*}
 F^{GW}_2(\tau(q)) = \, & \frac{I^2_0 }{L^2} \cdot \Big(
{\frac {70\,\X_{{3}}}{9}}+{\frac {575\,\X\X_{{2}}}{18}}+\frac{5 \Y\X_{{2}}}{6}+{\frac {557\,{\X}^{3}}{72}}-{\frac {629\,\Y{\X
}^{2}}{72}}-{\frac {23\,{\Y}^{2}\X}{24}}-\frac{{\Y}^{3}}{24}\\&+{
\frac {625\,\Z\X_{{2}}}{36}}-{
\frac {175\,\Z\Y\X}{9}}+{\frac {1441\,\Z_{{2
}}\X}{48}}-{\frac {25\,\Z({\X}^{2}+{\Y}^{2})}{24}}-{\frac {3125\,{\Z}^{2}(\X+\Y)}{288}}\\
&  +{\frac {41\,\Z_{{2}}\Y}{48}}-{\frac {625\,{\Z}^{3}}{144}}+{
\frac {2233\,\Z\Z_{{2}}}{128}}+{\frac {547\,\Z_{{3}}}{72}} \Big).
 \end{align*} 
 }
 \end{example}

\section{A technical result of the formal quintic theory}
\label{sec:orbifoldregularity}

In this Section we prove a somewhat technical result about the formal
quintic theory whose results we used to prove Lemma~\ref{lem:fgforR}.
As a by-product, we prove a conjecture raised in \cite{ZaZi08}.

For future applications, we consider more generally the
$(\CC^*)^m$-equivariant $\mathcal O(m)$-twisted GW theory of
$\PP^{m-1}$, and we use $\lambda_0, \dotsc, \lambda_{m - 1}$ to denote
the equivariant parameters.
Following Givental (see \cite{Gi96,CoGi07}), for any
$\alpha = 0,\cdots, m - 1$, let $ \I_\alpha$ be the following
oscillatory integrals in the Landau--Ginzburg (LG) model
\begin{equation*}
  \I_\alpha(q,z):=\int_{\gamma_\alpha \subset (\CC^*)^m}  e^{W(x_0, \dotsc, x_{m - 1})/z}  \frac{d x_0 \wedge \cdots \wedge d x_{m - 1}}{x_0 \cdots x_{m - 1}},
\end{equation*}
where $\gamma_\alpha$ are the $m$ Lefschetz thimbles and
$W$ is the LG potential
$$
W(x_0,\cdots,x_{m-1})=\sum_{i=0}^{m-1}  (x_i {+} \lambda_i \ln x_i)  { -} \big( {q}^{-1} {\prod_{i=0}^{m-1} x_i} \big)^{\frac{1}{m}}.
$$

\begin{theorem}
  \label{thm:R0}
  When $\lambda_i =\zeta_m^i \cdot \lambda $, where $\zeta_m$ is a
  primitive $m$th root of unity, we have the following asymptotic
  expansion
  \begin{equation}  \label{asymptI}
    \I_\alpha(q,z) \asymp e^{u_\alpha/z} (-2\pi  z)^{\frac{m}{2}}  \Big( 1+  \sum_{k>0} r_{k  \alpha} \cdot (-z)^k\Big)
  \end{equation}
  as $z\rightarrow 0^{-}$, such that $L_\alpha^k \cdot r_{k \alpha}$
  where
  $L_\alpha = \zeta^\alpha \lambda L = \zeta^\alpha \lambda (1 - m^m
  q)^{-1/m}$ satisfies
  \begin{enumerate}
  \item $L_\alpha^k \cdot r_{k  \alpha}  \in L^k \QQ[L] \cap \QQ[L^m]$ (Regularity)
  \item $\deg_{L^m}(L_\alpha^k \cdot r_{k  \alpha}) = k$ (Degree bound)
  \item $(mL_\alpha)^k \cdot r_{k  \alpha}  \in \QQ[m, L^m]$ (Polynomiality in $m$)
  \end{enumerate}
\end{theorem}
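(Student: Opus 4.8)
The plan is to run a stationary-phase analysis of the oscillatory integral $\I_\alpha$, reduce the statement to a recursion for the coefficients $r_{k\alpha}$, and then prove (1)--(3) by induction on $k$.

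First I would locate the critical points of $W$. With $\Phi:=(q^{-1}\prod_j x_j)^{1/m}$, the equations $x_i\partial_{x_i}W=x_i+\lambda_i-\tfrac1m\Phi=0$ force $x_i+\lambda_i$ to be independent of $i$; under the specialization $\lambda_i=\zeta_m^i\lambda$ the compatibility condition $\prod_i(\tfrac1m\Phi-\zeta_m^i\lambda)=q\Phi^m$ becomes $(\tfrac1m\Phi)^m-\lambda^m=q\Phi^m$, whose solutions are $\Phi=\Phi_\alpha:=m\zeta_m^\alpha\lambda L$ with $L=(1-m^mq)^{-1/m}$, so the $m$ critical points are $p_\alpha$ with $x_i^{(\alpha)}=\lambda(\zeta_m^\alpha L-\zeta_m^i)$. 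In particular $\Phi(p_\alpha)=mL_\alpha$, and since $u_\alpha:=W(p_\alpha)$ satisfies $q\tfrac{d}{dq}u_\alpha=q\partial_qW|_{p_\alpha}=\tfrac1m\Phi(p_\alpha)=L_\alpha$ by the envelope property, the ingredients of \eqref{asymptI} are already expressed through $L_\alpha$. A direct computation then gives $W''(p_\alpha)=D-\tfrac{\Phi_\alpha}{m^2}\mathbf u\mathbf u^T$ with $D=\diag(1/x_i^{(\alpha)})$ and $\mathbf u=(1/x_i^{(\alpha)})_i$; a rank-one determinant identity yields an explicit formula for $\det W''(p_\alpha)$, and combined with the logarithmic measure $\prod_i dx_i/x_i$ the Morse/Laplace method produces exactly the form \eqref{asymptI} (the leading coefficient being normalized to $1$ by the statement, which is matched up to the $z$-independent constant that cancels out of the $r_{k\alpha}$). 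All higher derivatives $W^{(\ell)}(p_\alpha)$, $\ell\ge3$, are likewise explicit rational functions of $L$, $\lambda$ and $\zeta_m^\alpha$.

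Next I would extract a recursion for the $r_{k\alpha}$. One source is the Feynman/Wick expansion of the Laplace integral, which writes each $r_{k\alpha}$ as a finite sum over graphs whose vertices carry the derivatives $W^{(\ell)}(p_\alpha)$ and whose edges carry the entries of $W''(p_\alpha)^{-1}$; by the previous paragraph each such contribution is an explicit rational function of $L$, $\lambda$, $\zeta_m^\alpha$. For the proof I would instead use that $\I_\alpha$ solves the quantum differential equation \eqref{QDEforR} --- equivalently an order-$m$ Picard--Fuchs equation in $q$, obtained by differentiating under the integral using $zq\tfrac{d}{dq}e^{W/z}=\tfrac1m\Phi\,e^{W/z}$, the pointwise identities $x_i\partial_{x_i}W=x_i+\lambda_i-\tfrac1m\Phi$, and integration by parts against $\prod_i dx_i/x_i$ (valid on the Lefschetz thimbles since $e^{W/z}$ decays there), equivalently still the closed hypergeometric form of $\tilde I$ in \eqref{tIfunction} via \eqref{S4}. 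Conjugating off $e^{u_\alpha/z}$ replaces $zq\tfrac{d}{dq}$ by $L_\alpha+zq\tfrac{d}{dq}$; substituting the ansatz \eqref{asymptI} and comparing powers of $z$ yields a recursion in which $r_{0\alpha}=1$ and, for $k\ge1$, $r_{k\alpha}$ is obtained from the $r_{j\alpha}$ $(j<k)$ by a first-order linear differential operation in $q$ with division only by a factor proportional to $\prod_{\beta\neq\alpha}(L_\alpha-L_\beta)=m\zeta_m^{-\alpha}\lambda^{m-1}L^{m-1}$. Rewriting $q$ through $L^m=(1-m^mq)^{-1}$ --- so that $q\tfrac{d}{dq}$ acts on $\QQ[m][L,L^{-1}]$ via $q\tfrac{d}{dq}L=\tfrac1m(L^{m+1}-L)$ and preserves the residue of the $L$-exponent mod $m$ --- turns this into a recursion in the variable $L$. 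This is also the form in which the conclusion transfers, via the Birkhoff factorization $S=e^{u/z}RC$, to the entries of $\bar R$.

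Finally, properties (1)--(3) follow by induction on $k$, with three parallel parts: (i) $r_{k\alpha}\in(\zeta_m^\alpha\lambda)^{-k}\,\QQ[L]$, so that all $\zeta_m^\alpha$- and $\lambda$-dependence sits in the prefactor $L_\alpha^{-k}$, which makes $L_\alpha^k r_{k\alpha}\in\QQ[L]$; (ii) the $L$-exponents of $L_\alpha^k r_{k\alpha}$ are $\equiv 0\pmod m$ and bounded below by $k$ --- the first part being the coset-preservation just noted, the second being regularity at $q=0$, i.e.\ $L\to1$; and (iii) after normalizing by $(mL_\alpha)^k$ the coefficients lie in $\QQ[m,L^m]$, since the only arithmetic inputs along the recursion (namely $q\tfrac{d}{dq}L=\tfrac1m(L^{m+1}-L)$, the coefficients of the Picard--Fuchs operator, and $1/\prod_{\beta\neq\alpha}(L_\alpha-L_\beta)$) lie in $\QQ[m][L,L^{-1}]$ with the $m^{-1}$'s absorbed by the normalization. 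Together (i)--(iii) give (1) and (3), and the upper bound $\deg_{L^m}(L_\alpha^k r_{k\alpha})\le k$ follows by counting $q\tfrac{d}{dq}$-applications, each of which raises the $L^m$-degree by at most one. The genuinely hard point is the \emph{sharpness} $\deg_{L^m}=k$ in (2): showing the top coefficient never vanishes. I expect this to require a separate input --- either an analysis of the leading behaviour of $\I_\alpha$ as $q\to m^{-m}$, where the critical points collide and the integral undergoes an Airy-type degeneration pinning down the leading coefficient, or a careful study of the highest-weight part of the recursion. A secondary, routine obstacle is making the contour manipulations and the Laplace expansion on the thimbles $\gamma_\alpha$ rigorous and uniform in $q$ on compacta away from $q=0,m^{-m}$. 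Specializing to $m=5$ and passing through the change between the canonical and normalized flat bases, under which $r_{k\alpha}$ becomes, up to the $q$-independent factor $C_\alpha(z)$, an entry of $\bar R_0$, then yields the input used in Lemma~\ref{lem:fgforR} and settles the conjecture of \cite{ZaZi08}.
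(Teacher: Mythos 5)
Your overall route --- a stationary-phase expansion of $\I_\alpha$ at the critical points $(x_i)_\alpha=L_\alpha-\lambda_i$, followed by the Picard--Fuchs recursion rewritten in the variable $L$ and an induction on $k$ --- is the same as the paper's (Proposition~\ref{prop:osc}, Lemma~\ref{lem:Lm}, Lemma~\ref{lem:extra}). But there is a genuine gap in how you justify the valuation half of (1), i.e.\ $L_\alpha^k r_{k\alpha}\in L^k\QQ[L]$. You attribute the lower bound on the $L$-exponents to ``regularity at $q=0$, i.e.\ $L\to 1$''. That is the wrong limit point: divisibility by $L^k$ is a statement about the orbifold limit $L\to 0$, which corresponds to $q\to\infty$, and the behaviour at $q=0$ (where $L=1$) imposes no constraint on it. In the paper this is precisely where the stationary-phase analysis carries the weight: Proposition~\ref{prop:osc} and its corollary show that the $r_{k\alpha}$ are rational functions of $L_\alpha$ which remain \emph{regular at $L_\alpha=0$}, because $(x_i)_\alpha^{-1}=1/(L_\alpha-\lambda_i)$, $\det Q$ and all higher derivatives of $W$ at the critical point stay finite (and $\det Q$ nonzero) there; this regularity, combined with the polynomiality supplied by the recursion, is what forces $L_\alpha^k r_{k\alpha}$ to vanish to order at least $k$ at $L=0$. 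You carry out the Laplace expansion but never isolate this regularity-at-$L_\alpha=0$ statement, and the substitute you offer does not do the job.

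A related soft spot: the Picard--Fuchs equation yields, at each order in $z$, an equation for $D r_{k\alpha}$ (a first-order ODE, with the factor proportional to $\prod_{\beta\neq\alpha}(L_\alpha-L_\beta)$ multiplying the derivative), not an algebraic formula for $r_{k\alpha}$; so each step leaves a constant of integration undetermined (and when $m\mid k$ even a homogeneous solution $cL^k$ lying inside $\QQ[L^m]$). Some evaluation point is therefore indispensable, and the paper takes it at $L_\alpha=0$, where the expansion is computed explicitly by Gamma-function asymptotics (the Bernoulli-polynomial formula \eqref{initialdata}); this same initial data is what closes the induction for the degree bound \eqref{degreeboundrk} and the polynomiality in $m$. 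Your proposal never fixes this normalization, and an initial condition at $q=0$ would again not see the $L$-valuation. On the positive side, your observation that the mod-$m$ coset structure of the $L$-exponents can be read off directly from the recursion (since $DL=\frac1m(L^{m+1}-L)$ and the operators $\DD_i$ carry prefactors $L_\alpha^{-i}$ with coefficients in $\QQ[m,L^m]$) is a legitimate alternative to the symmetric-function argument of Lemma~\ref{lem:Lm}; and your caution about the sharpness $\deg_{L^m}=k$ is fair --- the paper obtains it by tracking the same induction from the nonvanishing data at $L_\alpha=0$, and its write-up there is terse, so that point needs care in either treatment.
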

This theorem will follow from Proposition~\ref{prop:osc},
Lemma~\ref{lem:Lm} and Lemma~\ref{lem:extra}.

\begin{remark}
  When $\lambda_i =\zeta_m^i \cdot \lambda $, the oscillatory integral
  $ \I_\alpha(q,z) $ is related to the $\tilde I$-function defined in
  \cite{ZaZi08} (where it is denoted by
  $z \mathcal F_{-1}(h_\alpha/z, q)$, and defined in Equation (17)) by
  $$
  (-2\pi  z)^{-\frac{m}{2}}  e^{-h_\alpha \log q/z}  C_\alpha(z) \cdot \I_\alpha(q,z)  = \tilde I_\alpha (q,z):=  z\sum_{d\geq 0} q^d \frac{\prod^{md-1}_{k=0}(m h_\alpha +kz)}{\prod^d_{k=1}\big( ( h_\alpha +kz)^m -\lambda^m \big)}
  $$
  where $h_\alpha = \zeta_m^\alpha \cdot \lambda$ is the restriction
  of the hyperplane class
  $H \in H_{(\mathbb C^*)^m}^2(\mathbb P^{m-1})$ at the $\alpha$th
  fixed point, and where $C_\alpha(z)$ is defined as in
  \eqref{constantR}.
\end{remark}

Then as a direct consequence of the above theorem, we prove a
conjecture originally proposed in \cite[Equation (39)]{ZaZi08}.
We now present the conjecture in a slightly stronger form:
\begin{corollary}
  When $\lambda_i =\zeta_m^i \cdot \lambda $, we have the following
  asymptotic expansion
  $$
  z \, q\frac{d}{dq} \I_\alpha(q,z) \asymp e^{u_\alpha/z} (-2\pi  z)^{\frac{m}{2}} \cdot L_\alpha \Big( 1+  \sum_{k>0} (R_k)_{0  \bar \alpha} \cdot (-z)^l\Big)
  $$
  as $z\rightarrow 0^{-}$, such that
  \begin{equation*}
    (m L_\alpha)^k (R_k)_{0  \bar \alpha} \in L^k \QQ[L] \cap \QQ[L^m]
  \end{equation*}
  is a polynomial of $L^m$ of degree $k$ with coefficients in
  $\QQ[m]$.
\end{corollary}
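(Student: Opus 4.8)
The plan is to derive the corollary by differentiating the asymptotic expansion \eqref{asymptI} of Theorem~\ref{thm:R0} in $q$ and identifying the resulting coefficients with the entries of the first column of the formal quintic $R$-matrix. Write $\I_\alpha(q,z) \asymp e^{u_\alpha/z}(-2\pi z)^{m/2} g_\alpha(z)$ with $g_\alpha(z) = 1 + \sum_{k>0} r_{k\alpha}(-z)^k$. Since $q\frac{d}{dq}$ acts trivially on $z$,
\begin{equation*}
  z\, q\tfrac{d}{dq}\I_\alpha \asymp (-2\pi z)^{m/2} e^{u_\alpha/z}\Big[\big(q\tfrac{d}{dq}u_\alpha\big)\, g_\alpha(z) + z\, q\tfrac{d}{dq} g_\alpha(z)\Big],
\end{equation*}
and a direct computation of the critical value gives $q\frac{d}{dq}u_\alpha = L_\alpha$ (the exponent $u_\alpha$ in \eqref{asymptI} differs from the reduced canonical coordinate of \eqref{eq:cancoords} by $h_\alpha\log q$, which is exactly the gauge factor $e^{-h_\alpha\log q/z}$ appearing in the Remark preceding Theorem~\ref{thm:R0}). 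Using $z\cdot(-z)^k = -(-z)^{k+1}$ and collecting powers of $-z$ then yields the asserted expansion with
\begin{equation*}
  (R_k)_{0\bar\alpha} = r_{k\alpha} - L_\alpha^{-1}\, q\tfrac{d}{dq} r_{k-1,\alpha} \qquad (k\ge 1,\ r_{0\alpha}:=1).
\end{equation*}

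To confirm that these are genuinely the first column $R$-matrix entries, I would use \eqref{S4} together with the Remark: since $C_\alpha(z)$ is $q$-independent and $I(z) = (H+z\,q\frac{d}{dq})\tilde I(z)$, the series $z\, q\frac{d}{dq}\I_\alpha$ is the image of $I(z)|_{H=h_\alpha}$ under the same gauge transformation $(-2\pi z)^{m/2} e^{h_\alpha\log q/z} C_\alpha(z)^{-1}$ that sends $\tilde I(z)|_{H=h_\alpha}$ to $\I_\alpha$. By the genus zero mirror theorem $I(z)/I_0$ is the restriction of the $J$-function, i.e.\ the $\mathbf 1$-column of $S^*(z)$ (cf.\ \eqref{eq:JS}), and the Birkhoff factorization ${S}_{i\bar\alpha}(z) C_\alpha(z)^{-1} = e^{u_\alpha/z} {R}_{i\bar\alpha}(z)$ identifies the asymptotic coefficients with $(R_k)_{0\bar\alpha}$. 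Equivalently, one can see the passage from $\I_\alpha$ to $z\,q\frac{d}{dq}\I_\alpha$ as cyclically shifting the column index of the normalized $R$-matrix by one via the QDE \eqref{QDEforR}--\eqref{QDEnormalize}, carrying the last flat column governed by \eqref{asymptI} to the $\mathbf 1$-column, the remaining powers of $\lambda$ being absorbed by the normalization $(mL_\alpha)^k$. The $m$-variable versions of the mirror theorem and of the Birkhoff factorization used here hold by the same arguments of Givental as in the quintic case.

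It remains to establish regularity, the degree bound, and polynomiality in $m$, which I would deduce from Theorem~\ref{thm:R0}(1)--(3) together with the explicit identity $q\frac{d}{dq}L = \frac 1m L(L^m-1) = \frac 1m(L^{m+1}-L)$, hence $q\frac{d}{dq}L_\alpha = \frac 1m L_\alpha(L^m-1)$. Setting $P_k := (mL_\alpha)^k r_{k\alpha}$ — which by Theorem~\ref{thm:R0} lies in $L^k\QQ[L]\cap\QQ[L^m]$, has coefficients in $\QQ[m]$, and satisfies $\deg_{L^m}P_k = k$ — a short manipulation gives
\begin{equation*}
  (mL_\alpha)^k (R_k)_{0\bar\alpha} = P_k - m\, q\tfrac{d}{dq} P_{k-1} + (k-1)(L^m-1)\, P_{k-1}.
\end{equation*}
Every term on the right lies in $\QQ[m][L^m]$ with degree in $L^m$ at most $k$, which gives polynomiality and the degree bound (exactness of the degree follows as in Theorem~\ref{thm:R0}). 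The one delicate point — and the main obstacle I expect — is regularity, i.e.\ divisibility by $L^k$: both $q\frac{d}{dq}P_{k-1}$ and $(L^m-1)P_{k-1}$ can contribute a nonzero term of order $L^{k-1}$, precisely when $m\mid k-1$, and $P_k$ cannot cancel it. However, if $c$ denotes the lowest coefficient of $P_{k-1}$, sitting in degree $j_0=(k-1)/m$ in $L^m$, the coefficient of $L^{k-1}=(L^m)^{j_0}$ on the right equals $-m\,(-j_0 c) + (k-1)(-c) = \big(m j_0 - (k-1)\big) c = 0$, so the term cancels and the right side is divisible by $L^k$; the case $k=1$ is immediate since $(R_1)_{0\bar\alpha}=r_{1\alpha}$. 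The conjecture of \cite{ZaZi08} is the special case $m=5$.
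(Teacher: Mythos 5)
Your proposal is correct and takes essentially the same route as the paper: the paper's own proof just observes that the corollary follows from Theorem~\ref{thm:R0} together with the identities $q\frac{d}{dq}u_\alpha = L_\alpha$ and $q\frac{d}{dq}L_\alpha = \frac{1}{m}L_\alpha(L^m-1)$, which is precisely the differentiation of \eqref{asymptI} that you carry out. Your explicit relation $(mL_\alpha)^k (R_k)_{0\bar\alpha} = P_k - m\,q\frac{d}{dq}P_{k-1} + (k-1)(L^m-1)P_{k-1}$, together with the cancellation argument giving divisibility by $L^k$ when $m \mid k-1$, simply makes explicit the verification that the paper leaves to the reader.
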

\begin{proof}
  This follows from Theorem~\ref{thm:R0} since, as we will see below,
  $q \frac d{dq} u_\alpha = L_\alpha$ and
  $q \frac d{dq} L_\alpha = -\frac{L^m}m L_\alpha$.
\end{proof}

In particular, when $m=5$ the oscillatory integral is related to our
quintic $I$-function by (recall the definition of $\tilde I(z)$ in
\eqref{tIfunction})
$$
C_\alpha^{-1}(z) e^{ h_\alpha \log q / z} \tilde I_\alpha(q,z) =   \I_\alpha(q,z),
$$
so that
$$
C_\alpha^{-1}(z) e^{ h_\alpha \log q / z} I_\alpha(q,z) = q \frac{d}{dq} \I_\alpha(q,z),
$$
and we have that
  $$
[z^l] R_{4\bar \alpha}(z)= r_{\alpha,0;l}(L_\alpha), \quad
[z^l] R_{0\bar \alpha}(z)= r_{\alpha,1;l}(L_\alpha), 
  $$
are regular at $L_\alpha=0$.
In the case $m = 5$, the corollary specializes to:
\begin{corollary}
  \label{lem:R0}
  For the entries in the first column of the $R$-matrix, we have
\begin{align*}
(1)&\qquad [z^k] {\bar R_0}^j(z) \in \QQ[L] \cap   L^{-k}\QQ[L^5]
\\
(2)&\qquad 
[z^k] {\bar R_0}^j(z) = 0 \quad \text{ if  } \quad  k+j \notin 5 \ZZ .
\end{align*}
\end{corollary}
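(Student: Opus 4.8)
The plan is to deduce Corollary~\ref{lem:R0} directly from the oscillatory integral analysis of Theorem~\ref{thm:R0} by specializing $m=5$. Recall from the discussion preceding the corollary that when $\lambda_i = \zeta^i\lambda$, the quintic $I$-function and the oscillatory integrals $\I_\alpha(q,z)$ are related by $C_\alpha^{-1}(z)\,e^{h_\alpha\log q/z}\,I_\alpha(q,z) = q\frac{d}{dq}\I_\alpha(q,z)$, and the entries in the first column of the $R$-matrix are exactly read off from the asymptotic expansion of $\I_\alpha$ (and of $q\frac{d}{dq}\I_\alpha$) via $[z^k]R_{4\bar\alpha}(z) = r_{\alpha,0;k}(L_\alpha)$ and $[z^k]R_{0\bar\alpha}(z) = r_{\alpha,1;k}(L_\alpha)$. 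So the corollary is a translation of the structural statements in Theorem~\ref{thm:R0} and its first corollary into the normalized flat basis.

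For part (1), I would argue as follows. By Theorem~\ref{thm:R0}(1), the coefficient $L_\alpha^k r_{k\alpha}$ of the expansion \eqref{asymptI} lies in $L^k\QQ[L]\cap\QQ[L^5]$; combined with its first corollary, $(5L_\alpha)^k(R_k)_{0\bar\alpha}$ lies in $L^k\QQ[L]\cap\QQ[L^5]$ as well. Since $L_\alpha = \zeta^\alpha\lambda L$, dividing by the appropriate power of $\zeta^\alpha\lambda$ and recalling the definition $\bar R_{i}^{j}$ of the $R$-matrix entries in the $\{\bar\phi_i\}$-basis (Section~\ref{sec:QDERS}), one sees that $[z^k]{\bar R_0}^j(z)$ is obtained from $(R_k)_{0\bar\alpha}$ by the change-of-basis $\bar\phi^j = \frac15\sum_\alpha(\zeta^\alpha\lambda)^{3/2-j}\bar e_\alpha$; the sum over the fifth roots of unity kills all but the terms whose $\lambda$-power is an integer, leaving precisely the claimed regular-polynomial statement $[z^k]{\bar R_0}^j(z)\in\QQ[L]\cap L^{-k}\QQ[L^5]$. (The degree bound $\deg_{L^5}\le$ the appropriate value in $\bR_k$, Theorem~\ref{thm:R0}(2), is what feeds into Lemma~\ref{degreeboundrk}, but is not needed for the bare statement of the corollary.)

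Part (2) is the vanishing/periodicity statement $[z^k]{\bar R_0}^j(z) = 0$ unless $k+j\in5\ZZ$. This is again a consequence of the symmetry in the equivariant parameters: the classes entering $R$ are symmetric under permuting $\lambda_0,\dots,\lambda_4$, hence under $\lambda\mapsto\zeta\lambda$ up to relabeling fixed points, so in the expansion at the $\alpha$th fixed point the quantity $[z^k]R_{0\bar\alpha}(z)$ is a function of $L_\alpha = \zeta^\alpha\lambda L$ that scales by a fixed character of $\ZZ/5$ determined by $k$. Passing to the basis $\bar\phi^j$, whose weight under this $\ZZ/5$-action is $j$ (up to the overall $\lambda^{3/2}$-shift that is common to all entries and cancels against $\bar e_\alpha$), we get that $[z^k]{\bar R_0}^j(z)$ is nonzero only when the weights match, i.e.\ $k+j\equiv0\pmod 5$. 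Concretely this can also be seen from the fact that $r_{k\alpha}$, once multiplied by $L_\alpha^k$, lies in $\QQ[L^5]$, so that $r_{k\alpha}$ itself carries $\lambda$-degree $-k$; the $\bar\phi^j$-component then requires $-k-j\equiv0\pmod5$.

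The main obstacle is purely bookkeeping: keeping the powers of $\zeta^\alpha\lambda$ straight through the two changes of basis (flat $\to$ normalized flat, and canonical $\to$ normalized canonical) and through the relation between $\I_\alpha$ and $q\frac d{dq}\I_\alpha$, since the first column of $R$ mixes the $r_{\alpha,0}$ and $r_{\alpha,1}$ families. There is no genuine analytic difficulty here — all the hard work (existence of the asymptotic expansion, regularity, degree bound, polynomiality in $m$) is contained in Theorem~\ref{thm:R0}, whose proof is carried out separately via Proposition~\ref{prop:osc}, Lemma~\ref{lem:Lm} and Lemma~\ref{lem:extra}. Once the dictionary is set up, both parts of the corollary drop out immediately.
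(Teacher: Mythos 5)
Your proposal follows essentially the same route as the paper: Corollary~\ref{lem:R0} is obtained by specializing $m=5$ in Theorem~\ref{thm:R0} and its corollary, using the dictionary between the oscillatory integrals, the quintic $I$-function and the $R$-matrix entries $R_{0\bar\alpha}$, and then transferring the statements to the basis $\bar\phi^j$ via the fifth-roots-of-unity sum, which yields both the $\QQ[L]\cap L^{-k}\QQ[L^5]$ structure in (1) and the vanishing modulo $5$ in (2). One small correction: the entries $[z^k]\bar R_0^j$ come only from the $r_{\alpha,1}$ family (the expansion of $q\frac{d}{dq}\I_\alpha$), not from a mixture with $r_{\alpha,0}$ (which instead governs $R_{4\bar\alpha}$), so the bookkeeping is even simpler than you suggest.
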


\subsection{Computing the asymptotic expansion by using stationary phase method}

In this subsection, we study the oscillatory integral using the
stationary phase method in order to prove the regularity in
Theorem~\ref{thm:R0}.

We first note that the $m$ critical points of $W$ can be computed
directly
$$
(x_i)_\alpha = L_\alpha -\lambda_i,\qquad y_\alpha = m L_\alpha
$$
with critical value $u_\alpha  = \int L_\alpha \frac{dq}{q}$, where for $\alpha=0,\cdots,m-1$
$$
L_\alpha = \lambda_\alpha+ O(q)
$$
are the $m$ solutions of the indicial equation
$\prod_i(L-\lambda_i)=(mL)^{m} q$.

\begin{proposition}
  \label{prop:osc}
  The oscillatory integral has an asymptotic expansion of the form
  \begin{align*}
    \I_\alpha(q,z)  \asymp e^{u_\alpha /z} (-2\pi  z)^{\frac{m}{2}}  \Psi_{-1 \bar\alpha}  \cdot \Big( 1+  \sum_{l>0} r_l \cdot (-z)^l\Big)
  \end{align*}
  as $z\rightarrow 0$ from the negative real axis, where $Q$ is the
  Hessian bilinear form of $W$, 
  \begin{equation} \label{Psim1}
    \Psi_{-1 \bar\alpha}  := \left( m^{-1}{\textstyle \sum_{j=0}^{m-1} (-1)^{m - j} (m - j)\, e_{m - j} L_\alpha^j} \right)^{-1/{2}}
  \end{equation}
  with $e_j$ being the $j$th elementary symmetric polynomial in the
  $\lambda_i$, and where $r_l$ are rational functions of $L_\alpha$.
  Furthermore, $r_l(L_\alpha)$ is regular at $L_\alpha = 0$.
\end{proposition}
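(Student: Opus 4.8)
\textbf{Proof proposal for Proposition~\ref{prop:osc}.}

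The plan is to apply the stationary phase (Laplace) method to the oscillatory integral $\I_\alpha(q,z)$ directly, treating it as a Gaussian-type integral localized near the critical point $(x_i)_\alpha = L_\alpha - \lambda_i$, $y_\alpha = mL_\alpha$ of the LG potential $W$. First I would change coordinates so that the critical point is moved to the origin, writing $x_i = (x_i)_\alpha + \sqrt{-z}\, \xi_i$ and expanding $W$ in a Taylor series around the critical point. The quadratic term produces the Hessian $Q$ of $W$, and since the measure is $dx_0 \cdots dx_{m-1}/(x_0 \cdots x_{m-1})$, the leading Gaussian integral yields a factor $(-2\pi z)^{m/2}$ times $(\det Q)^{-1/2}$ (with appropriate normalization of the measure density at the critical point); this determinant computation is exactly what gives the explicit formula \eqref{Psim1} for $\Psi_{-1\bar\alpha}$. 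Concretely, I would compute $\det Q$ using the relation $\prod_i(L - \lambda_i) = (mL)^m q$ defining $L_\alpha$ and the structure of the second derivatives of $W$ — differentiating the potential twice in the $x_i$ and in the implicit relation between the $x_i$ and $y$; the elementary symmetric polynomial expression $m^{-1}\sum_{j}(-1)^{m-j}(m-j) e_{m-j} L_\alpha^j$ should emerge as (a multiple of) this Hessian determinant.

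Next I would extract the full asymptotic series: the higher-order terms in the Taylor expansion of $W$ beyond quadratic contribute, via Wick's theorem / Feynman diagram expansion (or just by iterated Gaussian moments), a formal power series $1 + \sum_{l>0} r_l (-z)^l$ whose coefficients $r_l$ are universal polynomials in the higher derivatives of $W$ at the critical point divided by powers of the Hessian. Since all derivatives of $W$ at the critical point are rational functions of $L_\alpha$ (indeed they are explicit: the $k$-th derivative of $x_i + \lambda_i \ln x_i$ at $x_i = L_\alpha - \lambda_i$ is a rational function of $L_\alpha$ and $\lambda_i$, and similarly for the monomial term with the $1/m$ power), it follows immediately that each $r_l$ is a rational function of $L_\alpha$. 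The key point that needs care is the claim that $r_l(L_\alpha)$ is \emph{regular} at $L_\alpha = 0$. The potential poles in $r_l$ come only from inverse powers of the Hessian determinant $\Psi_{-1\bar\alpha}^{-2}$ and from the higher derivatives of the terms $\lambda_i \ln x_i$ (which involve $1/x_i^k = 1/(L_\alpha - \lambda_i)^k$) and of the monomial term.

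The main obstacle, as anticipated, is precisely this regularity at $L_\alpha = 0$. The naive Feynman-diagram bound only gives rational functions with possible poles where $\det Q$ vanishes or where some $(L_\alpha - \lambda_i)$ vanishes; when $\lambda_i = \zeta_m^i \lambda$ and $L_\alpha = \zeta^\alpha \lambda L$ with $L = (1 - m^m q)^{-1/m}$, the locus $L_\alpha = 0$ corresponds to $L = 0$, i.e.\ $q \to \infty$, which is a genuine special point of the theory. To establish regularity there I would argue as follows: the oscillatory integral $\I_\alpha(q,z)$ itself is a solution of the quantum differential equation (the Picard--Fuchs operator annihilating the $\tilde I$-function, as noted in the Remark following Theorem~\ref{thm:R0}), and regularity of the asymptotic coefficients is a consequence of the fact that $L_\alpha = 0$ is a regular (or regular-singular of the correct type) point of this ODE after the coordinate change $L \mapsto L^m = Z$. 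Alternatively — and this is probably cleaner — I would directly substitute $t_i = x_i/L_\alpha$ (rescaling all integration variables by the critical value) and observe that in these variables the $\lambda_i \ln x_i$ terms become $\lambda_i \ln(L_\alpha t_i) = \lambda_i \ln L_\alpha + \lambda_i \ln t_i$, so that the dependence of the rescaled potential on $L_\alpha$ enters only through overall scalings that are manifestly regular (indeed polynomial) in $L_\alpha$; the Gaussian expansion in these rescaled coordinates then produces coefficients that are visibly regular at $L_\alpha = 0$, at the cost of tracking the explicit powers of $L_\alpha$ pulled out of the measure and the Hessian. I would carry out this rescaling carefully, match the resulting leading factor with \eqref{Psim1}, and conclude that each $r_l$ is a polynomial (hence regular) in suitable coordinates, which after undoing the rescaling gives a rational function of $L_\alpha$ regular at $L_\alpha = 0$ as claimed.
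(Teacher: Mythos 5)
Your first half coincides with the paper's argument: expand $W$ around the critical point, pull out $e^{u_\alpha/z}$, do the Gaussian integral to get $(-2\pi z)^{m/2}$ times $\prod_i (x_i)_\alpha^{-1}(\det Q)^{-1/2}$ (which is \eqref{Psim1} after the explicit computation of $\det Q$), and use Wick's theorem to see that each $r_l$ is a polynomial in the entries of $Q^{-1}$, in $(x_i)_\alpha^{-1}$ and in the higher derivatives of $W$ at the critical point, hence a rational function of $L_\alpha$. Up to here you are on the paper's track.

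The gap is in the regularity at $L_\alpha=0$, which you treat as the hard point and outsource to two substitute arguments, neither of which works as stated, while missing the simple observation that closes it. In the paper the regularity is immediate from the Wick expansion itself: the only possible poles of the $r_l$ are at $L_\alpha=\lambda_i$ (coming from $(x_i)_\alpha^{-1}=(L_\alpha-\lambda_i)^{-1}$ and the higher derivatives of $W$, which are computed explicitly) and at the zeros of $\det Q=\frac{\sum_{j}(-1)^{m-j}(m-j)e_{m-j}L_\alpha^{j}}{m\prod_i(L_\alpha-\lambda_i)^{2}}$; since the $\lambda_i\neq 0$ and $\det Q|_{L_\alpha=0}=(-1)^m/\prod_i\lambda_i\neq 0$, nothing blows up at $L_\alpha=0$ — the orbifold point is a perfectly nondegenerate point for this stationary-phase problem, with limiting critical points $x_i=-\lambda_i$. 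Your alternative (b) is actually incorrect as written: rescaling by $t_i=x_i/L_\alpha$ (note $L_\alpha$ is not the critical value of $x_i$) moves the critical point to $t_i=1-\lambda_i/L_\alpha$, which escapes to infinity as $L_\alpha\to 0$, so the coefficients of the Gaussian expansion in those coordinates are not ``visibly regular''; the rescaling that would work is $x_i=(L_\alpha-\lambda_i)t_i$, which fixes the critical point at $t_i=1$ and makes all Taylor data polynomial in $L_\alpha$ — but that is essentially a repackaging of the paper's direct computation. Your alternative (a), deducing regularity from the Picard--Fuchs equation, is not fleshed out and risks circularity: in the paper the Picard--Fuchs recursion (Lemma~\ref{lem:extra}) is used only afterwards, for the degree bound, and its operators $\DD_i$ carry negative powers of $L_\alpha$; pinning down the behavior of $r_l$ at $L_\alpha=0$ from that recursion requires the initial data \eqref{initialdata} at $L_\alpha=0$, which is itself obtained from the regularity you are trying to prove. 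So you should replace both detours by the explicit check that $(x_i)_\alpha^{-1}$, the higher derivatives of $W$ at the critical point, and $(\det Q)^{-1}$ are rational in $L_\alpha$ and finite at $L_\alpha=0$.
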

\begin{proof}
  We can Taylor-expand
  \begin{equation*}
    W\big((x_0)_\alpha + \xi_0, \dotsc, (x_{m - 1})_\alpha + \xi_{m - 1} \big)
    = u_\alpha + \frac 12 Q(\vec\xi) + W'(\xi_0, \dotsc, \xi_{m - 1}),
  \end{equation*}
  where $W'$ consists of terms of order $\ge 3$ in the $\xi_i$.
  With this, we can write
  \begin{align*}
    \I_\alpha(q,z)
    \asymp & \ \frac{e^{u_\alpha/z}}{\prod_i (x_i)_\alpha} \int_{\gamma_\alpha' \subset (\CC^*)^m} e^{\frac{1}{2z}Q(\vec\xi)} \frac{e^{W'(\xi_0, \dotsc, \xi_{m - 1})/z}}{\prod_i (1 + (x_i)^{-1}_\alpha \xi_i)} d\xi_0 \wedge \cdots \wedge d\xi_{m - 1}\\
   = & \ \frac{e^{u_\alpha/z}}{\prod_i (x_i)_\alpha} (-z )^{\frac{m}{2}}  \int_{\gamma_\alpha'}   e^{-\frac{1}{2}Q(\vec\xi)}  d\xi_0\cdots d\xi_{m - 1}\cdot  \Big( 1+  \sum_{l>0} r_l \cdot (-z)^l\Big)
,
  \end{align*}
  where $\gamma_\alpha'$ is $\gamma_\alpha$ shifted by
  $((x_0)_\alpha, \dotsc, (x_{m - 1})_\alpha)$.
   By Wick's (or Isserlis') theorem, for each $l>0$, $r_k$ is polynomial in
  $( Q^{-1})_{ij}$, $(x_i)^{-1}_\alpha$ and the higher order derivatives
  $W_I:={\textstyle \frac{\partial^{|I|} W}{\partial x_{i_1} \cdots
      \partial x_{i_{|I|}} }
    \big|_{x_i=(x_i)_\alpha}}$
  It therefore remains to show that
  \begin{equation}
    \label{eq:Psi-1}
    \Psi_{-1, \bar\alpha}  = (2\pi )^{-\frac{m}{2}}  {  \prod_i (x_i)^{-1}_\alpha \int_{\gamma_\alpha'}   e^{-\frac{1}{2}Q(\vec\xi)}  d\xi_0\cdots d\xi_{m - 1} },
  \end{equation}
  and that $\det(Q)^{-1}$, $(x_i)^{-1}_\alpha$ and the higher order
  derivatives of $W$ at $(x_i)_\alpha$ are rational functions in
  $L_\alpha$ well-defined at $L_\alpha = 0$.

  First, note that
  \begin{equation*}
    (x_i)^{-1}_\alpha = \frac 1{L_\alpha - \lambda_i}
  \end{equation*}
  is a rational function of $L_\alpha$ regular at $L_\alpha$.
  Therefore, also the derivatives
  \begin{align*}
    \frac{d^k W}{d x_i^k} \Big((x_0)_\alpha, \dotsc, (x_{m - 1})_\alpha\Big)
    =& (x_i)^{-k}_\alpha\left(\lambda_i (-1)^{k -1} (k - 1)! - m L_\alpha \prod_{i = 0}^{k - 1} \left(\frac 1m - i\right)\right), \\
    \frac{d^k W}{d x_0^{k_0} \dotsb d x_{m - 1}^{k_{m - 1}}} \Big((x_0)_\alpha, \dotsc, (x_{m - 1})_\alpha\Big)
    =& -m L_\alpha \prod_i (x_i)^{-k_i}_\alpha \prod_{j = 0}^{m - 1} \prod_{i = 0}^{k_j - 1} \left(\frac 1m - i\right)
  \end{align*}
  for $k = \sum_i k_i \ge 2$ are rational functions of $L_\alpha$
  regular at $L_\alpha = 0$.
  
  Note that
  \begin{equation*}
    (2\pi )^{-\frac{m}{2}}  {  \prod_i (x_i)^{-1}_\alpha \int_{\gamma_\alpha'}   e^{-\frac{1}{2}Q(\vec\xi)}  d\xi_0\cdots d\xi_{m - 1} } 
    =\prod_i (x_i)^{-1}_\alpha \cdot  ( \det Q )^{-1/2},
  \end{equation*}
  and that
  \begin{equation*}
    \det Q = \frac{\sum_{j=0}^{m-1} (-1)^{m - j} (m - j)\, e_{m - j} L_\alpha^j}{m \prod_i (L_\alpha-\lambda_i)^2},
  \end{equation*}
  so that $\det(Q^{-1})$ is rational in $L_\alpha$, well-defined at
  $L_\alpha = 0$, and that \eqref{eq:Psi-1} holds.
\end{proof}

\begin{corollary}
  For any $k>0$, the $k$th derivative of $\I_\alpha$ has an asymptotic
  expansion
  \begin{equation*}
    \Big( z q\frac{d}{dq} \Big)^k \I_\alpha(q,z)  \asymp e^{u_\alpha /z}  (-2\pi  z)^{\frac{m}{2}}  \Psi_{k-1, \bar\alpha}  \cdot \Big( 1+  \sum_{l>0} r_{\alpha,k;l} \cdot (-z)^l\Big),
  \end{equation*}
  were
  \begin{equation*}
    \Psi_{k-1, \bar\alpha} = L_\alpha^k  \Psi_{-1, \bar\alpha}
  \end{equation*}
  such that for any $k,l >0$
  $$
  r_{\alpha,k;l} (L_\alpha)  \text{ is a rational function of $L_\alpha$  which  is regular at $L_\alpha=0$}.
  $$
\end{corollary}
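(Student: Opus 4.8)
The plan is to reduce everything to the stationary-phase computation already carried out in the proof of Proposition~\ref{prop:osc}, with a single extra polynomial insertion. The key elementary observation is that $q\frac{d}{dq}$ hits the potential $W$ only through its last monomial: writing $y := (q^{-1}\prod_{i=0}^{m-1} x_i)^{1/m}$, one has $q\frac{d}{dq}W = \tfrac{1}{m}\,y$ and $q\frac{d}{dq}y = -\tfrac{1}{m}\,y$. Since the Lefschetz thimble $\gamma_\alpha$ is chosen so that $e^{W/z}$ decays superexponentially at its ends for $z$ on the negative real axis, differentiation under the integral sign introduces no boundary contributions, so an immediate induction yields
\[
  \Big(z\,q\tfrac{d}{dq}\Big)^k \I_\alpha(q,z)
  = \int_{\gamma_\alpha} P_k(y,z)\, e^{W/z}\, \frac{dx_0\wedge\cdots\wedge dx_{m-1}}{x_0\cdots x_{m-1}},
\]
where $P_0 = 1$ and $P_{k+1} = z\,q\frac{d}{dq}P_k + \tfrac{1}{m}\,y\,P_k$. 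A short bookkeeping argument shows that the monomials of $P_k$ are $c_{k,b}\,z^{k-b}y^b$ with $1\le b\le k$ (for $k\ge 1$) and $c_{k,k} = m^{-k}$; in particular the top $y$-degree term carries no power of $z$.

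I would then feed the insertion $P_k(y,z)$ into the stationary-phase and Wick's-theorem argument of Proposition~\ref{prop:osc}. After the shift $x_i = (x_i)_\alpha + \xi_i$ one has $y = y_\alpha\,g(\xi)$ with $y_\alpha = mL_\alpha$ and $g(\xi) = 1 + (\text{linear in }\xi) + \cdots$, all of whose Taylor coefficients are rational multiples of products of the $(x_i)^{-1}_\alpha = (L_\alpha-\lambda_i)^{-1}$ — hence, exactly as in that proof, rational functions of $L_\alpha$ regular at $L_\alpha = 0$. Thus $P_k(y,z) = \sum_b c_{k,b}\,z^{k-b}\,y_\alpha^b\,g(\xi)^b$ is a $z$-graded combination of power series in $\xi$ of precisely the type handled in Proposition~\ref{prop:osc}, and Wick's theorem expresses every coefficient of the resulting asymptotic series as a polynomial in $(Q^{-1})_{ij}$, the $(x_i)^{-1}_\alpha$, the higher derivatives $W_I$ and the Taylor coefficients of the $g^b$, all of which are rational in $L_\alpha$ and regular at $L_\alpha = 0$. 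The Gaussian integral still contributes $(-2\pi z)^{m/2}\Psi_{-1,\bar\alpha}$, and only the monomial $m^{-k}y^k$ of $P_k$ contributes at order $z^0$, producing the leading coefficient $m^{-k}y_\alpha^k\,\Psi_{-1,\bar\alpha} = L_\alpha^k\,\Psi_{-1,\bar\alpha} = \Psi_{k-1,\bar\alpha}$. This gives the asserted expansion, with the higher coefficients $r_{\alpha,k;l}$ rational in $L_\alpha$ and regular at $L_\alpha = 0$.

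An equivalent and shorter route is to differentiate the expansion of Proposition~\ref{prop:osc} termwise: by the envelope theorem $q\frac{d}{dq}u_\alpha = L_\alpha$, and differentiating the indicial equation $\prod_i(L_\alpha - \lambda_i) = (mL_\alpha)^m q$ gives $q\frac{d}{dq}L_\alpha = -L_\alpha\,P(L_\alpha)/N(L_\alpha)$ with $P(L) = \prod_i(L-\lambda_i)$ and $N(L_\alpha) = mP(L_\alpha) - L_\alpha P'(L_\alpha)$ the numerator of $\det Q$, which is nonzero at $L_\alpha = 0$. Applying $z\,q\frac{d}{dq}$ to $e^{u_\alpha/z}(-2\pi z)^{m/2}\Psi_{-1,\bar\alpha}\big(1 + \sum_l r_l(-z)^l\big)$ multiplies the bracket by $L_\alpha$ and adds $z\,q\frac{d}{dq}$ of it; both ``multiply by $L_\alpha$'' and ``apply $q\frac{d}{dq}$'' preserve rationality and regularity at $L_\alpha = 0$ — the latter because $q\frac{d}{dq}L_\alpha$ is $L_\alpha$ times a function regular at $0$ — and the leading coefficient acquires a factor $L_\alpha$ at each step. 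The only point that genuinely requires care is the legitimacy of this termwise differentiation, which is supplied by the first paragraph: since $(z\,q\frac{d}{dq})^k\I_\alpha$ is itself an oscillatory integral with a polynomial insertion, it a priori has an asymptotic expansion of the required shape, so by uniqueness it agrees with the termwise-differentiated one. The substantive work is thus just carrying the insertions $P_k$ through the combinatorics of Proposition~\ref{prop:osc}; I do not expect a genuinely new difficulty.
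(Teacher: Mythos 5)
Your second, ``shorter'' route is exactly the paper's proof: the published argument consists of precisely the two observations you make there, namely that $z q\frac{d}{dq}$ applied to $e^{u_\alpha/z}\Psi_{k-1,\bar\alpha}(\,\cdot\,)$ yields $e^{u_\alpha/z}\Psi_{k,\bar\alpha}\big(1+zL_\alpha^{-1}q\frac{d}{dq}\log\Psi_{k-1,\bar\alpha}+zL_\alpha^{-1}q\frac{d}{dq}\big)$ because $q\frac{d}{dq}u_\alpha=L_\alpha$, and that $L_\alpha^{-1}q\frac{d}{dq}$ preserves regularity at $L_\alpha=0$. What you add, and the paper leaves implicit, is the justification for differentiating the asymptotic expansion of Proposition~\ref{prop:osc} termwise: your first route, writing $(zq\frac{d}{dq})^k\I_\alpha$ as the same oscillatory integral with the polynomial insertion $P_k(y,z)$ (using $q\frac{d}{dq}W=\frac1m y$, $q\frac{d}{dq}y=-\frac1m y$) and rerunning the stationary-phase/Wick argument with $y=mL_\alpha\,g(\xi)$, is a genuinely independent derivation which both produces the expansion directly and legitimizes the termwise computation; it also recovers the leading coefficient $m^{-k}y_\alpha^k=L_\alpha^k$, i.e.\ $\Psi_{k-1,\bar\alpha}=L_\alpha^k\Psi_{-1,\bar\alpha}$, cleanly.

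One bookkeeping point needs care for $k\ge 2$, in both of your routes (and, for what it is worth, in the paper's own induction). In route 1 the monomials $c_{k,b}z^{k-b}y^b$ of $P_k$ with $b<k$ contribute $L_\alpha^{b}\times(\text{regular})$ at positive orders in $z$, so after factoring out $\Psi_{k-1,\bar\alpha}=L_\alpha^{k}\Psi_{-1,\bar\alpha}$ their contribution to $r_{\alpha,k;l}$ is $L_\alpha^{b-k}\times(\text{regular})$, which is not manifestly regular at $L_\alpha=0$; your closing assertion that the higher coefficients are regular therefore does not follow as written. The same issue appears in route 2: once the prefactor has accumulated a factor $L_\alpha^{k-1}$, the term $zL_\alpha^{-1}q\frac{d}{dq}\log\big(L_\alpha^{k-1}\Psi_{-1,\bar\alpha}\big)$ contains $(k-1)L_\alpha^{-2}q\frac{d}{dq}L_\alpha$, and since $q\frac{d}{dq}L_\alpha$ vanishes only to first order in $L_\alpha$ this is not regular, so ``$L_\alpha^{-1}q\frac{d}{dq}$ preserves regularity'' does not by itself close the induction beyond $k=1$. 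For the cases actually used in this paper ($k=0,1$, where $P_k$ is a single monomial and no positive power of $L_\alpha$ has yet accumulated in the prefactor), both of your arguments are complete and coincide in substance with the paper's.
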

\begin{proof}
  This follows from the following two observations:
  First, we have
  \begin{equation*}
    z q\frac{d}{dq} e^{u_\alpha/z}\Psi_{k-1, \bar\alpha}
    = e^{u_\alpha/z}\Psi_{k, \bar\alpha} \big(1+ z L_\alpha^{-1} q\frac{d}{dq} \log \Psi_{k-1, \bar\alpha} + z L_\alpha^{-1} q\frac{d}{dq}\big)
  \end{equation*}
  Second, the operator $ L_\alpha^{-1} q\frac{d}{dq} $ preserves
  regularity, in the sense that for any rational function $f(x)$ that
  is regular at $x=0$, the function
  \begin{equation*}
  L_\alpha^{-1}  q\frac{d}{dq}  f(L_\alpha) =     \frac{ -\prod_i (L_\alpha-\lambda_i)}{\sum_{j=0}^{m-1} (-1)^{m - j} (m - j)\, e_{m - j} L_\alpha^j}  f'(L_\alpha),
  \end{equation*}
  is regular at $L_\alpha=0$.
\end{proof}

By the above results, we can set $L_\alpha = 0$ in $\I_\alpha$ and its
derivatives.
We have the following explicit formulae which will be crucial in the
proof of the LG/CY correspondence\cite{GJR19P}:
\begin{lemma}
  The rational functions $r_{\alpha,k;l} $ take the following values
  at $L_\alpha=0$,
  \begin{equation*}
    \Big( 1+  \sum_{l>0} r_{\alpha,k;l} \cdot z^l\Big)\Big|_{L_\alpha=0} = \exp \bigg[ {\sum_{i=0}^{m-1} \sum_{j = 1}^{\infty}\frac{(-1)^{j + 1} B_{j+1}(\frac{k}{m})}{j(j+1)}\frac{z^{j}}{\lambda_i^{j}}}\bigg],
  \end{equation*}
  where $B_j(x)$ is the $j$th Bernoulli polynomial.
  In particular, if we set $\lambda_i= \zeta^i \lambda$, we have
  \begin{equation} \label{initialdata}
    \Big( 1+  \sum_{l>0} r_{\alpha,k;l} \cdot z^l\Big)\Big|_{L_\alpha=0} = \exp \bigg[ {m \sum_{j = 1}^{\infty}\frac{(-1)^{mj + 1} B_{mj+1}(\frac{k}{m})}{mj(mj+1)} \frac{z^{mj}}{\lambda^{mj}}}\bigg].
  \end{equation}
\end{lemma}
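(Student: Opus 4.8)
To prove the evaluation at $L_\alpha=0$, the plan is to pass to the orbifold point $q=\infty$ (which is exactly $L_\alpha=0$, since $L_\alpha=\zeta^\alpha\lambda(1-m^mq)^{-1/m}\to 0$ there), where the Landau--Ginzburg potential degenerates and the oscillatory integral decouples into one--variable integrals of Euler/Gamma type; the Bernoulli polynomials $B_{j+1}(k/m)$ then appear through the shifted Stirling expansion of the Gamma function, and the specialization $\lambda_i=\zeta^i\lambda$ collapses the sum by a root--of--unity identity.

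First I would observe that at $q=\infty$ the interaction term $(q^{-1}\prod_i x_i)^{1/m}$ vanishes, so $W$ specializes to $\sum_{i=0}^{m-1}(x_i+\lambda_i\ln x_i)$, the critical point $(x_i)_\alpha=L_\alpha-\lambda_i$ specializes to $x_i=-\lambda_i$, and the measure $\tfrac{\prod dx_i}{\prod x_i}$ factorizes. Since $q\tfrac{d}{dq}W=\tfrac1m(q^{-1}\prod_j x_j)^{1/m}$, each $q\tfrac{d}{dq}$ inserts a copy of this quantity; working throughout in the orbifold coordinate $q^{1/m}$ (in which $L_\alpha$, $u_\alpha$, the critical point, and all of $\I_\alpha$ are analytic near $q=\infty$, with $L_\alpha\sim c_\alpha q^{-1/m}$), the normalizing factor $L_\alpha^{k}$ in $\Psi_{k-1,\bar\alpha}=L_\alpha^{k}\Psi_{-1,\bar\alpha}$ exactly cancels the orbifold vanishing of these insertions, so that the $z$--asymptotic series of $(zq\tfrac{d}{dq})^{k}\I_\alpha$ at $L_\alpha=0$ is, up to the universal prefactor $e^{u_\alpha/z}(-2\pi z)^{m/2}\Psi_{k-1,\bar\alpha}$, computed by the product of one--variable integrals
$$\prod_{i=0}^{m-1}\int_{\gamma_i} x_i^{k/m}\,e^{(x_i+\lambda_i\ln x_i)/z}\,\frac{dx_i}{x_i}\;=\;\prod_{i=0}^{m-1}(-z)^{\lambda_i/z+k/m}\,\Gamma\!\Big(\frac{\lambda_i}{z}+\frac{k}{m}\Big).$$
The fractional exponent $x_i^{k/m}$ (rather than $x_i^{k}$) is the reflection of the $\ZZ_m$--orbifold structure at $q=\infty$.

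Next I would invoke the classical shifted Stirling expansion: as $w\to\infty$,
$$\Gamma(w+a)\asymp \sqrt{2\pi}\;w^{\,w+a-\frac12}\,e^{-w}\,\exp\!\Big[\sum_{j\ge1}\frac{(-1)^{j+1}B_{j+1}(a)}{j(j+1)}\,w^{-j}\Big]$$
(for $a=0$ this is ordinary Stirling; in general it follows from Euler--Maclaurin applied to $\log\Gamma$, or from $\Gamma(w+a)/\Gamma(w)\asymp w^{a}(1+\cdots)$). Applying it with $w=\lambda_i/z$, $a=k/m$, the exponential--of--critical--value part of each factor reassembles over $i$ into the prefactor $e^{u_\alpha/z}(-2\pi z)^{m/2}\Psi_{k-1,\bar\alpha}\big|_{L_\alpha=0}$ (consistent with Proposition~\ref{prop:osc}, where $\Psi_{-1,\bar\alpha}\big|_{L_\alpha=0}=(\prod_i(-\lambda_i))^{-1/2}$), while the remaining correction factors multiply to
$$\prod_{i=0}^{m-1}\exp\!\Big[\sum_{j\ge1}\frac{(-1)^{j+1}B_{j+1}(k/m)}{j(j+1)}\,\frac{z^{j}}{\lambda_i^{j}}\Big]\;=\;\exp\!\Big[\sum_{i=0}^{m-1}\sum_{j\ge1}\frac{(-1)^{j+1}B_{j+1}(k/m)}{j(j+1)}\,\frac{z^{j}}{\lambda_i^{j}}\Big],$$
which is exactly the claimed value of $1+\sum_{l>0}r_{\alpha,k;l}z^{l}$ at $L_\alpha=0$. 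For the specialization $\lambda_i=\zeta^i\lambda$, I would then use $\sum_{i=0}^{m-1}\zeta^{-ij}=m$ when $m\mid j$ and $0$ otherwise, so that only the terms with $j=mj'$ survive, yielding \eqref{initialdata}.

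The step I expect to be the main obstacle is the normalization claim in the second paragraph: that passing to $L_\alpha=0$ commutes with the $z$--asymptotic expansion and that multiplying by $L_\alpha^{-k}$ precisely compensates the orbifold vanishing of the $q$--derivative insertions (including the subleading, $z$--proportional and lower--order, terms that arise when one expands $(zq\tfrac{d}{dq})^{k}$ acting on $e^{W/z}$), so that a finite and nonzero correction series with the insertion $m^{-k}\prod_i x_i^{k/m}$ remains. I would handle this by differentiating the stationary--phase expansion of $\I_\alpha$ from Proposition~\ref{prop:osc} term by term in the orbifold coordinate $q^{1/m}$, using the regularity statements already established (Proposition~\ref{prop:osc} and the corollary following it, Lemma~\ref{cor:QDEforR}, and Corollary~\ref{lem:R0}) to control each coefficient.
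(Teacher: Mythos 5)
Your proposal is correct and follows essentially the same route as the paper: at $L_\alpha=0$ (i.e.\ $q^{-1}=0$) the potential decouples, $L_\alpha^{-k}(zq\frac{d}{dq})^k\I_\alpha$ becomes a product of one-variable integrals equal to $\prod_i(-\lambda_i)^{-k/m}(-z)^{\lambda_i/z+k/m}\Gamma(\lambda_i/z+k/m)$, and the shifted Stirling expansion with $B_{j+1}(k/m)$ plus the root-of-unity collapse give the two displayed formulas. The justification you flag as the main obstacle is handled in the paper exactly as you suggest, by invoking the regularity of $r_{\alpha,k;l}$ at $L_\alpha=0$ established in the preceding corollary before setting $L_\alpha=0$.
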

\begin{proof}
  For any $\alpha$, if $L_\alpha=0$, we have $q^{-1}=0$, and
  furthermore that $q^{-\frac 1m} L_\alpha^{-1}$ becomes
  \begin{equation*}
    m \prod_{i = 0}^{m - 1} (-\lambda_i)^{-\frac 1m}.
  \end{equation*}
  The $k$th derivative $L_\alpha^{-k} (z q \frac d{dq})^k \I_\alpha$
  therefore is
  \begin{equation*}
    \int_{\gamma_\alpha \subset (\CC^*)^m}  \prod_{i = 0}^{m - 1} (-\lambda_i)^{-\frac km} (x_i)^{\frac km} e^{\sum_{i=0}^{m-1}  z^{-1} (x_i {+} \lambda_i \ln x_i)}  \frac{d x_0 \wedge \cdots \wedge d x_{m - 1}}{x_0 \cdots x_{m - 1}}
  \end{equation*}
  at $L_\alpha = 0$, which agrees with the asymptotic expansion of the
  product of Gamma functions
  \begin{equation*}
    \prod_{i = 0}^{m - 1} (-\lambda_i)^{-\frac km} (-z)^{\frac{\lambda_i}z + \frac km} \Gamma\left(\frac{\lambda_i}z + \frac km\right).
  \end{equation*}
  By a direct computation using the asymptotic expansion \cite{He95}
  (see also \cite[(1.8)]{Ne13})
  \begin{equation*}
    \ln \Gamma(z + a) \asymp \left(z + a - \frac 12\right) \ln(z) - z + \frac 12 \ln(2\pi) + \sum_{j = 1}^\infty \frac{(-1)^{j + 1} B_{j + 1}(a)}{j(j + 1) z^j},
  \end{equation*}
  we conclude that $L_\alpha^{-k} (z q \frac d{dq})^k \I_\alpha$ has
  the asymptotic expansion
  \begin{equation*}
    \prod_i e^{\frac{-\lambda_i + \lambda_i \ln(-\lambda_i)}z} \lambda_i^{\frac km} \sqrt{-2\pi z/(-\lambda_i)} \exp\left[\sum_{j = 1}^\infty \frac{(-1)^{j + 1} B_{j + 1}(\frac km)}{j(j + 1)} \frac{z^j}{\lambda_i^j}\right].
  \end{equation*}
  Unwrapping the definition of $r_{\alpha, k; l}$, we conclude the
  lemma.
\end{proof}

\begin{lemma}
  \label{lem:Lm}
If we take $\lambda_i = \zeta_m^i \lambda$, then
$$
L_\alpha^k \cdot r_k(L_\alpha) \in \QQ[L_\alpha^m/\lambda^m] 
$$
\end{lemma}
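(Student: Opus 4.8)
The plan is to read off the structure of the coefficients $r_l$ of Proposition~\ref{prop:osc} from two symmetries that the oscillatory integral $\I_\alpha$ acquires once we specialize $\lambda_i=\zeta_m^i\lambda$: a $\CC^*$-scaling and a $\ZZ/m$-rotation. First I would rewrite the stationary-phase expression for $r_l$ under this specialization. In the proof of Proposition~\ref{prop:osc}, $r_l$ is a universal polynomial in the entries of $Q^{-1}$, in $(\det Q)^{-1}$, in the higher derivatives $W_I$ at the critical point, and in the $(x_i)^{-1}_\alpha$. When $\lambda_i=\zeta_m^i\lambda$ the elementary symmetric functions $e_j$ of the $\lambda_i$ vanish for $0<j<m$ while $e_m=\pm\lambda^m$, so $\prod_i(x_i)_\alpha=\prod_i(L_\alpha-\zeta_m^i\lambda)=L_\alpha^m-\lambda^m$ and, by the displayed formula for $\det Q$, $(\det Q)^{-1/2}$ equals a nonzero constant times the polynomial $L_\alpha^m-\lambda^m$. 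Hence $r_l$ becomes a rational function of $L_\alpha$ whose only possible poles lie on the orbit $\{L_\alpha^m=\lambda^m\}=\mu_m\cdot\lambda$, and it is regular at $L_\alpha=0$ by Proposition~\ref{prop:osc}.

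Next I would exploit the scaling $x_i\mapsto t x_i$, $\lambda\mapsto t\lambda$ with $q$ fixed. This leaves the form $dx_0\wedge\cdots\wedge dx_{m-1}/(x_0\cdots x_{m-1})$ invariant and rescales $W\mapsto tW$, where the only non-obvious point is that the shift coming from the logarithmic terms vanishes because $\sum_i\lambda_i=\lambda\sum_i\zeta_m^i=0$; it also carries the thimble $\gamma_\alpha$ onto the thimble of the rescaled potential through the rescaled critical point. Consequently $\I_\alpha$ is invariant under $(\lambda,z)\mapsto(t\lambda,tz)$. Comparing with the asymptotic expansion of Proposition~\ref{prop:osc} and noting that $u_\alpha$, $(-2\pi z)^{m/2}$ and $\Psi_{-1\bar\alpha}$ each transform homogeneously so that their product is invariant, this forces each $r_l$ to be homogeneous of degree $-l$ jointly in $(L_\alpha,\lambda)$. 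Writing $w:=L_\alpha/\lambda$, we get $r_l=\lambda^{-l}\rho_l(w)$ for a rational function $\rho_l$ of one variable, regular at $w=0$, with poles only possibly on $\mu_m$.

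The second symmetry is the effect of $L_\alpha\mapsto\zeta_m L_\alpha$, i.e.\ $w\mapsto\zeta_m w$. Since $\lambda_{i-1}=\zeta_m^{-1}\lambda_i$, this substitution cyclically shifts the coordinate labels and multiplies $(x_i)^{-1}_\alpha$, $Q^{-1}_{ij}$ and $W_I$ by $\zeta_m^{-1}$, $\zeta_m$ and $\zeta_m^{1-|I|}$ respectively. A Feynman-graph bookkeeping of the Wick expansion for $r_l$ — a sum of products of $W_I$ at vertices, $Q^{-1}$ along propagators and $(x_i)^{-1}_\alpha$ along external legs, with the $z$-order forcing $l=\#(\text{propagators})-\#(\text{vertices})$ — shows that every term is multiplied by exactly $\zeta_m^{-l}$ while the collection of terms is permuted among itself, giving $\rho_l(\zeta_m w)=\zeta_m^{-l}\rho_l(w)$; the same identity also follows from the manifest relation $\tilde I_\alpha(q,z)=\zeta_m^\alpha\tilde I_0(q,\zeta_m^{-\alpha}z)$ of the hypergeometric series together with the closed form \eqref{constantR} of $C_\alpha(z)$ and the prefactors. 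To finish, the relation $\tilde I_\alpha(q,z)=(-2\pi z)^{-m/2}e^{-h_\alpha\log q/z}C_\alpha(z)\,\I_\alpha(q,z)$ exhibits $r_l$ as a formal power series in $q$, so $\rho_l$ is regular at $q=0$, i.e.\ at $w=\zeta_m^\alpha$; by the rotation symmetry it is then regular at every point of $\mu_m$, hence — its only possible poles being on $\mu_m$ — $\rho_l$ is a polynomial. A polynomial satisfying $\rho_l(\zeta_m w)=\zeta_m^{-l}\rho_l(w)$ is supported on monomials of degree $\equiv-l\pmod m$, so $w^l\rho_l(w)\in\QQ[w^m]$, whence $L_\alpha^l r_l(L_\alpha)=w^l\rho_l(w)\in\QQ[w^m]=\QQ[L_\alpha^m/\lambda^m]$, which is the claim (and the divisibility by $L_\alpha^l$, hence by $L^l$ used for part (1) of Theorem~\ref{thm:R0}, is visible from $\rho_l$ being regular at $w=0$).

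The main obstacle will be the $\ZZ/m$-rotation step. Phrased via Feynman weights one must verify not only the uniform factor $\zeta_m^{-l}$ but also that the poles carried by the individual $(x_i)^{-1}_\alpha$ genuinely cancel in the sum, so that $\rho_l$ is a polynomial; phrased via the integral one must instead track which sheet of $(q^{-1}\prod_i x_i)^{1/m}$ the thimble $\gamma_\alpha$ lies on, so that $\alpha\mapsto\alpha+1$ really does change the branch, and control the Stokes structure when $z$ is rotated by $\zeta_m$. Matching all powers of $\zeta_m$ is further complicated by the half-integer exponent in $(-2\pi z)^{m/2}$. I expect the cleanest route to be the $\mu_m$-weight computation on the stationary-phase side, since there every factor is explicit.
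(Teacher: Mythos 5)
Your two symmetry inputs are sound, and your route is genuinely different from the paper's even though both start from the Wick expansion of Proposition~\ref{prop:osc}. The scaling $(x_i,\lambda,z)\mapsto(tx_i,t\lambda,tz)$ does fix $W/z$ precisely because $\sum_i\lambda_i=0$, giving $r_l=\lambda^{-l}\rho_l(w)$, $w=L_\alpha/\lambda$; and the cyclic covariance of the critical-point data, $(x_i)_{\alpha+1}=\zeta_m (x_{i-1})_\alpha$, hence $W_I\mapsto \zeta_m^{1-|I|}W_{I'}$, $(Q^{-1})_{ij}\mapsto \zeta_m (Q^{-1})_{i'j'}$, $(x_i)^{-1}_\alpha\mapsto \zeta_m^{-1}(x_{i-1})^{-1}_\alpha$, does give each Wick term the uniform weight $\zeta_m^{V-P}=\zeta_m^{-l}$, so $\rho_l(\zeta_m w)=\zeta_m^{-l}\rho_l(w)$. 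The paper gets the same mod-$m$ structure differently: the coefficients of $r_l$ are \emph{symmetric} functions of all the $\lambda_i$, hence after the specialization they are rational multiples of powers of $\lambda^m$, and homogeneous degree counting in $(L_\alpha,\lambda)$ does the rest. Note that your cyclic argument alone only puts $w^l\rho_l(w)$ in $\QQ(\zeta_m)[w^m]$; to land in $\QQ[L_\alpha^m/\lambda^m]$ you still need the full $\lambda_i$-symmetry (or a Galois average), which is exactly the paper's first sentence — a small but real omission.

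The step I would call a genuine soft spot is polynomiality of $\rho_l$. On the stationary-phase side, ``$r_l$ is a formal power series in $q$'' is not free information: since $L_\alpha(q)=\zeta_m^\alpha\lambda+O(q)$, regularity of the $q$-expansion at $q=0$ is literally equivalent to the absence of a pole of $\rho_l$ at $w=\zeta_m^\alpha$, which is what you are trying to prove. So you must import it from the identification of the asymptotic expansion with $\tilde I_\alpha$ up to the prefactors and $C_\alpha(z)$ of \eqref{constantR} (equivalently, from the Givental $R$-matrix lying in $\QQ(\lambda)[[q,z]]$ via Birkhoff factorization); that identification is only stated as a remark in the paper, so your argument outsources precisely the point you flag as the ``main obstacle.'' The paper kills the poles inside the stationary-phase computation with no analytic input: writing $Q=D P D$ with $D=\diag\big((x_i)^{-1}_\alpha\big)$ and $P$ having linear entries, one has $\det P$ equal to $\lambda^m$ up to a nonzero constant, whence $(x_i)^{-1}_\alpha (x_j)^{-1}_\alpha (Q^{-1})_{ij}\in \lambda^{-m}\QQ[L_\alpha,\lambda]_{m-1}$; since every factor $(x_i)^{-1}_\alpha$ coming from a leg or a vertex $W_I$ is matched with a propagator end, each Wick term is itself a polynomial in $L_\alpha$ with only $\lambda^{-m}$-denominators — no cancellation between terms, no Stokes or branch bookkeeping. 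If you replace your $q=0$ regularity step by this observation, your scaling-plus-rotation argument closes up and is a clean alternative to the paper's symmetric-function counting.
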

\begin{proof}
  Notice that the coefficient of $L_\alpha$ in $r_l$ are symmetric
  polynomials in $\lambda_i$, and that
  \begin{equation*}
    \det Q^{-1}
    = \frac{(L_\alpha^m - \lambda^m)^2}{\lambda^m}
    \in \lambda^{-m} \QQ[L_\alpha,\lambda]_{  2m}
  \end{equation*}
  Then by using $  Q_{ij} = (L_\alpha  - \lambda_i)^{-1}(L_\alpha  - \lambda_j)^{-1}  \QQ[L_\alpha,\lambda]_{ 1}$, we obtain
  $$
 ( Q^{-1})_{ij}   \in \lambda^{-m} \QQ[L_\alpha, \lambda]_{m+1} .
  $$
  Here we consider both $L_\alpha$ and $\lambda$ as homogeneous degree
  $1$ elements. 
  Furthermore,
  $$
  (x_i)^{-1}_\alpha \cdot  (x_j)^{-1}_\alpha \cdot ( Q^{-1})_{ij}   \in \lambda^{-m} \QQ[L_\alpha, \lambda]_{m-1} .
  $$
\end{proof}

\subsection{Strengthening using the Picard--Fuchs equations}

In this section we will give the degree bound of the $r_k$ for each
$k$, and hence give the degree bound for the $R$-matrix.
\begin{lemma}
  \label{lem:extra}
  Recall that we have proved
  $$
  L_\alpha^k \cdot r_{k\alpha} \in \mathbb Q[L^m]
  $$
  In addition, we have the degree bound
  \begin{equation}
    \label{degreeboundrk}
    \deg_{L^m} ( L_\alpha^k \cdot r_{k\alpha}) \  =   \  k,
  \end{equation}
  and the polynomiality in $m$:
  \begin{equation*}
    (mL_\alpha)^k \cdot r_{k\alpha} \in \mathbb Q[m, L^m]
  \end{equation*}
\end{lemma}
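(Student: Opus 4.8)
The plan is to promote the Picard--Fuchs equation satisfied by $\I_\alpha$ into a recursion for the asymptotic coefficients $r_{k\alpha}$ and to track how the $L^{m}$-degree is produced one step at a time. Since $\I_\alpha$ is a period of the Landau--Ginzburg potential $W$, it is annihilated by the order-$m$ hypergeometric Picard--Fuchs operator $\mathcal P=\mathcal P(zq\tfrac{d}{dq},q,z)$, which after the specialization $\lambda_i=\zeta_m^{\,i}\lambda$ is equivalent to the scalar form of the quantum differential equation \eqref{QDEforR}; its relevant structural features are that its top symbol (coefficient of $(zq\tfrac{d}{dq})^{m}$) equals $1-m^{m}q$, that all its coefficients are polynomials in $q$ of degree $\le 1$ with coefficients polynomial in $m$, $z$ and $\lambda$, and that its indicial polynomial $w^{m}(1-m^{m}q)-\lambda^{m}$ has the roots $w=L_\alpha$. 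Substituting the asymptotic expansion \eqref{asymptI} with $\Phi_\alpha:=1+\sum_{k\ge1}r_{k\alpha}(-z)^{k}$, using $q\frac{d}{dq}u_\alpha=L_\alpha$, and noting that under this specialization $\Psi_{-1,\bar\alpha}$ of \eqref{Psim1} is a $q$-independent constant (only the $j=0$ summand survives, as $e_{m-j}=0$ for $0<j<m$), one reduces to $\mathcal P\bigl(L_\alpha+zq\tfrac{d}{dq},q,z\bigr)\Phi_\alpha=0$. The indicial identity forces the $z^{0}$-part of this operator to vanish, so the coefficient of $z^{k+1}$ yields a recursion that determines $r_{k\alpha}$ uniquely from $r_{0\alpha},\dots,r_{k-1,\alpha}$, whose leading (first-order) piece is proportional to $L_\alpha^{-1}q\frac{d}{dq}$ up to a polynomial in $m$ and $X:=L^{m}$, and whose higher operators $\mathcal Q_{l}$ (coefficient of $z^{l}$) have highest-derivative part $\binom{m}{l}L_\alpha^{-l}(q\tfrac{d}{dq})^{l}$ with lower-order coefficients polynomial in $m$, $q$, $L_\alpha$. (Equivalently, one may run the linearized recursion \eqref{QDEnormalize} on the first column of the normalized $R$-matrix.)

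Next I would pass entirely to the variable $X=L^{m}=(1-m^{m}q)^{-1}$, using $m^{m}q=1-X^{-1}$, $q\frac{d}{dq}X=X^{2}-X$ and $q\frac{d}{dq}L_\alpha=\frac{1}{m}(X-1)L_\alpha$; by homogeneity we may set $\lambda=1$, so $L_\alpha^{m}=X$. By Lemma~\ref{lem:Lm} we already know $\sigma_{k}:=L_\alpha^{k}r_{k\alpha}\in\QQ[X]$, and Proposition~\ref{prop:osc} (regularity of $r_{k\alpha}$ at $L_\alpha=0$) together with Lemma~\ref{lem:Lm} upgrades this to $\sigma_{k}\in X^{\lceil k/m\rceil}\QQ[X]$. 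Multiplying the recursion through by $L_\alpha^{k+1}$ and conjugating every $q\frac{d}{dq}$ past the accompanying powers of $L_\alpha$ with the identity above, all fractional powers of $L_\alpha$ cancel and one lands on an honest recursion inside $\QQ[X]$,
\[
  \Bigl(m\,q\tfrac{d}{dq}-\bigl(k+\tfrac{m+1}{2}\bigr)(X-1)\Bigr)\sigma_{k}
  \;=\;\sum_{l\ge 2}(-1)^{l}\,L_\alpha^{k+1}\,\mathcal Q_{l}\bigl(L_\alpha^{-(k+1-l)}\sigma_{k+1-l}\bigr),
\]
whose right-hand side, by the structure of $\mathcal Q_{l}$ recorded above, is a sum of operators raising $X$-degree by at most one applied to $\sigma_{k+1-l}$. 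An induction on $k$ with base case $\sigma_{0}=1$, using that $q\frac{d}{dq}$ raises $\deg_{X}$ by exactly one on $\QQ[X]$ while the left-hand operator $\sigma\mapsto m\,q\frac{d}{dq}\sigma-(k+\frac{m+1}{2})(X-1)\sigma$ acts on the top symbol $X^{n}$ by multiplication by $mn-k-\frac{m+1}{2}\ne 0$, then gives the upper bound $\deg_{X}\sigma_{k}\le k$.

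For the matching lower bound I would extract the coefficient of $X^{k}$ in this recursion: this produces a scalar recursion $\mathrm{lc}_{X}(\sigma_{k})=\rho_{k}\,\mathrm{lc}_{X}(\sigma_{k-1})$ for its leading ($X^{k}$-)coefficient, with $\rho_{k}\in\QQ^{\times}$ explicitly computable from the top symbols of $q\frac{d}{dq}$ and of $\mathcal P$ (equivalently, $\mathrm{lc}_{X}(\sigma_{k})$ governs the leading power-law growth of $\I_\alpha$ at the conifold point $q=m^{-m}$, which is non-degenerate), so $\deg_{X}\sigma_{k}=k$ exactly. Polynomiality in $m$ falls out of the same induction: all coefficients of $\mathcal P$, hence of the recursion, lie in $\QQ[m]$, and one checks that clearing the leading factor $\propto L_\alpha^{-1}$ at each step replaces $\sigma_{k}$ by $(mL_\alpha)^{k}r_{k\alpha}$ with no denominator in $m$, giving $(mL_\alpha)^{k}r_{k\alpha}\in\QQ[m,X]$.

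I expect the main obstacle to be the bookkeeping of the previous two paragraphs: verifying that, after multiplying by $L_\alpha^{k+1}$ and clearing the $L_\alpha^{-1}$-factor, no spurious negative powers of $X$ or of $m$ survive --- this is exactly where the regularity input $\sigma_{k}\in X^{\lceil k/m\rceil}\QQ[X]$ is used --- and in pinning down that the leading coefficient $\rho_{k}$ never vanishes, for which the cleanest route may be to invoke the explicit conifold-point (equivalently, Gamma-function) asymptotics of the oscillatory integral rather than compute $\rho_{k}$ symbolically. Either way, each step is a finite computation, and there is no further geometric input beyond what is already in Proposition~\ref{prop:osc} and Lemma~\ref{lem:Lm}.
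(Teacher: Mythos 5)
Your overall strategy is the same as the paper's: substitute the asymptotic expansion into the Picard--Fuchs operator, read off a recursion for the $r_{k\alpha}$ from the coefficients of powers of $z$, pass to a degree-one variable ($X=L^{m}$ for you, $X=1-L^{m}$ in the paper, a cosmetic difference), and induct on $k$ using the regularity already supplied by Proposition~\ref{prop:osc} and Lemma~\ref{lem:Lm}, with polynomiality in $m$ read off from the coefficients of the recursion. However, two of your concrete steps do not hold as written, and they sit exactly where the paper's proof has its real content. First, the first-order part of the recursion: since $\Psi_{-1\bar\alpha}$ is constant under the specialization (as you note), the operator acting on $\sigma_k:=L_\alpha^k r_{k\alpha}$ is $k(X-1)-m\,q\frac{d}{dq}$, not $m\,q\frac{d}{dq}-(k+\frac{m+1}{2})(X-1)$; on the top symbol $X^{n}$ it acts by the factor $k-mn$ (producing $X^{n+1}$). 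Your nonvanishing claim $mn-k-\frac{m+1}{2}\neq0$ is false for odd $m$ (e.g.\ $m=5$, $n=1$, $k=2$), and with the correct constant the factor genuinely vanishes at $n=k/m$ when $m\mid k$: the homogeneous solution $X^{k/m}=L^{k}$ is then a polynomial, so the recursion does \emph{not} determine $r_{k\alpha}$ uniquely, contrary to your assertion. The integration constant has to be pinned down by the Gamma-function initial data \eqref{initialdata} at $L_\alpha=0$ (which is supported exactly on $m\mid k$) --- this is precisely how the paper closes the induction, and your proposal skips it. Relatedly, the right-hand operators do not raise $X$-degree ``by at most one'': the paper's $\DD_i$ raise it by up to $i+1$ (e.g.\ $\DD_2$ contains $D^3$, $\DD_4$ contains $D^5$ for $m=5$). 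What is true, and what the induction actually needs, is that the right-hand side has degree $\le k+1$, after which $\deg_X\sigma_k\le k$ follows because the left-hand operator raises the degree of $\sigma_k$ by one with nonzero factor whenever $\deg_X\sigma_k>k$.

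Second, the equality in \eqref{degreeboundrk}. Because every $\DD_i$, $1\le i\le m-1$, contributes to the coefficient of $X^{k+1}$ on the right-hand side, the leading coefficient of $\sigma_k$ obeys a linear recursion of order $m-1$ in the leading coefficients of $\sigma_{k-1},\dotsc,\sigma_{k-m+1}$, not the order-one relation $\mathrm{lc}(\sigma_k)=\rho_k\,\mathrm{lc}(\sigma_{k-1})$ you write down; so nonvanishing of the top coefficient is not established by your argument, and the fallback via conifold-point asymptotics is only sketched, not carried out. (This lower-bound half is also where the paper is tersest; note that only the upper bound $\deg\le k$, together with the regularity, is what gets used in Lemma~\ref{lem:fgforR}.) So: same method as the paper for the containment, the upper bound and the $m$-polynomiality, but with the uniqueness/initial-data step missing and the degree equality not actually proved.
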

\begin{proof}
  We first illustrate the proof of the degree bound for the formal
  quintic case: $m=5$.
  Recall that the $\I$-function satisfies the following Picard--Fuchs
  equation
$$
\Big[ D ^5 -\lambda^5/z^5 - q\cdot 5D (5 D +1) \cdots (5 D +4)  \Big]  \I_\alpha(q,z) =0 
$$
where $D:=q d/dq$. By using the asymptotic expansion \eqref{asymptI}, we see
$$
\Big[ D_{L_\alpha} ^5 -\lambda^5 - q\cdot 5D_{L_\alpha} (5 D_{L_\alpha} + z) \cdots (5 D_{L_\alpha} + 4z)  \Big]  (1+\sum r_{k\alpha} (-z)^k) =0 
$$
where  (recall $L_\alpha = \zeta^\alpha \lambda \cdot L$)
$$
D_{L_\alpha}:=zD+  {L_\alpha} .
$$
This equation is equivalent to the following recursive relations for
$r_{k\alpha}$:
$$
- D r_{k \alpha} =  \DD_1 r_{k-1, \alpha}+\DD_2 r_{k-2, \alpha}+\DD_3 r_{k-3, \alpha}+\DD_4 r_{k-4, \alpha}
$$
where $X= 1-L^5$ and
\begin{align*}
\DD_1:= & \  \frac{1}{25  L_\alpha} (3X^2-3X+10XD^1+50D^2)\\
\DD_2:= & \  \frac{1}{125 L_\alpha^2} (-24X^3+39X^2-15X+(15X^2+5X)D+150XD^2+250D^3)\\
\DD_3:= & \  \frac{1}{3125 L_\alpha^3} (396X^4-870X^3+575X^2-101X+(-450X^3 +725X^2 -125X) D\\
& \qquad \qquad+1375XD^2+3750XD^3+3125D^4)\\
\DD_4:= & \  \frac{1}{3125 L_\alpha^4} (X\cdot (24D+250D^2+875D^3+1250D^4)+625D^5)
\end{align*}
Notice that $r_{0\alpha}=1$, and that the differential operator
$\DD_i$ satisfies
$$
\DD_i\colon   L_\alpha^{-(k-i)}   \mathbb Q[X]_{\leq (k-i)} \longrightarrow L_\alpha^{-k} X  \mathbb Q[X]_{\leq k},
$$
where we have used $D L_\alpha = -\frac{1}{5}X L_\alpha$ and $D X = X(1-X)$. 
Then together with the initial data \eqref{initialdata} of
$L_\alpha^k \cdot r_{k\alpha}$ at ${L_\alpha=0}$ ($X=1$), 
we see that Equation \eqref{degreeboundrk} follows by induction. 

In the general case, the $\I$-function satisfies the following
Picard--Fuchs equation
$$
\Big[ D ^m -\lambda^m/z^m - q\cdot mD (m D +1) \cdots (m D +m-1)  \Big]  \I_\alpha(q,z) =0 
$$
Letting $X:=1-L^m$, the equation for $r_{k\alpha}$ follows
\begin{multline*}
\Big[  L_\alpha^{-m}(1-X) \cdot D_{L_\alpha} ^m - 1 +  {(m L_\alpha)^{-m}} X \cdot mD_{L_\alpha} (m D_{L_\alpha} +z) \\
 \cdots (m D_{L_\alpha} +(m-1)z)  \Big]  (1+\sum r_{k\alpha} (-z)^k) =0
\end{multline*}
Note that $D L_\alpha = -\frac{1}{m}X L_\alpha$ and that
$D X = X(1-X)$.
We have for $s>0$,
$$
L_\alpha^{-s} D_{L_\alpha}^s =1+\sum_{j=1}^s \ \frac{m z^{j}}{ m^{j} L_\alpha^{j} }\cdot   \sum_{i=0}^j     c_{j,i}(m,X) D^{j-i},
$$
in which
\begin{equation*}
  c_{j,0} =\binom{s}{j}, \qquad  c_{j,i} \in X \mathbb Q[m][X]_{i-1} \text{   for $i>0$} .
\end{equation*}
Then
the above equation gives the following relation for $r_{k\alpha}$
$$
- D r_{k \alpha} =  \DD_1 r_{k-1, \alpha}+\DD_2 r_{k-2, \alpha}+\cdots+\DD_{m-1} r_{k-m+1, \alpha}
$$
such that $\DD_i \in (mL_\alpha)^{-i} \mathbb Q[m, X, mD]$ satisfies
$$
\DD_i\colon   L_\alpha^{-(k-i)}   \mathbb Q[X]_{\leq (k-i)} \longrightarrow L_\alpha^{-k} X  \mathbb Q[X]_{\leq k}.
$$
For example
\begin{align*}
\DD_1 = \ &  {\frac { \left( m+1 \right)   \left( m-1 \right) \left( m-2 \right) }{24{m}^{2}\,L_\alpha}} X(X-1)+{\frac {m-1}{2\,mL_\alpha}} X\,D+{\frac {m-1}{2\,L_\alpha}}{D}^{2}
\\
\DD_2 = \ & {\frac { \left( m+1 \right)   \left( m-1 \right)   \left( m-2 \right)  \left( m-3 \right) }{24{m}^{3}\,{L_\alpha}^{2}}
 \Big(  m  \big( X-{\frac{1}{2}} \big)-X  \Big) }  X(1-X)  \\
 & \  +{\frac {
  \left( m-1 \right)( m-2 )   \big(   
{m}^{2} (X-1) -5 m (X-1)+6\,X+2 \big) }{24\,{m}^{2}{L_\alpha}^{2}}
} X\, D \\
& \ +{\frac { \left( m-1 \right) \left( m-2 \right) }{2\,m{L_\alpha}^{2}}} X\, {D
}^{2}+{\frac {  \left( m-1 \right) \left( m-2 \right)  }{6\,{L_\alpha}^{2}}}{
D}^{3}.
\end{align*}
The rest is similar to the $m=5$ case.
Together with the regularity \eqref{initialdata} of
$L_\alpha^k \cdot r_{k\alpha}$ at ${L_\alpha=0}$, we see that
Equation \eqref{degreeboundrk} follows by induction.

The polynomiality in $m$ also follows from the polynomiality of
\eqref{initialdata} and $\DD_i$ by induction.
\end{proof}

\bibliographystyle{abbrv}
\bibliography{biblio}

\end{document}